  \newtheorem{theorem}{Theorem}[section]
  \newtheorem{definition}[theorem]{Definition}
  \newtheorem{remark}[theorem]{Remark} 
  \newtheorem{lemma}[theorem]{Lemma}
  \newtheorem{proposition}[theorem]{Proposition}
  \newtheorem{corollary}[theorem]{Corollary}
  \newtheorem{example}[theorem]{Example}
  \def\R{\mathbb{R}}
  \def\Z{\mathbb{Z}}
  \def\Q{\mathbb{Q}}
  \def\C{\mathbb{C}}
  \newcommand{\id}{\mathrm{id}}
  \newcommand{\simeqto}{\xrightarrow{\sim}}
  \newcommand{\into}{\hookrightarrow}
  \newcommand{\End}{\mathrm{End}}
  \newcommand{\gr}{\mathrm{Gr}}
  \newcommand{\Hom}{\mathrm{Hom}}
  \def\Image{\mathrm{Im}}
  \def\del{\partial}
  \def\delbar{{\overline{\del}}} 
  \def\T{\mathbb{T}}
  \def\H{\mathbb{H}}
  \def\P{\mathbb{P}}    
  \def\L{\mathcal{M}}
  \def\w{{\sf w}}
  \def\dmod{\mathcal{D}}
  \newcommand{\rmod}{\lambda}
  \newcommand{\para}{\tau} 
  \newcommand{\scale}{\theta} 
  \def\m{\sigma}
  \def\HH{\mathrm{HH}}
  \def\id{\mathrm{id}}
  \def\Res{\mathrm{Res}}
  \newcommand{\pole}{D}
  \newcommand{\amb}{X}
  \newcommand{\tot}{s(\widetilde{A}^{\bullet,\bullet})}
  \newcommand{\rtot}{s(\widetilde{C}^{\bullet,\bullet})}
  \def\i{{\tt i}}
  \newcommand{\disk}{{\Delta_s}}
  \newcommand{\kah}{\omega_{\rm K\ddot{a}h}}
  \newcommand{\poly}{{\sf P}}
  \newcommand{\facet}{{\sf F}}
  \newcommand{\face}{{\sf Q}}
  \newcommand{\cone}{\sigma}
  \newcommand{\ray}{\rho}
  \newcommand\fano{{\mathrm{F}}}
\begin{document}
  \title{Hodge-Tate conditions for Landau-Ginzburg models}
  \author{Yota Shamoto}
  \footnote[0]{Kavli Institute for the Physics and Mathematics of the Universe (WPI), University of Tokyo, 5-1-5 Kashiwanoha, Kashiwa, Chiba 277-8583, Japan
  Math. Subject Classification: 14J33. Communicated by T. Mochizuki. Received September 19, 2017. Revised December 27, 2017}
\address{Kavli Institute for the Physics and Mathematics of the Universe (WPI), University of Tokyo, 5-1-5 Kashiwanoha, Kashiwa, Chiba 277-8583, Japan}
\email{yota.shamoto@ipmu.jp} 
            \date{}
            \begin{abstract}            
             We give a sufficient condition for a class of 
             tame compactified Landau-Ginzburg models  
             in the sense of Katzarkov-Kontsevich-Pantev 
             to satisfy some versions of their conjectures. 
             We also give examples which satisfy the condition.
             The relations to the quantum D-modules of Fano manifolds and 
             the original conjectures are explained in Appendices.
            \end{abstract}
          \maketitle
          \setcounter{tocdepth}{1}
          \tableofcontents
  \section{Introduction}
        Let $\amb$ be a smooth projective variety over $\C$ 
        with a Zariski open subset $Y$.
        We assume that $\pole:=\amb\setminus Y$ is a simple normal crossing hypersurface.
        Let  $f:X\to \P^1$ be a flat projective morphism such that the restriction 
        $\w:=f_{|Y}$ is a regular function. 
        In general, the meromorphic
        flat connection $(\mathcal{O}_\amb(*\pole), d+df)$ has irregular singularities
        along $\pole$.
        Let $H_{\rm dR}^\bullet(Y, \w)$ denote the 
        de Rham cohomology group of $(\mathcal{O}_\amb(*\pole), d+df)$.        
        It has been studied from the viewpoint of generalized Hodge theories. 
        (See twistor $\dmod$-modules \cite{moctwi}, \cite{mocgkz},
         irregular Hodge structures \cite{delirr}, \cite{esy},  \cite{sabirr}, \cite{sabont},
         non-commutative Hodge structures \cite{kkp08}, \cite{kkp14}, 
        TERP-structures \cite{hernil}, and so on.)

         In some cases, $(Y, \w)$ can be considered as a ^^ mirror dual' of a 
         smooth projective Fano variety $\fano$ called a sigma model.
         In that case, $(Y, \w)$ is called a Landau-Ginzburg model, and 
         it is predicted that some categories associated to $(Y,\w)$ are 
         equivalent to the corresponding categories associated to $\fano$.
         This prediction is called a Homological Mirror Symmetry  conjecture (HMS).
         Some parts of HMS are proved in some cases \cite{aurmir}, \cite{aurwei}, \cite{uedhom}.
         
         From this point of view, 
         Katzarkov-Kontsevich-Pantev \cite{kkp14}
         proposed some conjectures 
         as conjectural
         consequences of HMS.
         As emphasized in \cite{kkp14}, 
         some of their conjectures can be seen as 
         ^^ ^^ purely algebro-geometric'' conjectures
         on the generalized Hodge theory of $H_{\rm dR}^\bullet(Y, \w)$.
         Such conjectures are the main subjects of this paper.
         
         As an introduction, we survey some versions of the conjectures in 
         \S \ref{int num} and \S \ref{int spe}. 
         (The relations to the original ones are explained in Appendix \ref{app kkp}.)
         Then, we explain our main result in \S \ref{int main}. 
         In this paper, we always assume that the pole divisor
         $(f)_\infty$ of $f$ is reduced and the support 
         $|(f)_\infty|$ is equal to $\pole$, although 
         this assumption is more restrictive than that of \cite{kkp14}.

         \subsection{Hodge numbers}\label{int num}
         The cohomology group $H_{\rm dR}^\bullet(Y, \w)$ is 
         given by taking the hypercohomology 
         of the complex $(\Omega^\bullet_\amb(* \pole),d+df\wedge)$.
         There are $\mathcal{O}_X$-coherent subsheaves $\Omega_f^k$ of
         $\Omega_\amb^k(*\pole)$ which give a subcomplex
         $(\Omega^\bullet_f, d+df\wedge)$ 
         (see \S \ref{ss kon }).
         It is known that the inclusion 
         $(\Omega^\bullet_f, d+df\wedge)
         \hookrightarrow (\Omega^\bullet_\amb(* \pole),d+df\wedge)$
         is a quasi-isomorphism (see \cite[Corollary 1.4.3]{esy}). 
         The Hodge number $f^{p, q}(Y, \w)$ is defined by 
         \begin{align*}
         f^{p, q}(Y, \w):=\dim H^q(X, \Omega_f^p).
         \end{align*}
         It is proved by Esnault-Sabbah-Yu, Kontsevich, and M. Saito \cite{esy}
          that we have $\dim H^k(Y, \w)=\sum_{p+q=k}f^{p, q}(Y, \w)$,
         which can be considered as a consequence of 
         $E_1$-degeneration property of the ^^ ^^ Hodge filtration''.
         
         Take sufficiently small holomorphic disk $\Delta$ in $\P^1$ centered at infinity 
         so that $Y_b:=f^{-1}(b)$ is smooth for any $b\in \Delta\setminus\{\infty\}$.
         It is proved in \cite{kkp14} (see also \cite{dimont})
         that we have the following equality:
         \begin{align*}
          \dim H^k_{\mathrm{dR}}(Y, \w)=\dim H^k(Y, Y_b),
         \end{align*}
         where $b\in \Delta\setminus \{\infty\}$, 
         and $ H^k(Y, Y_b)$ denotes the relative cohomology with $\C$-coefficient.
         In our situation, the monodromy $T_k$ at infinity is known to be unipotent (\cite[Theorem I']{lanont}).
         Let ${}^kW$ be the monodromy weight filtration of 
         $N_k:=\log T_k$ on $H^k(Y, Y_b)$
         centered at $k$ (see (\ref{dwf}), (\ref{dwf2})). 
         The number $h^{p, q}(Y, \w)$ 
         is defined by 
         \begin{align*} 
         h^{p, q}(Y, \w):=\dim \gr^{{}^kW}_{2p}H^k(Y, Y_b),\ \ \  (k=p+q).
         \end{align*}
         
         By a HMS consideration, 
         Katzarkov-Kontsevich-Pantev \cite{kkp14} conjectured: 
         \begin{align}\label{con h}
          f^{p, q}(Y, \w)=h^{p, q}(Y, \w).
         \end{align}
         It is easy to observe that the conjecture (\ref{con h}) does not hold 
         if the fiber $D$ at infinity is smooth 
         and $f^{p, q}(Y, \w)$ are not  zero for two different pairs $(p, q)$ and $(p',q')$
         with $p+q=p'+q'$.
         Actually, such example is given in \cite{lunlan}.         
         However, in loc. cit., there are examples of $(X, f)$ 
         which satisfy (\ref{con h}).
         There remains a question  
         when the equality (\ref{con h}) holds.
         The counter-example suggests that 
         we need to impose some conditions on the degeneration property 
         of $Y_b$ as $b\to \infty$.

         \subsection{Speciality}\label{int spe}        
         Let $(\rmod, \para)$ be a pair of complex numbers. 
         The dimension of the hypercohomology $\H^\bullet(\amb;(\Omega_f^\bullet,\rmod d+\para df\wedge))$
         is known to be independent of the choice of $(\rmod, \para)$ (\cite{esy}, \cite{moctwi}). 
          Let $\C_\rmod$, $\C_\para$ be complex planes with coordinate 
         $\rmod$ and $\para$ respectively.
         Put $\P_\rmod^1:=\C_\rmod\cup \{\infty\}$ and 
         $S:=\P^1_\rmod \times \C_\para$.
         It follows that we have a locally free $\Z/2\Z$-graded $\mathcal{O}_S(*(\rmod)_\infty)$-module
         ${}^{\mathfrak{b}}H$ whose fiber at $(\rmod,\para)$ 
         is $\H^\bullet(\amb;(\Omega_f^\bullet,\rmod d+\para df\wedge))$.
         The $\mathcal{O}_S(*(\rmod)_\infty)$-module ${}^{\mathfrak{b}}H$
         is equipped with a grade-preserving meromorphic flat connection:
         $${}^{\mathfrak{b}}\nabla:
         {}^{\mathfrak{b}}H\to {}^{\mathfrak{b}}H\otimes_{\mathcal{O}_S}\Omega^1_S(\log \rmod \para)((\rmod)_0),$$
         where $\Omega^1_S(\log \rmod \para)((\rmod)_0)$ denotes the 
         $\mathcal{O}_S$-module locally generated by 
         $\rmod^{-1}\para^{-1}d\para$ and $\rmod^{-2}d\rmod$.
                  
         For a smooth projective Fano variety $\fano$, 
         the quantum $\dmod$-module for the quantum parameters 
         $c_1(\fano)\log \para \in H^2(\fano)$
         gives a similar pair $({}^\mathfrak{a}H,{}^{\mathfrak{a}}\nabla)$.
         These pairs are considered as one parameter variation of
         non-commutative Hodge structures 
         $({}^{\sf A}H,{}^{\sf A}\nabla):=({}^\mathfrak{a}H,{}^{\mathfrak{a}}\nabla)_{|\para=1}$, 
         and $({}^{\sf B}H,{}^{\sf B}\nabla):=({}^\mathfrak{b}H,{}^{\mathfrak{b}}\nabla)_{|\para=1}$.
         It is conjectured \cite[Conjecture 3.11]{kkp14} that homological mirror correspondences for a pair $\fano\mid(Y, \w)$         
         should induce an isomorphism 
         $({}^\mathfrak{a}H,{}^{\mathfrak{a}}\nabla)\simeq ({}^\mathfrak{b}H,{}^{\mathfrak{b}}\nabla)$ 
         (more precisely, we need to fix more data to determine the mirror pair).
         
         On the one hand,  $({}^{\sf A}H,{}^{\sf A}\nabla)$ has a trivial logarithmic extension to $\rmod=\infty$. 
         On the other hand,
         it is a non-trivial problem
         to construct a logarithmic extension of $({}^{\sf B}H,{}^{\sf B}\nabla)$
         such that the induced vector bundle on $\P^1_\rmod$ is trivial.
         The problem is called Birkhoff problem (see e.g.\cite{sabiso}),
         and the solution to the problem  for $({}^{\sf B}H,{}^{\sf B}\nabla)$
         plays a key roll in the construction of primitive forms
         \cite{sabgau}, \cite{sabhyp}. 
         
         Katzarkov-Kontsevich-Pantev observed that
         the trivial solution of the Birkhoff problem for 
         the connection $({}^{\sf A}H,{}^{\sf A}\nabla)$
         can be described in terms of the 
         Deligne's canonical extension and 
         the weight filtration for the nilpotent part of 
         the residue endomorphism along $\{\rmod=\infty\}$.         
         An extension given in a similar way 
         is called a skewed canonical extension in \cite{kkp14}. 
         The skewed canonical extension can be defined for more general objects including 
         $({}^{\sf B}H,{}^{\sf B}\nabla)$.
         The property that the skewed canonical extension 
         gives a solution to the Birkhoff problem 
         is called ^^ ^^ speciality" (see \cite[Definition 3.21]{kkp14}, 
         or Definition \ref{special} for details).

         From the point of view of the conjecture 
         $({}^\mathfrak{a}H,{}^{\mathfrak{a}}\nabla)
         \simeq ({}^\mathfrak{b}H,{}^{\mathfrak{b}}\nabla)$, 
         they conjectured 
         that $({}^{\sf B}H,{}^{\sf B}\nabla)$ is special (\cite[Conjecture 3.22 (a)]{kkp14}).
         Combining it with their unobstructedness result on 
         the versal deformation of $(Y, \w)$, 
         they also conjectured
         the existence           
         of a version of a primitive form under the assumption that $\Omega^{\dim\amb}_\amb(\pole)$
         is trivial
         (\cite[Conjecture 3.22 (b)]{kkp14}). 
         \subsection{Rescaling structures and Hodge-Tate conditions}\label{int main}
         To treat the conjectures in \S \ref{int num} and \S \ref{int spe} simultaneously, 
         we introduce a notion of rescaling structure (See \S \ref{se res} for details).
         Let $\m: \C^*_\scale\times S\to S$ be a $\C^*_\scale$ 
         be the action of $\C^*_\scale$ defined by 
         $(\scale,\rmod,\para)\mapsto (\scale\rmod,\scale\para)$.
         Let $p_2:\C^*_\scale\times S\to S$ denote the projection.
         A rescaling structure is a triple $(\mathcal{H},\nabla,\chi)$ of
         $\Z$-graded locally free $\mathcal{O}_S(*(\rmod)_\infty)$-module 
         $\mathcal{H}$, 
         a grade-preserving meromorphic flat connection 
         $$\nabla:\mathcal{H}\to 
         \mathcal{H}\otimes \Omega_S^1(\log \rmod\para)((\rmod)_0),$$
         and 
         an isomorphism
         $\chi: p_2^*\mathcal{H}\simeqto\m^*\mathcal{H}$ 
         with some conditions 
         (see Definition \ref{definition of rescaling structure}).
         
         For a rescaling structure $(\mathcal{H},\nabla,\chi)$,
         take a fiber $V$ of $\mathcal{H}$ at $(\rmod,\para)=(1,0)$. 
         Under an assumption, 
         we associate two filtrations $F$ and $W$ on $V$,  
         where $F$ is called Hodge filtration and 
         $W$ is called weight filtration of 
         $\mathcal{H}$ (\S \ref{reshod}). 
         We also define an abstract version of Hodge numbers 
         $f^{p, q}(\mathcal{H})$
         and $h^{p, q}(\mathcal{H})$.
         
         The rescaling structure is said to satisfy the Hodge-Tate condition 
         if these two filtration behave like a Hodge filtration and a weight filtration of 
         a mixed Hodge structure of Hodge-Tate type
         in the sense of Deligne \cite{delloc} 
         (see Definition \ref{def ht} for details). 
         If $(\mathcal{H},\nabla,\chi)$ satisfies Hodge-Tate condition, 
         we have $f^{p, q}(\mathcal{H})=h^{p, q}(\mathcal{H})$, 
         and we also have that $\mathcal{H}_{|\para=1}$ is special.
         
         In Appendix \ref{app A}, we show that
         a ^^ ^^ Tate twisted" version 
         $\mathcal{H}_\fano$ of ${}^{\mathfrak{a}}H$ 
         comes equipped with a rescaling structure 
         for any smooth projective Fano variety $\fano$.
         The rescaling structure $\mathcal{H}_\fano$
         satisfies the Hodge-Tate condition, 
         and we have 
         $$f^{p, q}(\mathcal{H}_\fano)=h^{p, q}(\mathcal{H}_\fano)
         =\dim H^q(\fano,\Omega_\fano^{n-p}).$$
         
         For the pair $(X, f)$, we also have a version 
         $\mathcal{H}_f$ of ${}^\mathfrak{b}H$, 
         which comes equipped with a rescaling structure
         (See \S \ref{section LG}. 
         The relation between $\mathcal{H}_f$ and ${}^\mathfrak{b}H$
         is given  in Appendix \ref{app kkp}).
          The main result of this paper is the following:
         \begin{theorem}[Theorem \ref{main theorem}]\label{thm intro}
         Let $\mathcal{H}_f$ be the rescaling structure for $(X, f)$.
          \begin{enumerate} 
           \item If $\mathcal{H}_f$ satisfies the Hodge-Tate condition, 
                    then the equation $(\ref{con h})$ holds 
                    and $\mathcal{H}_{f|\para=1}$ is special.
           \item The rescaling structure $\mathcal{H}_f$ satisfies Hodge-Tate condition
                    if and only if the mixed Hodge structure
                    $(H^k(Y,Y_\infty;\Q), F, W)$ is Hodge-Tate for every $k\in\Z$.
           \end{enumerate}
         \end{theorem}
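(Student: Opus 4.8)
The plan is to reduce both assertions to a single structural statement: that the bifiltered vector space underlying the rescaling structure $\mathcal{H}_f$ at $(\rmod,\para)=(1,0)$ is, graded piece by graded piece, the mixed Hodge structure on the relative cohomology. Writing $V=\bigoplus_k V_k$ for the fiber of $\mathcal{H}_f$ equipped with its abstract Hodge filtration $F$ and weight filtration $W$ from \S\ref{reshod}, I would construct a bifiltered isomorphism
\[
(V_k, F, W)\;\simeq\;(H^k(Y,Y_\infty;\C), F, W),
\]
compatible with rational structures, for every $k$. Once this identification is in hand, both parts follow almost formally, and all the genuine work is concentrated in producing it.

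For Part (1), the general theory of rescaling structures recalled in \S\ref{int main} already gives that the Hodge-Tate condition forces $f^{p,q}(\mathcal{H}_f)=h^{p,q}(\mathcal{H}_f)$ and the speciality of $\mathcal{H}_{f|\para=1}$; it then remains to match the abstract Hodge numbers with the geometric ones, and this matching is a byproduct of the bifiltered identification. The equality $f^{p,q}(\mathcal{H}_f)=f^{p,q}(Y,\w)=\dim H^q(\amb,\Omega_f^p)$ I would read off from the construction of $\mathcal{H}_f$ through the Kontsevich complex $(\Omega_f^\bullet, d+df\wedge)$ together with the $E_1$-degeneration of Esnault-Sabbah-Yu, which identifies the extension of $\mathcal{H}_f|_{\para=0}$ across $\rmod=0$ with the irregular Hodge filtration. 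The equality $h^{p,q}(\mathcal{H}_f)=h^{p,q}(Y,\w)=\dim\gr^{{}^kW}_{2p}H^k(Y,Y_b)$ I would obtain by identifying the abstract $W$, coming from the nilpotent part of the residue of $\nabla$ along $\rmod=\infty$, with the monodromy weight filtration ${}^kW$ of $N_k=\log T_k$. Combining these gives (\ref{con h}) and the speciality of $\mathcal{H}_{f|\para=1}$.

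For Part (2), the abstract Hodge-Tate condition (Definition \ref{def ht}) is by construction nothing but the requirement that $(V,F,W)$ be a Hodge-Tate structure in Deligne's sense, i.e. $\gr^W_{2p+1}=0$ and $\gr^W_{2p}$ purely of type $(p,p)$. Granting the bifiltered identification, $\mathcal{H}_f$ satisfies the Hodge-Tate condition if and only if each $(H^k(Y,Y_\infty;\Q),F,W)$ is Hodge-Tate, which is precisely the stated equivalence. The two halves of the identification have very different flavors: the Hodge-filtration half is the more routine one, following from the quasi-isomorphism $(\Omega_f^\bullet,d+df\wedge)\into(\Omega^\bullet_\amb(*\pole),d+df\wedge)$, the $E_1$-degeneration, and the compatibility of the $\rmod=0$ extension with these.

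The hard part is matching the abstract weight filtration $W$ — defined via the skewed canonical extension and the nilpotent residue endomorphism at $\rmod=\infty$ — with the Deligne weight filtration of the mixed Hodge structure on $H^k(Y,Y_\infty)$. The essential tool is the rescaling isomorphism $\chi:p_2^*\mathcal{H}_f\simeqto\m^*\mathcal{H}_f$ for the action $(\scale,\rmod,\para)\mapsto(\scale\rmod,\scale\para)$: it couples the rescaling degeneration $\rmod\to\infty$ along $\para=0$ to the degeneration of the fibers $Y_b$ as $b\to\infty$, so that the nilpotent operator governing the $\rmod=\infty$ limit is identified with $N_k$, whose unipotence $(\cite[\text{Theorem I}']{lanont})$ keeps the relevant limit mixed Hodge structure well-behaved. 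I expect the main technical effort to lie in showing that, after this identification, the monodromy weight filtration of $N_k$ and the weight filtration of $H^k(Y,Y_\infty)$ agree up to the standard centering, so that the abstract $W$ is exactly the geometric weight filtration; this is the step where the limit mixed Hodge structure of the family and the rescaling structure must be reconciled, and it is where I anticipate the principal difficulty.
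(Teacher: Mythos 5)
Your overall strategy coincides with the paper's: everything is routed through a bifiltered identification $(V_f^k,F,W)\simeq(H^k(Y,Y_\infty;\C),F,W)$ (this is Theorem \ref{main theorem}), the Hodge-filtration half is handled by the Kontsevich complex and $E_1$-degeneration exactly as you describe (Lemma \ref{lemhod}), and Part (1) then follows from Lemma \ref{2.8} and Proposition \ref{HT special}. You have also correctly located the center of gravity of the argument in the weight-filtration half. The issue is that for that half your proposal states the two decisive assertions as goals rather than proving them, and neither is routine.

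Concretely, two things are missing. First, the identification of the nilpotent operator: the abstract $W$ on $V_f^k$ is built from $\Res_{\{\para=0\}}\nabla$ (equivalently, by the rescaling isomorphism, from the residue at $\rmod=\infty$; this is Lemma \ref{tau del tau}), and one must show this residue equals the residue of the Gauss--Manin connection on the relative cohomology. In the paper this is Theorem \ref{icchi}, and it is not a formal consequence of $\chi$: it requires the V-filtration $U_\bullet\L$ of the twisted meromorphic connection from \cite{moctwi} (\S\ref{qisL}), the comparison $\varphi_0=\varphi_1$ of two descriptions of the residue (Lemma \ref{varphi}), and an explicit chain-level map $\Phi$ from the logarithmic complex $\bm{E}^\bullet_0$ computing the Gauss--Manin residue into $\gr^U_0\L\otimes\Omega^\bullet_{\amb^{(1)}}(\log\para)_0$ (Lemma \ref{Phi}). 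Second, even after the two nilpotent operators are matched, you still need that the Deligne weight filtration of the mixed Hodge structure $H^k(Y,Y_\infty)$ \emph{is} the monodromy weight filtration of that operator centered at $k$; this is Theorem \ref{thmmon}, which the paper obtains by realizing $H^k(Y,Y_\infty)$ via the cohomological mixed Hodge complex $\Xi^{\rm Hdg}_g$ (Proposition \ref{XI}), computing the weight spectral sequence in terms of the strata $\pole(m)$, and invoking the differential polarized Hodge--Lefschetz module theorem of Guill\'en--Navarro Aznar (Theorems \ref{dphl} and \ref{surles}), together with the comparison $\nu\circ\rho=\rho\circ\varphi_2$ of Proposition \ref{lastprop} to connect $\nu$ with the Gauss--Manin residue. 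Without these two inputs the claimed bifiltered isomorphism, and hence both parts of the theorem, remain unestablished; your plan would succeed, but only once they are supplied.
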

         The definition of the mixed Hodge structure $(H^k(Y,Y_\infty;\Q), F, W)$
         is given in \S \ref{MHC}.
         In \S \ref{section Ex}, we also give some examples
         such that $\mathcal{H}_f$ satisfies the Hodge-Tate condition in the case where
         the dimension of $X$ is two or three.
  \section{Rescaling structures}\label{se res}
      \subsection{Holomorphic extensions and filtrations}
          Let $\C$ denote a complex plane.  
          Set $\C^*:=\C\setminus \{0\}$. 
          Let $H$  be a finitely generated locally free $\mathcal{O}_\C(*\{0\})$-module.
          Let $V$ denote the fiber of $H$ at $1\in\C$. 
          Assume that we are given an increasing filtration 
          $G_\bullet V=(G_mV\mid m\in \Z)$ on $V$ such that 
          \begin{align}\label{finie}
          G_mV:=\begin{cases}
                       0& (m\ll 0) \\
                       V& (m\gg 0).
                       \end{cases}
          \end{align}
          We shall recall some methods to construct an extension 
          of $H$ to an $\mathcal{O}_\C$-module
          by using $G_\bullet V$.
          Here, by an extension of $H$, 
          we mean a locally free $\mathcal{O}_\C$-submodule $L$ of $H$
          such that $L\otimes \mathcal{O}_\C(*\{0\})=H$. 
      
     \subsubsection{Construction using $\C^*$-actions}\label{action}
         Let $\mathrm{m}:\C^*\times \C^*\to \C^*$ 
         and $\m:\C^*\times \C\to \C$ denote the multiplications.
         Let $p_2:\C^*\times\C\to \C$ be the projection.      
         Assume that $H$ is $\C^*$-equivariant with 
         respect to $\m$.
         Namely, 
         we have an isomorphism 
         $\chi: p_2^*H\simeqto \m^*H$ 
         with the cocycle condition: 
         $$ (\mathrm{m}\times \id_\C)^*\chi=(\id_{\C^*}\times \m)^*\chi\circ p_{23}^*\chi,
         $$
         where $p_{23}:\C^*\times\C^*\times\C\to \C^*\times\C$ is given by $p_{23}(t_1,t_2,z):=(t_2,z)$.
          This case is considered in \cite[Lemma 19]{simnon}, for example.
          For any vector $v\in V$, 
          there is a unique invariant section 
          $\phi_v\in \Gamma(\C,H)$ with $\phi_v(1)=v$.
          There exists a unique extension $L_1$ such that 
          $v\in G_mV$ if and only if $\phi_v\in L_1(m\{0\})$. 
           The extension $L_1$ is isomorphic to the extension 
           $\sum_m G_mV \otimes \mathcal{O}_\C(-m\{0\})$ 
           of 
           $V\otimes\mathcal{O}(*\{0\})$. 
           This construction gives a one to one correspondence between the 
           sets of increasing filtrations on $V$ with (\ref{finie}) 
           and $\C^*$-equivariant holomorphic extensions of $H$.
           \begin{example}\label{exR}
           Let $V$ be a finite dimensional $\C$-vector space with a decomposition 
           $V=\bigoplus_{p\in \Z}V_p$. Put $H:=\mathcal{O}_\C(*\{0\})\otimes_\C V$.
           Remark that 
           $p_2^*H\simeq \mathcal{O}_{\C^*\times \C}(*\C^*\times \{0\})\otimes V
           \simeq\m^*H$.
           Define $\chi: p_2^*H\simeqto \m^*H $
           by $\chi_{|\mathcal{O}_{\C^*\times\C}(*\C^*\times \{0\})\otimes V_p}(t, z)
           :=t^p\otimes\id_{V_p}$.
            Consider $V$ as the fiber of $H$ at $1\in \C$.
            Then the trivial extension $L_1:=\mathcal{O}_\C\otimes V$
            corresponds to the following filtration: 
            $$G_{m}V=\bigoplus_{-p\leq m} V_p.$$  
            Indeed, 
            for $v\in V_p$, the invariant section $\phi_v$ is given by 
            $\phi_v(z)=z^{p}v\in L_1(-p\{0\})$.
           \end{example}
      \subsubsection{Double complex}
        Let $(C^{\bullet,\bullet},\delta_1,\delta_2)$ be a double complex of $\C$-vector spaces
        where $\delta_1:C^{p, q}\to C^{p+1,q}$ and 
        $\delta_2:C^{p, q}\to C^{p, q+1}$ are the differentials.
        We assume that $C^{p, q}=0$ if $p<0$ or $q<0$, 
        and that 
        the total complex $(C^\bullet,\delta)$ has finite dimensional cohomology.
        Here, we put $C^\ell:=\bigoplus_{p+q=\ell}C^{p, q}$ and $\delta:=\delta_1+\delta_2$.
        Let $F$ be the filtration on $(C^\bullet,\delta)$
        given by $F_mC^\ell:=\bigoplus_{p+q=\ell, -p\leq m}C^{p, q}$.
        We also assume that 
        the morphisms
        $  H^k(F_m(C^\bullet, \delta))\to H^k(C^\bullet, \delta)$
        are injective for all $k$ and $m$.
        
        Put $\mathcal{C}^{p, q}:=\mathcal{O}_\C\otimes C^{p, q}$ and 
        $\mathcal{C}^\ell:=\bigoplus_{p+q=\ell}\mathcal{C}^{p, q}$.
        We have a complex 
        $(\mathcal{C}^\bullet, z\delta_1+\delta_2).$
        Let $L_1$ be the $k$-th cohomology group of this complex. 
        By the assumption, $L_1$
        is a finitely generated locally free $\mathcal{O}_\C$-module.
        Put $H:=L_1\otimes \mathcal{O}_\C(*\{0\})$ and 
        consider $L_1$ as an extension of $H$.
        Define $\chi_p: p_2^*\mathcal{C}^{p, q}\simeqto \sigma^*\mathcal{C}^{p, q}$ by 
        $\chi_p(t, z):=t^p\otimes \id$. 
        This induces an isomorphism $\chi: p_2^*H\simeqto \m^*H$
        with the cocycle condition.
        \begin{lemma}\label{doucpx}
         Consider the $k$-th cohomology ${H}^k(C^\bullet,\delta)$ as 
         the fiber of $H$ at $1\in \C$.
         Then the extension $L_1$ corresponds to the following filtration:
         \begin{align*}
         G_mH^k(C^\bullet,\delta):=\Image(H^k(F_m(C^\bullet, \delta))\to H^k(C^\bullet, \delta)).
         \end{align*} 
         \end{lemma}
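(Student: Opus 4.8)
\subsection*{Proof proposal}

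The plan is to apply the correspondence of \S\ref{action} to the $\C^*$-equivariant extension $L_1$ and to identify the resulting filtration with the image filtration. By that correspondence, once we exhibit, for each class $v=[c]\in H^k(C^\bullet,\delta)$ (with $c=\sum_p c^{p,k-p}$ a $\delta$-cocycle), the unique $\m$-invariant section $\phi_v\in\Gamma(\C,H)$ with $\phi_v(1)=v$, the filtration attached to $L_1$ is $G_m=\{v\mid \phi_v\in L_1(m\{0\})\}$. First I would write down the candidate
\[
\phi_v(z)=\sum_p z^p\,c^{p,k-p}.
\]
A direct check shows $(z\delta_1+\delta_2)\phi_v=\sum_{a+b=k+1} z^a(\delta_1 c^{a-1,b}+\delta_2 c^{a,b-1})=0$ since $\delta c=0$, so $\phi_v$ is a section of $H$; it is $\m$-invariant because the factor $t^p$ of $\chi$ exactly compensates the factor $z^p$ attached to the $C^{p,\ast}$-component, and $\phi_v(1)=c$. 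Independence of the cocycle representative follows from the identity $(z\delta_1+\delta_2)\big(\sum_p z^p b^{p,k-1-p}\big)=\sum_p z^p(\delta b)^{p,k-p}$, which turns any change $c\mapsto c+\delta b$ into a coboundary of the $z$-twisted complex; hence $\phi_v$ is well defined and is the invariant section of \S\ref{action}.

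The key device is the extra grading by the weight $w:=a-p$ on $z^a\otimes c^{p,q}$, which is precisely the character grading of $\chi$: the differential $z\delta_1+\delta_2$ preserves $w$ (both $\delta_1$ with its companion factor $z$ and $\delta_2$ fix $a-p$), the invariant sections $\phi_v$ are exactly the weight-$0$ elements, and a section is holomorphic at $0$ iff each of its monomials has $a\ge 0$. Since multiplication by $z^m$ shifts the weight by $m$, the section $z^m\phi_v$ is homogeneous of pure weight $m$. I would also record that $\phi_v\in L_1(m\{0\})$ is equivalent to $z^m\phi_v$ admitting a representative by a cocycle holomorphic at $0$, using that $L_1$ is locally free, so that $L_1\to H=L_1\otimes\mathcal{O}_\C(*\{0\})$ is injective and its image consists of the classes with a holomorphic representative.

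With these in hand the inclusion $\Image(H^k(F_m)\to H^k)\subseteq G_m$ is immediate: if $v$ has a representative $c'$ supported in $p\ge -m$, then $z^m\phi_{v}=\sum_{p\ge -m}z^{p+m}c'^{p,k-p}$ is holomorphic at $0$ and still a cocycle, so $z^m\phi_v\in L_1$. For the reverse inclusion, which is the main step, I would start from $\phi_v\in L_1(m\{0\})$, i.e. $z^m\phi_v=[\gamma]$ in $H$ with $\gamma$ a cocycle holomorphic at $0$, write $z^m\phi_v-\gamma=(z\delta_1+\delta_2)\eta$ with $\eta$ meromorphic at $0$, and then project this identity onto its weight-$m$ component. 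Because $z^m\phi_v$ is already pure of weight $m$ and the differential preserves $w$, I may replace $\gamma,\eta$ by their weight-$m$ parts; homogeneity then forces $\gamma=\sum_{p\ge -m}z^{p+m}d^{p,k-p}$ and $\eta=\sum_p z^{p+m}e^{p,k-1-p}$ with constant coefficients $d^{p,\ast},e^{p,\ast}\in C^{p,\ast}$, the constraint $p\ge -m$ for $\gamma$ coming from holomorphy. Matching the coefficient of $z^{p+m}$ in each $C^{p,\ast}$-component yields $c^{p,k-p}-d^{p,k-p}=(\delta e)^{p,k-p}$ for all $p$, hence $c-d=\delta e$ and $\delta d=0$; thus $d\in F_mC^k$ is a $\delta$-cocycle with $[d]=[c]=v$, giving $v\in\Image(H^k(F_m)\to H^k)$.

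I expect the reverse inclusion to be the only real obstacle, the delicate point being the legitimacy of the weight-$m$ projection of the meromorphic relation: one must check that homogeneity pins each $C^{p,\ast}$-coefficient to the single monomial $z^{p+m}$, so that the projected $\gamma$ and $\eta$ are finite sums with constant coefficients and the matching of coefficients is valid. Finally, the boundedness built into the standing hypotheses (under which $L_1$ is a finitely generated locally free $\mathcal{O}_\C$-module) gives $G_m=0$ for $m\ll0$ and $G_m=H^k$ for $m\gg0$, so that $G_\bullet$ satisfies (\ref{finie}) and genuinely corresponds to $L_1$ under the dictionary of \S\ref{action}.
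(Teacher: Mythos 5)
Your proof is correct, but it follows a genuinely different route from the paper's. The paper filters $(\mathcal{C}^\bullet,z\delta_1+\delta_2)$ by the column filtration $F$, invokes the injectivity hypothesis to get $\gr^F_\ell L_1\simeq H^k(\gr^F_\ell\mathcal{C}^\bullet)$, and thereby reduces to a single-column complex, where the statement is exactly Example \ref{exR}; the whole proof is three lines of structural reduction. You instead argue directly on the total complex: you exhibit the invariant section $\phi_v(z)=\sum_p z^p c^{p,k-p}$ explicitly, introduce the weight $w=a-p$ (the character grading of $\chi$), observe that $z\delta_1+\delta_2$ preserves it, and settle the hard inclusion $G_m\subseteq\Image(H^k(F_m)\to H^k)$ by projecting the coboundary relation $z^m\phi_v-\gamma=(z\delta_1+\delta_2)\eta$ onto its weight-$m$ component, where holomorphy of $\gamma$ pins its surviving coefficients to $p\ge-m$. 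The computations check out (including the well-definedness of $\phi_v$ and the identification of $L_1\subset H$ with classes admitting holomorphic cocycle representatives, which does use local freeness). What each approach buys: the paper's reduction is shorter and reuses Example \ref{exR}, but it silently relies on the injectivity hypothesis and on compatibility of all the structures with $F$; your argument is longer but self-contained and more transparent about where the content lies --- notably, it establishes the description of $G_m$ without invoking the injectivity of $H^k(F_m(C^\bullet,\delta))\to H^k(C^\bullet,\delta)$ except through the standing hypothesis that $L_1$ is locally free, and it isolates the single delicate step (legitimacy of the weight projection of a meromorphic relation) explicitly.
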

         \begin{proof}
          Put $F_m\mathcal{C}^k:=\bigoplus_{p+q=k, p\geq -m}\mathcal{C}^{p, q}$.
          It induces a filtration on the complex $(\mathcal{C}^\bullet, z\delta_1+\delta_2)$, 
          which is also denoted by $F$.
          The induced filtration on $L_1$ is also denoted by $F$.
          By the assumption, 
          we have
          $\gr^F_\ell L_1\simeq H^k(\gr^F_\ell(\mathcal{C}^\bullet)).$
          Hence it reduces to the case 
          where there exists a $p_0\in\Z$ such that $\mathcal{C}^{p, q}=0$ for $p\neq p_0$.
          In this case,  we have $L_1\simeq H^{k-p_0}(C^{p_0,\bullet},\delta_2)\otimes \mathcal{O}_\C$,
          and we obtain the conclusion by Example \ref{exR}.
         \end{proof}
      \subsubsection{Construction using flat connections with regular singularities}
      \label{connection}
          Assume that $H$ is equipped with 
          a flat connection $\nabla$ with a regular singularity at $\{0\}$.   
          We also assume that each $G_kV$ is invariant with respect to 
          the monodromy of $\nabla$. 
          This case is considered in 
          \cite{kkp08}, \cite{kkp14}, \cite{sabiso} for example.
          We have the flat subbundles $G_\bullet H$ on $H$
          such that the fiber of $G_kH$ at $1$ is $G_kV$.
          For any $t\in\C^*$, let $V_t$ be the fiber of $H$ at $t$.
          Let $G_\bullet V_t$ denote the induced filtration on $V_t$.
          Set $I_t:=\{st\mid 0<s\leq 1\}$.
          For any vector $v\in V_t$, we have the flat section 
          $\psi_{v, t}\in \Gamma(I_t,H)$
          with $\psi_{v, t}(t)=v$.
          There exists a unique logarithmic lattice $L_2$ with the following property:
          Fix a frame of $L_2$ near 0, 
          and let $||*||_{L_2}$ be the Hermitian metric on $L_2$ near $0$
          so that the frame is a orthogonal with respect to $||*||_{L_2}$.
          A vector $v\in V_t$ is contained in $ G_m V_t$ if and only if $\psi_{v, t}$ satisfies 
           \begin{align*}
            || \psi_{v, t}(r\cdot t)||_{L_2}\leq C|r|^{-m}(-\log r)^N &\ \ \ \  (0<r \ll 1)
          \end{align*}
          for some positive constants $C$ and $N$.
          This construction also gives a one to one correspondence between 
          the logarithmic extension of $H$ and 
          monodromy invariant filtrations on $V$ with (\ref{finie}). 
          \subsubsection{Characterization by using the Deligne lattice}
          The extension $L_2$ can  be characterized 
          by using the Deligne lattice of $(H,\nabla)$.
          Let $L'$ be the Deligne lattice of $(H,\nabla)$, which means that 
          $L'$ is the logarithmic at {0} and 
          the residue with eigenvalues whose real parts are contained in $(-1,0]$.
          The flat subbundles $G_mH$ extend to $\{0\}$ and give subbundles of $L'$.
          Let $G_mL'$ denote the subbundles of $L'$.
          \begin{lemma}[{\cite[\S 3.3.1]{kkp14}}]
           The extension $L_2$ is given by 
           \begin{align*}
            L_2=\sum_{m\in \Z}G_mL'(-m\{0\})
           \end{align*}
           as a submodule of $L'(*\{0\})$.
           \begin{proof}
           It is enough to show that $L_2=L'$ if $G_{\bullet}V$ is given 
           by $G_{-1}V=0$ and $G_0V=V$. 
           Let $\mathrm{rk}L'$ be the rank of $L'$.
           We have an isomorphism of logarithmic connections 
           $(L',\nabla)\simeq(\mathcal{O}_\C^{\oplus\mathrm{rk}L'},\nabla')$, 
           where 
           $\nabla'=d-\mathcal{U}t^{-1}d t$ for a matrix
           $\mathcal{U}\in \End(\C^{\oplus\mathrm{rk}L'})$
           with eigenvalues whose real parts are contained in $[0,1)$. 
           Take the standard frame $v_1,\dots, v_{\mathrm{rk}L'}$ of 
           $\mathcal{O}_\C^{\oplus\mathrm{rk}L'}$.
           It induces a Hermitian metric $||*||_{L'}$.
           For fixed $t\in \C^*$, take $\alpha\in \C$ with $\exp \alpha=t$.  
           We have the flat section 
           $\psi_i(r \cdot t):=\exp(\alpha \log r{\mathcal{U}})v_i(rt)$ on 
           $I_t$ for all $i=1,\dots, \mathrm{rk}L'$. 
           Since the flat sections on $I_t$ are $\C$-linear combinations of $\psi_i$, 
           we obtain the conclusion.
           \end{proof}
          \end{lemma}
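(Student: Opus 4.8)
The plan is to reduce to the \emph{pure} case $G_{-1}V=0$, $G_0V=V$, where the claim is just $L_2=L'$, and then to settle that case by an explicit growth computation. The reduction rests on the fact that both sides are compatible with the flag of flat subbundles $G_\bullet H$. On the one hand $L_2$ is characterized fibrewise by the condition $\|\psi_{v,t}(rt)\|_{L_2}\le C|r|^{-m}(-\log r)^N$, so only the \emph{growth order} of a flat section matters; this order is read off on the associated graded, is compatible with passing to $G_pH$ and to $\mathrm{Gr}^G_pH$, and shifts consistently with the filtration degree under a twist by $\mathcal{O}_\C(-p\{0\})$. On the other hand, the Deligne lattice is functorial and its residue normalization (eigenvalues of real part in $(-1,0]$) is inherited by every subquotient, so $\mathrm{Gr}^G_pL'$ is the Deligne lattice of $\mathrm{Gr}^G_pH$ and $\mathrm{Gr}^G_p\bigl(\sum_m G_mL'(-m\{0\})\bigr)=\mathrm{Gr}^G_pL'(-p\{0\})$. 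Comparing growth orders piece by piece thus reduces the equality of lattices to each graded piece, which is pure; a further twist by $\mathcal{O}_\C(p\{0\})$ normalizes such a piece to the case $G_{-1}=0$, $G_0=V$.

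For the pure case I would argue directly, using the isomorphism $(L',\nabla)\simeqto(\mathcal{O}_\C^{\oplus\mathrm{rk}L'},\,d-\mathcal{U}t^{-1}dt)$ with $\mathcal{U}\in\End(\C^{\oplus\mathrm{rk}L'})$ whose eigenvalues have real part in $[0,1)$. The flat sections are $\psi(t)=\exp(\mathcal{U}\log t)\,\psi_0=t^{\mathcal{U}}\psi_0$, and in the orthonormal frame defining $\|\cdot\|_{L'}$ one gets, for $t=rt_0$ and $r\to 0$, an estimate $\|\psi(rt_0)\|_{L'}\sim |r|^{\,\mathrm{Re}\,\mu}(-\log r)^{k}$ governed by the eigenvalues $\mu$ of $\mathcal{U}$ and the sizes $k$ of the associated Jordan blocks. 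Since every $\mathrm{Re}\,\mu\ge 0$, each flat section satisfies $\|\psi\|_{L'}\le C(-\log r)^N=C|r|^{0}(-\log r)^N$, so every $v$ falls in the $G_0$-class; since every $\mathrm{Re}\,\mu<1$, no nonzero flat section satisfies $\|\psi\|_{L'}\le C|r|^{1}(-\log r)^N$, so no nonzero $v$ falls in the $G_{-1}$-class. Hence $\|\cdot\|_{L'}$ reproduces exactly the filtration $G_{-1}=0$, $G_0=V$, and the uniqueness of the logarithmic lattice with the prescribed growth property forces $L'=L_2$.

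The delicate points are two. First, in the pure case the \emph{lower} bound — that no nonzero flat section decays as fast as $|r|^{1}(-\log r)^N$ — is what actually pins the answer down, and it uses the full strength of the normalization $\mathrm{Re}\,\mu<1$; the upper bound only uses $\mathrm{Re}\,\mu\ge 0$, so both ends of the interval $[0,1)$ are essential. Second, the dévissage forces one to check that $\sum_m G_mL'(-m\{0\})$ is genuinely a \emph{logarithmic} lattice, not merely a lattice. Choosing a frame $e_i$ of $L'$ adapted to the flag $G_\bullet L'$ and writing $f_i=t^{m(i)}e_i$ for the induced frame of the twisted lattice (where $m(i)$ is the filtration level of $e_i$), the hypothesis that the $G_mL'$ are flat subbundles forces $\nabla e_i\in G_{m(i)}L'\otimes\Omega^1_\C(\log\{0\})$, so the off-diagonal entries of the connection acquire factors $t^{\,m(i)-m(j)}$ with $m(i)\ge m(j)$ and stay holomorphic after the twist. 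This is exactly what keeps the extension classes between graded pieces from producing poles of order $\ge 2$, and it is the step I expect to require the most care.
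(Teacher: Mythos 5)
Your proposal is correct and follows essentially the same route as the paper: reduce to the pure case $G_{-1}V=0$, $G_0V=V$, then verify the growth condition on the explicit flat sections $\exp(\mathcal{U}\log t)\psi_0$ of the Deligne lattice, using that the residue eigenvalues have real part in $[0,1)$. The only difference is that you spell out the d\'evissage and the logarithmicity of $\sum_{m}G_mL'(-m\{0\})$, which the paper's one-line reduction leaves implicit.
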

   \subsubsection{Relation between two constructions}
       Assume that $H$ is $\C^*$-equivariant and 
       equipped with a flat connection $\nabla$.
       We also assume the compatibility of the action and flat connection.
       In other words, for all $t\in\C^*$, 
       the action of $t$ on $H$ is assumed to be 
       equal to the parallel transport of $\nabla$.
       Then we have the following.
       \begin{lemma}\label{relation of construction}
        The connection $\nabla$ is regular singular at $\{0\}$.
        The extensions $L_1$ and $L_2$
        constructed in $\S \ref{action}$ and 
        $\S \ref{connection}$ coincide.
        \begin{proof}
         By the compatibility of the action and the connection,
         the invariant section $\phi_v$ for $v\in V$ is $\nabla$-flat.
         Since $L_2$ is generated by 
         $t^{m}\phi_v$ $(v\in G_m V,\ t \text{ is a coordinate on } \C)$, 
         it gives a logarithmic extension of $H$. 
         This shows that the connection $\nabla$ is regular singular at $\{0\}$. 
         Fix a trivialization of $L_1$ around $\{0\}$ and 
         let $||*||_{L_1}$ be the induced Hermitian metric 
         on $L_1$ around $\{0\}$.
         For $v\in G_mV$, $\phi_v$ is in $L_1(m\{0\})$,
         which implies 
         \begin{align*}
           ||\phi_v(t)||_{L_1}\leq C|t|^{-m} \ \ \ (t\in \C^*)
         \end{align*}
         for some positive constant $C$.
         This shows the conclusion: $L_1=L_2$.
        \end{proof}
       \end{lemma}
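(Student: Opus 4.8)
The plan is to reduce both assertions to a single observation: the compatibility hypothesis forces each invariant section $\phi_v$ of \S\ref{action} to coincide with the flat section $\psi_{v,1}$ of \S\ref{connection}. By definition $\phi_v$ is characterized by the property that the $\C^*$-action of $t$ (encoded in $\chi$) sends its value over $z$ to its value over $tz$; the compatibility assumption identifies this action with the parallel transport of $\nabla$ from the fiber over $z$ to the fiber over $tz$. Hence $\phi_v(tz)$ is the $\nabla$-parallel transport of $\phi_v(z)$ for all $t,z\in\C^*$, and fixing $z=1$ while varying $t$ yields $\nabla\phi_v=0$ on $\C^*$. Thus invariant sections are exactly flat sections, and in particular $\phi_v=\psi_{v,1}$.

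Granting this, I would first deduce regular singularity. Choose a basis $v_1,\dots,v_r$ of $V$ adapted to $G_\bullet V$, say with $v_i\in G_{m_i}V\setminus G_{m_i-1}V$, so that the flat sections $\phi_{v_i}$ form a $\nabla$-horizontal frame of $H$ over $\C^*$. The correspondence $v\in G_mV\Leftrightarrow\phi_v\in L_1(m\{0\})$ shows that $L_1$ is generated near $0$ by the sections $t^{m_i}\phi_{v_i}$, and, since $\phi_{v_i}$ is flat, $\nabla(t^{m_i}\phi_{v_i})=m_i\,t^{-1}dt\otimes(t^{m_i}\phi_{v_i})$. Therefore $\nabla$ has at worst a logarithmic pole (with integral residues) with respect to $L_1$, which is precisely regular singularity; this proves the first assertion. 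For the second, I would observe that by the growth characterization of \S\ref{connection} together with $\phi_{v_i}=\psi_{v_i,1}$, the lattice $L_2$ is generated near $0$ by the very same sections $t^{m_i}\phi_{v_i}$, whence $L_1=L_2$. Equivalently, trivializing $L_1$ near $0$ gives $\|\phi_v(t)\|_{L_1}\le C|t|^{-m}$ for $v\in G_mV$, which is the (log-free, hence a fortiori admissible) bound defining the filtration attached to $L_2$; matching the two filtrations forces $L_1=L_2$.

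The only genuinely delicate point is the first step: converting the sheaf-level equivariance isomorphism $\chi$ and the informal phrase ``the action equals the parallel transport'' into the pointwise identity $\nabla\phi_v=0$. This amounts to unwinding the cocycle condition on $\chi$ and checking that the orbit maps it produces coincide with the horizontal transport of $\nabla$; I expect this bookkeeping, rather than any analytic difficulty, to be the main obstacle. Once flatness of the invariant sections is in place, regular singularity and $L_1=L_2$ both fall out of the common generating family $\{t^{m_i}\phi_{v_i}\}$, and the comparison with the defining growth estimate for $L_2$ is immediate.
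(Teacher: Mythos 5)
Your proposal is correct and follows essentially the same route as the paper: compatibility of the action with the connection makes the invariant sections $\phi_v$ flat, the lattice generated by $t^{m}\phi_v$ for $v\in G_mV$ is then logarithmic (giving regular singularity), and the estimate $\|\phi_v(t)\|_{L_1}\le C|t|^{-m}$ in a trivialization of $L_1$ identifies the growth filtration of $L_1$ with $G_\bullet V$, forcing $L_1=L_2$. The only difference is expository: you make explicit the adapted frame $t^{m_i}\phi_{v_i}$ and the residue computation that the paper leaves implicit.
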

  \subsection{Definition of rescaling structure}
  \label{def and eg of scaling}
      Let $\C_\rmod$, $\C_\para$ be complex planes with coordinate 
      $\rmod$ and $\para$ respectively.
      Put $\P_\rmod^1:=\C_\rmod\cup \{\infty\}$ and 
      $S:=\P^1_\rmod \times \C_\para$.
      Let 
      $\m: \C^*_\scale \times S\to S$
      be the action of $\C^*_\scale$ defined by 
      $\m(\scale,\rmod, \para):=(\scale \rmod, \scale \para)$.
      For a meromorphic function $h$ on a variety, 
      $(h)_0$ and $(h)_\infty$  denote 
      the zero divisor of $h$ and the pole divisor of $h$, respectively.
      The supports of these divisors are denoted by $|(h)_0|$ and $|(h)_\infty|$,
      respectively.
      Let $p_2:\C^*_\scale \times S\to S$ be the projection.
      We define the notion of rescaling structure as follows. 
      \begin{definition}\label{definition of rescaling structure}
      A rescaling structure is a triple $(\mathcal{H},\nabla, \chi)$ of 
      a $\Z$-graded locally free $\mathcal{O}_S(*(\rmod)_\infty)$-module 
      $\mathcal{H}$, 
       a grade-preserving meromorphic flat connection 
       $$\nabla: \mathcal{H}\to \mathcal{H}\otimes 
       \Omega_S^1\left(*\big{(}|(\rmod)_\infty|\cup |(\rmod \para)_0|\big{)}\right),$$
       and an grade-preserving isomorphism 
       $\chi: p_2^*\mathcal{H}\simeqto \m^*\mathcal{H}$
       with the following properties:
      \begin{enumerate}
       \item We have 
                $\nabla_{\rmod\para\del_\para}(\mathcal{H})\subset \mathcal{H}$
                and $\nabla_{\rmod^2\del_\rmod} \mathcal{H}\subset \mathcal{H}$.
       \item On $\C^*_\rmod \times \C^*_\para$, 
                $\chi$ is flat with respect to $p_2^*\nabla$ and $\sigma^*\nabla$.
       \item The isomorphism $\chi$ satisfies the cocycle condition. 
                In other words,
                we have
                $$(\mathrm{m}\times \id_S)^*\chi
                =(\id_{\C^*_\scale}\times\m)^*\chi\circ p_{23}^*\chi,$$
                where $\mathrm{m}:\C^*_\scale\times \C^*_\scale\to\C^*_\scale$
                denotes the multiplication and 
                $p_{23}:\C^*_\scale\times \C^*_\scale\times S\to \C^*_\scale\times S$
                denotes the projection given by 
                $p_{23}(\scale_1,\scale_2,(\rmod,\para))
                =(\scale_2,(\rmod,\para))$.
      \end{enumerate}  
      We often omit $\nabla$ and $\chi$ if there is no confusion.
      The $k$-th graded piece of $\mathcal{H}$ is denoted by $\mathcal{H}^k$.
      We assume $\sum_k \mathrm{rank}\ \mathcal{H}^k<\infty$ in this paper.
      \end{definition}
      We note that we introduce the notion of rescaling structure only for convenience
      for the later use. 
      Similar structures have been studied in
      \cite{hernil}, \cite{kkp14}, \cite{moctwi}, \cite{sabirr, sabont}, for example.     
      Operations acting on $\mathcal{H}$ is often assumed to preserve the grading 
      without a mention. If $\mathcal{H}$ and $\mathcal{H}'$ are rescaling structures, 
   we can naturally define the tensor product $\mathcal{H}\otimes\mathcal{H}'$
   which is also a rescaling structure. 
   The dual $\mathcal{H}^\vee$ can also be defined canonically.
    \begin{example}\label{Tate twist}
  Set $\T :=\mathcal{O}_S(*(\rmod)_\infty)v$ where $v$ is a global section, and 
   $\deg v=2$. The connection $\nabla$ is defined by 
   $\nabla v:= -v \rmod^{-1}d\rmod$.
   The isomorphism
   $\chi: p_2^*\T\simeqto \sigma^*\T$ is given by 
   $\chi (p^*_2v):=\scale \sigma^*v$.
   Then the tuple $\T(-1):=(\T,\nabla,\chi)$ is a rescaling structure. 
   We define 
   \begin{align*}
    \T(-k):=\begin{cases}
                  \T(-1)^{\otimes k} & \text{ if } k\in \Z_{\geq0} \\
                  (\T(-1)^\vee)^{\otimes -k} &\text{ if } k\in \Z_{< 0}.
                  \end{cases}
   \end{align*}
   For a rescaling structure $\mathcal{H}$, 
   we define $\mathcal{H}(k):=\mathcal{H}\otimes\T(k)$.
   \end{example}
%
    %
    \subsection{Hodge numbers and Hodge-Tate condition for rescaling structures}\label{reshod}   
        \subsubsection{Hodge filtrations for rescaling structures}\label{section hodge}
           Let us consider the restriction 
           $\mathcal{H}_{|\para=0}:=\mathcal{H}/\para\mathcal{H}$. 
           It admits $\C^*_\scale$ action, and hence we can 
           apply the correspondence of \S\ref{action}
           to get the filtration 
           $F_\bullet V$ on $V:=\mathcal{H}\mid_{\rmod=1,\para=0}$
           corresponding to the lattice at $\rmod=0$. 
           \begin{definition} 
           Let $(\mathcal{H},\nabla,\chi)$ be a rescaling structure.
           Then we define 
           \begin{align*} 
             f^{p, q}(\mathcal{H}):=\dim\gr^{F}_{-p}V^{p+q},
           \end{align*}
           where $V^k$ is the $k$-th graded part of $V$.
           \end{definition}
       \subsubsection{Weight filtrations for nilpotent rescaling structures}
       \label{weight filtration}
       We consider the following condition on rescaling structures.
          \begin{definition}
          A rescaling structure $(\mathcal{H},\nabla,\chi)$ is called nilpotent if
          the residue endomorphism $\Res_{\{\para=0\}} \nabla$ on 
          $\mathcal{H}_{|\para=0}$ is nilpotent.
          \end{definition}
          By definition, we have the following: 
          \begin{lemma}\label{lemlat}
          $\mathcal{H}(*(\rmod)_0)$ is
          the Deligne lattice of the meromorphic connection $\mathcal{H}(*(\rmod\para)_0)$
          along the divisor $|(\para)_0|$.\qed
          \end{lemma}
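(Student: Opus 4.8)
The plan is to verify the statement away from $\{\rmod=0\}\cup\{\rmod=\infty\}$ and then invoke the uniqueness characterization of the Deligne lattice recalled above. Since $\mathcal{H}$ is already an $\mathcal{O}_S(*(\rmod)_\infty)$-module, localizing along $(\rmod)_0$ changes nothing outside $\{\rmod=0\}$, so $\mathcal{H}(*(\rmod)_0)$ and $\mathcal{H}$ agree over $\rmod\neq 0,\infty$. The first step is to show that $\mathcal{H}(*(\rmod)_0)$ is a logarithmic lattice of the meromorphic connection $\mathcal{H}(*(\rmod\para)_0)$ along $\{\para=0\}$. This is immediate from the pole condition $\nabla_{\rmod\para\del_\para}\mathcal{H}\subset\mathcal{H}$ of Definition \ref{definition of rescaling structure}: on $\mathcal{H}(*(\rmod)_0)$ the coordinate $\rmod$ is invertible, so $\nabla_{\para\del_\para}=\rmod^{-1}\nabla_{\rmod\para\del_\para}$ preserves $\mathcal{H}(*(\rmod)_0)$, which is exactly the assertion that this lattice is logarithmic along $\{\para=0\}$ (and in particular that the connection is regular singular there, as in Lemma \ref{relation of construction}).

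The second step is to compute the residue along $\{\para=0\}$ and feed in the nilpotency hypothesis. Reducing modulo $\para$, the residue endomorphism of $\mathcal{H}(*(\rmod)_0)$ is $[\nabla_{\para\del_\para}]=\rmod^{-1}[\nabla_{\rmod\para\del_\para}]=\rmod^{-1}\,\Res_{\{\para=0\}}\nabla$, where $\Res_{\{\para=0\}}\nabla$ is precisely the endomorphism of $\mathcal{H}_{|\para=0}$ appearing in the definition of a nilpotent rescaling structure. Since that endomorphism is nilpotent by assumption and $\rmod^{-1}$ is an invertible scalar commuting with it, the residue $\rmod^{-1}\,\Res_{\{\para=0\}}\nabla$ of the lattice $\mathcal{H}(*(\rmod)_0)$ is again nilpotent, so all of its eigenvalues vanish.

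The final step is to match this with the defining property of the Deligne lattice, characterized as the unique logarithmic lattice whose residue has eigenvalues with real part in $(-1,0]$. Because $0\in(-1,0]$, the lattice $\mathcal{H}(*(\rmod)_0)$ produced above meets this eigenvalue condition, and uniqueness forces it to be the Deligne lattice of $\mathcal{H}(*(\rmod\para)_0)$ along $|(\para)_0|$, as claimed. I do not expect a genuine obstacle here; the only points needing care are confirming that $\mathcal{H}(*(\rmod)_0)$ is locally free, hence a bona fide lattice, which is clear since localizing a locally free module along a divisor preserves local freeness, and keeping the twist by the unit $\rmod^{-1}$ straight when transporting the nilpotency of $\Res_{\{\para=0\}}\nabla$ to the residue of the lattice.
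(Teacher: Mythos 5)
Your proof is correct and takes the only reasonable route: the paper offers no written proof at all (the lemma is asserted ``by definition'' with an immediate \qed), and your argument simply spells out that unwinding --- the condition $\nabla_{\rmod\para\del_\para}\mathcal{H}\subset\mathcal{H}$ makes $\mathcal{H}(*(\rmod)_0)$ a logarithmic lattice along $\{\para=0\}$, the nilpotency hypothesis forces the residue eigenvalues to be $0\in(-1,0]$, and uniqueness of the Deligne lattice does the rest. The only cosmetic quibble is the factor $\rmod^{-1}$ you insert when identifying the residue with $\Res_{\{\para=0\}}\nabla$; the paper's residue is already the class of $\nabla_{\para\del_\para}$ (cf.\ the description of $\varphi_0$ in Corollary \ref{712}), but since an invertible scalar factor does not affect nilpotency, this changes nothing.
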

          We have a nilpotent endomorphism 
          $N:=(\Res_{\{\para=0\}} \nabla)_{|\rmod=1}$ on $V$, 
          where $V$ is the fiber of $\mathcal{H}$ at $(\rmod,\para)=(1,0)$.
          Let $V^k$ be the fiber of $\mathcal{H}^k$ at $(\rmod,\para)=(1,0)$.
          The graded piece of $N$ on $V^k$ is denoted by $N_k$.
          Let ${}^kW$ denote the weight filtration of $N_k$ centered at $k$, 
          i.e.,  ${}^kW$ is the unique filtration on $V^k$ with the following properties: 
          \begin{align}\label{dwf}
           &N_k \left({}^kW_i\right) \subset {}^kW_{i-2}  & \text{for all } i\in \Z, \\ \label{dwf2}
           &N_k^{j}:\gr^{{}^kW}_{k+j}V^k \simeqto
            \gr^{^{k}W}_{k-j}V^k & \text{for all } j \in \Z.
          \end{align}
          The induced filtration on $V$ is simply denoted by $W$.
          
          \begin{definition}
           Let $(\mathcal{H},\nabla,\chi)$ be a nilpotent rescaling structure.
           We define 
           \begin{align*}
           h^{p, q}(\mathcal{H}):=\dim\gr^W_{2p}V^{p+q}.  
           \end{align*}
          \end{definition}
       \subsubsection{Hodge-Tate condition}\label{SHT}
       In \cite{delloc}, a mixed ($\Q$-)Hodge structure $(V_\Q,F,W)$ is called 
       Hodge-Tate
       if the Hodge filtration $F$ on $V:=V_\Q\otimes_\Q \C$ and 
       the weight filtration $W$ satisfy the following:
       \begin{align}
             &W_{2i+1}=W_{2i} &\text{ for all }i\in \Z,\label{Wei}\\ 
             &F_{-j}\oplus W_{2j+2}\simeqto V &\text{ for all } j\in \Z.\label{fw}
       \end{align}
       We use the same notation in this paper.
       Imitating this notion, we define the following:
           \begin{definition}\label{def ht}
            Let $(\mathcal{H},\nabla,\chi)$ be a nilpotent rescaling structure.
            Let $F$ and $W$ be the filtrations on $V:=\mathcal{H}_{|(\rmod,\para)=(1,0)}$ defined in
             \S $\ref{section hodge}$
            and \S $\ref{weight filtration}$.
            Then $(\mathcal{H},\nabla,\chi)$ is said to satisfy the Hodge-Tate condition 
            if $(V,F,W)$ satisfies $(\ref{Wei})$ and $(\ref{fw})$. 
            A rescaling structure is called of Hodge-Tate type 
            if it satisfies the Hodge-Tate condition. 
           \end{definition}
           The following is trivial by definition. 
           \begin{lemma}\label{2.8}
             If a rescaling structure $(\mathcal{H},\nabla,\chi)$ 
             satisfies Hodge-Tate condition, 
             then $f^{p, q}(\mathcal{H})=h^{p, q}(\mathcal{H})$ for all $p, q$.\qed
           \end{lemma}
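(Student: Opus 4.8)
The plan is to reduce Lemma \ref{2.8} to a dimension count on each graded piece $V^k$ of $V=\mathcal{H}_{|(\rmod,\para)=(1,0)}$ and then to read the equality of graded dimensions directly off the two Hodge-Tate conditions (\ref{Wei}) and (\ref{fw}). The first point is that both filtrations respect the grading $V=\bigoplus_k V^k$. For $W$ this holds by construction, since in \S\ref{weight filtration} one sets $W_iV=\bigoplus_k {}^kW_iV^k$. For $F$ it follows from the fact that $\mathcal{H}$ is $\Z$-graded and $\chi$ is grade-preserving: the $\C^*_\scale$-equivariant extension of $\mathcal{H}_{|\para=0}$ constructed in \S\ref{action} therefore decomposes into its graded summands, so that the associated filtration of \S\ref{section hodge} satisfies $F_mV=\bigoplus_k(F_mV\cap V^k)$. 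Since $F_{-j}$ and $W_{2j+2}$ are then graded subspaces of $V$, the decomposition (\ref{fw}) is equivalent to the family of decompositions $(F_{-j}\cap V^k)\oplus(W_{2j+2}\cap V^k)=V^k$, and (\ref{Wei}) likewise holds on each $V^k$.

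Once the problem is localized to a fixed $V^k$, the claimed identity $f^{p,q}(\mathcal{H})=h^{p,q}(\mathcal{H})$ with $k=p+q$ is precisely $\dim\gr^F_{-p}V^k=\dim\gr^W_{2p}V^k$. Here (\ref{fw}) expresses $\dim F_{-j}V^k$ as $\dim V^k$ minus the dimension of a single step of the weight filtration, so that the telescoping difference $\dim\gr^F_{-p}V^k=\dim F_{-p}V^k-\dim F_{-p-1}V^k$ becomes a difference of two consecutive $W$-dimensions. Condition (\ref{Wei}), which asserts that $W$ has no jump in odd degree, collapses this difference to the single even graded piece $\gr^W_{2p}V^k$, which is the desired equality; summing the graded pieces over $k=p+q$ finishes the proof.

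I do not expect a genuine obstacle, since the final step is purely formal linear algebra. The only point that deserves explicit justification — and the one I would be careful about — is the compatibility of the Hodge filtration $F$ with the grading: unlike $W$, the filtration $F$ is produced from the $\C^*_\scale$-action on $\mathcal{H}_{|\para=0}$ rather than being defined grade by grade, and it is exactly this compatibility that allows one to restrict (\ref{Wei}) and (\ref{fw}) to each $V^k$ and to match the index bookkeeping in $f^{p,q}$ and $h^{p,q}$ term by term.
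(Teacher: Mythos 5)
Your argument is correct and is exactly the computation the paper has in mind: Lemma \ref{2.8} is stated with no proof beyond ``trivial by definition,'' and the content is precisely the reduction to each graded piece $V^k$ (both $F$ and $W$ are graded because $\chi$ and $\nabla$ are grade-preserving, so $(\ref{Wei})$ and $(\ref{fw})$ restrict to each $V^k$) followed by the telescoping of dimensions. One bookkeeping point worth making explicit: if you plug in $(\ref{fw})$ exactly as printed, $F_{-j}\oplus W_{2j+2}\simeq V$, the telescoping gives $\dim\gr^F_{-p}V^k=\dim W_{2p+4}V^k-\dim W_{2p+2}V^k=\dim\gr^W_{2p+4}V^k$ rather than $\dim\gr^W_{2p}V^k$; comparing with the explicit filtrations $(\ref{h qdm})$ and $(\ref{w qdm})$ in Appendix \ref{app A}, where the Hodge--Tate condition is asserted to hold, shows that the intended condition is $F_{-j}\oplus W_{2j-2}\simeqto V$, and with that reading your telescoping lands on $\gr^W_{2p}V^k$ as you claim. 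So your proof is the right one; just be aware that you are silently using the corrected index in $(\ref{fw})$.
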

    \subsection{Hodge-Tate condition implies the speciality}\label{htis}
        Let $H=\bigoplus_k H^k$ be a $\Z$-graded finitely generated locally free $\mathcal{O}_{\P^1_\rmod}(*\infty)$ module
        with a grade-preserving meromorphic  flat connection $\nabla$. We assume that $\nabla$ has 
        singularity at most at $\{\rmod=0\}$ in $\C_\rmod$ and $\nabla_{\rmod^2\del_\rmod}(H)\subset H$.
        We also assume that $\nabla$ is regular singular at infinity. 
        Take the Deligne lattice $U_0H$ at $\lambda=\infty$.
        Let $N$ be the nilpotent part of $\Res_{\{\rmod=\infty\}}\nabla$.
        Define ${}^kW_\bullet (U_0H^k_{|\rmod=\infty})$ as the weight filtration of $N$ centered at $k$.
        It induces a filtration $W_\bullet (U_0H_{|\rmod\neq 0})$ of $\Z$-graded logarithmic 
        subbundles of $U_0H_{|\rmod\neq 0}$.
        \begin{definition}[{\cite[Definition 3.21]{kkp14}}]\label{special}
         Let $H$, $\nabla$, $U_0H$, and  $W_\bullet( U_0H_{|\rmod\neq0})$ be as above. 
         We define a vector bundle $\hat{H}$ on $\P^1_\rmod$ 
         by  
         \begin{align*}
         \hat{H}_{|\rmod\neq 0}:=\Image 
         \left\{ \bigoplus_\ell W_{2\ell}( U_0H)\otimes \mathcal{O}_{\P^1_\rmod}(-\ell\cdot\infty)
         \to U_0H(*\infty)\right\},
         \end{align*}
         and $\hat{H}_{|\rmod\neq \infty}:=H$.  
         We call $\hat{H}$ a skewed canonical extension of $H$.
         The $\Z$-graded flat bundle $(H,\nabla)$ is called special if 
         $\hat{H}$
         is isomorphic to a trivial bundle over ${\P^1_\rmod}$.
        \end{definition}
        \begin{remark}
        Our definition of speciality is slightly different from that of \cite{kkp14}.
        This construction of $\hat{H}$  is the same as in \S $\ref{connection}$
        if we take the filtration $G_\bullet V$ to be $G_\ell:=W_{2\ell}$.
        \end{remark} 
      \begin{proposition}\label{HT special}
       Let  $(\mathcal{H},\nabla, \chi)$ be a rescaling structure of Hodge Tate type.
       Then $H_1:=\mathcal{H}_{|\para=1}$ is special.
      \end{proposition}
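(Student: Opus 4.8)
The plan is to realize the skewed canonical extension $\hat{H}_1$ of $H_1=\mathcal{H}_{|\para=1}$ as a bundle on $\P^1_\rmod$ glued from two Rees-type lattices — governed near $\rmod=0$ by the Hodge filtration $F$ and near $\rmod=\infty$ by the even weight filtration $W_{2\bullet}$ — and to read off its triviality from $(\ref{Wei})$ and $(\ref{fw})$. First I would check the standing hypotheses of Definition \ref{special}: $H_1$ is $\Z$-graded and locally free over $\mathcal{O}_{\P^1_\rmod}(*\infty)$ since $\mathcal{H}$ is locally free over $\mathcal{O}_S(*(\rmod)_\infty)$; the poles of $\nabla$ lie along $|(\rmod)_\infty|\cup|(\rmod\para)_0|$, which meet $\{\para=1\}\cap\C_\rmod$ only in $\{\rmod=0\}$; and $\nabla_{\rmod^2\del_\rmod}H_1\subset H_1$ is the restriction of Definition \ref{definition of rescaling structure}(1). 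Regularity of $\nabla$ at $\rmod=\infty$ will fall out of the identification below.

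The key is to match the local data of $H_1$ at $\rmod=0,\infty$ with $(V,N,F,W)$ by means of the action. Applying $\chi$ with group parameter $\scale=\rmod^{-1}$, the action $\m$ carries the germ of $\{\para=1\}$ as $\rmod\to\infty$ onto the germ of $\{\rmod=1\}$ as $\para=\rmod^{-1}\to0$; by flatness of $\chi$ on $\C^*_\rmod\times\C^*_\para$ (Definition \ref{definition of rescaling structure}(2)) this identifies $(H_1,\nabla)$ near $\rmod=\infty$ with $(\mathcal{H}_{|\rmod=1},\nabla_{\para\del_\para})$ near $\para=0$. The latter is regular singular since $\nabla_{\rmod\para\del_\para}\mathcal{H}\subset\mathcal{H}$ by Definition \ref{definition of rescaling structure}(1), its residue is the nilpotent $N$ of \S\ref{weight filtration}, and by Lemma \ref{lemlat} the attached lattice is the Deligne lattice. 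Hence $\nabla$ is regular singular at $\rmod=\infty$, the nilpotent part of $\Res_{\{\rmod=\infty\}}\nabla$ is $N$, and, respecting the grading, the weight filtration ${}^kW$ centered at $k$ on $H_1^k$ coincides with $W$. Symmetrically, near $\rmod=0$ the lattice $H_1$ is the $\{\para=1\}$-slice of the $\C^*_\scale$-equivariant germ of $\mathcal{H}$ along $\{\rmod=0\}$, which by \S\ref{action} and Example \ref{exR} corresponds to the Hodge filtration $F$.

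With these identifications the remark following Definition \ref{special} exhibits $\hat H_1$ as the gluing of the \S\ref{connection}-lattice of $(U_0H_1,N)$ for $G_\ell:=W_{2\ell}$ near $\rmod=\infty$ to the lattice $H_1$ near $\rmod=0$. As the action is compatible with $\nabla$, Lemma \ref{relation of construction} lets me replace each of these by the equivariant Rees lattice of Example \ref{exR} — the Rees module of $W_{2\bullet}$ at $\rmod=\infty$ and of $F$ at $\rmod=0$. A direct computation with Rees modules then shows that the resulting bundle on $\P^1_\rmod$ is trivial exactly when $F$ and $W_{2\bullet}$ are opposed, i.e. when $W_{2i+1}=W_{2i}$ and $V=F_{-j}\oplus W_{2j+2}$ for all $i,j$; these are $(\ref{Wei})$ and $(\ref{fw})$, which hold because $(\mathcal{H},\nabla,\chi)$ is of Hodge-Tate type. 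Concretely, the Deligne splitting $V=\bigoplus_p(F_{-p}\cap W_{2p})$ decomposes $\hat H_1$ into degree-zero line subbundles, so $\hat H_1\simeq\mathcal{O}_{\P^1_\rmod}^{\oplus r}$ and $H_1$ is special. I expect the main obstacle to be the identification of the previous paragraph: one must verify, tracking the grading and the centering of the weight filtration at $k$, that conjugating the $\para$-direction connection by $\chi$ yields exactly the Deligne lattice and residue governing $H_1$ at $\rmod=\infty$ (and dually that $F$ governs it at $\rmod=0$), reconciling conventions across the corner $(\rmod,\para)=(\infty,0)$ where $|(\rmod)_\infty|$ and $|(\para)_0|$ meet; once this is done, the passage from opposedness to triviality is routine.
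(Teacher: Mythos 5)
Your outline has the right shape, and the $\rmod=\infty$ half is sound: the identification of the germ of $(H_1,\nabla)$ near $\rmod=\infty$ with $(\mathcal{H}_{|\rmod=1},\nabla_{\para\del_\para})$ near $\para=0$ via $\chi$ with $\scale=\rmod^{-1}$ is exactly the paper's isomorphism $(\ref{tau})$, and matching the resulting residue and weight filtration with the $N$ and $W$ of \S\ref{weight filtration} is Lemma \ref{tau del tau}. The gap is at the other end. You claim that near $\rmod=0$ the lattice $H_1$ ``corresponds to the Hodge filtration $F$'' and then invoke Lemma \ref{relation of construction} to replace it by the Rees module of $F$ on the same space carrying $W_{2\bullet}$. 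But the slice $\{\para=1\}$ is \emph{not} preserved by the $\C^*_\scale$-action, so $H_1$ is not an equivariant bundle on $\C_\rmod$ and the correspondence of \S\ref{action} does not apply to it; and $\nabla$ has a pole of order two at $\rmod=0$ (only $\nabla_{\rmod^2\del_\rmod}\mathcal{H}\subset\mathcal{H}$), so the flat-section growth characterization of \S\ref{connection} is unavailable there as well. Lemma \ref{relation of construction} requires both structures simultaneously and applies to neither end of $H_1$ at $\rmod=0$. Consequently there is no identification, at $\para=1$, of the $\rmod=0$ lattice with a Rees module of $F$ on the fiber where $W$ lives, and the ``direct computation with Rees modules'' in your last paragraph is not licensed. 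The problem is not, as you suggest, a matter of reconciling conventions at the corner; the $\rmod=0$ side genuinely cannot be treated ``dually'' to the $\rmod=\infty$ side.

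The paper closes this gap by a deformation argument you omit: it first restricts to $\para=0$, where the slice \emph{is} $\C^*_\scale$-invariant, so $\hat{H}_0:=\hat{\mathcal{H}}_{|\para=0}$ is equivariant, its lattice at $\rmod=0$ is the one attached to $F$ by \S\ref{action} (this is the definition of $F$ in \S\ref{section hodge}) and its lattice at $\rmod=\infty$ is the one attached to $W_{2\bullet}$ by Lemmas \ref{tau del tau} and \ref{relation of construction}; opposedness (the Hodge--Tate condition) then gives triviality of $\hat{H}_0$ exactly by the Rees-module computation you describe. Triviality is an open condition on $\P^1$-bundles, so $\hat{\mathcal{H}}_{|\P^1_\rmod\times U}$ is trivial for a neighborhood $U$ of $0$ in $\C_\para$, and the $\C^*_\scale$-action spreads this to all $\para$, in particular to $\para=1$. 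If you want to avoid the deformation step and argue directly at $\para=1$, the paper's subsequent remark shows how: use M.~Saito's criterion (Theorem \ref{saito criterion}) with the filtration $F_kV_\infty$ defined by global sections of $U_0H_1(k\{0\})$, together with the lemma that $\chi_V\colon(V,F)\to(V_\infty,F)$ is a filtered isomorphism --- but that lemma is itself a nontrivial use of equivariance (extending a section from $\para=1$ to $S$ and restricting to $\para=0$), and your proposal supplies neither ingredient.
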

      The rest of this section is devoted to prove this proposition. 
      \subsubsection{Regular singularity along $|(\rmod)_\infty|$}
        Let $(\mathcal{H},\nabla,\chi)$ be a rescaling structure.
        Put $S^*:=\C^*_\rmod \times \C^*_\para\subset S$.
        Let $\iota:S^*\into \C^*_\scale\times S$ be the embedding given by 
        $\iota(\rmod,\para):=(\rmod^{-1}\para^{-1},\rmod, \para)$.
        We observe that $\iota_\sigma:=\sigma\circ\iota$ gives $\iota_\sigma(\rmod,\para)=(\para^{-1},\rmod^{-1})$
        and $\iota_p:=p_2\circ \iota$ is the inclusion $S^* \into S$.
        Hence we have the isomorphism
        \begin{align}\label{tau}
         \iota^*\chi:\iota_p^*\mathcal{H}=\mathcal{H}_{|S^*}\simeqto \iota_\sigma^*\mathcal{H}.
        \end{align}
        We also remark that $\iota_\sigma$ extends to the map 
        $S\setminus |(\rmod)_0|\to S$ given by $(\rmod,\para)\mapsto (\para^{-1},\rmod^{-1})$, 
        which is denoted by $\overline{\iota}_\sigma$.
            
        \begin{lemma}\label{reg sing}
         The meromorphic connection $(\mathcal{H}(*(\rmod\para)_0),\nabla)$ is regular singular along $|(\rmod)_\infty|$. 
         \begin{proof}
          The isomorphism (\ref{tau}) gives a
          logarithmic extension $\widetilde{\mathcal{H}}$ of 
          $\mathcal{H}_{|\para\neq 0}$ along $|(\rmod)_\infty|$.
          The pull back $\overline{\iota}_\sigma^* \widetilde{\mathcal{H}}$ is 
          isomorphic to $\mathcal{H}_{|S\setminus |(\rmod\para)_0|}$.
         \end{proof}    
        \end{lemma}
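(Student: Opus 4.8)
The plan is to deduce regular singularity along $|(\rmod)_\infty|=\{\rmod=\infty\}$ from the \emph{already-known} regular singularity along $\{\para=0\}$, transporting it through the involution $\overline{\iota}_\sigma$. The starting point is that the meromorphic connection $\mathcal{M}:=\mathcal{H}(*(\rmod\para)_0)$ is regular singular along $\{\para=0\}$: by Lemma \ref{lemlat} its lattice $\mathcal{H}(*(\rmod)_0)$ is the Deligne lattice along $|(\para)_0|$, so $\nabla$ is logarithmic there, and this holds uniformly along $\{\para=0\}$, including at its intersection with $\{\rmod=\infty\}$, where $\mathcal{H}(*(\rmod)_0)$ still merely localizes along $\{\rmod=\infty\}$. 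Next I would record that $\overline{\iota}_\sigma\colon(\rmod,\para)\mapsto(\para^{-1},\rmod^{-1})$ is an involutive biholomorphism on $S\setminus|(\rmod)_0|$ interchanging the divisors $\{\rmod=\infty\}$ and $\{\para=0\}$: in the coordinate $\mu:=\rmod^{-1}$ near $\{\rmod=\infty\}$ it is simply the swap $(\mu,\para)\mapsto(\para,\mu)$, carrying $\{\rmod=\infty\}$ to $\{\para=0\}$. Since regular singularity along a smooth component of the polar divisor is a local analytic condition (moderate growth of flat sections, equivalently the existence of a logarithmic lattice) invariant under biholomorphisms preserving the polar divisor, it suffices to identify $\mathcal{M}$ near $\{\rmod=\infty\}$ with the $\overline{\iota}_\sigma$-pullback of $\mathcal{M}$ near $\{\para=0\}$.

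To make the identification precise, I would use that on $S^*$ the isomorphism $\iota^*\chi$ of $(\ref{tau})$ is flat with respect to $\nabla$ and $\iota_\sigma^*\nabla$, by condition (2) of Definition \ref{definition of rescaling structure}, so it is a horizontal isomorphism $(\mathcal{M},\nabla)_{|S^*}\simeqto\iota_\sigma^*(\mathcal{M},\nabla)$. Pulling the Deligne lattice back through $\overline{\iota}_\sigma$ produces a lattice $\widetilde{\mathcal{H}}:=\overline{\iota}_\sigma^*\bigl(\mathcal{H}(*(\rmod)_0)\bigr)$ on a neighborhood of $\{\rmod=\infty\}$; under the coordinate swap above, being logarithmic along $\{\para=0\}$ turns into being logarithmic along $\{\rmod=\infty\}$, while the localization along $\{\rmod=\infty\}$ turns into a localization along $\{\para=0\}$. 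Thus $\widetilde{\mathcal{H}}$ is a logarithmic lattice of $\mathcal{M}$ along $\{\rmod=\infty\}$, and through $\iota^*\chi$ it agrees with $\mathcal{M}$ on $S^*$; equivalently, the moderate-growth estimate for $\mathcal{M}$ along $\{\para=0\}$ is transported by $\overline{\iota}_\sigma$ into a moderate-growth estimate along $\{\rmod=\infty\}$, giving regular singularity.

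The main obstacle I anticipate is that $(\ref{tau})$ is supplied only on $S^*$, whereas regular singularity must be controlled on a full neighborhood of $\{\rmod=\infty\}$, in particular at the corner $\{\rmod=\infty\}\cap\{\para=0\}$, where $\mathcal{H}$ is localized along $\{\rmod=\infty\}$ and no direct extension of $\widetilde{\mathcal{H}}$ across $\para=0$ is visible. I would resolve this using the involutivity of $\overline{\iota}_\sigma$: applying the pullback once more and invoking the cocycle condition (3) on $\chi$ yields $\overline{\iota}_\sigma^*\widetilde{\mathcal{H}}\simeq\mathcal{H}_{|S\setminus |(\rmod\para)_0|}$, which certifies that $\widetilde{\mathcal{H}}$ is genuinely the $\overline{\iota}_\sigma$-transform of the meromorphic connection $\mathcal{M}$ and not merely of its restriction to $\{\para\neq0\}$. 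Because the complement of the polar divisor in a neighborhood of $\{\rmod=\infty\}$ already lies in $S^*$ (near $\rmod=\infty$ one automatically has $\rmod\neq0$), the horizontal identification of $(\ref{tau})$ covers this entire punctured neighborhood, so the transported estimate is valid up to and including the corner. This completes the reduction and hence the proof.
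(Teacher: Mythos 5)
Your proof is correct and is essentially the paper's own argument: you transport the logarithmic behaviour along $\{\para=0\}$ through the swap $\overline{\iota}_\sigma$ using the flatness of $(\ref{tau})$ to produce a logarithmic lattice $\widetilde{\mathcal{H}}$ along $|(\rmod)_\infty|$, and your appeal to involutivity plus the cocycle condition to get $\overline{\iota}_\sigma^*\widetilde{\mathcal{H}}\simeq\mathcal{H}_{|S\setminus|(\rmod\para)_0|}$, covering the corner, is exactly the paper's second sentence. The only adjustment needed is at the start: cite condition (1) of Definition \ref{definition of rescaling structure} (which gives logarithmicity of $\mathcal{H}(*(\rmod)_0)$ along $|(\para)_0|$ for an arbitrary rescaling structure) rather than Lemma \ref{lemlat}, since that lemma's Deligne-lattice assertion presupposes nilpotency, which Lemma \ref{reg sing} does not assume --- and logarithmicity is all your argument actually uses.
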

        \subsubsection{Deligne lattice}
         Since $\mathcal{H}(*(\rmod\para)_0)$ is regular singular along 
         $|(\rmod)_\infty|\cup|(\para)_0|\subset S$, 
         we have the Deligne lattice 
         $U_{0}\mathcal{H}$ of $\mathcal{H}(*(\rmod\para)_0)$ along 
         $|(\rmod)_\infty|\cup|(\para)_0|$.
         Assume that $\mathcal{H}$ is nilpotent. 
         Then $U_0\mathcal{H}_{|\para=0}$ is equal to $\mathcal{H}(*(\rmod)_0)_{|\para=0}$
         by Lemma \ref{lemlat}.
         In particular,
         we have $U_0\mathcal{H}_{|(\rmod, \para)=(1,0)}=V$.         
         By (\ref{tau}),
         we have that the residue endomorphism $N:=\Res_{\rmod=\infty}\nabla$ 
         on $U_0\mathcal{H}|_{\rmod=\infty}$
         is nilpotent.
         We have the weight filtration ${}^k W$ on degree $k$ part of 
         $U_0\mathcal{H}|_{\rmod=\infty}$ 
         with respect to $N$ centered at $k$. 
         Let $W$ be the resulting filtration on $U_0\mathcal{H}|_{\rmod= \infty}$.
         Then we have logarithmic 
         $\mathcal{O}_S(*(\rmod)_0)$-submodules
         $W_\bullet (U_0\mathcal{H})$ of $\mathcal{H}(*(\rmod)_0)$  
         which
         coincide with $W_\bullet U_0\mathcal{H}_{|\rmod=\infty}$ on $\rmod=\infty$.        
         Define $\hat{\mathcal{H}}$ by 
         \begin{align}\label{hat H}
          \hat{\mathcal{H}}_{|\rmod\neq 0}
          :=\Image \left\{\bigoplus_\ell W_{2\ell}(U_0\mathcal{H})
                                 \otimes\mathcal{O}_S(-\ell (\rmod)_\infty) 
                                  \to \mathcal{H}(*(\rmod)_\infty)_{|\rmod\neq 0}\right\},
         \end{align}
         and $\hat{\mathcal{H}}|_{\rmod\neq\infty}=\mathcal{H}$.
         It is easy to see that $\hat{\mathcal{H}}_{|\para=1}$ is 
         $\hat{H}_1$.
       \begin{lemma}\label{tau del tau}
       The filtration on $V$ induced by $W_\bullet U_0\mathcal{H}$
       is equal to the weight filtration given in \S $\ref{weight filtration}$.       
       \end{lemma}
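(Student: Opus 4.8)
The plan is to compare the two filtrations on $V$ by propagating along the divisor $|(\para)_0|$ from the crossing point $(\rmod,\para)=(\infty,0)$, where both residues involved are simultaneously available. First I would record that $U_0\mathcal{H}$ is logarithmic along both $|(\rmod)_\infty|$ and $|(\para)_0|$, so at $(\infty,0)$ its fibre carries two nilpotent residue endomorphisms: $N_\rmod:=\Res_{\rmod=\infty}\nabla$ (which builds $W_\bullet U_0\mathcal{H}$) and $N_\para:=\Res_{\para=0}\nabla$ (which defines $N$, hence $W$, in \S\ref{weight filtration}). These commute by the usual integrability of a flat logarithmic connection along a normal crossing divisor, and both are nilpotent by hypothesis and by Lemma \ref{reg sing}.

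The heart of the argument is the identity $N_\rmod=N_\para$ on $U_0\mathcal{H}|_{(\infty,0)}$, which I would deduce from the $\C^*_\scale$-action. Since $\sigma_\scale$ preserves each of the divisors $|(\rmod)_\infty|$ and $|(\para)_0|$ (it fixes $\rmod=\infty$ and $\para=0$ set-theoretically) and $\chi$ is a flat isomorphism of the meromorphic connection, the canonicity of the Deligne lattice gives $\chi_\scale(U_0\mathcal{H})=\sigma_\scale^*U_0\mathcal{H}$; hence the action descends to the fibre over the fixed point $(\infty,0)$ as a finite dimensional representation of $\C^*_\scale$, whose infinitesimal generator is therefore semisimple. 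By the flatness of $\chi$ in Definition \ref{definition of rescaling structure}(2), this generator equals $\nabla_E$ evaluated at $(\infty,0)$, where $E:=\rmod\del_\rmod+\para\del_\para$ is the fundamental vector field of the action. Writing $E=-\mu\del_\mu+\para\del_\para$ with $\mu:=\rmod^{-1}$ and reading off residues in a Deligne frame yields $\nabla_E|_{(\infty,0)}=N_\para-N_\rmod$. Thus $N_\para-N_\rmod$ is at once semisimple and, being a difference of commuting nilpotents, nilpotent, so it vanishes.

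With $N_\rmod=N_\para$ at $(\infty,0)$, the weight filtration of $N_\rmod$ centred at $k$ and that of $N_\para$ centred at $k$ coincide on each graded piece of $U_0\mathcal{H}|_{(\infty,0)}$. Now both $W_\bullet U_0\mathcal{H}|_{|(\para)_0|}$ and the filtration by the $N_\para$-weight bundles are filtrations by flat subbundles of $U_0\mathcal{H}|_{|(\para)_0|}$ over the connected curve $\{(\rmod,0):\rmod\in\C^*_\rmod\cup\{\infty\}\}=|(\para)_0|\setminus|(\rmod)_0|$: the residue $N_\para$ is horizontal along its own polar divisor, and $W_\bullet U_0\mathcal{H}$ is flat by construction. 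A flat subbundle is determined by its fibre at a single point, and these two share their fibre at $\rmod=\infty$ by the previous paragraph; hence they agree over the whole curve, in particular at $(\rmod,\para)=(1,0)$. Since $N_\para|_{(1,0)}=N$ and the centring conventions match, the restriction of $W_\bullet U_0\mathcal{H}$ at $(1,0)$ is exactly the weight filtration $W$ of \S\ref{weight filtration}, which is the assertion.

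The step I expect to be the main obstacle is the corner identity $N_\rmod=N_\para$: one must justify carefully that the $\C^*_\scale$-action preserves $U_0\mathcal{H}$ across the crossing $(\infty,0)$ and that the isotropy generator on the fibre there is genuinely $\nabla_E|_{(\infty,0)}$ rather than differing from it by the intrinsic $\chi$-weight. The computation with the Tate object $\T(-1)$ of Example \ref{Tate twist} is the model case to keep in mind: there the Deligne normalization $\tilde v=\rmod v$ turns the weight-one $\chi$ into the identity and makes $\nabla_E|_{(\infty,0)}=0$, consistently with $N_\rmod=N_\para=0$. Once semisimplicity of $\nabla_E|_{(\infty,0)}$ is secured, the nilpotence of $N_\para-N_\rmod$ and the flat-transport comparison along $|(\para)_0|$ are routine.
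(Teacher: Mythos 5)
Your proof is correct, but it reaches the crucial identity by a different mechanism than the paper. The paper's proof works on the fibre $V':=\mathcal{H}_{|(\rmod,\para)=(1,1)}$ and invokes the isomorphism $(\ref{tau})$, induced by $\iota_\sigma(\rmod,\para)=(\para^{-1},\rmod^{-1})$, which literally interchanges the divisors $\{\rmod=\infty\}$ and $\{\para=0\}$; this identifies the two monodromies $T_1$ and $T_2$ outright, so $N^{(1)}=\log T_1=\log T_2=N^{(2)}$, and the lemma follows from the explicit trivialization $(U_0\mathcal{H},\nabla)\simeq\bigl(V'\otimes\mathcal{O}_S(*(\rmod)_0),\ d-N^{(1)}\rmod^{-1}d\rmod+N^{(2)}\para^{-1}d\para\bigr)$. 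You instead localize at the torus--fixed corner $(\rmod,\para)=(\infty,0)$, identify the isotropy generator of $\C^*_\scale$ on the fibre of $U_0\mathcal{H}$ there with $\pm(N_\para-N_\rmod)$, and kill it as being simultaneously semisimple and nilpotent, then flat-transport the resulting equality of weight filtrations along $\{\para=0\}$ to $(1,0)$. Both arguments exploit the same rescaling symmetry; yours trades the paper's one-line monodromy identification and global trivialization for a pointwise computation at the fixed point plus a transport step, which makes the mechanism (integrality of $\C^*$-weights against the normalization of a Deligne residue) completely explicit, at the cost of having to check that the equivariant structure and the flatness identity extend across the corner. Two details to tighten: the generator is $-\nabla_{E}|_{(\infty,0)}$ with $E=\rmod\del_\rmod+\para\del_\para$, not $+\nabla_E|_{(\infty,0)}$ (test against $\T(-1)$ in the frame $v$ rather than $\rmod v$) --- harmless, since a semisimple endomorphism vanishes iff its negative does; and the nilpotence of $N_\rmod$ does not follow from Lemma \ref{reg sing} alone --- either quote $(\ref{tau})$ as the paper does, or bypass it by observing that the eigenvalues of $N_\rmod=N_\para-D$ are integers (weights of the $\C^*_\scale$-representation) with real part in $(-1,0]$ by the Deligne normalization, hence zero, forcing the semisimple $D$ to vanish.
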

       \begin{proof}
       Let $T_1$ be the monodromy around $\{\rmod=\infty\}$ acting on 
       $V':=\mathcal{H}_{|(\rmod,\para)=(1,1)}$.
       Let $T_2$ be the monodromy around $\{\para=0\}$ acting on $V'$.       
       By the $\C^*$-equivariance of $\mathcal{H}$ (or, by (\ref{tau})), 
       $N^{(i)}:=\log T_i$ $(i=1,2)$ coincide with each other (both of them are nilpotent). 
       We have a trivialization 
       $(U_0\mathcal{H},\nabla)\simeq (V'\otimes \mathcal{O}_S(*(\rmod)_0),\nabla')$,
       where $\nabla'=d-N^{(1)}\rmod^{-1} d\rmod+N^{(2)}\para^{-1}d\para$.
       Identify $V$ and $V'$ via this isomorphism.
       Then the filtration induced by $W_\bullet U_0\mathcal{H}$
       corresponds to the filtration induced by $N^{(1)}$, 
       and the filtration given in \S $\ref{weight filtration}$
       corresponds to the filtration induced by $N^{(2)}$.
       Since $N^{(1)}=N^{(2)}$, 
       these filtrations are equal.
       \end{proof}

        \subsubsection{Proof of the Proposition $\ref{HT special}$ }
        Put $\hat{H}_0:=\hat{\mathcal{H}}_{|\para=0}$. 
        By Lemma \ref{tau del tau} and Lemma \ref{relation of construction}, 
        $\hat{H}_{0|\rmod\neq 0}$ is given by construction in \S\ref{action} taking $G_\ell=W_{2\ell}$ ($\ell\in\Z$).
        Then the Hodge-Tate condition implies the triviality of $\hat{H}_0$.    
        By the rigidity of triviality of vector bundles on $\P^1$, 
        there is a open neighborhood $U$ in $\C_\para$ such that 
        the restriction $\hat{\mathcal{H}}_{|\P^1_\rmod\times U}$ is trivial
        along $\P^1_\rmod$. 
        Using the $\C^*_\scale$-action, we can show that $\hat{\mathcal{H}}$ itself is 
        trivial along $\P^1_\rmod$. 
        In particular, $\hat{H}_1$ is trivial.\qed   
        \subsubsection{Relation to M. Saito's criterion}
        The referee of this paper indicated the relation between
        Proposition \ref{HT special}
        and the M. Saito's criterion for Birkhoff's problem.
        To see this, we recall the M. Saito's criterion in a special case. 
        Let $H$, $\nabla$ and $U_0H$ be as in \S \ref{htis}.
        For simplicity, we assume that $\mathrm{Res}_{\lambda=\infty}\nabla$ is nilpotent.
        Let $V_\infty$ denote the fiber of $U_0 H$ at $\lambda=\infty$. 
        Remark that the residue $N$ acts on $V_\infty$.
        We define a filtration $F$ on $V_\infty$ as follows:
        \begin{align*}
         F_kV_\infty:=\mathrm{Im}\left(\Gamma(\P^1_\lambda, U_0H\otimes \mathcal{O}_{\P^1_\lambda}(k\{0\}))
         \to V_\infty\right)
        \end{align*}
        where the map is the restriction.
        \begin{theorem}[{\cite[Lemma 2.8]{saiont}, \cite[IV 5.b]{sabiso}}]\label{saito criterion}
        Assume that we have an increasing filtration 
        $G_\bullet V_\infty$ such that 
        \begin{enumerate}
        \item $G_kV_\infty$ is invariant under the morphism $N$ for each $k\in \Z;$ $N(G_kV_\infty)\subset G_kV_\infty$ and
        \item $G_\bullet V_\infty$ is opposed to $F_\bullet V_\infty; F_{-p}\oplus G_{p+1}=V_\infty$ for each $p\in\Z$.         
        \end{enumerate}       
        Then the extension defined by replacing $W_{2\ell}$ by $G_\ell$ in Definition $\ref{special}$
        is logarithmic at infinity and isomorphic to trivial $\mathcal{O}_{\P^1_\lambda}$-module.\qed
        \end{theorem}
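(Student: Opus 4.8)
The plan is to establish the two assertions separately: first that the modified lattice $\hat H$ has at worst a logarithmic pole at $\lambda=\infty$, and then that the resulting vector bundle on $\mathbb{P}^1_\lambda$ is trivial. First I would fix the coordinate $\mu:=\lambda^{-1}$ near $\infty$, where $U_0H$ is the Deligne lattice and $\Res_{\lambda=\infty}\nabla=N$ is nilpotent. Since by hypothesis (1) the endomorphism $N$ preserves each $G_kV_\infty$, the flat subbundle $G_k(U_0H)$ is the constant extension of $G_kV_\infty$, and the extension obtained by replacing $W_{2\ell}$ with $G_\ell$ in Definition \ref{special} is, near $\infty$, the Rees-type module $\sum_\ell \mu^\ell\,G_\ell(U_0H)$, whose fiber at $\{\mu=0\}$ is $\gr^GV_\infty$. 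The $N$-invariance is exactly what makes $N$ act on this Rees module compatibly with the $\mu$-adic filtration, so $\nabla$ has at most a logarithmic pole along $\{\mu=0\}$ with residue induced by $N$. This settles logarithmicity at infinity and reduces the problem to triviality.

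For triviality I would use one computational input that isolates how the pole filtration sees the splitting type. Writing $U_0H\simeq\bigoplus_i\mathcal{O}_{\mathbb{P}^1}(a_i)$ by the Birkhoff--Grothendieck theorem, evaluating global sections of $U_0H\otimes\mathcal{O}(k\{0\})$ at $\infty$ shows that the $i$-th summand contributes to $F_kV_\infty$ precisely when $a_i\ge -k$; hence $\dim\gr^F_kV_\infty=\#\{i:a_i=-k\}$, i.e.\ the pole filtration $F_\bullet V_\infty$ reconstructs the splitting type of $U_0H$. The same evaluation principle, applied now to $\hat H$ itself, expresses the splitting type of $\hat H$ through its own order-of-pole filtration at $\infty$, which by the Rees description is governed simultaneously by $F_\bullet$ (the order of pole at $0$, controlling which global sections of $U_0H$ exist) and by $G_\bullet$ (the $\mu$-degree in the Rees module at $\infty$).

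The heart of the argument is to feed the opposedness condition (2), $F_{-p}\oplus G_{p+1}=V_\infty$, into this description. Opposedness produces a canonical splitting $V_\infty=\bigoplus_p I_p$ refining both filtrations, and for each basis vector $w\in I_p$ I would construct a global section $s_w$ of $U_0H$ whose order of pole at $0$ is dictated by the $F$-level of $w$ and whose leading term at $\infty$ sits in the prescribed step of $G_\bullet$; because the $F$-level and the $G$-level of $w$ are forced to coincide by opposedness, $s_w$ becomes a nowhere-vanishing section of $\hat H$ in the $\mu^0$-position, and the $s_w$ together form a frame. Equivalently, I would reduce to the two vanishings $H^0(\mathbb{P}^1,\hat H(-1))=0$ and $H^0(\mathbb{P}^1,\hat H^\vee(-1))=0$, which for a bundle on $\mathbb{P}^1$ force all $a_i=0$; the first vanishing would come from the $F$-side (a section of $\hat H$ vanishing at $0$ would violate opposedness) and the second from the dual $G$-side.

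The hard part will be precisely this matching: one must verify that the order of pole at $0$ that $F$ assigns to each graded piece is exactly cancelled by the $\mu$-twist that $G$ assigns at $\infty$, so that the candidate sections are regular and independent at every point of $\mathbb{P}^1_\lambda$ and the degree of $\hat H$ collapses to zero. This is a two-ended bookkeeping tying the singular behavior at $0$ to the logarithmic behavior at $\infty$ through the flat structure of $(H,\nabla)$, and it is here that the opposedness hypothesis (2) — rather than mere $N$-invariance — is indispensable; nilpotence of $N$ enters only to guarantee that the $G$-steps extend flatly and that the weight filtration at $\infty$ is compatible with the grading.
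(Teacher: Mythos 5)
Note first that the paper offers no proof of Theorem \ref{saito criterion}: it is stated with a \qed and attributed to \cite[Lemma 2.8]{saiont} and \cite[IV 5.b]{sabiso}, so your proposal can only be measured against those references. Your overall strategy is exactly the standard one, and the logarithmicity half is essentially complete: in the Deligne trivialization $U_0H\simeq V_\infty\otimes\mathcal{O}$ near $\mu:=\lambda^{-1}=0$ with $\nabla=d+N\,d\mu/\mu$, one has $\nabla(\mu^\ell v)=\mu^\ell(\ell v+Nv)\,d\mu/\mu$ and $Nv\in G_\ell V_\infty$, so the Rees module $\sum_\ell\mu^\ell G_\ell(U_0H)$ is preserved. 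Your preliminary computation that $\dim\gr^F_kV_\infty$ counts the summands $\mathcal{O}(a_i)$ of $U_0H$ with $a_i=-k$ is also correct.

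The genuine gap is in the triviality half, at the step you yourself label ``the hard part.'' For $w\in F_{-p}\cap G_p$ the candidate section $\lambda^{-p}\sigma$ (with $\sigma\in\Gamma(\P^1,U_0H(-p\{0\}))$ lifting $w$) is regular at $\lambda=0$ and has leading $\mu$-coefficient $w\in G_p$ at $\infty$, but its higher Taylor coefficients $\sigma_i$ need not lie in $G_{p+i}$, so it is in general \emph{not} a section of $\hat H$; one must correct it order by order, splitting each $\sigma_i$ by opposedness into a $G$-part (harmless) and an $F$-part (absorbed into a new global section with one more order of vanishing at $0$), and the induction terminates only because $G_q=V_\infty$ for $q\gg0$. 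Without this the claimed frame does not exist. Of your two proposed vanishings, only the first is immediate: if $s\in\Gamma(\P^1,\hat H)$ vanishes at $\lambda=0$ and $v_{j_0}\neq0$ is its lowest $\mu$-coefficient at $\infty$, then $\lambda^{j_0}s$ exhibits $v_{j_0}\in F_{-j_0-1}\cap G_{j_0}$, which opposedness kills; this gives all $a_i\le 0$. The second vanishing ``from the dual $G$-side'' is asserted but not justified — it presupposes that the pole-order filtration of $H^\vee$ is the annihilator of $F_\bullet V_\infty$ and that the dual filtrations are again opposed, neither of which is free. A cleaner finish, which also avoids the correction procedure entirely, is the degree count: $\deg\hat H=\deg U_0H-\sum_\ell\ell\dim\gr^G_\ell$, and since opposedness forces $\dim\gr^G_\ell=\dim\gr^F_{-\ell}$ while your splitting-type computation gives $\deg U_0H=-\sum_kk\dim\gr^F_k$, one gets $\deg\hat H=0$, which together with $a_i\le0$ yields $a_i=0$ for all $i$. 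Finally, be aware that the index placement in hypothesis (2) (inherited from $(\ref{fw})$) must be the one making the $F$-level and $G$-level of the graded pieces coincide, as you implicitly assume; with a shifted pairing the Rees module acquires nonzero degree and the conclusion fails.
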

        Let us consider the case $H=H_1=\mathcal{H}_{|\tau=1}$
        where $\mathcal{H}$ is a nilpotent rescaling structure.
        We firstly observe that $\mathcal{H}$ is reconstructed from its restriction 
        ${H}_{1}$ as follows.
        Let $\varpi:\P^1_\lambda\times \C^*_\tau\to \P^1_\lambda$ be the map defined by $\varpi(\lambda,\tau):=(\lambda/\tau)$.
        Let $\iota_\tau:\P^1_\lambda\times \C^*_\tau\to \C^*_\theta\times S$ be the map defined by 
        $\iota_\tau(\lambda,\tau)=(\tau^{-1},\lambda,\tau)$.
        Then by taking the pullback of the morphism $\chi$ by $\iota_\tau$, similarly as $(\ref{tau})$, 
        we can identify $\mathcal{H}_{\P^1_\lambda\times \C^*_\tau}$ with 
        $\varpi^*(\mathcal{H}_{|\tau=1})$.
        Hence, by Lemma $\ref{lemlat}$, we obtain $\mathcal{H}$ by taking the Deligne lattice 
        of $\varpi^*(\mathcal{H}_{|\tau=1})$ along $\{\tau=0\}$.
         
        This identification also gives an isomorphism $\chi_V:V\simeqto V_\infty$
        and we have the following.
        \begin{lemma}
        $\chi_V:(V,F)\to (V_\infty, F)$ is a filtered isomorphism.
        \end{lemma}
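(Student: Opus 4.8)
The plan is to prove the stronger statement that $\chi_V(F_mV)=F_mV_\infty$ for every $m$, by matching the two descriptions of the filtration through the reconstruction $\mathcal{H}=$ (Deligne lattice of $\varpi^*H_1$ along $\{\tau=0\}$) recalled above. The filtration $F$ on $V$ is the one produced by the recipe of \S\ref{action} from the $\C^*_\theta$-equivariant lattice $\mathcal{H}_{|\tau=0}$ on $\C_\lambda$: a vector $v$ lies in $F_mV$ if and only if the $\C^*_\theta$-invariant section $\phi_v$ of $\mathcal{H}_{|\tau=0}$ with $\phi_v(1)=v$ is contained in $\mathcal{H}_{|\tau=0}(m\{\lambda=0\})$ near $\lambda=0$. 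The filtration $F$ on $V_\infty$ is M. Saito's, $F_mV_\infty=\Image(\Gamma(\P^1_\mu,U_0H(m\{0\}))\to V_\infty)$, read off from global meromorphic sections of $U_0H$ on $\P^1_\mu$ with a pole of order at most $m$ at $\mu=0$, evaluated at $\mu=\infty$. Thus the assertion reduces to a comparison between the pole order at $\lambda=0$ of the invariant section $\phi_v$ and the pole order at $\mu=0$ of a global section of $U_0H$ taking the value $\chi_V(v)$ at $\mu=\infty$.

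The conceptual bridge is that $\mu=\lambda/\tau$ is $\C^*_\theta$-invariant. Hence for any section $h$ of $U_0H$ over $\P^1_\mu$ the pullback $\varpi^*h$ is automatically $\C^*_\theta$-invariant; it is a section of $\mathcal{H}=\varpi^*U_0H$ over $S^*$, and its restriction to $\{\tau=0\}$ is the invariant section attached to the value $h(\mu=\infty)$, which under $\chi_V$ is $h(\mu=\infty)\in V_\infty$. Conversely, every invariant section on $\{\tau=0\}$ comes from a germ of $U_0H$ at $\mu=\infty$, and such a germ extends to a global section of $U_0H(m\{0\})$ exactly when it acquires at worst a pole of order $m$ at $\mu=0$. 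Granting the order comparison of the next step, this yields $v\in F_mV$ $\Leftrightarrow$ $\chi_V(v)=h(\mu=\infty)$ for some $h\in\Gamma(\P^1_\mu,U_0H(m\{0\}))$ $\Leftrightarrow$ $\chi_V(v)\in F_mV_\infty$, as desired.

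The delicate point, which I expect to be the main obstacle, is to identify for $\Phi=\varpi^*h$ the order of $\Phi|_{\{\tau=0\}}$ at $\lambda=0$ in the lattice $\mathcal{H}_{|\tau=0}$ with the pole order of $h$ at $\mu=0$; this is subtle because $\{\lambda=0\}$ and $\{\tau=0\}$ meet at the origin of $S$, where $\mu$ is indeterminate and the naive restriction of $\varpi^*h=(\lambda/\tau)^{-m}(\cdots)$ to $\{\tau=0\}$ is ill defined. I would resolve this by blowing up the origin of $S$: the exceptional curve is a copy of $\P^1_\mu$ on which the strict transform of $\{\tau=0\}$ meets it at $\mu=\infty$ and that of $\{\lambda=0\}$ meets it at $\mu=0$, and the pullback of $\mathcal{H}$ restricts there to $U_0H$ (away from $\mu=0$ it is the Deligne lattice of $\varpi^*H_1$, hence $U_0H$; at $\mu=0$ it is the Hodge lattice $H_1$ near $\lambda=0$, which is again $U_0H$ near $\mu=0$). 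On the blow-up the $\C^*_\theta$-action fixes the exceptional curve pointwise and scales the transverse coordinate, so the $\C^*_\theta$-weight of $v$, equivalently the order of $\phi_v$ at $\lambda=0$, becomes exactly the bookkeeping of the bundle $U_0H$ along $\P^1_\mu$ linking its value at $\mu=\infty$ to its poles at $\mu=0$, i.e. M. Saito's order. Making this last matching precise — e.g. via the normal form $(U_0\mathcal{H},\nabla)\simeq(V'\otimes\mathcal{O}_S(*(\lambda)_0),d-N\lambda^{-1}d\lambda+N\tau^{-1}d\tau)$ together with the fact that $\mathcal{H}\subset U_0\mathcal{H}(*(\lambda)_0)$ records at $\mu=0$ the same Hodge lattice as $U_0H$ — is the crux; the rest is bookkeeping with the correspondence of \S\ref{action}, and the rank-one Tate case $\T(-1)$ (both filtrations jumping at $-1$) is a useful consistency check.
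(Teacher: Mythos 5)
Your strategy is the same as the paper's: reconstruct $\mathcal{H}$ as the Deligne lattice of $\varpi^*H_1$ along $\{\tau=0\}$, spread a global section of $U_0H_1(m\{0\})$ over $S$ by $\C^*_\theta$-equivariance (equivalently, pull back by $\varpi$), and read off the pole order of its restriction to $\{\tau=0\}$ at $\lambda=0$ via the correspondence of \S\ref{action}. The paper dispenses with the blow-up you propose for the "delicate point": it simply observes that the equivariant spread is a section of the locally free sheaf $\mathcal{H}(k(\lambda)_0)$ on all of $S$ (the extension across the codimension-two point $(\lambda,\tau)=(0,0)$ being automatic), whose restriction to $\tau=0$ is the invariant section through $\chi_V^{-1}(\overline{v})$ lying in $\mathcal{H}_{|\tau=0}(k\{0\})$ — so your plan is correct but can be carried out more directly than you anticipate.
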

        \begin{proof}
        For $\overline{v}\in F_kV_\infty$, take a lift $v\in\Gamma (\P^1_\lambda, U_0H_1(k\{0\}))$.
        Then we have a unique $\C^*_\theta$-equivariant section $\tilde{v}\in \Gamma(S, \mathcal{H}(k(\lambda)_0))$ 
        whose restriction to $\para=1$ is $v$.
        We have that 
        the restriction of $\tilde{v}$ to $\tau=0$ is the $\C^*$-invariant section with 
        $(\tilde{v}_{|\para=0})(1)=\tilde{v}(1,0)=\chi_V^{-1}(\overline{v})$, 
        and $\tilde{v}_{|\para=0}\in \mathcal{H}_{|\tau=0}(k\{0\})$.
        This proves the lemma.
        \end{proof}
        By this lemma, 
        Proposition \ref{HT special} 
        can be seen as a corollary of 
        M. Saito's criterion (Theorem \ref{saito criterion}).
        We also remark that the relation to Hodge-Tate condition is mentioned in \cite[Example 3.4.3]{sabfro}
        for classical Hodge structures.
        \begin{remark}
        From these observations, it seems that the parameter $\para$ plays a minor role.        
        However, this parameter naturally appears in some examples  \cite{ggi}, \cite{kkp14}, \cite{moctwi}, \cite{sabirr}.
        In particular, as we will see in Appendix \ref{app A},  
        the parameter $\tau$ appears as a quantum parameter for Tate twisted quantum $\mathcal{D}$-modules.
        In that case, the nilpotent-ness of the rescaling structure is deduced from the fact that 
        the quantum cup product converges to the classical cup product as the quantum parameter goes to zero.
        \end{remark}
%
%
%
%
%
%
%
%
%
%
%
%
%
%
%
%
%
%
%
%
%
%
%
%
%
%
%
%
  \section{Landau-Ginzburg models} \label{section LG}
  In this section, we consider the following pair $(\amb, f)$, referred as a Landau-Ginzburg model: 
  \begin{itemize}
   \item A smooth projective variety $X$ of dimension $n$ over $\C$.
   \item A flat projective morphism $f:X\to\P^1$ of varieties.
  \end{itemize} 
  We also consider $f$ as a meromorphic function on $\amb$.
  We assume that the pole divisor $(f)_\infty$ of $f$ is reduced. 
  The support $|(f)_\infty|$ is denoted by $D$.  
  We also assume that $D$ is simple normal crossing.
  Put $Y:=\amb\setminus \pole$.
  The restriction of $f$ to $Y$ is denoted by $\w$.
  \begin{remark}
  The terminology ^^ ^^ Landau-Ginzburg model" might be inappropriate for general $(X, f)$.
  We need to impose the condition that there is an isomorphism 
  $ \mathcal{O}\simeqto\Omega_X^n(D); 1\mapsto {\sf{vol}}_X$ 
  in order to regard the tuple $((X, f),D,{\sf{vol}}_X)$ as a tame compactified Landau-Ginzburg model in \cite{kkp14}
  $($see Appendix \ref{app kkp}$)$.
  In this paper, we do not use this condition. 
  However, since the main examples we have in mind are $($tame compactified$)$  Landau-Ginzburg models, 
  we call the pair $(X, f)$ a Landau-Ginzburg model for the sake of convenience. 
  \end{remark}
  \subsection{Rescaling structure for Landau Ginzburg models}
      \subsubsection{The Kontsevich complex}\label{ss kon }  
          Let  
          $ df: \Omega^k_X(\log D)\to {\Omega^{k+1}_X(\log D)(D)}$
          be a morphism induced by the multiplication of $df$.
          The inverse image of ${\Omega^{k+1}_X(\log D)}\subset {\Omega^{k+1}_X(\log D)(D)}$ 
          is denoted by $\Omega_f^k$.
          The multiplication $df$ induces a morphism $df:\Omega_f^k\to\Omega_f^{k+1}$.
          The exterior derivative $d$ induces a morphism $d:\Omega_f^k\to\Omega_f^{k+1}$.

          Let $\pi_S:S\times \amb\to \amb$ be the projection.
          Recall that $S=\P^1_\rmod\times\C_\para$.
          Put $\Omega_{f,\rmod,\para}^k
                  :=\pi_S^{-1}\Omega^k_f\otimes\rmod^{-k}\mathcal{O}_{S \times \amb}(*(\rmod)_\infty)$.
          We have morphisms of sheaves 
          $d+\rmod^{-1}\para df: \Omega_{f,\rmod,\para}^k\to \Omega_{f,\rmod,\para}^{k+1}$
          where $d$ is the relative exterior derivative, i.e., $d=d_{S \times \amb/S}$.
          Since $(d+\rmod^{-1}\para df)^2=0$, we have a complex 
          $(\Omega_{f,\rmod,\para}^\bullet, d+\rmod^{-1}\para df)$.
          \begin{definition}Let $p_S:S \times \amb\to S$ denote the projection.
          For each $k\in \Z$, we put
             \begin{align}\label{Kon}
               \mathcal{H}_f^k:=\R^kp_{S*}\left(\Omega_{f,\rmod,\para}^\bullet, d+\rmod^{-1}\para df\right).
             \end{align}
           We define a $\Z$-graded $\mathcal{O}_S(*(\rmod)_\infty)$-module by
           $\mathcal{H}_f:=\bigoplus_{k\in\Z}\mathcal{H}_f^k$.
          \end{definition}
       \subsubsection{The rescaling structure}
        Let $\m:\C_\scale^*\times S\to S$ denote the action of $\C^*_\scale$
        given in \S \ref{def and eg of scaling}.
        Let $\tilde{\m}:\C^*_\scale\times S \times \amb\to S \times \amb$ be the
        action 
        induced by $\m$ and trivial $\C^*_\scale$-action on $X$.
        Let $\tilde{p}_2:\C^*_\scale\times S \times \amb\to S \times \amb$
        denote the projection.
        We have the natural isomorphism 
        $\tilde{\chi}_f:\tilde{p}_2^*(\Omega_{f,\rmod,\para}^\bullet, d+\rmod^{-1}\para df )
        \simeqto \tilde{\m}^*(\Omega_{f,\rmod,\para}^\bullet , d+\rmod^{-1}\para df ).$ 
        It induces an isomorphism $\chi_f: p_2^*\mathcal{H}_f\simeqto\m^*\mathcal{H}_f$
        with the cocycle condition (Definition \ref{definition of rescaling structure} (3)).
          \begin{proposition}
           The pair $(\mathcal{H}_f,\chi_f)$ comes equipped with a  
           rescaling structure.
           \begin{proof}
            By the theorem of Esnault-Sabbah-Yu, M. Saito, and M. Kontsevich \cite{esy} (see also \cite{kkp14}, \cite{moctwi}), 
            $\mathcal{H}_f$ 
            is locally free
            over $\mathcal{O}_{S}(*(\rmod)_\infty)$. 
            Moreover,  \cite[Theorem 3.5]{moctwi} (see also its consequences in \cite[\S 3.1.8]{moctwi})
            implies  
            that we have a connection $\nabla$ on each $\mathcal{H}^k_f$
            with the properties in Definition \ref{definition of rescaling structure}.
           \end{proof} 
          \end{proposition}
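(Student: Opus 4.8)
The plan is to realize $\mathcal{H}_f$ as the relative twisted de Rham cohomology of the meromorphic potential $g:=\rmod^{-1}\para f$ on $S\times\amb$ and to produce $\nabla$ as the associated Gauss--Manin connection. First I would record local freeness: the fibre of $\mathcal{H}_f^k$ at $(\rmod,\para)$ computes the hypercohomology $\H^k$ of the twisted de Rham complex $(\Omega^\bullet_f, d+\rmod^{-1}\para\, df\wedge)$, whose dimension is independent of $(\rmod,\para)$ by the $E_1$-degeneration and coherence results of Esnault--Sabbah--Yu, M.~Saito, and Kontsevich \cite{esy}. Constancy of the fibre dimension together with the coherence of $\R^kp_{S*}$ then forces $\mathcal{H}_f^k$ to be locally free over $\mathcal{O}_S(*(\rmod)_\infty)$, which is the first requirement of Definition \ref{definition of rescaling structure}.

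Next I would construct $\nabla$. Writing $\D:=d_{S\times\amb}+d_{S\times\amb}(g)\wedge$ for the absolute twisted differential on $S\times\amb$, the Leibniz rule gives $d_{S\times\amb}(g)=\rmod^{-1}\para\,df+f\,d_S(\rmod^{-1}\para)$, so that $\D$ splits as the relative operator $d+\rmod^{-1}\para\,df\wedge$ defining the Kontsevich complex plus the $S$-directional operator $d_S+f\,d_S(\rmod^{-1}\para)\wedge$. The latter descends to a connection on $\mathcal{H}_f$, and on a relative representative $\alpha$ it is
\begin{align*}
\nabla\alpha=d_S\alpha+f\bigl(-\rmod^{-2}\para\,d\rmod+\rmod^{-1}d\para\bigr)\wedge\alpha.
\end{align*}
Since $d_S(\rmod^{-1}\para)$ has poles on $S$ only along $\{\rmod=0\}$, while differentiating the module structure of $\mathcal{H}_f$ contributes poles only along $(\rmod)_\infty$, the connection lands in $\mathcal{H}_f\otimes\Omega^1_S(*(|(\rmod)_\infty|\cup|(\rmod\para)_0|))$, as required; flatness $\nabla^2=0$ follows from $\D^2=0$, i.e.\ from $d_{S\times\amb}^2=0$ and the closedness of $d_{S\times\amb}(g)$.

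The main obstacle is property (1) of Definition \ref{definition of rescaling structure}, namely that $\nabla_{\rmod^2\del_\rmod}$ and $\nabla_{\rmod\para\del_\para}$ preserve the lattice $\mathcal{H}_f$. Contracting the formula above gives $\nabla_{\rmod^2\del_\rmod}=\rmod^2\del_\rmod-f\para$ and $\nabla_{\rmod\para\del_\para}=\rmod\para\del_\para+f\para$; the factors $\rmod^2$ and $\rmod\para$ clear the denominators coming from $d_S(\rmod^{-1}\para)$, so the delicate point is that the remaining operator ``multiplication by $f\para$'' preserves $\mathcal{H}_f$. This is subtle because $f$ has poles along $\pole$, so multiplication by $f$ a priori maps $\Omega^k_f$ into a sheaf with larger pole order, and one must see that this does not enlarge the hypercohomology lattice. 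Rather than verify this by hand, I would invoke \cite[Theorem 3.5]{moctwi} and its consequences in \cite[\S3.1.8]{moctwi}, which produce the Gauss--Manin connection on $\mathcal{H}_f$ with exactly the pole bounds of property (1); this is the deep analytic input of the proof.

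Finally I would treat $\chi_f$. The cocycle condition (3) is inherited from the corresponding cocycle identity for the complex-level isomorphism $\tilde{\chi}_f$ together with the functoriality of $\R p_{S*}$. For the flatness condition (2), the key observation is that the potential is invariant under the simultaneous scaling: $\tilde{\m}^*g=(\scale\rmod)^{-1}(\scale\para)f=\rmod^{-1}\para f=g$. Hence $\tilde{\m}$ preserves the absolute twisted differential $\D$, so on $\C^*_\rmod\times\C^*_\para$ the isomorphism $\tilde{\chi}_f$ intertwines $\tilde{p}_2^*\D$ with $\tilde{\m}^*\D$, and passing to cohomology shows that $\chi_f$ is flat for $p_2^*\nabla$ and $\m^*\nabla$. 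This completes the verification that $(\mathcal{H}_f,\nabla,\chi_f)$ is a rescaling structure.
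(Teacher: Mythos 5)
Your proposal is correct and follows essentially the same route as the paper: local freeness is deduced from the constancy of fibre dimensions due to Esnault--Sabbah--Yu, Kontsevich, and M.~Saito, and the existence of the connection with the pole bounds of Definition \ref{definition of rescaling structure}(1) is delegated to \cite[Theorem 3.5]{moctwi}, exactly as in the paper. Your added detail --- splitting the absolute twisted differential of the potential $\rmod^{-1}\para f$ into the relative Kontsevich differential plus the Gauss--Manin part, isolating the lattice-preservation of multiplication by $\para f$ as the genuinely nontrivial input, and deriving flatness of $\chi_f$ from the $\C^*_\scale$-invariance of the potential --- is a faithful expansion of what the paper compresses into its two citations.
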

        \subsubsection{Hodge filtration}
           Since $\mathcal{H}_f$ is a rescaling structure, 
           $V_f:=\mathcal{H}_{f|(\rmod,\para)=(1,0)}$  
           is equipped with a filtration
           $F_\bullet V_f$ (See \S \ref{section hodge}). 
           Note that $V_f\simeq \H^\bullet\big{(}X,(\Omega^\bullet_f, d)\big{)}.$
            
          \begin{lemma}[{\cite{esy},\cite{moctwi}}]\label{lemhod}
           Let $F_\bullet(\Omega_f^\bullet, d)$ be the stupid filtration on $(\Omega_f^\bullet, d)$,
           that is, we put $F_{-p}\Omega^k_f=0$ for $p>k$ and 
           $F_{-p}\Omega^k_f=\Omega_f^k$ for $p\leq k$.
           Then we have the following$:$
           \begin{align}\label{hodge filter}
            F_{-p}V^k_f\simeq\Image \left(\H^k\big{(}X,F_{-p}(\Omega^\bullet_f, d)\big{)}
                                 \to \H^k\big{(}X,(\Omega^\bullet_f, d)\big{)}\right).         
           \end{align}
           \end{lemma}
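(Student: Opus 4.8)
The plan is to identify the filtration $F_\bullet V_f$, which by \S\ref{section hodge} is defined through the $\C^*_\scale$-equivariant structure on $\mathcal{H}_{f|\para=0}$ together with the correspondence of \S\ref{action}, with the filtration produced by the double-complex construction of Lemma \ref{doucpx}. Once $\mathcal{H}_{f|\para=0}$ is realized as the $k$-th cohomology of a complex of the shape $(\mathcal{C}^\bullet, z\delta_1+\delta_2)$ carrying the weight bookkeeping of \S\ref{doucpx}, the asserted formula (\ref{hodge filter}) becomes exactly the statement of Lemma \ref{doucpx} applied to the stupid filtration, and the proof reduces to checking compatibilities.

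First I would set $\para=0$ in the Kontsevich complex. Since $\m(\scale,\rmod,\para)=(\scale\rmod,\scale\para)$ preserves $\{\para=0\}$ and acts there by $\rmod\mapsto\scale\rmod$, the restriction $\mathcal{H}_{f|\para=0}$ is a $\C^*_\scale$-equivariant $\mathcal{O}_{\C_\rmod}(*(\rmod)_\infty)$-module, and the differential $d+\rmod^{-1}\para\,df$ specializes to the relative de Rham differential $d$. I would then choose a $p_S$-acyclic resolution of each $\Omega_f^p$ (a \v{C}ech resolution for a Stein cover, or a Godement resolution) pulled back from $\amb$, producing a double complex $C^{p,q}$ with $\delta_1=d\colon\Omega_f^p\to\Omega_f^{p+1}$ the de Rham differential and $\delta_2$ the resolution differential, so that $H^k(C^\bullet,\delta)=\H^k(\amb,(\Omega_f^\bullet,d))=V_f^k$.

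The key point, which I would verify by tracking the twist in $\Omega_{f,\rmod,\para}^p=\pi_S^{-1}\Omega_f^p\otimes\rmod^{-p}\mathcal{O}_{S\times \amb}(*(\rmod)_\infty)$, is that in the untwisted trivialization the de Rham differential carries a factor of $\rmod$: writing $\rmod^{-p}=\rmod\cdot\rmod^{-(p+1)}$ shows that $\delta_1$ lowers the twisting weight by one and so appears as $\rmod\,\delta_1$, whereas $\delta_2$ preserves the de Rham degree $p$ and hence the twist, and appears without a factor of $\rmod$. Thus, with $z=\rmod$, the restricted Kontsevich complex is precisely $(\mathcal{C}^\bullet,z\delta_1+\delta_2)$ of \S\ref{doucpx}, and $\mathcal{H}^k_{f|\para=0}=L_1$. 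The same computation shows that the natural isomorphism $\tilde{\chi}_f$ acts on the degree-$p$ part $\Omega_f^p$ with weight $\scale^p$ (the differential $\rmod^{-1}\para\,df$ being itself $\scale$-invariant), so the induced $\chi$ is $\chi_p(t,z)=t^p\otimes\id$; hence the $\C^*_\scale$-equivariant lattice at $\rmod=0$ defining $F_\bullet V_f$ coincides with the lattice $L_1$ of Lemma \ref{doucpx}, and the filtration $F_\bullet V_f$ equals the filtration $G_\bullet$ of that lemma.

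It then remains to supply the hypothesis of Lemma \ref{doucpx}, namely that $\H^k(\amb,F_m(\Omega_f^\bullet,d))\to\H^k(\amb,(\Omega_f^\bullet,d))$ is injective for all $k,m$. This is exactly the $E_1$-degeneration of the Hodge-to-de Rham spectral sequence of the Kontsevich complex, which is the theorem of Esnault--Sabbah--Yu, Kontsevich, and M.\ Saito cited in the statement. Matching indices, the stupid filtration $F_{-p}\Omega_f^\bullet=\Omega_f^{\ge p}$ corresponds to $F_mC^\ell=\bigoplus_{a+q=\ell,\,-a\le m}C^{a,q}$ with $m=-p$, so Lemma \ref{doucpx} yields $F_{-p}V_f^k=\Image(\H^k(\amb,F_{-p}(\Omega_f^\bullet,d))\to\H^k(\amb,(\Omega_f^\bullet,d)))$, which is (\ref{hodge filter}). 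The genuine mathematical content (the injectivity) is imported from the cited work; the main obstacle within the present lemma is the bookkeeping of the previous step, where one must check with correct signs that forming cohomology commutes with the equivariant-extension construction (guaranteed by the local freeness in the Esnault--Sabbah--Yu theorem) and that the weight of $\chi_f$ on $\Omega_f^p$ is exactly $p$, since an off-by-one there would shift the entire filtration.
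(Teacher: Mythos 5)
Your proposal is correct and follows essentially the same route as the paper: restrict to $\para=0$, exhibit $\mathcal{H}^k_{f|\para=0}$ as the cohomology of a double complex of the shape $(\mathcal{C}^\bullet, z\delta_1+\delta_2)$ with equivariant weight $\scale^p$ on the de Rham degree $p$ part, apply Lemma \ref{doucpx}, and import the injectivity hypothesis from the $E_1$-degeneration theorem of \cite{esy}. The only difference is that the paper uses the Dolbeault resolution $(\mathscr{A}_f^{p,q},\rmod\del,\overline{\del})$ where you propose a \v{C}ech or Godement resolution, which is an immaterial choice of acyclic resolution.
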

               \begin{proof}               
                Let $\pi_\rmod: \C_\rmod \times \amb\to \amb$ be the projection.
                Define $\Omega_{f,\rmod}^k:=\pi^*_\rmod\Omega_f^k$.
                Let $p_\rmod: \C_\rmod \times \amb\to \C_\rmod$ denote the projection. 
                By the local freeness, we have an $\C^*_\scale$-equivariant isomorphism
                $$\mathcal{H}^k_{f|\para=0}\simeq \R^kp_{\rmod *}(\Omega_{f,\rmod}^\bullet,\rmod d).$$
                The isomorphism $\chi$ on $\R^kp_{\rmod *}(\Omega_{f,\rmod}^\bullet,\rmod d)$
                is induced by 
                $\scale^p\tilde{\chi}_{f|\para=0}: 
                (\tilde{p}_2^*\Omega_{f,\rmod,\para}^p)_{|\para=0}
                \simeqto (\tilde{\m}^*\Omega_{f,\rmod,\para}^p)_{|\para=0}
                $.                
                
                Let $\mathscr{A}^{p, q}_\amb$ denote the sheaf of $(p, q)$-forms on $\amb$.
                Let $\del:\mathscr{A}_\amb^{p, q}\to \mathscr{A}_\amb^{p+1, q}$ 
                and $\overline{\del}:\mathscr{A}_\amb^{p, q}\to \mathscr{A}_\amb^{p, q+1}$
                be the Dolbeault operators.
                Set $\mathscr{A}_f^{p, q}:=\Omega^p_f\otimes_{\mathcal{O}_\amb}\mathscr{A}_\amb^{0, q}$.
                Put  
                $\mathscr{A}^{p, q}_{f,\rmod}
                :=\mathcal{O}_{\C_\rmod\times X}
                \otimes_{\pi_\rmod^{-1}\mathcal{O}_\amb} 
                \pi^{-1}_\rmod
                \mathscr{A}_f^{p, q}$.
                The operators on $\mathscr{A}^{p, q}_{f,\rmod}$
                induced by $\del$ and $\overline{\del}$ are denoted by the same notation.
                Then we obtain the double complex 
                $(\mathscr{A}_{f,\rmod}^{\bullet,\bullet}, \rmod \del, \overline{\del})$.
                Let $(\mathscr{A}_{f,\rmod}^\bullet, \rmod\del+\overline{\del})$ be the total complex.
                Remark that $\mathscr{A}^k_{f,\rmod}=\bigoplus_{p+q=k}\mathscr{A}^{p, q}_{f,\rmod}$.
                We obtain a $\C_\scale^*$-equivariant quasi-isomorphism
                $$(\Omega_{f,\rmod}^\bullet,\rmod d)\simeqto (\mathscr{A}_{f,\rmod}^\bullet, \rmod\del+\overline{\del}),$$
                where the isomorphism on $\mathscr{A}^{p, q}_{f,\rmod}$ is induced by $\scale^p\tilde{\chi}_{f|\para=0}$.
                Hence, we have $\C^*_\scale$-equivariant isomorphism:
                \begin{align}\label{dolres}
                \R^kp_{\rmod *}(\Omega_{f,\rmod}^\bullet,\rmod d)\simeq
               \mathscr{H}^k p_{\rmod *}(\mathscr{A}_{f,\rmod}^\bullet, \rmod\del+\overline{\del}).
               \end{align}           
          
               Applying Lemma \ref{doucpx}
               for $C^{p, q}:=\Gamma(\amb,\mathscr{A}^{p, q}_f), \delta_1:=\del,$ and $\delta_2:=\overline{\del}$,
               the fiber of the cohomology sheaf
               $ \mathscr{H}^k p_{\rmod *}(\mathscr{A}_{f,\rmod}^\bullet, \rmod\del+\overline{\del})$
               at $\rmod=1$
               has the filtration $G_\bullet$
               as in Lemma \ref{doucpx} 
               (The fact that we can apply the lemma is due to \cite[Theorem 1.3.2]{esy}).
               Since the restriction of $(\ref{dolres})$ to $\rmod=1$ 
               gives a filtered isomorphism 
               $(V_f^k,F)\simeq (H^k(C^\bullet,\delta), G)$, 
               we obtain the conclusion.
               \end{proof}
           By this lemma, we have $\gr^F_{-p}V_f^k=H^{k-p}(X,\Omega_f^p)$.
           Define $ f^{p, q}(Y, \w):=\dim H^q(X,\Omega^p_f)$.
           Then we have $ f^{p, q}(Y, \w)=f^{p, q}(\mathcal{H}_f)$.
           In the rest of \S \ref{section LG}, 
           we investigate $h^{p, q}(\mathcal{H}_f)$, 
           or the weight filtration of the rescaling structure.
           \subsection{Meromorphic connections for Landau-Ginzburg models}           
           We set  ${\amb}^{(1)}:=\C_\para\times \amb$. We also set ${\pole}^{(1)}:=\C_\para\times\pole$.
           Let $p_\para:{\amb}^{(1)}\to \C_\para$ and 
           $\pi_\para:\amb^{(1)}\to X$ denote the projections.
           We shall review some results on a meromorphic flat bundle $\L:=\mathcal{O}(*{\pole}^{(1)})v$
           with $\nabla v=d(\para f)v$ in \cite{moctwi}, where $v$ denotes a global frame.
           We have
           $$\L \simeq \left(\mathcal{O}_{\amb^{(1)}}(*{\pole^{(1)}}),
           d+d(\para f)\right); v\mapsto 1.$$      
           Remark that, in our case, some of the results in \cite{moctwi} are simplified
           since we assume that $(f)_\infty$ is reduced and the horizontal divisor
           (denoted by $H$ in \cite{moctwi}) is empty.
          \subsubsection{V-filtration along $\para$}\label{V-filter L}
              Regard $\pi_\para^*\dmod_{\amb}$ as a sheaf of subalgebra in $\mathcal{D}_{\amb^{(1)}}$.
              Let ${}^\para\! V_0\dmod_{{\amb^{(1)}}}$ denote the sheaf of subalgebra generated by 
              $\pi_\para^*\dmod_{\amb}$
              and $\para\del_\para$.
              For $\alpha=0,1$, we set 
              $$ U_\alpha \L:=\pi^*_\para\dmod_X\cdot\mathcal{O}_{{\amb^{(1)}}}
              \left((\alpha+1){\pole}^{(1)}\right)v \subset \L.$$ 
              For $\alpha\in \Z_{<0}$, we set $U_\alpha \L:=\para^{-\alpha}U_{0}\L$. 
              For $\alpha\in\Z_{>0}$, we set $U_\alpha \L:=\sum_{p+q\leq \alpha}\del_\para^pU_q\L$.
              Then we have the following:
                  \begin{proposition}[{\cite[Proposition 2.3]{moctwi}}]
                     $U_\bullet \L$ is a V-filtration on $\L$ along $\para$ indexed by integers 
                     with the standard order $($up to shift of degree by $1)$. 
                     More precisely,  we have the following$:$ 
                     \begin{itemize}
                      \item $U_\alpha\L$ are coherent ${}^\para\! V_0\dmod_{\amb^{(1)}}$-modules  
                               such that $\bigcup_\alpha U_\alpha \L=\L$.
                      \item We have $\para U_\alpha \L\subset U_{\alpha-1}\L$ and 
                               $\del_\para U_\alpha\L\subset U_{\alpha +1}\L$.
                      \item Define 
                               $\gr^U_\alpha\L:=U_\alpha\L/U_{\alpha-1}\L$. 
                               Then $\para\del_\para+\alpha$ is nilpotent on $\gr^U_\alpha\L$.\qed
                     \end{itemize}                  
                     \end{proposition}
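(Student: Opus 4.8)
The plan is to verify the three listed properties directly, reducing everything to an explicit local computation near $\pole$ together with the commutation relations of $\para$ and $\del_\para$. Since $(f)_\infty$ is reduced and $\pole$ is simple normal crossing, near a point of $\pole$ I would choose coordinates $(z_1,\dots,z_r,w_1,\dots)$ so that $\pole=\{z_1\cdots z_r=0\}$ and $f$ has a first-order pole along each $\{z_i=0\}$. The whole argument rests on one identity in the frame $v$: one has $\nabla_{\del_\para}v=fv$ and $\nabla_{z_i\del_{z_i}}v=\para(z_i\del_{z_i}f)v$, where $z_i\del_{z_i}f=c_i\,f$ for a holomorphic function $c_i$ with $c_i\equiv-1$ along $\{z_i=0\}$, the last fact being the manifestation of reducedness of $(f)_\infty$. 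Consequently, for $h\in\mathcal{O}_{\amb^{(1)}}((\alpha+1)\pole^{(1)})$ the element $\para f h v$ is expressed, via $\para c_i f h v=\nabla_{z_i\del_{z_i}}(hv)-(z_i\del_{z_i}h)v$, in terms of $\pi_\para^*\dmod_X$-operators applied to $\mathcal{O}_{\amb^{(1)}}((\alpha+1)\pole^{(1)})v$, because the logarithmic Euler field $z_i\del_{z_i}$ preserves the pole order. In other words, multiplication by $\para f$ is absorbed by logarithmic vector fields modulo operators that do not raise the pole order; this is the technical heart that will make $U_0\L$ and $U_1\L$ stable under $\para\del_\para$.

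Granting this, the first bullet follows quickly. Each $U_\alpha\L$ with $\alpha\in\{0,1\}$ is generated over $\pi_\para^*\dmod_X$ by the $\mathcal{O}_{\amb^{(1)}}$-coherent sheaf $\mathcal{O}_{\amb^{(1)}}((\alpha+1)\pole^{(1)})v$, and the absorption identity shows it is in addition stable under $\para\del_\para$, hence is a coherent ${}^\para\! V_0\dmod_{\amb^{(1)}}$-module; the negative and positive parts are then coherent since they are produced by the finite-order operators $\para^{-\alpha}$ and $\del_\para^p$. For exhaustiveness I would note that $\del_\para(hv)=(\del_\para h+fh)v$ raises the pole order along $\pole^{(1)}$ by one, so an induction on $\alpha$ gives $\mathcal{O}_{\amb^{(1)}}((\alpha+1)\pole^{(1)})v\subset U_\alpha\L$, whose union over $\alpha$ is all of $\L$. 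For the second bullet, the base inclusions $\del_\para U_0\L\subset U_1\L$ (immediate from the formula for $\del_\para$ together with $\del_\para\pi_\para^*\dmod_X=\pi_\para^*\dmod_X\del_\para$) and $\para U_1\L\subset U_0\L$ (a direct consequence of the absorption identity) propagate to all $\alpha$ through the relations $[\para,\del_\para]=-1$ and $[\para\del_\para,\del_\para]=-\del_\para$.

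The main obstacle is the third bullet, the nilpotence of $\para\del_\para+\alpha$ on $\gr^U_\alpha\L$. Here I would first use the commutation relations to reduce to the base indices: from $[\para\del_\para,\del_\para]=-\del_\para$ one gets $(\para\del_\para+\alpha)\del_\para^p=\del_\para^p(\para\del_\para+(\alpha-p))$, and $[\para\del_\para,\para^{-\alpha}]=-\alpha\para^{-\alpha}$ gives $(\para\del_\para+\alpha)\para^{-\alpha}=\para^{-\alpha}(\para\del_\para)$, so that $\del_\para^p$ and $\para^{-\alpha}$ intertwine the action on $\gr^U_\alpha\L$ with that of $\para\del_\para$ on $\gr^U_0\L$ and of $\para\del_\para+1$ on $\gr^U_1\L$. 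It then remains to treat these two graded pieces, each of which is a restriction to $\{\para=0\}$, where the connection on $v$ becomes trivial. On $\gr^U_0\L\simeq U_0\L_{|\para=0}$ the induced action of $\para\del_\para$ is, by the absorption identity, governed by the residues of the logarithmic Euler fields $\sum_i z_i\del_{z_i}$ along the components of $\pole$; the expected conclusion is that this operator has integer eigenvalues with unipotent behaviour, the nilpotent part arising exactly at the crossings of $\pole$. Establishing that the residue is nilpotent after the shift by $\alpha$—equivalently, that the V-filtration is integrally indexed with unipotent monodromy—is where reducedness of $(f)_\infty$ is essential, since it forces the local normal form $z_i\del_{z_i}f\equiv-f$ and hence integral exponents. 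I expect this residue computation, carried out componentwise on $\pole$ and assembled using the normal-crossing structure, to be the delicate step.
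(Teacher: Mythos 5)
The paper offers no proof of this statement: it is quoted verbatim from Mochizuki and closed with a \qed, so the only ``proof'' on record is the citation to \cite[Proposition 2.3]{moctwi}. Your attempt to reprove it directly is therefore measured against that source rather than against anything in the text. Your local normal form $f=a\,(z_1\cdots z_r)^{-1}$ with $a$ a unit near $\pole$ (valid because $f$ is a morphism to $\P^1$, so $(f)_0\cap(f)_\infty=\emptyset$), the resulting identity $z_i\del_{z_i}f=c_if$ with $c_i\equiv-1$ on $\{z_i=0\}$, and the ``absorption'' of $\para f$ by logarithmic vector fields are all correct, and they do suffice for the first two bullets: stability of $U_0\L$, $U_1\L$ under $\para\del_\para$, coherence over ${}^\para V_0\dmod_{\amb^{(1)}}$, exhaustion, and the inclusions $\para U_\alpha\subset U_{\alpha-1}$, $\del_\para U_\alpha\subset U_{\alpha+1}$ all reduce to these local facts plus the commutation relations, exactly as you say.

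The gap is the third bullet, which is the actual content of the proposition. Your reduction via $(\para\del_\para+\alpha)\del_\para^p=\del_\para^p(\para\del_\para+\alpha-p)$ and $(\para\del_\para+\alpha)\para^{-\alpha}=\para^{-\alpha}\para\del_\para$ to the base cases $\alpha\in\{0,1\}$ is fine, but for those base cases you only assert the ``expected conclusion'' that the induced operator is nilpotent and defer the computation. This does not follow formally from the absorption identity: that identity rewrites $\para f h v$ as $c_i^{-1}\bigl(\nabla_{z_i\del_{z_i}}(hv)-(z_i\del_{z_i}h)v\bigr)$, which shows $\para\del_\para$ preserves $U_0\L$ but says nothing yet about a power of it landing in $U_{-1}\L=\para U_0\L$. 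To get nilpotence one must iterate and control how powers of $\para f$ interact with the $\pi_\para^*\dmod_X$-module generators modulo $\para U_0\L$ and $U_0\L$ respectively --- for instance, one must rule out that $\para\del_\para$ contributes a nonzero semisimple part coming from the residues $c_i(0)=-1$ on $\gr^U_0\L$ rather than on $\gr^U_1\L$. This bookkeeping, carried out stratum by stratum on the normal crossing divisor, is precisely the nontrivial part of Mochizuki's proof, and it is the part your write-up leaves open. As it stands the argument establishes that $U_\bullet\L$ is an exhaustive, coherent, $\Z$-indexed filtration compatible with $\para$ and $\del_\para$, but not that it is the V-filtration with integral, unipotent-type graded pieces.
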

          \subsubsection{Relative de Rham complexes}\label{qisL}
          We set $\Omega^k_{f,\para}:=\pi_\para^*\Omega_f^k$.
          We obtain a complex $(\Omega^\bullet_{f,\para},d+\para df)$
          where $d=d_{{\amb}^{(1)}/\C_\para}$
          is the relative exterior derivative.
          We have the following:
              \begin{proposition}[{\cite{moctwi}}]
               We have a quasi-isomorphism of complexes
               \begin{align}\label{gi dokei L}
                  (\Omega^\bullet_{f,\para},d+\para df)\simeqto U_0\L\otimes \Omega^\bullet_{{\amb^{(1)}}/\C_\para}.
              \end{align}  
                  \begin{proof}
                   Combine \cite[Proposition 2.21]{moctwi} and \cite[Proposition 2.22]{moctwi} in the case $\alpha=0$. 
                  \end{proof}              
              \end{proposition}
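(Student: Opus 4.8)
The plan is to write down the evident comparison map and then reduce its being a quasi-isomorphism to a local statement along $\pole^{(1)}$. Using the frame $v$ to identify $\L$ with $(\mathcal{O}_{\amb^{(1)}}(*\pole^{(1)}), d+d(\para f))$, the relative connection acts on $\Omega^\bullet_{\amb^{(1)}/\C_\para}(*\pole^{(1)})$ as $d+\para\, df\wedge$, since the $d\para$-part of $d(\para f)=\para\, df+f\, d\para$ is dropped when passing to forms relative to $\C_\para$. First I would define the map by $\omega\mapsto\omega\, v$. Because $(f)_\infty$ is reduced one has $\Omega^k_f\subseteq\Omega^k_\amb(\log\pole)\subseteq\Omega^k_\amb(\pole)$, and as $U_0\L$ contains $\mathcal{O}_{\amb^{(1)}}(\pole^{(1)})v$, the image $\omega\, v$ lies in $U_0\L\otimes\Omega^k_{\amb^{(1)}/\C_\para}$. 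The Leibniz rule gives $\nabla_{\mathrm{rel}}(\omega\, v)=(d\omega+\para\, df\wedge\omega)\, v$, so $\omega\mapsto\omega\, v$ is indeed a morphism of complexes $(\Omega^\bullet_{f,\para},d+\para df)\to U_0\L\otimes\Omega^\bullet_{\amb^{(1)}/\C_\para}$.

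That this morphism is a quasi-isomorphism is local along $\pole^{(1)}$: over $Y\times\C_\para$ the frame $v$ is flat and regular, $\Omega^\bullet_f$ coincides with the ordinary relative de Rham complex, and the map is an isomorphism. Near a point of $\pole$ I would fix coordinates $z_1,\dots,z_n$ with $\pole=\{z_1\cdots z_\ell=0\}$; since the pole divisor is reduced, $f=(z_1\cdots z_\ell)^{-1}$ after absorbing a unit, whence $df=-f\sum_{i\le\ell}dz_i/z_i$. In these coordinates $\Omega^\bullet_f$ is the concrete subcomplex of the logarithmic complex cut out by the requirement that $df\wedge\omega$ remain logarithmic, while $U_0\L\otimes\Omega^\bullet_{\mathrm{rel}}$ is described through the $V$-filtration recalled above. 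The essential point is that $U_0\L$ is the piece of the $V$-filtration along $\{\para=0\}$ whose relative de Rham complex has the correct restriction to $\para=0$: on $\gr^U_\alpha\L$ the operator $\para\del_\para+\alpha$ is nilpotent, so comparing $U_0\L$ with the neighbouring lattices $U_\alpha\L$ isolates the index $\alpha=0$. I would therefore filter both complexes simultaneously by order of pole along the branches of $\pole$ and by the $V$-filtration, pass to the associated graded, and observe that there the differential degenerates to (a multiple of) wedging by $\sum_{i\le\ell}dz_i/z_i$, that is, to a Koszul differential in the $\ell$ logarithmic directions.

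The main obstacle is exactly this graded local computation at the crossings of $\pole$: one must prove that the Koszul complex built from the forms $dz_i/z_i$ ($i\le\ell$) is acyclic outside the degree in which it reproduces the cohomology of $U_0\L\otimes\Omega^\bullet_{\mathrm{rel}}$, and that this holds uniformly in $\para$ and compatibly with the filtrations, and in particular that nothing degenerates as $\para\to0$. The reduced-pole hypothesis and the normal-crossing structure of $\pole$ are what guarantee the non-degeneracy of the relevant logarithmic residues and hence the acyclicity. This is the relative, lattice-level refinement of the Esnault-Sabbah-Yu comparison $(\Omega^\bullet_f,d+df)\simeqto(\Omega^\bullet_\amb(*\pole),d+df)$ cited above; the extra work over that prototype is precisely the control of the $\para=0$ limit through the $V$-filtration, which is the bookkeeping supplied by Mochizuki's analysis of the de Rham complexes of the lattices $U_\alpha\L$ specialized to $\alpha=0$.
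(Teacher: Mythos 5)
The paper offers no argument here beyond the citation: it invokes Mochizuki's Propositions 2.21 and 2.22 at $\alpha=0$, and those propositions are precisely the lattice-level local comparison you are trying to reconstruct. Your opening moves match what that reference establishes: the map $\omega\mapsto\omega\,v$ is the right one, the Leibniz computation showing it is a morphism of complexes is correct (the $f\,d\para$ term of $\nabla v$ dies in the relative complex), and the reduction to a local statement along $\pole^{(1)}$ is the right strategy. The membership claim $\omega\,v\in U_0\L\otimes\Omega^k_{\amb^{(1)}/\C_\para}$ is also fine, since $\Omega^k_f\subset\Omega^k_\amb(\log\pole)$ and $\mathcal{O}_{\amb^{(1)}}(\pole^{(1)})v\subset U_0\L$.

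The gap is that the decisive step is announced rather than proved. You yourself label the graded local computation ``the main obstacle'' and then assert that reducedness and normal crossings ``guarantee'' the acyclicity; that assertion is the entire content of the cited propositions. Concretely, what is missing is a local description of $U_0\L$ itself: it is the $\pi_\para^*\dmod_\amb$-module generated by $\mathcal{O}_{\amb^{(1)}}(\pole^{(1)})v$, and applying $\del_{z_i}$ to $(z_1\cdots z_\ell)^{-1}v$ produces sections with poles of order $\geq 2$ along the branches, weighted by powers of $\para f$ --- so $U_0\L$ is strictly larger than $\mathcal{O}_{\amb^{(1)}}(\pole^{(1)})v$, and one must exhibit a filtration of this larger module for which the associated graded of $\nabla_{\mathrm{rel}}$ really is the Koszul differential on $\sum_{i\leq\ell}dz_i/z_i$ and really is acyclic in the complementary degrees (compare the acyclicity of $(\Omega^\bullet_\amb(\log\pole),g^{-1}dg\wedge)$ used elsewhere in the paper). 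Your sketch never engages with the excess of $U_0\L$ over the naive lattice, nor with why the answer is uniform in $\para$ and recovers $\Omega^\bullet_f$ exactly (rather than, say, $\Omega^\bullet_\amb(\log\pole)$) on the other side. So the proposal is a correct roadmap to the cited results but not a self-contained proof; to close it you would either carry out Mochizuki's local filtration argument or, as the paper does, simply cite it.
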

          As a consequence, we have the following (see also the proof of  \cite[Corollary 2.23]{moctwi}):
              \begin{corollary}\label{712}
              We have the following isomorphism of logarithmic connections: 
               $$ \mathcal{H}^k_{f|\rmod=1}\simeqto
               \R^kp_{\para *}\left(U_0 \L\otimes \Omega^\bullet_{{\amb}^{(1)}/\C_\para}\right).$$
               We also have a quasi-isomorphism of complexes: 
               \begin{align*}
                (\Omega^\bullet_f, d)\left(=(\Omega_{f,\para}^\bullet, d+\para df)_{|\para=0}\right)
                \simeqto \gr^U_0\L\otimes \Omega^\bullet_{\amb},
               \end{align*}
              which induces 
              $V_f^k\simeqto \H^k \left(X,\left(\gr^U_0\L\otimes \Omega^\bullet_{\amb}\right)\right)$. 
              The residue endomorphism on $V_f^k$ is 
              identified with the nilpotent endomorphism on 
              $\H^k\left(\amb,\left(\gr^U_0\L\otimes \Omega^\bullet_{\amb}\right)\right)$
              associated with $\varphi_0$ on $\gr^U_0\L\otimes \Omega_X^\bullet$, 
              where $\varphi_0$ denotes the endomorphism induced by $\para\del_\para$.
              \qed
              \end{corollary}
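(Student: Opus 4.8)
The plan is to deduce all three assertions from the quasi-isomorphism (\ref{gi dokei L}) together with cohomology and base change, the analytic inputs being already contained in \cite{moctwi}. First I would identify the source of the isomorphism of logarithmic connections. Restricting $\Omega_{f,\rmod,\para}^\bullet$ to $\rmod=1$ trivializes the twist $\rmod^{-k}$ and the localization $\mathcal{O}(*(\rmod)_\infty)$, so the restricted complex is exactly $(\Omega^\bullet_{f,\para}, d+\para df)$ on $\amb^{(1)}$. Since $\mathcal{H}_f$ is locally free over $\mathcal{O}_S(*(\rmod)_\infty)$, base change applies and gives $\mathcal{H}^k_{f|\rmod=1}\simeq \R^k p_{\para *}(\Omega^\bullet_{f,\para}, d+\para df)$; applying $\R^k p_{\para *}$ to (\ref{gi dokei L}) then yields the asserted isomorphism of $\mathcal{O}_{\C_\para}$-modules. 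To upgrade it to an isomorphism of logarithmic connections, I would note that the target carries the Gauss--Manin connection induced by the ${}^\para\! V_0\dmod_{\amb^{(1)}}$-module structure of $U_0\L$: the action of $\para\del_\para$ is defined on $U_0\L$ and descends to a connection with logarithmic pole along $\{\para=0\}$, and (\ref{gi dokei L}) is compatible with this action, as in \cite{moctwi}.

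Next I would restrict to $\para=0$. On the left, $(\Omega^\bullet_{f,\para}, d+\para df)_{|\para=0}=(\Omega^\bullet_f, d)$ by inspection. On the right, since $U_{-1}\L=\para U_0\L$ by definition, one has $(U_0\L)_{|\para=0}=U_0\L/\para U_0\L=\gr^U_0\L$, while $\Omega^\bullet_{\amb^{(1)}/\C_\para}$ restricts to $\Omega^\bullet_\amb$; hence the target of (\ref{gi dokei L}) restricts to $\gr^U_0\L\otimes\Omega^\bullet_\amb$. The delicate point is that the naive restriction to $\para=0$ preserves the quasi-isomorphism. For this I would check that every term of both complexes is flat over $\C_\para$: the forms $\pi_\para^*\Omega_f^k$ are pullbacks, and $U_0\L$ is $\C_\para$-torsion-free as a submodule of the torsion-free $\L$, hence flat over the smooth curve $\C_\para$. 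Then naive restriction computes the derived restriction, and a quasi-isomorphism of complexes of flat sheaves remains a quasi-isomorphism, giving the second quasi-isomorphism. The induced identification $V_f^k\simeqto \H^k(\amb,\gr^U_0\L\otimes\Omega^\bullet_\amb)$ follows from one more application of base change: $V_f^k=(\mathcal{H}^k_{f|\rmod=1})_{|\para=0}$, and by local freeness this fiber is $\H^k$ of the $\para=0$-restricted complex, which the previous step identifies with the right-hand side.

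Finally, for the residue I would trace how $\nabla_{\para\del_\para}$ is constructed. On $\R^k p_{\para *}(U_0\L\otimes\Omega^\bullet_{\amb^{(1)}/\C_\para})$ the logarithmic connection in the $\para$-direction is induced by $\para\del_\para\in{}^\para\! V_0\dmod_{\amb^{(1)}}$ acting on $U_0\L$, so its residue at $\{\para=0\}$ is the endomorphism induced on $\gr^U_0\L$ by $\para\del_\para$, which is precisely $\varphi_0$, and which \cite[Proposition 2.3]{moctwi} guarantees is nilpotent (the relevant index being $\alpha=0$). Matching $\Res_{\{\para=0\}}\nabla$ on $V_f^k$ with the endomorphism of $\H^k(\amb,\gr^U_0\L\otimes\Omega^\bullet_\amb)$ induced by $\varphi_0$ is then exactly the statement that the Gauss--Manin residue along $\para=0$ is computed by the algebraic operator $\para\del_\para$ on the graded piece, which is standard in Mochizuki's framework; compare the proof of \cite[Corollary 2.23]{moctwi}.

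I expect the main obstacle to be this last identification of the residue: while each of the preceding steps is essentially base change or a flatness argument, verifying that the analytically defined residue $\Res_{\{\para=0\}}\nabla$ agrees with the purely algebraic $\para\del_\para$-action on $\gr^U_0\L$ requires controlling how the Gauss--Manin connection degenerates across $\para=0$, and it is here that the precise structure of the $V$-filtration and the compatibility of (\ref{gi dokei L}) with the $\dmod$-module action must be used carefully.
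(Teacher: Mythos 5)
Your proposal is correct and follows essentially the same route as the paper, which states the corollary as an immediate consequence of the quasi-isomorphism (\ref{gi dokei L}) and defers the details to the proof of \cite[Corollary 2.23]{moctwi}; your unwinding via base change at $\rmod=1$, flatness of $U_0\L$ over $\C_\para$ to justify restricting the quasi-isomorphism to $\para=0$ (using $U_{-1}\L=\para U_0\L$), and the identification of the residue with the $\para\del_\para$-action on $\gr^U_0\L$ is exactly the intended argument. The only remark worth adding is that the further comparison of $\varphi_0$ with the connecting map $\varphi_1$ of (\ref{GRU}) is deliberately postponed to Lemma \ref{varphi} and is not part of this corollary, so you are right not to prove it here.
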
  
     \subsubsection{Residue endomorphisms}\label{resM}
     We shall give another description of $\varphi_0$ in Corollary \ref{712}.
      Consider 
      $\Omega^k_{\amb^{(1)}}(\log \para)_0
      :=\mathcal{O}_{\{0\}\times\amb}\otimes \Omega^k_{\amb^{(1)}}(\log \para)$
      as an $\mathcal{O}_\amb$-module. 
      It naturally decomposes to   
      the following module: 
      $$\Omega^k_{\amb}\oplus [\para^{-1}d\para]\cdot \Omega^{k-1}_{\amb},$$  
      where $[\para^{-1} d\para]$ denotes the section induced by $\para^{-1}d\para$.
      
      Since $U_\alpha\L$ is a $V_0\dmod_{\amb^{(1)}}$-module, 
      we have 
      $\nabla: U_\alpha\L\otimes\Omega^k_{\amb^{(1)}}(\log \para)
      \to U_\alpha\L\otimes \Omega^{k+1}_{\amb^{(1)}}(\log \para).$
      This induces 
      \begin{align*}
      \nabla':\gr^U_0\L\otimes \Omega^k_{\amb^{(1)}}(\log \para)_0
      \to \gr^U_0\L\otimes \Omega^{k+1}_{\amb^{(1)}}(\log \para)_0.
      \end{align*}
      The morphisms 
      \begin{align*}
      &\nabla'_0:\gr^U_0\L\otimes \Omega^k_{\amb}
      \to \gr^U_0\L\otimes \Omega^{k+1}_{\amb}, \text{   and   }\\
      &\nabla'_0:\gr^U_0\L\otimes[\para^{-1}d\para]\cdot \Omega^{k-1}_{\amb}
      \to \gr^U_0\L\otimes [\para^{-1}d\para]\cdot \Omega^{k}_{\amb}
      \end{align*}
      induced by $\nabla'$ 
      are 
      the same as the flat connection $\nabla_0$
      given by the $\dmod_\amb$-module structure of $\gr^U_0\L$.
      The morphism $\gr^U_0\L\otimes \Omega^k_\amb\to \gr^U_0\L\otimes[\para^{-1}d\para]\cdot\Omega^k_\amb$
      induced by $\nabla'$
      is given by 
      $m\mapsto [\para^{-1}d\para ]\varphi_0(m)$.
      
      We have the following exact sequence of complexes: 
      \begin{align}\label{GRU}
       0\longrightarrow 
         \gr^U_0\L\otimes\left([\para^{-1}d\para]\cdot\Omega^\bullet_\amb[-1]\right)
         \longrightarrow
         \gr^U_0\L\otimes\Omega^\bullet_{\amb^{(1)}}(\log \para)_0        
        \xrightarrow{h}
        \gr^U_0\L\otimes\Omega^\bullet_\amb
        \longrightarrow 
        0
      \end{align}
      From this exact sequence, 
      we obtain a morphism
      $$\varphi_1:\gr^U_0\L\otimes\Omega^\bullet_\amb
      \to 
      \gr^U_0\L \otimes
      \left(
      [\para^{-1}d\para]\cdot \Omega^\bullet_\amb
      \right)
      \simeq 
      \gr^U_0\L\otimes\Omega^\bullet_\amb
      $$
      in the derived category $D^b(\C_X)$ of $\C_X$-modules.
      \begin{lemma}\label{varphi}
       $\varphi_0=\varphi_1.$
      \end{lemma}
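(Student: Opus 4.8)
The plan is to recognize the short exact sequence $(\ref{GRU})$ as a shift of the mapping-cone sequence attached to the chain endomorphism $\varphi_0$ of the complex $\gr^U_0\L\otimes\Omega^\bullet_\amb$, and then to invoke the standard fact that the connecting morphism of such a sequence recovers, in the derived category, the chain map out of which the cone was built. Since $\varphi_1$ is defined to be exactly this connecting morphism, followed by the identification dropping $[\para^{-1}d\para]$, this will yield $\varphi_1=\varphi_0$ in $D^b(\C_X)$.

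First I would check that $\varphi_0$ is an honest degree-zero morphism of complexes: as $\para\del_\para$ commutes with the $\dmod_\amb$-action on $\gr^U_0\L$, the induced endomorphism commutes with the de Rham differential $\nabla_0$, so $\varphi_0\colon\gr^U_0\L\otimes\Omega^\bullet_\amb\to\gr^U_0\L\otimes\Omega^\bullet_\amb$ is a chain map and defines a morphism in $D^b(\C_X)$. (Consistency with $(\nabla')^2=0$ on the middle term of $(\ref{GRU})$ reproduces this commutation.)

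Next I would make the cone structure explicit via the decomposition $\Omega^k_{\amb^{(1)}}(\log\para)_0\simeq\Omega^k_\amb\oplus[\para^{-1}d\para]\cdot\Omega^{k-1}_\amb$ of \S\ref{resM}, which splits $(\ref{GRU})$ as graded $\mathcal{O}_\amb$-modules. Choosing the splitting $s$ of the surjection $h$ given by the inclusion of the first summand, $a\mapsto(a,0)$, the description of $\nabla'$ in \S\ref{resM} gives $\nabla'(s(a))=(\nabla_0 a,\,[\para^{-1}d\para]\varphi_0(a))$ while $s(\nabla_0 a)=(\nabla_0 a,0)$. Hence $\nabla'\circ s-s\circ\nabla_0$ sends $a$ to $[\para^{-1}d\para]\varphi_0(a)$, which lands in the subcomplex $\gr^U_0\L\otimes([\para^{-1}d\para]\cdot\Omega^\bullet_\amb[-1])$. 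This cocycle represents the value at $a$ of the connecting morphism of $(\ref{GRU})$; under the identification $\gr^U_0\L\otimes[\para^{-1}d\para]\cdot\Omega^\bullet_\amb\simeq\gr^U_0\L\otimes\Omega^\bullet_\amb$ used to define $\varphi_1$, it becomes $\varphi_0(a)$. Therefore $\varphi_1=\varphi_0$.

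The only real obstacle is sign bookkeeping: one must verify that the shift conventions for the connecting morphism $C\to K[1]$, together with the sign carried by the differential of a shifted complex, match those implicit in the definition of $\varphi_1$, so that the boundary map comes out as $+\varphi_0$ rather than $-\varphi_0$. This is settled directly by the explicit splitting above, and no further geometric input — beyond the description of $\nabla'$ already recorded in \S\ref{resM} — is required.
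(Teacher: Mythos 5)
Your argument is correct and is essentially the paper's proof in a different wrapper: the paper constructs the mapping cone $\mathrm{C}^\bullet(h)$ and a homotopy $\Psi$ (given by exactly your splitting $a\mapsto(a,0)$) between $\iota_1$ and $\iota_0\circ\varphi_0$, whereas you compute the chain-level representative $\nabla'\circ s-s\circ\nabla_0$ of the connecting morphism directly from the short exact sequence $(\ref{GRU})$. The key input is identical in both cases --- the decomposition $\Omega^k_{\amb^{(1)}}(\log\para)_0\simeq\Omega^k_\amb\oplus[\para^{-1}d\para]\cdot\Omega^{k-1}_\amb$ together with the formula for $\nabla'$ from \S\ref{resM} --- and your sign discussion matches the paper's conventions.
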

      \begin{proof}
      Let $\mathrm{C}^\bullet(h)$ be the mapping cone of $h$ in  (\ref{GRU}),
      that is, 
      \begin{align*}
      \mathrm{C}^k(h)
      :=\gr^U_0\L \otimes \Omega^{k+1}_{\amb^{(1)}}(\log \para)_0
      \oplus\gr^U_0\L\otimes \Omega_\amb^k, \ \ \ 
      &d_{\mathrm{C}^\bullet(h)}(a, b):= (-\nabla' a, ha+\nabla_0 b), 
      \end{align*}
      where $a\in \gr^U_0\L \otimes \Omega^{k+1}_{\amb^{(1)}}(\log \para)_0$
      and $b\in\gr^U_0\L\otimes \Omega_\amb^k$.
      Then the morphism 
      $$\gr^U_0\L\otimes\Omega_\amb^k\ni \omega\mapsto
      [\para^{-1}d\para]\cdot\omega
      \in \gr^U_0\L \otimes \Omega^{k+1}_{\amb^{(1)}}(\log \para)_0$$
      induces a quasi-isomorphism 
      $\iota_0: \gr^U_0\L\otimes\Omega_\amb^\bullet\to \mathrm{C}^\bullet(h).$
     The morphism $\varphi_1$ is induced by 
      a natural morphism $\iota_1: \gr^U_0\L\otimes\Omega_\amb^\bullet\to \mathrm{C}^\bullet(h).$

      Using the identification 
      $\Omega^k_{\amb^{(1)}}(\log \para)_0=\Omega^k_\amb\oplus [\para^{-1}d\para]\cdot\Omega^{k-1}_\amb$, 
      we obtain a morphism $\Omega_\amb^k\to\Omega^k_{\amb^{(1)}}(\log \para)_0$ of $\mathcal{O}_X$-modules.
      This morphism induces $\Psi: \gr^U_0\L\otimes\Omega_\amb^k\to \mathrm{C}^{k-1}(h).$
      For a section $\omega\in\gr^U_0\L\otimes\Omega_\amb^k$, 
      we have 
      $\Psi\circ d_{\gr^U_0\L\otimes \Omega_\amb^\bullet}(\omega)=\Psi(\nabla_0(\omega))=\nabla_0'(\Psi(\omega))$.
      We also have
      \begin{align*}
      d_{\mathrm{C}^\bullet(h)}\circ\Psi(\omega)
        &=(-\nabla'(\Psi(\omega)), h\circ\Psi(\omega)) \\
        &=-\nabla'_0(\Psi(\omega))-[\para^{-1}d\para]\cdot\varphi_0(\omega)+\iota_1(\omega)\\
        &=-\nabla'_0(\Psi(\omega))-\iota_0\varphi_0(\omega)+ \iota_1(\omega).
      \end{align*}
      Hence we obtain $(d\circ\Psi+\Psi\circ d)(\omega)=\iota_1(\omega)-\iota_0\varphi(\omega)$,
      which implies $\varphi_1=\varphi_0$.
      \end{proof}
      
   \subsection{Relative cohomology groups for Landau-Ginzburg models}\label{sub section Beilinson functor}
       Let $t$ denote a coordinate 
       on the target space of $\w:Y\to \C$.
       Put $s:=1/t$ and let $\C_s\subset\P^1$ be the complex plane with coordinate $s$.
       Take a sufficiently small holomorphic disk $\Delta_s\subset \C_s$ centered at infinity 
       so that 
       no critical values of $f$ are contained in $\Delta_s^\times:=\Delta_s\setminus\{ \infty\}$.
     
       Set $\mathfrak{\amb}:=\amb\times \Delta_s,$ $\mathfrak{\pole}:=\pole\times \Delta_s$.  
       Let $\pi_s:\mathfrak{\amb}\to \amb$ and $p_s:\mathfrak{\amb}\to \Delta_s$ be the projections.     
       Put $g:=1/f$.
       Set 
       $\Gamma:=\{(x, s)\in \mathfrak{\amb}\mid g(x)=s\}$.  
       The inclusion $\Gamma \hookrightarrow\mathfrak{\amb}$ is denoted by $i_\Gamma$. 
              The divisor $\mathfrak{\pole}\cup \Gamma$ is  normal crossing. 
       The intersection $\mathfrak{\pole}\cap \Gamma$ is denoted by $\mathfrak{\pole}_\Gamma$.
       \subsubsection{De Rham complexes}   
       For $k\in\Z_{\geq 0}$, we have a natural morphism
       \begin{align*}
         \bm{\phi}^k: \Omega_\mathfrak{\amb}^k\left(\log \mathfrak{D}\cup\{s=0\}\right)
         \longrightarrow
         i_{\Gamma *}\Omega^k_\Gamma(\log \mathfrak{D}_\Gamma).
       \end{align*}
        Let $\bm{E}^k$ be the kernel of $\bm{\phi}^k$.
       This gives a subcomplex $\bm{E}^\bullet$ of
       $\Omega_\mathfrak{\amb}^\bullet\left(\log \mathfrak{D}\cup\{s=0\}\right)$. 
       \begin{lemma}
       For each $k$, we have
        \begin{align}\label{bmE}
        \bm{E}^k=\left(
                        \frac{ds}{s}-\frac{dg}{g}
                         \right)
                         \cdot
                         \pi_s^*\Omega^{k-1}_\amb(\log \pole)
                         \oplus
                         (s-g)\cdot \pi^*_s\Omega^k_\amb(\log \pole).
        \end{align}
        In particular, 
        $\bm{E}^k$ is a locally free $\mathcal{O}_\mathfrak{\amb}$-module.
       \end{lemma}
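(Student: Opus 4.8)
The plan is to turn the computation of $\ker\bm{\phi}^k$ into linear algebra over $\mathcal{O}_{\mathfrak{\amb}}$ by choosing a logarithmic coframe adapted to the graph $\Gamma$; the statement is local on $\mathfrak{\amb}$, so I work in a chart. First I would record the structural input coming from the hypotheses: since $(f)_\infty$ is reduced with simple normal crossing support $\pole$, in suitable local coordinates $z_1,\dots,z_n$ on $\amb$ one has $g=u\,z_1\cdots z_r$ with $u$ a unit and $\pole=\{z_1\cdots z_r=0\}$, whence
$$\frac{dg}{g}=\frac{du}{u}+\sum_{i=1}^r\frac{dz_i}{z_i}\in\pi_s^*\Omega^1_\amb(\log\pole).$$
Consequently $\omega_0:=\frac{ds}{s}-\frac{dg}{g}$ is a section of $\Omega^1_{\mathfrak{\amb}}(\log(\mathfrak{D}\cup\{s=0\}))$ differing from $\frac{ds}{s}$ only by a pullback logarithmic form.

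Next I would use the product decomposition for $\mathfrak{\amb}=\amb\times\Delta_s$, namely $\Omega^k_{\mathfrak{\amb}}(\log(\mathfrak{D}\cup\{s=0\}))=\pi_s^*\Omega^k_\amb(\log\pole)\oplus\frac{ds}{s}\wedge\pi_s^*\Omega^{k-1}_\amb(\log\pole)$, and rewrite it with $\omega_0$ in place of $\frac{ds}{s}$; since this change of coframe is unimodular, it remains a direct sum
$$\Omega^k_{\mathfrak{\amb}}(\log(\mathfrak{D}\cup\{s=0\}))=\pi_s^*\Omega^k_\amb(\log\pole)\oplus\omega_0\wedge\pi_s^*\Omega^{k-1}_\amb(\log\pole).$$
The point of this frame is that $\bm{\phi}^k$ becomes transparent. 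Indeed $\pi_s$ restricts to an isomorphism $\Gamma\simeqto\amb$ carrying $\mathfrak{D}_\Gamma$ to $\pole$, and on $\Gamma$ one has $s=g$, so $i_\Gamma^*s=i_\Gamma^*g$ and therefore $i_\Gamma^*\omega_0=0$. Hence for $\alpha=\omega_1+\omega_0\wedge\omega_2$ in the decomposition above, $\bm{\phi}^k(\alpha)=i_\Gamma^*\omega_1$, and on the first summand $i_\Gamma^*$ is evaluation along the graph, surjective onto $\Omega^k_\Gamma(\log\mathfrak{D}_\Gamma)$ with kernel exactly $\mathcal{I}_\Gamma\cdot\pi_s^*\Omega^k_\amb(\log\pole)=(s-g)\,\pi_s^*\Omega^k_\amb(\log\pole)$, the ideal of $\Gamma$ being $(s-g)$. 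This yields
$$\bm{E}^k=\ker\bm{\phi}^k=(s-g)\,\pi_s^*\Omega^k_\amb(\log\pole)\oplus\Bigl(\tfrac{ds}{s}-\tfrac{dg}{g}\Bigr)\wedge\pi_s^*\Omega^{k-1}_\amb(\log\pole),$$
which is the asserted formula.

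Local freeness is then formal: the two summands lie in complementary direct summands of the adapted decomposition, so the sum is direct, and multiplication by the non-zero-divisor $(s-g)$, respectively wedging with the coframe vector $\omega_0$, identifies them with $\pi_s^*\Omega^k_\amb(\log\pole)$ and $\pi_s^*\Omega^{k-1}_\amb(\log\pole)$, both locally free. Thus $\bm{E}^k$ is locally free of rank $\binom{n}{k}+\binom{n}{k-1}=\binom{n+1}{k}$; this matches $\mathrm{rank}\,\Omega^k_{\mathfrak{\amb}}(\log(\mathfrak{D}\cup\{s=0\}))$, as it must, since the cokernel $i_{\Gamma *}\Omega^k_\Gamma(\log\mathfrak{D}_\Gamma)$ is torsion supported on the divisor $\Gamma$.

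The one place needing separate care is the locus $(f)_0=\{f=0\}$, where $g=1/f$ acquires a pole: there $\frac{dg}{g}$ and $s-g$ fail to be holomorphic and the adapted coframe breaks down, so the displayed formula is to be read on $\mathfrak{\amb}\setminus\bigl((f)_0\times\Delta_s\bigr)$. This is harmless, because $\Gamma=\{sf=1\}$ is disjoint from $(f)_0\times\Delta_s$; there $\bm{\phi}^k=0$ and $\bm{E}^k$ is just the full logarithmic sheaf, which is locally free. Accordingly the only genuinely substantive step — and the only one that uses the hypotheses — is the verification that $\frac{dg}{g}$ is logarithmic, equivalently that $\Gamma$ meets $\mathfrak{D}\cup\{s=0\}$ normally; once that is secured, the identification of the kernel and the local freeness are the bookkeeping carried out above.
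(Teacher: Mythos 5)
Your argument is correct and is essentially the paper's own proof in different packaging: the paper takes a kernel section $s^{-1}ds\cdot\omega_1+\omega_2$, subtracts $(s^{-1}ds-g^{-1}dg)\omega_1$ (which visibly lies in the right-hand side), and identifies $\bm{E}^k\cap\pi_s^*\Omega^k_\amb(\log\pole)$ with $(s-g)\pi_s^*\Omega^k_\amb(\log\pole)$ --- which is exactly your change of coframe from $\frac{ds}{s}$ to $\omega_0=\frac{ds}{s}-\frac{dg}{g}$ followed by the computation of $\ker i_\Gamma^*$ on pullback forms. Your explicit remark that the displayed formula is to be read away from $(f)_0\times\Delta_s$, where $\Gamma$ is absent and $\bm{E}^k$ is simply the full logarithmic sheaf, is a point the paper's proof passes over in silence, and it is needed for the global local-freeness claim.
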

       \begin{proof}
        It is trivial that the right hand side of (\ref{bmE}) is included in $\bm{E}^k$.
        Let 
        $s^{-1}ds\cdot \omega_1+\omega_2$ be
        a section of $\bm{E}^k$,
        where 
        $\omega_1\in\pi^*_s \Omega^{k-1}_\amb(\log\pole)$
        and $\omega_2\in \pi_s^*\Omega^k_\amb(\log \pole)$.
        Since $(s^{-1} ds-g^{-1}dg)\cdot\omega_1$
        is a (local) section of $\bm{E}^k$, 
        $g^{-1} dg\cdot\omega_1+\omega_2$ is 
        a section of $\bm{E}^k$. 
        We observe that $g^{-1} dg\cdot\omega_1+\omega_2$ is also
        a section of $\pi^*_s\Omega^k_\amb(\log\pole)$.
        Since 
        $\bm{E}^k\cap \pi^*_s\Omega^k_\amb(\log\pole)=(g-s)\pi^*_s\Omega^k_\amb(\log\pole)$, 
        we obtain that $g^{-1} dg\cdot\omega_1+\omega_2$
        is a section of $(s-g)\pi^*_s\Omega^k_\amb(\log\pole)$.
        This implies that 
        $s^{-1}ds\cdot \omega_1+\omega_2$
        is a section of the right hand side of (\ref{bmE}).
       \end{proof}
       \subsubsection{Relative de Rham complex}
       For $k\in \Z_{\geq 0}$, we have a canonical morphism
       \begin{align*}
        \phi^k:\pi_s^*\Omega^k_\amb(\log \pole)\longrightarrow 
        i_{\Gamma *}\Omega_{\Gamma/\disk}^k(\log \mathfrak{\pole}_\Gamma). 
       \end{align*}
       Remark that $\pi_s^*\Omega^k_\amb(\log \pole)$ is given by
       \begin{align*} 
       \Omega_{\Gamma/\disk}^k(\log \mathfrak{\pole}_\Gamma)
       :=\frac{\Omega_{\Gamma}^k(\log \mathfrak{\pole}_\Gamma)}
                {\Omega_{\Gamma}^{k-1}(\log \mathfrak{\pole}_\Gamma)\wedge p_\Gamma^*\Omega_\disk^1(\log s)},
       \end{align*}    
       where $p_\Gamma$ denotes the composition of $i_\Gamma$ and $p_s$.      
       \begin{definition}[\cite{kkp14}]
       The kernel of the morphism $\phi^k$ is denoted by $E^k$.
       The induced subcomplex of $\pi_s^*\Omega^\bullet_\amb(\log \pole)$
       is denoted by $E^\bullet$.
       \end{definition}
            By definition, 
            $E^k_{|\mathfrak{\amb}\setminus \mathfrak{\pole}}
            \simeq \big{(}(s-g)\pi_s^*\Omega^k_\amb(\log \pole)\big{)}_{|\mathfrak{\amb}\setminus \mathfrak{\pole}}$.
       \begin{lemma}[\cite{kkp14}]
       Let $Q$ be a point in $\pole$.
       If we take a sufficiently small neighborhood $U$ of $Q$, 
       we have the following: 
       \begin{align}\label{Ek}
        E^k_{|\mathfrak{U}}=
        \frac{dg}{g}\cdot(\pi_s^*\Omega^{k-1}_\amb(\log \pole))_{|\mathfrak{U}}
        +\big{(}(s-g)\pi_s^*\Omega^{k-1}_\amb(\log \pole)\big{)}_{|\mathfrak{U}},
       \end{align}
       where $\mathfrak{U}:=U\times \disk$.
       Moreover, $E^k$ is a locally free $\mathcal{O}_\mathfrak{\amb}$-module.
       \end{lemma}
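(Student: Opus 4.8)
My plan is to establish the two inclusions in (\ref{Ek}) separately and then read off local freeness from an explicit frame. I work on a small polydisk $\mathfrak U=U\times\disk$ about $Q$, shrinking $U$ so that $g(U)\subset\disk$ and so that in coordinates $z_1,\dots,z_n$ centered at $Q$ one has $\pole=\{z_1\cdots z_r=0\}$ and $g=u\,z_1\cdots z_r$ with $u$ a unit (here $r\ge 1$ since $Q\in\pole$, and the vanishing order is $1$ because $(f)_\infty$ is reduced). The projection $\pi_s\circ i_\Gamma\colon\Gamma\simeqto U$ identifies $\Gamma$ with $U$ over $\mathfrak U$; under it a pulled-back form restricts to $\Gamma$ by the substitution $s=g(z)$, while $p_\Gamma$ becomes $g$ and $p_\Gamma^*(ds/s)=dg/g$. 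Hence the target of $\phi^k$ is identified with the quotient $\Omega^k_\amb(\log\pole)/\big(\tfrac{dg}{g}\wedge\Omega^{k-1}_\amb(\log\pole)\big)$ on $U$, and $\phi^k(\pi_s^*\tilde\omega)$ is simply the class of $\tilde\omega$.

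The inclusion $\supseteq$ is immediate: for $(s-g)\gamma$ the factor $s-g$ vanishes on $\Gamma$, so $\phi^k((s-g)\gamma)=0$; for $\tfrac{dg}{g}\wedge\beta$ the restriction to $\Gamma$ lands in $\tfrac{dg}{g}\wedge\Omega^{k-1}_\amb(\log\pole)$ and hence dies in the relative quotient. For the reverse inclusion I take $\omega\in E^k_{|\mathfrak U}$ and expand $\omega=\sum_A h_A(z,s)\,\omega_A$ in the standard log frame $\{\omega_A\}$ of $\pi_s^*\Omega^k_\amb(\log\pole)$ (with $\omega_i=\tfrac{dz_i}{z_i}$ for $i\le r$ and $\omega_j=dz_j$ for $j>r$). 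Holomorphic division by $s-g(z)$ (legitimate since $s-g$ cuts out the smooth divisor $\Gamma$ and $g(U)\subset\disk$) gives $h_A(z,s)=h_A(z,g(z))+(s-g)k_A(z,s)$, so $\omega=\omega_0+(s-g)\gamma$ with $\omega_0=\pi_s^*\tilde\omega_0$ pulled back from $U$. Since $(s-g)\gamma$ already lies in $E^k$, we get $\phi^k(\omega_0)=0$; by the identification above this says the class of $\tilde\omega_0$ vanishes, i.e.\ $\tilde\omega_0=\tfrac{dg}{g}\wedge\tilde\beta$ for some $\tilde\beta\in\Omega^{k-1}_\amb(\log\pole)$. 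Therefore $\omega=\tfrac{dg}{g}\wedge\pi_s^*\tilde\beta+(s-g)\gamma$ belongs to the right-hand side of (\ref{Ek}).

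For local freeness I use (\ref{Ek}) together with a change of frame. Since $\tfrac{dg}{g}=\tfrac{du}{u}+\sum_{i=1}^r\tfrac{dz_i}{z_i}$ and the holomorphic form $\tfrac{du}{u}$ has $\tfrac{dz_1}{z_1}$-component divisible by $z_1$, the coefficient of $\tfrac{dz_1}{z_1}$ in $\tfrac{dg}{g}$ is a unit at $Q$; hence $\tilde\omega_1:=\tfrac{dg}{g}$ and $\tilde\omega_i:=\omega_i$ $(i\ge 2)$ form a frame of $\Omega^1_\amb(\log\pole)$ near $Q$, so $\{\tilde\omega_A\}_{|A|=k}$ frames $\pi_s^*\Omega^k_\amb(\log\pole)$. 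In this frame $\tfrac{dg}{g}\wedge\pi_s^*\Omega^{k-1}_\amb(\log\pole)=\langle\,\tilde\omega_A\mid 1\in A\,\rangle$, whence (\ref{Ek}) reads
\[
E^k_{|\mathfrak U}=\langle\,\tilde\omega_A\mid 1\in A\,\rangle_{\mathcal O_{\mathfrak U}}\;+\;(s-g)\,\langle\,\tilde\omega_A\mid 1\notin A\,\rangle_{\mathcal O_{\mathfrak U}}.
\]
The generators $\{\tilde\omega_A\mid 1\in A\}\cup\{(s-g)\tilde\omega_A\mid 1\notin A\}$ are $\mathcal O_{\mathfrak U}$-linearly independent, because $\{\tilde\omega_A\}$ is a free basis and $s-g\ne 0$ in the domain $\mathcal O_{\mathfrak U}$; hence $E^k$ is free of rank $\binom{n-1}{k-1}+\binom{n-1}{k}=\binom{n}{k}$. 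This is consistent with $\pi_s^*\Omega^k_\amb(\log\pole)/E^k\cong i_{\Gamma*}\Omega^k_{\Gamma/\disk}(\log\mathfrak D_\Gamma)$ being torsion supported on the divisor $\Gamma$.

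I expect the reverse inclusion to be the only delicate point: one must justify the holomorphic division by $s-g$ on all of $\mathfrak U$ (the reason for shrinking $U$ so that $g(U)\subset\disk$) and then carefully match the vanishing of $\phi^k(\omega_0)$ with membership in $\tfrac{dg}{g}\wedge\Omega^{k-1}_\amb(\log\pole)$ through the graph identification $\Gamma\cong U$ and the relation $p_\Gamma^*(ds/s)=dg/g$. Once that bookkeeping is in place, the frame computation for local freeness is purely formal.
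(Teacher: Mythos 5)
Your proof is correct and follows essentially the same route as the paper: your frame $\tilde\omega_1=dg/g,\ \tilde\omega_2,\dots$ of $\Omega^1_\amb(\log\pole)$ is precisely the local splitting $\pi_s^*\Omega^\ell_\amb(\log \pole)=\mathcal{F}^\ell\oplus\mathcal{G}^\ell$ with $g^{-1}dg\wedge\colon\mathcal{F}^{\ell-1}\simeqto\mathcal{G}^\ell$ that the paper extracts from the acyclicity of $(\Omega^\bullet_\amb(\log \pole), g^{-1}dg\wedge)$ near $Q\in\pole$. You simply make explicit the division by $s-g$ and the frame computation that the paper declares obvious, and you correctly read the second summand of $(\ref{Ek})$ as $(s-g)\pi_s^*\Omega^{k}_\amb(\log\pole)$, the exponent $k-1$ there being a typo.
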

       \begin{proof}
       Since the complex $(\Omega^\bullet_\amb(\log \pole), g^{-1}dg)$
       is acyclic near $Q\in D$, we have a decomposition 
       $$\pi_s^*\Omega^\ell_\amb(\log \pole)_{|\mathfrak{U}(=U\times \disk)}=\mathcal{F}^\ell\oplus\mathcal{G}^\ell$$
       such that $g^{-1}dg:\mathcal{F}^{\ell-1}\simeqto \mathcal{G}^\ell$ $(\ell\in\Z_{\geq 0})$
       for a sufficiently small neighborhood $U$ of $Q$ (see the proof of \cite[Lemma 2.29]{moctwi}).
       We have $E^k_{|\mathfrak{U}}\supset \mathcal{G}^k$,
       and $E^k_{|\mathfrak{U}}\cap\mathcal{F}^k=(s-g)\mathcal{F}^k$.      
       The local freeness of $E^k$ and the equation $(\ref{Ek})$ are obvious by this description.
       \end{proof}
       By this lemma, the restriction of $E^\bullet$ to $s=0$
       is identified with $(\Omega_f, d)$.
         Here, we remark that we have 
       $$\Omega^k_{f|U}=g\cdot \Omega_\amb^k(\log\pole)_{|U}+
       \frac{df}{f}\wedge\Omega_\amb^{k-1}(\log \pole)_{|U}$$
       for sufficiently small $U$ 
       (see \cite[(2.3.1)]{kkp14}, \cite[Lemma 2.29]{moctwi} for example).       

       \subsubsection{Gauss-Manin connection}
       We have a canonical epi-morphism
       $\varphi:\bm{E}^k\to E^k$.
       \begin{lemma}
        $\mathrm{Ker }\ \!\varphi =s^{-1}ds\cdot E^{k-1}$.
       \end{lemma}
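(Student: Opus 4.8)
The plan is to make the canonical morphism $\varphi$ explicit as a relative quotient and then to compute its kernel locally on $\mathfrak{\amb}$, separating the locus away from $\Gamma$ from a neighbourhood of $\Gamma$. First I would use that $\mathfrak{\amb}=\amb\times\disk$ is a product and $\{s=0\}=\amb\times\{0\}$ to split
\begin{align*}
\Omega^k_{\mathfrak{\amb}}(\log(\mathfrak{\pole}\cup\{s=0\}))
=\pi_s^*\Omega^k_\amb(\log\pole)\oplus\frac{ds}{s}\wedge\pi_s^*\Omega^{k-1}_\amb(\log\pole),
\end{align*}
and I write $q$ for the projection onto the first summand, so that $\ker q=\frac{ds}{s}\wedge\pi_s^*\Omega^{k-1}_\amb(\log\pole)$. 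Since $\phi^k$ is restriction to $\Gamma$ followed by the relative quotient on $\Gamma$, while $\bm{\phi}^k$ is the bare restriction, and since the relative quotient on $\Gamma$ annihilates $\frac{ds}{s}|_\Gamma=\frac{dg}{g}|_\Gamma$, the square relating $\bm{\phi}^k$, $\phi^k$, $q$ and the relative quotient on $\Gamma$ commutes. Hence $q(\bm{E}^k)\subset E^k$, the canonical epimorphism is $\varphi=q|_{\bm{E}^k}$, and
\begin{align*}
\ker\varphi=\bm{E}^k\cap\ker q=\bm{E}^k\cap\Big(\frac{ds}{s}\wedge\pi_s^*\Omega^{k-1}_\amb(\log\pole)\Big).
\end{align*}

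Next I would compute this intersection locally. Away from $\Gamma$ both $\bm{\phi}^k$ and $\phi^k$ vanish, so $\bm{E}^k$ and $E^k$ are the full logarithmic sheaves, $\varphi=q$, and there $\ker\varphi=\frac{ds}{s}\wedge\pi_s^*\Omega^{k-1}_\amb(\log\pole)=\frac{ds}{s}\cdot E^{k-1}$; this also settles the locus $\{f=0\}$, where $g$ is infinite, since it is disjoint from $\Gamma$ once $\disk$ is small. Near $\Gamma$ one has $g=s\in\disk$ close to $0$, in particular finite, so $\frac{dg}{g}=-df/f$ is logarithmic along $\pole$ only and the description $\bm{E}^k=(\frac{ds}{s}-\frac{dg}{g})\pi_s^*\Omega^{k-1}_\amb(\log\pole)\oplus(s-g)\pi_s^*\Omega^k_\amb(\log\pole)$ applies. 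A section $(\frac{ds}{s}-\frac{dg}{g})\alpha+(s-g)\beta$ has $ds$-free part $-\frac{dg}{g}\wedge\alpha+(s-g)\beta$, so it lies in $\ker q$ precisely when this part vanishes, in which case it equals $\frac{ds}{s}\wedge\alpha$. Thus near $\Gamma$ we obtain $\ker\varphi=\frac{ds}{s}\wedge K^{k-1}$, where $K^{k-1}\subset\pi_s^*\Omega^{k-1}_\amb(\log\pole)$ is the subsheaf of those $\alpha$ with $\frac{dg}{g}\wedge\alpha\in(s-g)\pi_s^*\Omega^k_\amb(\log\pole)$, and it remains to prove $K^{k-1}=E^{k-1}$.

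The inclusion $E^{k-1}\subset K^{k-1}$ follows at once from the generators of $E^{k-1}$: for $\alpha=\frac{dg}{g}\wedge\gamma$ one has $\frac{dg}{g}\wedge\alpha=0$, and for $\alpha=(s-g)\delta$ one has $\frac{dg}{g}\wedge\alpha=(s-g)\frac{dg}{g}\wedge\delta$, both in $(s-g)\pi_s^*\Omega^k_\amb(\log\pole)$. For the reverse inclusion I would use the local acyclicity of the complex $(\Omega^\bullet_\amb(\log\pole),\frac{dg}{g}\wedge)$ near $\Gamma$, which provides a splitting $\pi_s^*\Omega^\ell_\amb(\log\pole)=\mathcal{F}^\ell\oplus\mathcal{G}^\ell$ with $\mathcal{G}^\ell=\frac{dg}{g}\wedge\mathcal{F}^{\ell-1}$ and with $\frac{dg}{g}\wedge$ inducing an isomorphism $\mathcal{F}^{\ell-1}\simeqto\mathcal{G}^\ell$. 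Near $\mathfrak{\pole}_\Gamma$ this is the decomposition already used in the description of $E^k$ (after \cite[Lemma 2.29]{moctwi}); near $\Gamma\setminus\mathfrak{\pole}_\Gamma$ one has $g\neq0$ and, as $\disk$ contains no critical value of $f$ in $\disk^\times$, also $dg\neq0$, so $\frac{dg}{g}$ is a nowhere-vanishing holomorphic $1$-form and wedging with it is an acyclic Koszul differential, giving the same splitting. Writing $\alpha=\alpha_\mathcal{F}+\alpha_\mathcal{G}$ one has $\frac{dg}{g}\wedge\alpha=\frac{dg}{g}\wedge\alpha_\mathcal{F}\in\mathcal{G}^k$; intersecting $\mathcal{G}^k$ with $(s-g)(\mathcal{F}^k\oplus\mathcal{G}^k)$ and using that $s-g$ is not a zero divisor forces $\frac{dg}{g}\wedge\alpha_\mathcal{F}\in(s-g)\mathcal{G}^k$, hence $\alpha_\mathcal{F}\in(s-g)\mathcal{F}^{k-1}$ by the isomorphism above; since moreover $\alpha_\mathcal{G}\in\mathcal{G}^{k-1}=\frac{dg}{g}\wedge\mathcal{F}^{k-2}$, we conclude $\alpha\in(s-g)\pi_s^*\Omega^{k-1}_\amb(\log\pole)+\frac{dg}{g}\wedge\pi_s^*\Omega^{k-2}_\amb(\log\pole)=E^{k-1}$. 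Together with the region away from $\Gamma$ this yields $\ker\varphi=\frac{ds}{s}\cdot E^{k-1}$.

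I expect the crux to be this reverse inclusion near $\Gamma$: one must produce the splitting $\mathcal{F}^\bullet\oplus\mathcal{G}^\bullet$ coherently on a whole neighbourhood of $\Gamma$, combining the acyclicity near $\pole$ cited above with the nonvanishing of $dg$ guaranteed by the choice of $\disk$ away from $\pole$, and then track the divisibility by $s-g$ through the decomposition. The remaining verifications, namely the explicit form of $\varphi$ and the inclusion $E^{k-1}\subset K^{k-1}$, are routine.
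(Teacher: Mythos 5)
Your proof is correct and follows essentially the same route as the paper: there, too, one writes a general section of $\bm{E}^k$ via the decomposition $(\ref{bmE})$, imposes the vanishing of the $ds$-free part $-g^{-1}dg\,\omega_1+(s-g)\omega_2$, and concludes that the section equals $s^{-1}ds$ times an element of $E^{k-1}$. The only difference is that the paper simply asserts the key divisibility step (that $g^{-1}dg\wedge\omega_1\in(s-g)\pi_s^*\Omega^k_\amb(\log \pole)$ forces $\omega_1=(s-g)\tau_1+g^{-1}dg\,\tau_2$ and $\omega_2=g^{-1}dg\,\tau_1$), whereas you justify it via the splitting $\mathcal{F}^\bullet\oplus\mathcal{G}^\bullet$ already used in the description of $E^k$ and its extension off $\pole$ along $\Gamma$ — a worthwhile elaboration, not a different method.
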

       \begin{proof}
        $\mathrm{Ker}\ \!\varphi\supset s^{-1}ds\cdot E^{k-1}$ is trivial.
        Let $(s^{-1}ds-g^{-1}dg)\omega_1+(s-g)\omega_2$ be a (local) section of $\mathrm{Ker}\ \!\varphi$,
        where $\omega_1\in\pi_s^* \Omega^{k-1}_\amb(\log \pole)$
        and $\omega_2\in\pi_s^* \Omega^{k}_\amb(\log \pole)$.
        We have 
        $$-g^{-1}dg\omega_1 +(s-g)\omega_2=0.$$
        Hence, we have 
        $$ \omega_1=(s-g)\tau_1+g^{-1}dg\tau_2,\ \ \ \omega_2=g^{-1}dg\tau_1$$
        for some $\tau_1\in \Omega^{k-1}_\amb(\log \pole)$ and 
        $\tau_2\in \Omega^{k-2}_\amb(\log \pole)$.
        We obtain
        \begin{align*}
         &(s^{-1}ds-g^{-1}dg)\omega_1+(s-g)\omega_2\\
         &= (s^{-1}ds-g^{-1}dg)\big{(}(s-g)\tau_1+g^{-1}dg\tau_2\big{)}
              +(s-g)g^{-1}dg\tau_1\\
         &=s^{-1}ds(s-g)\tau_1+{(}s^{-1}ds-g^{-1}dg{)}g^{-1}dg\tau_2\\
         &=s^{-1}ds\big{(}(s-g)\tau_1+g^{-1}dg\tau_2\big{)}.
        \end{align*}
        This implies $\mathrm{Ker}\ \!\varphi\subset s^{-1}ds\cdot E^{k-1}$.
       \end{proof}
       By this lemma, 
       we have the following diagram, whose rows and columns are exact: 
       \begin{align}\label{GME}
       \xymatrix{&0\ar[d] &0\ar[d]&0\ar[d]\\
        0\ar[r]  & s^{-1}ds\cdot E^\bullet[-1]\ar[r]\ar[d]               
        &   \bm{E}^\bullet  \ar[r]^{\ \varphi\ }\ar[d]&   E^\bullet\ar[d]   \ar[r]&    0\\
        0\ar[r] & s^{-1}ds\cdot\pi_s^*\Omega^\bullet_\amb(\log \pole)[-1]\ar[r]\ar[d]^{\phi^\bullet[-1]}
         &   \Omega^\bullet_\mathfrak{\amb}(\log (\mathfrak{\pole}\cup\{s=0\}))\ar[r] \ar[d]^{\bm\phi^\bullet}
         & \pi_s^*\Omega^\bullet_\amb(\log \pole)\ar[r]\ar[d]^{\phi^\bullet}    
         &0          \\
         0\ar[r]&  s^{-1}ds\cdot i_{\Gamma*}\Omega^\bullet_{\Gamma/\disk}(\log \mathfrak{D}_\Gamma)[-1]\ar[r]\ar[d]&
         i_{\Gamma*}\Omega^\bullet_{\Gamma}(\log \mathfrak{D}_\Gamma)\ar[r]\ar[d]&
         i_{\Gamma*}\Omega^\bullet_{\Gamma/\disk}(\log \mathfrak{D}_\Gamma)\ar[r]\ar[d]&0\\
         &0&0&0
           }
       \end{align}
       From this exact sequence, we obtain a morphism 
       $$E^\bullet\longrightarrow s^{-1}ds\cdot E^\bullet $$
       in the derived category $D^b(\C_X)$.
       This gives a logarithmic connection 
       \begin{align}\label{gauman}
       \nabla^\mathrm{GM}:\R^kp_{s*}E^\bullet \longrightarrow \R^kp_{s*}E^\bullet\otimes \Omega^1_\disk(\log s).
       \end{align}
       On $\Delta_s^\times$, the kernel of $\nabla^\mathrm{GM}$
       is the local system of the relative cohomology $H^k(Y,Y_b)$ $(b\in \Delta_s^\times)$
       (\cite{kkp14}).        
       Hence (\ref{gauman}) gives a logarithmic extension 
       of the flat connection 
       associated with 
       the local system of the relative cohomology $H^k(Y,Y_b)$ $(b\in \Delta_s^\times)$.
      \subsubsection{Residue endomorphisms}\label{sub res}
      Put $\bm{E}^\bullet_0:=\bm{E}^\bullet\otimes\mathcal{O}_{\amb\times\{0\}}$.
      The complex $E^\bullet_0$ can naturally be 
      considered as a complex on $\amb$.
      The complex $E^\bullet_0$ is a subcomplex
      of the complex $\Omega_\amb^\bullet(\log \pole)\oplus s^{-1}d s\otimes\Omega^{\bullet-1}_\amb(\log \pole)$.
      On $\pi_s(\Gamma)$, we have 
      $$ \bm{E}^k_0=g\cdot \Omega^k_\amb(\log \pole)\oplus
           \left(\frac{d s}{s}-\frac{dg}{g}\right)\otimes\Omega^{k-1}_\amb(\log\pole).$$
      On $\amb\setminus \pi_s(\Gamma)$, 
      we have 
      $$ \bm{E}^k_0=\Omega^k_\amb(\log \pole)\oplus \frac{ds}{s}\otimes \Omega_\amb^{k-1}(\log \pole).$$
       From the exact sequence (\ref{GME}),
       we have the following exact sequence: 
       \begin{align}\label{GME0}
        0\longrightarrow 
          \frac{ds}{s}\otimes(\Omega^\bullet_f, d)[-1]
          \longrightarrow 
          \bm{E}^\bullet_0
          \longrightarrow
          (\Omega^\bullet_f, d)
          \longrightarrow
          0
       \end{align}
       From this exact sequence, we obtain a morphism
       $$\varphi_2:(\Omega^\bullet_f, d) \longrightarrow \frac{ds}{s}\otimes(\Omega^\bullet_f, d)\simeq (\Omega^\bullet_f, d).$$
       This induces 
       a residue endomorphism 
       $$\Res_{\{s=0\}}(\nabla^\mathrm{GM}):\H^k(X,(\Omega_f, d))\longrightarrow \H^k(X,(\Omega_f, d))$$
       of $\nabla^{\mathrm{GM}}$ along $\{s=0\}$.
     \subsection{Hodge-Tate conditions for Landau-Ginzburg models}\label{HTLG}
       \subsubsection{Comparison of the residue endomorphisms}\label{W}
        We shall compare the residue endomorphisms given in \S \ref{qisL}
        (see also \S \ref{resM})
        and \S \ref{sub res}.
        Put $\Omega_\mathfrak{\amb}^\bullet(\log s)_0
        :=\Omega_\mathfrak{\amb}^\bullet(\log s)
            \otimes \mathcal{O}_{\amb\times\{0\}}$.
        Let $[s^{-1}d s]$ denote the section of 
        $\Omega_\mathfrak{\amb}^\bullet(\log s)_0$
        induced by $s^{-1}d s$.
        The correspondence $[s^{-1}ds]\leftrightarrow [\para^{-1}d\para]$
        gives an isomorphism 
        $\Omega_\mathfrak{\amb}^\bullet(\log s)_0
        \simeq \Omega^\bullet_{\amb^{(1)}}(\log \para)_0$.
        Via this isomorphism, 
        we identify $\Omega_\mathfrak{\amb}^\bullet(\log s)_0$ with $ \Omega^\bullet_{\amb^{(1)}}(\log \para)_0$.
        Similarly, 
        we identify $$\Omega_\mathfrak{\amb}^\bullet\big{(}\log (\mathfrak{\pole}\cup\{s=0\})\big{)}_0
        :=\Omega_\mathfrak{\amb}^\bullet\big{(}\log (\mathfrak{\pole}\cup\{s=0\})\big{)}
            \otimes \mathcal{O}_{\amb\times\{0\}}$$
        with 
        $$\Omega_{\amb^{(1)}}^\bullet\big{(}\log ({\pole^{(1)}}\cup\{\para=0\})\big{)}_0
        :=\Omega_{\amb^{(1)}}^\bullet\big{(}\log ({\pole^{(1)}}\cup\{\para=0\})\big{)}
            \otimes \mathcal{O}_{\{0\}\times\amb}.$$
            
        By the construction of $U_0\L$, 
        we have an inclusion
        $$\Omega_{\amb^{(1)}}^k(\log \para)\otimes \mathcal{O}_{\amb^{(1)}}(\pole^{(1)})\cdot v
        \into \Omega_{\amb^{(1)}}^k(\log \para)  \otimes\L.$$
        We also have another inclusion
        $$\Omega_{\amb^{(1)}}^k\big{(}\log (D^{(1)}\cup\{\para=0\})\big{)}\cdot v \into
        \Omega_{\amb^{(1)}}^k(\log \para)\otimes \mathcal{O}_{\amb^{(1)}}(\pole^{(1)})\cdot v.$$
        Hence we obtain a morphism
        \begin{align*}
         \Omega^k_\mathfrak{\amb}\big{(}\log (\mathfrak{\pole}\cup\{s=0\})\big{)}
         \longrightarrow 
         U_0\L\otimes \Omega^k_{X^{(1)}}(\log \para).
        \end{align*}
        Since the filtration $U_\bullet\L$ is indexed by $\Z$,
        we have a morphism 
        $$\Omega_\mathfrak{\amb}^k\big{(}\log (\mathfrak{\pole}\cup\{s=0\})\big{)}_0
        \to \gr^U_0\L\otimes \Omega^k_{\amb^{(1)}}(\log \para)_0$$
        given by $\eta\mapsto v\otimes\eta$,
        where $v$ denotes the section of $\gr^U_0\L$ induced by the global section $v$ of $\L$.
        By restricting this morphism to $\bm{E}^k_0$, 
        we obtain a morphism
        $$\Phi:\bm{E}^k_0\longrightarrow \gr^U_0\L\otimes \Omega^k_{\amb^{(1)}}(\log \para)_0.$$
        \begin{lemma}\label{Phi}
         $\Phi$ defines a morphism of complexes.
        \end{lemma}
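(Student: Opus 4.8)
The plan is to verify the chain-map identity on the \emph{unrestricted} complexes and only reduce modulo $U_{-1}\L$ at the very end. Recall that $\Phi$ is induced by the morphism $\eta\mapsto v\otimes\eta$ from $\Omega^\bullet_\mathfrak{\amb}(\log(\mathfrak{\pole}\cup\{s=0\}))$ to $U_0\L\otimes\Omega^\bullet_{\amb^{(1)}}(\log\para)$, built from the two inclusions above together with the identification $[s^{-1}ds]\leftrightarrow[\para^{-1}d\para]$. The differential on the source is the exterior derivative $d$, while the differential on the target de Rham complex is the connection $\nabla$ with $\nabla v=d(\para f)v$; hence the Leibniz rule gives, for a section $\eta$,
\begin{align*}
\nabla(v\otimes\eta)-v\otimes d\eta=(\nabla v)\wedge\eta=d(\para f)\wedge\eta\cdot v .
\end{align*}
Thus it suffices to show that this error term lies in $U_{-1}\L\otimes\Omega^{k+1}_{\amb^{(1)}}(\log\para)$ for every lift $\eta$ of a section of $\bm E^k_0$; then $\eta\mapsto v\otimes\eta$ becomes a morphism of complexes after passing to $\gr^U_0\L$, and restricting to $\para=0$ yields the assertion for $\Phi$. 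A routine diagram chase, using that $d$ descends to $\bm E^\bullet_0$ and that $\nabla$ induces the differential $\nabla'$ on $\gr^U_0\L\otimes\Omega^\bullet_{\amb^{(1)}}(\log\para)_0$, reduces the lemma to this single vanishing statement.

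To establish the vanishing I would feed in the explicit shape $(\ref{bmE})$ of $\bm E^k$. Putting $g=1/f$, so that $-\tfrac{dg}{g}=\tfrac{df}{f}$ and $\tfrac{d\para}{\para}+\tfrac{df}{f}=\tfrac{d(\para f)}{\para f}$, a local section takes the form
\begin{align*}
\eta=\frac{d(\para f)}{\para f}\wedge\beta+\left(\para-\frac1f\right)\alpha,
\end{align*}
with $\alpha,\beta$ pulled back from $\Omega^\bullet_\amb(\log\pole)$. The first summand contributes $\tfrac{1}{\para f}\,d(\para f)\wedge d(\para f)\wedge\beta\cdot v=0$, so it drops out identically. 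For the second, the error term is $(\para-\tfrac1f)(\nabla v)\wedge\alpha$, and I would check that $(\para-\tfrac1f)\nabla v\in U_{-1}\L\otimes\Omega^1_{\amb^{(1)}}(\log\para)$: the summand $\para\,\nabla v$ lies in $\para\,U_0\L\otimes\Omega^1_{\amb^{(1)}}(\log\para)=U_{-1}\L\otimes\Omega^1_{\amb^{(1)}}(\log\para)$ because $\nabla$ preserves $U_0\L$ with at most logarithmic poles along $\para$; while $\tfrac1f\nabla v=\para\tfrac{df}{f}v+d\para\cdot v$ lies in $U_{-1}\L\otimes\Omega^1_{\amb^{(1)}}(\log\para)$ since $\tfrac{df}{f}v\in U_0\L\otimes\Omega^1_{\amb^{(1)}}(\log\para)$ by the inclusions above and $\para v\in U_{-1}\L$.

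The two facts that make this bookkeeping close are exactly our standing hypotheses: reducedness of $(f)_\infty$, which gives $fv\in\mathcal{O}_{\amb^{(1)}}(\pole^{(1)})v\subset U_0\L$, and the relation $U_{-1}\L=\para U_0\L$ for the $V$-filtration, so that any term carrying a factor $\para$ or the factor $fv$ falls into $U_{-1}\L$. I expect the genuine difficulty to be organizational rather than computational: one must pin down the identification of the two differentials after the restrictions to $\{s=0\}$ and $\{\para=0\}$ and after passing to $\gr^U_0\L$, and confirm that absorbing the logarithmic $\pole$-poles into the $\L$-factor (the content of the two inclusions) does not create components outside $U_0\L$. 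Once these compatibilities are arranged, the identity $d(\para f)\wedge d(\para f)=0$ kills the $\beta$-part and the factor-of-$\para$ argument kills the $\alpha$-part, which completes the proof.
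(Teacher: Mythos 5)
Your proposal is correct and follows essentially the same route as the paper: both rest on the Leibniz error term $\nabla v = d(\para f)\,v$, killed on the $\bigl(\tfrac{ds}{s}-\tfrac{dg}{g}\bigr)$-part by $d(\para f)\wedge d(\para f)=0$ and on the $(s-g)$-part by extracting a factor of $\para$ and using $U_{-1}\L=\para U_0\L$ together with $g^{-1}dg\cdot v,\ \para^{-1}d\para\cdot v\in U_0\L\otimes\Omega^1_{\amb^{(1)}}(\log\para)$. The only difference is organizational — you verify the identity upstairs and reduce modulo $U_{-1}\L$ at the end, while the paper computes $\nabla'(v)$ directly in $\gr^U_0\L\otimes\Omega^1_{\amb^{(1)}}(\log\para)_0$ and treats $\pi_s(\Gamma)$ and its complement separately — and the residual bookkeeping you flag (absorbing the $\log\pole$-poles of $\alpha$ into the $\L$-factor) is handled the same way in the paper's own argument.
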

        \begin{proof}
        Firstly, we verify the lemma on $\pi_s(\Gamma)$. 
        Since $f=1/g$, we have 
        $$\nabla'(v)=v\para dg^{-1}+v\para g^{-1}[\para^{-1}d\para]=v\para g^{-1}\big{(}[\para^{-1}d\para]-g^{-1}dg\big{)} $$
        in $\gr^U_0\otimes \Omega^1_{\amb^{(1)}}(\log\para)_0$.
        Hence, we have
        $$\nabla'(v)\cdot\big{(}-g^{-1}dg +[\para^{-1}d \para]\big{)}=0 $$
        in $\gr^U_0\otimes \Omega^2_{\amb^{(1)}}(\log\para)_0$.
        Since $v g^{-1}dg$ and $v\para^{-1}d\para$ are sections of $U_0\L\otimes\Omega^1_{\amb^{(1)}}(\log \para)$, 
        we have
        $$g\nabla'(v)=v\para(-g^{-1}dg+[\para^{-1}d\para])=0 $$
        in $\gr^U_0\otimes \Omega^1_{\amb^{(1)}}(\log\para)_0$.
        
        Let $(g^{-1}dg-[\para^{-1}d\para])\omega_1+g\omega_2$ be a section of $\bm{E}^k_0$, 
        where $\omega_1\in\Omega^{k-1}_\amb(\log \pole) $ and $\omega_2\in \Omega^k_\amb(\log \pole)$
        (see \S \ref{sub res}). We then obtain 
        \begin{align*}
         &\nabla'(v\cdot(g^{-1}dg-[\para^{-1}d\para])\omega_1)\\
         &=\nabla'(v)\cdot(g^{-1}dg-[\para^{-1}d\para])\omega_1
              +v\cdot d((g^{-1}dg-[\para^{-1}d\para]\omega_1))\\
         &=v\cdot d((g^{-1}dg-[\para^{-1}d\para]\omega_1))
        \end{align*}
        and
        \begin{align*}
         \nabla'(v\cdot g\omega_2)
         =\nabla'(v)\cdot g\cdot\omega_2+vd(g\cdot \omega_2)
         =vd(g\cdot \omega_2).        
        \end{align*} 
         Hence we have $\nabla'\circ \Phi=\Phi\circ d$ on $\pi_s(\Gamma)$.
         
         On $\amb\setminus \pi_s(\Gamma)$, 
         $f=1/g$ is a holomorphic function.
         Hence, $\nabla'(v)=v\para df+v\para f\cdot \para^{-1}d\para$
         is a section of $\para U_0\L\otimes\Omega^1_{\amb^{(1)}}(\log \para)$.
         This implies $\nabla'(v)=0$ on $\gr^U_0\L\otimes\Omega^1_{\amb^{(1)}}$.
         Then we can prove $\nabla'\circ \Phi=\Phi\circ d$
         on $\amb\setminus \pi_s(\Gamma)$ similarly.
        \end{proof}
        We then obtain the following.
        \begin{theorem}\label{icchi}
       The nilpotent endomorphism $(\Res_{\{\para=0\}}\nabla)_{|\rmod=1}$ on $V_f^k$ coincide 
       with 
       the residue endomorphism of the Gauss-Manin connection $\nabla^{\mathrm{GM}}$
       for the relative cohomology group.
       \end{theorem}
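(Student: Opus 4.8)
The plan is to realize both residue endomorphisms as the maps induced by connecting morphisms of short exact sequences of complexes, to observe that these two sequences are intertwined by the morphism $\Phi$ of Lemma \ref{Phi}, and then to conclude by naturality of the connecting morphism together with the identifications already established. Thus the theorem becomes a matter of recognizing $\Phi$ as a morphism of short exact sequences whose restrictions to sub and quotient are quasi-isomorphisms.

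First I would recall the two descriptions and reduce the statement to a single comparison. By Corollary \ref{712}, the endomorphism $(\Res_{\{\para=0\}}\nabla)_{|\rmod=1}$ on $V_f^k$ is the nilpotent endomorphism on $\H^k(\amb,\gr^U_0\L\otimes\Omega^\bullet_\amb)$ induced by $\varphi_0$, and by Lemma \ref{varphi} we have $\varphi_0=\varphi_1$, where $\varphi_1$ is the connecting morphism of the short exact sequence (\ref{GRU}). On the other side, the residue of $\nabla^{\mathrm{GM}}$ is by construction the endomorphism induced by $\varphi_2$, the connecting morphism of (\ref{GME0}). Hence it suffices to identify $\varphi_1$ with $\varphi_2$ under the relevant quasi-isomorphisms.

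The key step is to check that $\Phi$ is the middle vertical arrow of a morphism of short exact sequences from (\ref{GME0}) to (\ref{GRU}). I would verify that $\Phi$ carries the subcomplex $\frac{ds}{s}\otimes\Omega^\bullet_f[-1]$ into $\gr^U_0\L\otimes[\para^{-1}d\para]\cdot\Omega^\bullet_\amb[-1]$: this follows from the definition $\eta\mapsto v\otimes\eta$ together with the identification $[s^{-1}ds]\leftrightarrow[\para^{-1}d\para]$, since the $\frac{ds}{s}$-factor is sent precisely to the $[\para^{-1}d\para]$-factor. Consequently $\Phi$ induces a map on the quotients $(\Omega^\bullet_f,d)\to\gr^U_0\L\otimes\Omega^\bullet_\amb$, which is exactly the quasi-isomorphism of Corollary \ref{712}; and the restriction of $\Phi$ to the subcomplex is, up to the shift and the same identification, again that quasi-isomorphism. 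Thus both induced vertical maps, on the sub and on the quotient, are quasi-isomorphisms.

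Granting this, naturality of the connecting morphism in $D^b(\C_X)$ yields a commutative square relating $\varphi_2$ and $\varphi_1$ through these quasi-isomorphisms; passing to $\H^k$ identifies $\varphi_2$ with $\varphi_1$. Combined with $\varphi_0=\varphi_1$ and the identification in Corollary \ref{712}, this gives the asserted coincidence of the two residue endomorphisms. I expect the main obstacle to be the bookkeeping in the compatibility check: one must use the local descriptions of $\bm{E}^\bullet_0$ near $\pi_s(\Gamma)$, of $E^\bullet$ in (\ref{Ek}), and of the local form of $\Omega^k_f$ to confirm that $\Phi$ genuinely respects the sub/quotient decomposition and that the $g$-factors are absorbed by the generator $v$ of $\gr^U_0\L$, so that the induced maps are honest quasi-isomorphisms and not merely morphisms of complexes.
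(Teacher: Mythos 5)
Your proposal matches the paper's proof essentially verbatim: the paper assembles exactly the commutative diagram of short exact sequences with rows (\ref{GME0}) and (\ref{GRU}), middle column $\Phi$ from Lemma \ref{Phi}, and outer columns the quasi-isomorphisms of Corollary \ref{712}, then concludes $\varphi_1=\varphi_2$ by naturality of the connecting morphism and invokes Lemma \ref{varphi} for $\varphi_0=\varphi_1$. The compatibility of $\Phi$ with the sub/quotient decomposition that you flag as the main bookkeeping point is precisely what Lemma \ref{Phi} and the identification $[s^{-1}ds]\leftrightarrow[\para^{-1}d\para]$ are set up to deliver, so your argument is correct and complete.
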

       \begin{proof}
        By Lemma \ref{Phi},  
        We obtain the following commutative diagram in the abelian category of complexes on $X$: 
        $$ \xymatrix{
        0 \ar[r] & \frac{ds}{s}\cdot (\Omega_f^\bullet, d)[-1] \ar[r]\ar[d]^{\text{qis}} 
                   & \bm{E}^\bullet_0 \ar[r]\ar[d]^{\Phi} 
                   & (\Omega_f^\bullet, d)\ar[r]\ar[d]^{\text{qis}}& 0\\
        0 \ar[r] & \gr^U_0\L\otimes ([\para^{-1}d\para]\cdot\Omega_\amb^\bullet[-1]) \ar[r]
                   & \gr^U_0\L\otimes \Omega_{\amb^{(1)}}^\bullet(\log\para)_0\ar[r]
                   & \gr^U_0\L\otimes \Omega_\amb^\bullet\ar[r]
                   & 0
        }$$
        The rows of this diagram are the exact sequences (\ref{GME0}) and (\ref{GRU}).
        Left and right columns are the quasi-isomorphisms given in Corollary \ref{712}.
        This diagram shows $\varphi_1=\varphi_2$ in the derived category, 
        which implies the Theorem (See Lemma \ref{varphi}).
       \end{proof}

   \subsubsection{Koszul complex}\label{KC}
   Let $W_\bullet\Omega^\ell_\amb(\log \pole)$ 
   be the weight filtration given by 
   \begin{align*}
    W_m\Omega^\ell_\amb(\log \pole)
    :=\begin{cases}
       \Omega^\ell_\amb(\log\pole)  & (m\geq \ell) \\
       \Omega^{\ell-m}_\amb\wedge \Omega^m_\amb(\log \pole)  & (0\leq m< \ell) \\
       0 & (m<0).
       \end{cases}
   \end{align*} 
   Take the irreducible decomposition $\pole=\bigcup_{i\in \Lambda}\pole_i$.
   Fix an order of $\Lambda$. 
   Note that each $\pole_i$ is a smooth hypersurface in $\amb$ by the assumption.
   Put $\pole(0):=\amb$, and $\pole(m):=\bigsqcup_{I\subset \Lambda, |I|=m}(\bigcap_{i\in I}D_i)$
   for $m\in \Z_{>0}$.
   We have the isomorphism of complexes: 
   $\mathrm{R\acute{e}s}_m:\gr^W_m\Omega^\bullet_\amb(\log \pole)\simeqto a_{m*}\Omega^\bullet_{\pole(m)}[-m],$
   where $a_m:\pole(m)\to X$ denotes the morphism induced by inclusions
   (\cite{delIII}, \cite{guisur}, \cite{petmix}).
   
   We recall that the morphism $\mathrm{R\acute{e}s}_m$ is locally described as follows. 
   Let $(U;(z_1,\dots, z_n))$ be a local coordinate system 
   such that $U\cap \pole=\bigcup_{1\leq j\leq k}\{z_j=0\}$.   
   Assume that we have $\{i_1<i_2<\cdots<i_k\}\subset \Lambda$
   such that $D_{i_j}\cap U=\{z_j=0\}$. 
   For $J=(j_1, \dots,j_m)$ with $1\leq j_1<j_2<\cdots <j_m\leq k$, 
   put $D_J:=\{z_{j_1}=\cdots=z_{j_m}=0\}$
   and $(z^{-1}d z)_J:=z_{j_1}^{-1}dz_{j_1}\wedge\cdots z_{j_m}^{-1}dz_{j_m}$.   
   For $\omega\in W_m\Omega^\ell_\amb(\log \pole)$, 
   we have a unique expression 
   $$\omega=(z^{-1}dz)_J\wedge\alpha+\beta,$$
   where $\alpha\in \Omega_X^{\ell-m}$, $\beta\in\Omega^k_\amb(\log \pole)$
   such that $\beta$ does not have the component $(z^{-1}d z)_J$.
   The residue $\mathrm{R\acute{e}s}_J\omega$ is defined by 
   $\mathrm{R\acute{e}s}_J\omega:=\alpha_{|D_J}$, and $\mathrm{R\acute{e}s}_m$ is defined by 
   $$\mathrm{R\acute{e}s}_m(\omega):=\sum_{J\subset \{1,\dots, k\},|J|=m}  \mathrm{R\acute{e}s}_J(\omega).$$

   Let $\mathcal{M}^{\rm gp}_{\amb,\pole}$ be the sheaf of invertible sections of $\mathcal{O}_\amb(*\pole)$.  
   We have the morphism $\mathcal{O}_\amb\to\mathcal{M}^{\rm gp}_{\amb,\pole}$
   given by $h\mapsto \exp(2\pi \i h)$, where $\i:=\sqrt{-1}$.
   We have the following exact sequence of $\Z_\amb$-modules: 
   $$0\longrightarrow \Z_\amb\to \mathcal{O}_\amb\xrightarrow{\exp(2\pi \i-)}\mathcal{M}_{\amb,\pole}^{\rm gp}
   \xrightarrow{v_{D(1)}} a_{1*}\Z_{D(1)}\longrightarrow 0, $$
   where $\Z_\amb$-module structure of $\mathcal{M}_{\amb,\pole}^{\rm gp}$ is given by the multiplication 
   and $v_{D(1)}$ denotes taking the valuation along the divisors.
   The induced morphism $\mathcal{O}_\amb\to \mathcal{M}^{\rm gp}_{\amb, \pole}\otimes_\Z\Q$
   is denoted by ${\bm{\mathrm{e}}}$. We also have the following exact sequence: 
   \begin{align}\label{rme}
    0\longrightarrow \Q_\amb\to \mathcal{O}_\amb\xrightarrow{\ {\bm{\mathrm{e}}}\ }
    \mathcal{M}_{\amb,\pole}^{\rm gp}\otimes_\Z\Q
   \longrightarrow a_{1*}\Q_{D(1)}\longrightarrow 0.
   \end{align}
   We shall consider the following ^^ ^^ Koszul complex'' of ${\bm{\mathrm{e}}}$ (\cite{illcom}, \cite{petmix}):
   \begin{align*}
   K^\ell_m:=
   \mathrm{Sym}_\Q^{m-\ell}(\mathcal{O}_\amb)
   \otimes_\Q\bigwedge^\ell_\Q(\mathcal{M}^{\rm gp}_{\amb,\pole}\otimes_\Z\Q).
   \end{align*}
   We have the natural inclusion 
   $K^\ell_m\into K^\ell_{m+1}$ by
   $h_1\cdots h_{m-\ell}\otimes y\mapsto 1\cdot h_1\cdots h_{m-\ell}\otimes y$
   and the differential $d:K^\ell_m\to K^{\ell+1}_{m}$ by 
   \begin{align*}
   d(h_1\cdots h_{m-\ell}\otimes y):=\sum_{i=1}^{m-\ell}h_1\cdots h_{i-1}\cdot h_{i+1}\cdots h_{m-\ell}\otimes 
  {\bm{\mathrm{e}}}({h_i})\wedge y.
   \end{align*} 
   \begin{lemma}[{\cite[Proposition 4.3.1.6]{illcom}, \cite[Theorem 4.15]{petmix}}]
   \label{ratiso}
   \begin{align*} 
    \mathscr{H}^q(K^\bullet_p)\simeq 
    \begin{cases}
    a_{q*}\Q_{\pole(q)} &\text{ for }q\leq p\\
    0&\text{ for }q>p. \qed
    \end{cases}    
    \end{align*}
    \end{lemma}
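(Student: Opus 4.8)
The plan is to recognise $K^\bullet_p$ as the Koszul complex attached to the $\Q$-linear morphism $\bm{\mathrm{e}}\colon\mathcal{O}_\amb\to\mathcal{M}^{\rm gp}_{\amb,\pole}\otimes_\Z\Q$, whose kernel and cokernel are furnished by the exact sequence (\ref{rme}): $\ker\bm{\mathrm{e}}=\Q_\amb$ and $\mathrm{coker}\,\bm{\mathrm{e}}=a_{1*}\Q_{D(1)}$. Writing $A:=\mathcal{O}_\amb$, $B:=\mathcal{M}^{\rm gp}_{\amb,\pole}\otimes_\Z\Q$, $K:=\Q_\amb$, $Q:=a_{1*}\Q_{D(1)}$ and $I:=\mathrm{im}\,\bm{\mathrm{e}}=\ker(B\onto Q)$, the displayed differential $d(h_1\cdots h_{m-\ell}\otimes y)=\sum_i h_1\cdots\hat h_i\cdots h_{m-\ell}\otimes\bm{\mathrm{e}}(h_i)\wedge y$ is precisely the Koszul differential, and the total degree (symmetric plus exterior) $(p-\ell)+\ell=p$ is preserved, so $K^\bullet_p$ is a bona fide complex for each fixed $p$.

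Next I would reduce to the acyclicity of the Koszul complex of an identity map. Since we work over the field $\Q$, every sheaf in sight is flat, and the two short exact sequences $0\to K\to A\to I\to 0$ and $0\to I\to B\to Q\to 0$ induce natural filtrations of $\mathrm{Sym}^\bullet A$ and $\bigwedge^\bullet B$ with associated graded $\mathrm{Sym}^\bullet K\otimes\mathrm{Sym}^\bullet I$ and $\bigwedge^\bullet I\otimes\bigwedge^\bullet Q$. Passing to the associated graded of $K^\bullet_p$, the differential becomes the Koszul differential of $\id_I$ tensored with the constant, differential-free factors $\mathrm{Sym}^{a}K$ and $\bigwedge^{e}Q$. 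Tracking degrees, for fixed $a$ and $e$ the $I$-part is the Koszul complex of $\id_I$ in total $I$-degree $j=p-a-e$, occupying cohomological degrees $\ell\in\{e,\dots,e+j\}$.

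The core computation is then the classical fact that the Koszul complex $\bigl(\mathrm{Sym}^{j-c}I\otimes\bigwedge^{c}I\bigr)_{c}$ of the identity is acyclic for $j\geq 1$ and equals $\Q$ in degree $0$ for $j=0$ (the Euler homotopy supplies the contraction). Hence only the terms with $j=p-a-e=0$ survive, each contributing $\mathrm{Sym}^{a}K\otimes\bigwedge^{e}Q$ in cohomological degree $q=e$; the unique surviving value is $a=p-q\geq 0$, so $\mathscr{H}^q(K^\bullet_p)\simeq\mathrm{Sym}^{p-q}(\Q_\amb)\otimes\bigwedge^{q}\bigl(a_{1*}\Q_{D(1)}\bigr)$ for $0\leq q\leq p$ and $0$ for $q>p$. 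Because $K=\Q_\amb$ has rank one, $\mathrm{Sym}^{p-q}(\Q_\amb)\simeq\Q_\amb$ canonically, so the surviving factor is just $\bigwedge^{q}Q$. Finally I would identify $\bigwedge^{q}\bigl(a_{1*}\Q_{D(1)}\bigr)$ with $a_{q*}\Q_{\pole(q)}$: since $a_{1*}\Q_{D(1)}=\bigoplus_{i\in\Lambda}a_{i*}\Q_{D_i}$, at a point lying on exactly the components indexed by $J\subset\Lambda$ the stalk of the $q$-th exterior power has basis $\{e_{i_1}\wedge\cdots\wedge e_{i_q}\mid i_1<\cdots<i_q,\ \{i_1,\dots,i_q\}\subset J\}$, in bijection with the local components of $\pole(q)$ through that point, the fixed order on $\Lambda$ fixing the signs.

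I expect the main obstacle to be purely bookkeeping, namely verifying that the two natural filtrations make $d$ strictly filtered, so that the associated-graded differential is honestly the $\id_I$-Koszul differential and the resulting spectral sequence degenerates at $E_1$ with canonical surviving classes; this is exactly the content of \cite[Proposition 4.3.1.6]{illcom}. Alternatively one may perform the splittings $A\simeq K\oplus I$ and $B\simeq I\oplus Q$ stalkwise, run the same degree count, and check that the induced isomorphism $\mathscr{H}^q(K^\bullet_p)\simeq a_{q*}\Q_{\pole(q)}$ is independent of the chosen splitting, hence globalises to a morphism of sheaves.
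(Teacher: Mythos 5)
The paper offers no proof of this lemma at all---it is quoted from Illusie and Peters--Steenbrink with the qed box placed in the statement itself---so the only meaningful comparison is with those references, and your argument is a correct reconstruction of the standard proof given there. You correctly recognise $K^\bullet_p$ as the Koszul complex of $\bm{\mathrm{e}}$, use the four-term exact sequence (\ref{rme}) to reduce (the stalkwise splitting over $\Q$ being the cleanest route, since only cohomology sheaves are at stake and it sidesteps the filtration/degeneration bookkeeping entirely) to the acyclicity of the Koszul complex of an identity map via the Euler homotopy, and identify $\bigwedge^q\bigl(a_{1*}\Q_{\pole(1)}\bigr)$ with $a_{q*}\Q_{\pole(q)}$ stalkwise using the fixed order on $\Lambda$.
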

    By this lemma, the natural inclusion $K^\bullet_p\into K^\bullet_{p+1}$ is a quasi-isomorphism 
    for $p\geq n=\dim X$.
    We put $K^\bullet_{\infty}:=K^\bullet_{n}$
    and let $W_mK_\infty^k$ be the image of $K_m^k$ to $K_{n}^k$ for $m< n$ and 
    $W_mK_\infty^k:=K_\infty^k$ for $m\geq n$.
    We obtain a filtered complex $(K^\bullet_\infty, W)$.
    \begin{theorem}[{\cite[Theorem 4.15, Corollary 4.16]{petmix}}]\label{QisK}
      The morphism $K^\ell_m\to W_m\Omega^\ell_\amb(\log \pole)$ given by  
    \begin{align}\label{qislog}
    h_1\cdots h_{m-\ell}\otimes y_1\wedge \cdots \wedge y_\ell
    \mapsto \frac{1}{(2\pi \i)^\ell}\left(\prod_{i=1}^{m-\ell}h_i\right)\cdot 
    \frac{dy_1}{y_1}\wedge\cdots \wedge \frac{dy_\ell}{y_\ell}
    \end{align}
    induces a filtered quasi-isomorphism 
       $\alpha:(K^\bullet_\infty, W)\otimes\C \to (\Omega_\amb^\bullet(\log \pole), W)$, 
    or an isomorphism in the derived category of filtered complexes $D^b(F\C_X)$ \cite[\S 7.1]{delIII}.
    \qed 
     \end{theorem}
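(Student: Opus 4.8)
The plan is to reduce the claim to a computation on the $W$-graded pieces, using the standard fact that a morphism carrying one biregular filtration into another is a filtered quasi-isomorphism exactly when it induces a quasi-isomorphism on each $\gr^W_m$. Accordingly I would first verify that $\alpha$ is a filtered morphism of complexes. For the differentials, the only point is that $\alpha$ sends the one-form generator coming from $\bm{\mathrm{e}}(h)$ to $(2\pi\i)^{-1}\,d(\exp(2\pi\i h))/\exp(2\pi\i h)=dh$, so that the Koszul differential $h_i\mapsto\bm{\mathrm{e}}(h_i)\wedge(-)$ matches $d$ on logarithmic forms up to the chosen powers of $2\pi\i$, and a direct check gives $\alpha\circ d=d\circ\alpha$. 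Compatibility with $W$ is automatic: a nonzero element of $K^\ell_m$ forces $\ell\le m$, while $W_m\Omega^\ell_\amb(\log\pole)=\Omega^\ell_\amb(\log\pole)$ as soon as $m\ge\ell$, so $\alpha$ carries $W_mK^\ell_\infty$ into $W_m\Omega^\ell_\amb(\log\pole)$ with nothing to prove. It then remains to show that $\gr^W_m\alpha$ is a quasi-isomorphism for every $m$.

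The heart of the matter is the identification of the graded pieces. On the logarithmic de Rham side this is already available: the residue $\mathrm{R\acute{e}s}_m$ gives $\gr^W_m\Omega^\bullet_\amb(\log\pole)\simeqto a_{m*}\Omega^\bullet_{\pole(m)}[-m]$, whose cohomology sheaves are $a_{m*}\C_{\pole(m)}$ in degree $m$ and $0$ otherwise by the holomorphic Poincaré lemma on the smooth variety $\pole(m)$. For the Koszul side, since the inclusions $K^\bullet_{m-1}\into K^\bullet_m$ are injective I would identify $\gr^W_mK^\bullet_\infty$ with $K^\bullet_m/K^\bullet_{m-1}$. Writing $\mathcal{Q}:=\mathcal{M}^{\rm gp}_{\amb,\pole}\otimes_\Z\Q$ and $\mathcal{I}:=\Image(\bm{\mathrm{e}})\cong\mathcal{O}_\amb/\Q_\amb$, the cokernel of the multiplication-by-$1$ map $\mathrm{Sym}^{j-1}(\mathcal{O}_\amb)\into\mathrm{Sym}^{j}(\mathcal{O}_\amb)$ is $\mathrm{Sym}^{j}(\mathcal{I})$ (the constants $\Q_\amb\cdot 1$ splitting off stalkwise), so that $\gr^W_mK^\bullet_\infty$ becomes the Koszul-type complex with terms $\mathrm{Sym}^{m-\ell}(\mathcal{I})\otimes\bigwedge^\ell\mathcal{Q}$ and differential induced by the inclusion $\iota:\mathcal{I}\into\mathcal{Q}$ coming from $(\ref{rme})$.

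To compute its cohomology I would work on stalks, where the sequence $0\to\mathcal{I}\to\mathcal{Q}\to a_{1*}\Q_{\pole(1)}\to0$ splits. Using $\bigwedge^\ell\mathcal{Q}\cong\bigoplus_{a+b=\ell}\bigwedge^a\mathcal{I}\otimes\bigwedge^b(a_{1*}\Q_{\pole(1)})$ and the fact that the differential acts only on the $\mathcal{I}$-factors, the complex splits as a direct sum, indexed by $b$, of $\bigwedge^b(a_{1*}\Q_{\pole(1)})$ tensored with the Koszul complex of $\id_\mathcal{I}$ in total $\mathcal{I}$-degree $m-b$; the latter is acyclic unless $m-b=0$. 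Hence only $\bigwedge^m(a_{1*}\Q_{\pole(1)})$, sitting in degree $\ell=m$, survives, and after fixing the order of $\Lambda$ one has $\bigwedge^m(a_{1*}\Q_{\pole(1)})\cong a_{m*}\Q_{\pole(m)}$. Thus $\gr^W_mK^\bullet_\infty$ resolves $a_{m*}\Q_{\pole(m)}[-m]$, and tracing $\gr^W_m\alpha$ through $\mathrm{R\acute{e}s}_m$ shows that on the only nonzero cohomology sheaf it is the natural inclusion $a_{m*}\Q_{\pole(m)}\otimes\C\to a_{m*}\C_{\pole(m)}$ rescaled by the invertible constant $(2\pi\i)^{-m}$, hence an isomorphism. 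Therefore $\gr^W_m\alpha$ is a quasi-isomorphism for all $m$, and $\alpha$ is a filtered quasi-isomorphism.

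I expect the main obstacle to be the acyclicity computation for $\gr^W_mK^\bullet_\infty$: the splitting of $0\to\mathcal{I}\to\mathcal{Q}\to a_{1*}\Q_{\pole(1)}\to0$ is only local, so the local vanishing statements must be glued, and one must keep careful track of the orientation and sign conventions in the identification $\bigwedge^m(a_{1*}\Q_{\pole(1)})\cong a_{m*}\Q_{\pole(m)}$ so that it is compatible both with the chosen order on $\Lambda$ and with the residue $\mathrm{R\acute{e}s}_m$.
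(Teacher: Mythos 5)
Your argument is correct. Note that the paper itself gives no proof of this statement: it is quoted from Peters--Steenbrink (together with Lemma \ref{ratiso}, quoted from Illusie) and closed with a \verb|\qed|, so there is nothing internal to compare against; your write-up essentially reproduces the standard proof from the cited reference. The reduction to $\gr^W_m$ is the right move, the commutation $\alpha\circ d=d\circ\alpha$ via $\bm{\mathrm{e}}(h)\mapsto dh$ checks out, and the identification of $\gr^W_mK^\bullet_\infty$ with $\mathrm{Sym}^{m-\bullet}(\mathcal{I})\otimes\bigwedge^{\bullet}\mathcal{Q}$ followed by the Koszul acyclicity (exactness of $\mathrm{Sym}^{p}V\otimes\bigwedge^{a}V$ in total degree $p+a>0$, valid for arbitrary $\Q$-vector spaces by passing to finite-dimensional pieces) correctly isolates $a_{m*}\Q_{\pole(m)}$ in degree $m$, matching $\mathrm{R\acute{e}s}_m$ on the de Rham side up to the factor $(2\pi\i)^{-m}$, which is harmless after tensoring with $\C$. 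The obstacle you flag at the end is less serious than you suggest: cohomology sheaves are computed stalkwise, so the local splitting of $0\to\mathcal{I}\to\mathcal{Q}\to a_{1*}\Q_{\pole(1)}\to 0$ suffices for the vanishing, and the comparison map $\bigwedge^m\mathcal{Q}\to\bigwedge^m(a_{1*}\Q_{\pole(1)})\to a_{m*}\Q_{\pole(m)}$ is globally defined once the order on $\Lambda$ is fixed (the same order entering $\mathrm{R\acute{e}s}_m$), so no gluing issue arises beyond a stalkwise verification of compatibility.
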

    \begin{corollary}[{\cite[Proposition-Definition 4.11, Corollary 4.17]{petmix}}]\label{COR}
    Let $F$ be the stupid filtration on $\Omega_\amb^\bullet(\log \pole)$.
    Then the tuple 
     $$\mathcal{H}dg(\amb\log \pole):=\big{(}(K_\infty^\bullet, W),(\Omega^\bullet_\amb(\log \pole),F,W),\alpha\big{)}$$
    is isomorphic to the cohomological mixed $\Q$-Hodge complex 
    $\big{(}(\R\jmath_* \Q_Y,{\tau_{\leq}}),(\Omega^\bullet_\amb(\log \pole),F,W),\alpha'\big{)}$ on $\amb$
    in \cite[(8.1.8)]{delIII}, \cite{delII}.    
    Here, $\jmath: Y\into \amb$ is the inclusion,
    $\tau_\leq$ denotes the filtration by truncation functor 
    and 
    $\alpha': (\R\jmath_*\C_Y,\tau_\leq)\to (\Omega^\bullet_\amb(\log \pole),W)$
    is an isomorphism in $D^+(F\C_X)$.
   \end{corollary}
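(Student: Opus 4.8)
The plan is to assemble the statement from the two comparison results established immediately above, namely Lemma \ref{ratiso} and Theorem \ref{QisK}, together with Deligne's construction of the mixed Hodge structure on $H^\bullet(Y)$. Observe first that the complex (i.e.\ $\C$-)part of the asserted cohomological mixed Hodge complex, the triple $(\Omega^\bullet_\amb(\log\pole),F,W)$ equipped with the filtered quasi-isomorphism $\alpha'\colon(\R\jmath_*\C_Y,\tau_{\leq})\to(\Omega^\bullet_\amb(\log\pole),W)$, is exactly Deligne's datum from \cite[(8.1.8)]{delIII}, \cite{delII}; in particular the purity of $\gr^W_m\Omega^\bullet_\amb(\log\pole)$ and the fact that $\alpha'$ is a filtered quasi-isomorphism are already known. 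Hence it suffices to exhibit the rational datum $(K^\bullet_\infty,W)$ as a second rational structure that is filtered quasi-isomorphic to $(\R\jmath_*\Q_Y,\tau_{\leq})$ and compatible with $\alpha'$ after $\otimes\,\C$ via the comparison $\alpha$ of Theorem \ref{QisK}.

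The first concrete step is to compute the graded pieces of the weight filtration on the Koszul complex. Each inclusion $K^\bullet_{m-1}\into K^\bullet_m$ is injective, being induced by the multiplication-by-$1$ maps $\mathrm{Sym}^{j}(\mathcal{O}_\amb)\into\mathrm{Sym}^{j+1}(\mathcal{O}_\amb)$ between symmetric algebras, so that $W_mK^\bullet_\infty=\Image(K^\bullet_m\to K^\bullet_\infty)$ satisfies $\gr^W_mK^\bullet_\infty\simeq K^\bullet_m/K^\bullet_{m-1}$. Applying the long exact cohomology sequence of $0\to K^\bullet_{m-1}\to K^\bullet_m\to\gr^W_mK^\bullet_\infty\to 0$ and reading off the cohomology sheaves from Lemma \ref{ratiso}, one finds that $\mathscr{H}^q(\gr^W_mK^\bullet_\infty)$ vanishes for $q\neq m$ and equals $a_{m*}\Q_{\pole(m)}$ for $q=m$, whence $\gr^W_mK^\bullet_\infty\simeq a_{m*}\Q_{\pole(m)}[-m]$. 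On the Deligne side the standard local computation near a point where $m$ branches of $\pole$ meet gives $\R^m\jmath_*\Q_Y\simeq a_{m*}\Q_{\pole(m)}$, hence $\gr^\tau_m\R\jmath_*\Q_Y=\R^m\jmath_*\Q_Y[-m]\simeq a_{m*}\Q_{\pole(m)}[-m]$; the two filtered complexes therefore have matching graded pieces.

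Next I would promote this to an actual morphism. The exact sequence (\ref{rme}) exhibits $\mathcal{M}^{\rm gp}_{\amb,\pole}\otimes_\Z\Q$ as a sheaf model for degree-one logarithmic classes, and sending an invertible section $y$ to the rational class of $\tfrac{1}{2\pi\i}\tfrac{dy}{y}$ in $\R^1\jmath_*\Q_Y$ (defined integrally through the exponential sequence on $Y$) extends, through the exterior and symmetric powers and the differential built from ${\bm{\mathrm{e}}}$, to a morphism of complexes $K^\bullet_\infty\to\R\jmath_*\Q_Y$ carrying $W_m$ into $\tau_{\leq m}$. By the comparison of graded pieces just made this is a filtered quasi-isomorphism. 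To obtain the compatibility over $\C$, I would compare it after $\otimes\,\C$ with Theorem \ref{QisK}: the explicit formula (\ref{qislog}) for $\alpha$, including its normalising factors $(2\pi\i)^{-\ell}$, is precisely what identifies the rational winding classes with the complex residue classes $\tfrac{dy_1}{y_1}\wedge\cdots\wedge\tfrac{dy_\ell}{y_\ell}$, so that the triangle relating $\alpha$, $\alpha'$ and the rational comparison morphism commutes in $D^b(F\C_X)$.

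The main obstacle I anticipate is this last compatibility: constructing the rational morphism $K^\bullet_\infty\to\R\jmath_*\Q_Y$ precisely enough to be strictly filtered, and then verifying that its complexification is intertwined with $\alpha$ of Theorem \ref{QisK} up to the powers of $2\pi\i$ occurring in (\ref{qislog}). This is exactly the bookkeeping of the Tate twists — the fact that $\gr^W_m$ acquires a twist $(-m)$ in the Hodge structure — and it is where I would appeal directly to \cite[Proposition-Definition 4.11, Corollary 4.17]{petmix} rather than re-derive Deligne's normalisations, so that the two rational structures are seen to define the same cohomological mixed $\Q$-Hodge complex.
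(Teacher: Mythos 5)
Your proposal reaches the right conclusion and correctly identifies the two inputs (Lemma \ref{ratiso} and Theorem \ref{QisK}), but it follows a different and, in one step, shakier route than the paper. The paper's proof constructs no new morphism at all: the rational comparison is the zig-zag $(\R\jmath_*\Q_Y,\tau_\leq)\to\R\jmath_*\jmath^{-1}(K^\bullet_\infty,\tau_{\leq})\leftarrow(K^\bullet_\infty,\tau_{\leq})\to(K^\bullet_\infty,W)$ of (\ref{isoCMHC}), in which the first two arrows are quasi-isomorphisms (adjunction, plus the fact that $\jmath^{-1}K^\bullet_\infty$ resolves $\Q_Y$), and the last arrow is shown to be a filtered quasi-isomorphism by transporting, through $\alpha$ of Theorem \ref{QisK}, the classical fact that $(\Omega^\bullet_\amb(\log\pole),\tau_{\leq})\to(\Omega^\bullet_\amb(\log\pole),W)$ is one; compatibility with $\alpha'$ is then read off from the single commutative diagram (\ref{diagl}). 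Your computation of $\gr^W_mK^\bullet_\infty$ from Lemma \ref{ratiso} is a legitimate alternative to that transport argument, granting (as you implicitly do) that the inclusions $K^\bullet_{m-1}\into K^\bullet_m$ induce isomorphisms on $\mathscr{H}^q$ for $q\leq m-1$, which is part of what the cited results of \cite{petmix} supply.

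The step that does not work as written is the ``direct morphism'' $K^\bullet_\infty\to\R\jmath_*\Q_Y$ obtained by sending $y$ to the class of $(2\pi\i)^{-1}\,dy/y$ and ``extending through the exterior and symmetric powers'': the target is only an object of the derived category, and the proposed assignment defines a map on cohomology sheaves (essentially the valuation $v_{D(1)}$ from (\ref{rme})), not a map of complexes; there is no way to promote it to one without choosing a model. The correct replacement is precisely the adjunction unit $K^\bullet_\infty\to\R\jmath_*\jmath^{-1}K^\bullet_\infty$ used in the paper, which automatically respects $W$ and $\tau_{\leq}$, after which the $(2\pi\i)$-normalisations only enter when comparing with $\alpha$ over $\C$ in the last column of (\ref{diagl}). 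You flag this as the main obstacle and defer to \cite{petmix}, so the defect is one of execution rather than conception, but as stated your construction of the rational comparison morphism is not well defined and should be replaced by the zig-zag.
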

   \begin{proof}
    By Theorem \ref{QisK},  we have the following commutative diagram:
    \begin{align}\label{diagl}
    \xymatrix{
     (\R\jmath_*\C_Y,\tau_\leq)\ar@{=}[d]\ar[r]^{\sim}& (\R\jmath_*\Omega^\bullet_Y,\tau_\leq)
     &\ar[l]_{\sim}(\Omega_X^\bullet(\log\pole),\tau_{\leq})\ar[r]& 
     (\Omega^\bullet_X(\log \pole),W)\\
     (\R\jmath_*\C_Y,\tau_\leq)\ar[r]^{\sim\ \ \ }
     &\R\jmath_*\jmath^{-1}(K^\bullet_\infty,\tau_{\leq})\otimes\C\ar[u]_{\simeq}^{\R\jmath_*\jmath^{-1}\alpha}
     &\ar[l]_{\ \ \sim}(K^\bullet_\infty,\tau_{\leq})\otimes\C\ar[u]_\simeq^\alpha\ar[r]
     &(K^\bullet_\infty, W)\otimes\C\ar[u]^\alpha_\simeq
     }
    \end{align}
    Here, the arrows $\simeqto$ and $\uparrow\simeq$ denote filtered quasi-isomorphisms. 
    Since the natural morphism 
    $$(\Omega^\bullet_\amb(\log \pole),\tau_{\leq})\longrightarrow (\Omega^\bullet_\amb(\log \pole), W) $$
    is a filtered quasi-isomorphism, 
    the morphism 
    $$(K^\bullet_\infty,\tau_{\leq})\otimes\C \longrightarrow(K^\bullet_\infty, W)\otimes\C$$
    is also a filtered quasi-isomorphism.
   Remark that $\alpha'$ is defined by the first row of (\ref{diagl}),
   and the second row comes from the following sequence:
   \begin{align}\label{isoCMHC}
   \xymatrix{
   (\R\jmath_*\Q_Y,\tau_\leq)\ar[r]^{\sim\ \ \ }
     &\R\jmath_*\jmath^{-1}(K^\bullet_\infty,\tau_{\leq})
     &\ar[l]_{\ \ \sim}(K^\bullet_\infty,\tau_{\leq})\ar[r]^\sim
     &(K^\bullet_\infty, W)
   }
   \end{align}
   It follows that (\ref{isoCMHC}) defines the isomorphism of cohomological mixed Hodge complexes.
   \end{proof}
   The cohomological mixed Hodge complex  $\mathcal{H}dg(\amb\log \pole)$
   gives a mixed $\Q$-Hodge structure on the cohomology groups $H^k(Y,\Q)$, $k\in \Z_{\geq0}$,
   which is denoted by $H^k(Y):=(H^k(Y,\Q),F,W)$.
   \subsubsection{Cohomological mixed Hodge complex}\label{MHC}
   Put $\widetilde{A}^{p, q}:=\Omega^{p+q}_\amb(\log \pole)/W_{q-1}\Omega^{p+q}_\amb(\log \pole)$ 
   and $\widetilde{C}^{p, q}:=(K^{p+q}_\infty/W_{q-1}K^{p+q}_\infty)(q)$, 
   where $p, q\in\Z_{\geq0}$ and $(q)$ denotes the Tate twist. 
   We have the following differentials 
   \begin{align*}
    \delta':\widetilde{A}^{p, q}\to \widetilde{A}^{p+1,q};& [\eta \mod W_{q-1}]\mapsto [d\eta \mod W_{q-1}],\\
    \delta'':\widetilde{A}^{p, q}\to \widetilde{A}^{p, q+1};& [\eta \mod W_{q-1}]\mapsto [g^{-1}dg\wedge \eta\mod W_q],\\
    \delta':\widetilde{C}^{p, q}\to \widetilde{C}^{p+1,q};
              &[x\otimes y \mod W_{q-1}]\otimes(2\pi\i)^{q}
              \mapsto[d(x\otimes y)\mod W_{q-1}]\otimes (2\pi\i)^q,\\
    \delta'':\widetilde{C}^{p, q}\to \widetilde{C}^{p, q+1};
             &[x\otimes y \mod W_{q-1}]\otimes(2\pi\i)^{q}
             \mapsto[ x\otimes g\wedge y\mod W_{q}]\otimes(2\pi\i)^{q+1},
   \end{align*}
   where $\eta\in \Omega^{p+q}_\amb(\log \pole)$, $x\in \mathrm{Sym}^{k}_\Q(\mathcal{O}_\amb)$
   for $k\geq 0$, 
   and $y\in \bigwedge^{p+q}_\Q(\mathcal{M}_{\amb,\pole}^{\rm gp}\otimes_\Z\Q)$.
   The total complex of this double complexes are denoted by $\tot$
   and $\rtot$,
   that is, $\tot^k:=\bigoplus_{p+q=k}\widetilde{A}^{p, q}$ and $\delta:=\delta'+\delta'':\tot^k\to\tot^{k+1}$
   is the differential. $\rtot$ is defined similarly. 
   We also have the filtrations 
   \begin{align*}
   W_r\widetilde{A}^{p, q}
       &:=W_{r+2q}\Omega^{p+q}_\amb(\log \pole)/W_{q-1}\Omega^{p+q}_\amb(\log \pole)
       \subset \widetilde{A}^{p, q},\\
   W_r\widetilde{C}^{p, q}
       &:=(W_{r+2q}K^{p+q}_\infty/W_{q-1}K^{p+q}_\infty)(q)
       \subset \widetilde{C}^{p, q}.
   \end{align*}
   It induces the following filtrations on $\tot$ and $\rtot$:
   $W_r\tot^k:=\bigoplus_{p+q=k}W_r\widetilde{A}^{p, q},$ and
   $W_r\rtot^k:=\bigoplus_{p+q=k}W_r\widetilde{C}^{p, q}.$
   We define the filtration $F$ by 
   $F_\ell\tot^k:=\bigoplus_{p+q=k}\bigoplus_{p\geq -\ell} \widetilde{A}^{p, q}.$
  
   Since $\delta''W_r\subset W_{r-1}$ for the filtrations $W$ on $\tot$ and $\rtot$, 
    we obtain the isomorphisms
    $\gr^W_j s(\widetilde{A}^{\bullet,\bullet})\simeq 
    \bigoplus_{k\geq 0, -j}\gr^W_{j+2k}\Omega^\bullet_\amb(\log \pole)$
    and $\gr^W_j s(\widetilde{C}^{\bullet,\bullet})\simeq 
    \bigoplus_{k\geq 0, -j}\gr^W_{j+2k}K_\infty^\bullet(k)
    $ of complexes.
    Then the following lemma is trivial by Theorem $\ref{QisK}$. 
   \begin{lemma}
   The morphisms 
   $ K^{p+q}_\infty(q)\to \Omega^{p+q}_\amb(\log\pole)$
   given by 
   $$(h_1\cdots h_{n-p-q}\otimes y_1\wedge\cdots\wedge y_{p+q})\otimes (2\pi\i)^q
   \mapsto \frac{1}{(2\pi\i)^p}\left(\prod^{n-p-q}_{i=1}h_i\right)
   \frac{dy_1}{y_1}\wedge\cdots\frac{dy_{p+q}}{y_{p+q}}$$
   induces a filtered quasi-isomorphism 
   $\alpha_1:(\rtot, W)\otimes \C\to (\tot,W)$.\qed
   \end{lemma}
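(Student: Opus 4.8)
The plan is to deduce the lemma formally from the filtered quasi-isomorphism $\alpha$ of Theorem \ref{QisK} by passing to the graded pieces of the weight filtration $W$.

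First I would check that $\alpha_1$ is a well-defined morphism of $W$-filtered complexes. That it respects $W$ is immediate from the definitions of $W_r\widetilde{A}^{p,q}$ and $W_r\widetilde{C}^{p,q}$ together with the fact that $\alpha$ is filtered. Compatibility with the horizontal differential $\delta'$ is inherited directly from the statement that $\alpha$ is a morphism of complexes in Theorem \ref{QisK}. Compatibility with the vertical differential $\delta''$ is a short direct check: starting from $[x\otimes y]\otimes(2\pi\i)^q\in\widetilde{C}^{p,q}$, both $\alpha_1\circ\delta''$ and $\delta''\circ\alpha_1$ produce $\frac{1}{(2\pi\i)^p}\bigl(\prod_i h_i\bigr)\,g^{-1}dg\wedge\frac{dy_1}{y_1}\wedge\cdots\wedge\frac{dy_{p+q}}{y_{p+q}}$, the extra Tate twist $(2\pi\i)^{q+1}$ on the target of $\delta''$ on $\widetilde{C}^{\bullet,\bullet}$ exactly absorbing the increase of the exterior degree by one.

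The key reduction is that a morphism of $W$-filtered complexes is a filtered quasi-isomorphism as soon as every induced morphism $\gr^W_j$ is a quasi-isomorphism. I would therefore compute $\gr^W_j\alpha_1$ using the two decomposition isomorphisms displayed just before the lemma (valid because $\delta''W_r\subset W_{r-1}$), namely $\gr^W_j\rtot\simeq\bigoplus_k\gr^W_{j+2k}K^\bullet_\infty(k)\otimes\C$ and $\gr^W_j\tot\simeq\bigoplus_k\gr^W_{j+2k}\Omega^\bullet_\amb(\log\pole)$, in which the index $k$ records the column $q$. Under these identifications $\gr^W_j\alpha_1$ becomes the direct sum $\bigoplus_k\gr^W_{j+2k}(\alpha\otimes\C)$; the point is that the normalization $\frac{1}{(2\pi\i)^p}$ built into $\alpha_1$ together with the Tate twist $(q)=(k)$ on the $k$-th summand rebuilds exactly the factor $\frac{1}{(2\pi\i)^{p+q}}$ occurring in $\alpha$. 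Since $\alpha$ is a filtered quasi-isomorphism, each $\gr^W_{j+2k}(\alpha\otimes\C)$ is a quasi-isomorphism, hence so is $\gr^W_j\alpha_1$ for every $j$, and $\alpha_1$ is a filtered quasi-isomorphism.

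The computations in the first two paragraphs are routine, and the only place demanding real care is the bookkeeping of the last one: one must verify that matching the Tate twists $(k)$ against the powers of $2\pi\i$ makes the column-wise decomposition of $\gr^W$ strictly compatible with $\alpha_1$, so that $\gr^W_j\alpha_1$ agrees with $\bigoplus_k\gr^W_{j+2k}\alpha$ on the nose and not merely up to $k$-dependent scalars. Once these normalizations are seen to cancel, the statement follows formally from Theorem \ref{QisK}, which already contains all the geometric content.
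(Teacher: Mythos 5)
Your proposal is correct and follows the paper's own route: the paper establishes the decompositions $\gr^W_j\rtot\simeq\bigoplus_k\gr^W_{j+2k}K_\infty^\bullet(k)$ and $\gr^W_j\tot\simeq\bigoplus_k\gr^W_{j+2k}\Omega^\bullet_\amb(\log\pole)$ and then declares the lemma ``trivial by Theorem \ref{QisK},'' which is exactly the reduction to graded pieces you carry out. Your explicit verification of the $\delta''$-compatibility and of the cancellation of the Tate-twist factors against the powers of $2\pi\i$ simply supplies the bookkeeping the paper leaves implicit, and it checks out.
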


   Put $A^{p, q}:=\widetilde{A}^{p, q+1}$, $C^{p, q}:=\widetilde{C}^{p, q+1}$ for $p, q\in \Z_{\geq0}$. 
   The total complexes are denoted by $s(A^{\bullet,\bullet})$ and $s(C^{\bullet,\bullet})$.
   We note that $s(A^{\bullet, \bullet})$ and $s(C^{\bullet, \bullet})$ are supported on $D$.
   Let $W_r A^{p, q}:=W_{r-1}\widetilde{A}^{p, q+1}$ and $W_rC^{p, q}:=W_{r-1}\widetilde{C}^{p, q+1}$.
   These filtrations induce filtrations on $s(A^{\bullet,\bullet})$ and $s(C^{\bullet,\bullet})$.
   We have a quasi-isomorphism $\alpha_0:(s(C^{\bullet,\bullet}),W)\otimes\C\to (s(A^{\bullet,\bullet}),W)$
   by restricting $\alpha_1$. 
   We also have the filtration $F$ on $s(A^{\bullet,\bullet})$ by  
   $F_\ell s(A^{\bullet,\bullet})^k:=\bigoplus_{p+q=k}\bigoplus_{p\geq -\ell} {A}^{p, q}$.
   \begin{theorem}[{\cite[Theorem 11.22]{petmix}}]
   The tuple 
   \begin{align*}
   \psi^{\rm Hdg}_g:=\big{(}(s(C^{\bullet,\bullet}),W), (s(A^{\bullet,\bullet}),F,W), \alpha_0\big{)}
   \end{align*}
   is a cohomological mixed $\Q$-Hodge complex on $X$, 
   which defines a mixed Hodge structure 
   on the hypercohomology $\H^\bullet(X, \psi_g(\Q_X))$ of the nearby cycle $\psi_g(\Q_X)$.\qed
   \end{theorem}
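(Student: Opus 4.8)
The plan is to verify that $\psi^{\rm Hdg}_g$ satisfies the two axioms of a cohomological mixed $\Q$-Hodge complex in the sense of \cite{delIII}, \cite{petmix}, and then to identify its hypercohomology with $\H^\bullet(\amb,\psi_g(\Q_\amb))$. The comparison axiom, that $\alpha_0:(s(C^{\bullet,\bullet}),W)\otimes\C\to (s(A^{\bullet,\bullet}),W)$ is a filtered quasi-isomorphism, is essentially in hand: since $A^{p,q}=\widetilde A^{p,q+1}$ and $C^{p,q}=\widetilde C^{p,q+1}$ with the induced shifted filtrations cut out sub-double-complexes (both $\delta'$ and $\delta''$ preserve the range $q\ge 1$), the map $\alpha_0$ is the restriction of the filtered quasi-isomorphism $\alpha_1:(\rtot,W)\otimes\C\to(\tot,W)$ already established, and the filtered quasi-isomorphism property is inherited termwise on graded pieces. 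Thus the real content is the purity of the weight-graded pieces and the comparison with nearby cycles.

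For the Hodge axiom I would compute $\gr^W_m$. As noted in \S\ref{MHC}, the differential $\delta''$ strictly decreases $W$, so it vanishes on $\gr^W$; consequently $\gr^W_m s(A^{\bullet,\bullet})$ decomposes as a direct sum of Tate-twisted shifts of $\gr^W_\bullet\Omega^\bullet_\amb(\log\pole)$, exactly parallel to the decomposition of $\gr^W_j s(\widetilde A^{\bullet,\bullet})$ recorded above but with the index shift coming from $A^{p,q}=\widetilde A^{p,q+1}$. Applying the residue isomorphism $\mathrm{R\acute{e}s}_m:\gr^W_m\Omega^\bullet_\amb(\log\pole)\simeqto a_{m*}\Omega^\bullet_{\pole(m)}[-m]$ identifies each summand, up to shift and Tate twist, with the de Rham complex of the smooth projective variety $\pole(m)$. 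Classical Hodge theory endows $\H^\bullet(\pole(m))$ with a pure Hodge structure, and one then checks that the shifts $[-m]$ together with the Tate twists built into $\widetilde C^{p,q}=(K^{p+q}_\infty/W_{q-1}K^{p+q}_\infty)(q)$ combine so that every summand contributing to $\gr^W_m$ acquires total weight exactly $m$; the compatible $\Q$-structure is furnished by $\gr^W_m s(C^{\bullet,\bullet})$ through Lemma \ref{ratiso} and Theorem \ref{QisK}, and $E_1$-degeneration of $F$ on each graded piece follows from this purity. Hence each $\gr^W_m$ is a cohomological Hodge complex of weight $m$, and $\psi^{\rm Hdg}_g$ is a cohomological mixed Hodge complex.

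It remains to identify the hypercohomology with that of the nearby cycle. Here I would show that the total complex $s(A^{\bullet,\bullet})$ is a resolution of $\psi_g(\Q_\amb)\otimes\C$ and that $s(C^{\bullet,\bullet})$ supplies the underlying $\Q$-structure, with the second differential $\delta''=g^{-1}dg\wedge(-)$ encoding the logarithm $N$ of the local monodromy and with $W$ realizing the monodromy weight filtration of $N$, centered appropriately. The verification is local along $\pole$: on a polydisk where $\pole=\bigcup_{j\le k}\{z_j=0\}$ one checks directly, following Steenbrink, that the associated double complex computes the cohomology of the Milnor fiber of $g$ and that the induced action on it is that of $N$. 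I expect this last step — the quasi-isomorphism to $\psi_g(\Q_\amb)$ together with the precise matching of $W$ with the monodromy weight filtration — to be the main obstacle, since, unlike the formal manipulations used for purity, it requires the explicit local analysis near the normal crossing divisor $\pole$. Granting this local computation, gluing yields the global identification, and $\psi^{\rm Hdg}_g$ defines the asserted mixed Hodge structure on $\H^\bullet(\amb,\psi_g(\Q_\amb))$.
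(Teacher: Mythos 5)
The paper offers no proof of this statement: it is quoted from Peters--Steenbrink \cite[Theorem 11.22]{petmix} and closed immediately with a qed symbol, so the only ``proof'' to compare against is that citation. Your sketch reconstructs the underlying Steenbrink construction, and its architecture is the right one and matches the reference: the comparison axiom follows because $\alpha_0$ is the restriction of $\alpha_1$ to the sub-double-complex $q\geq 1$, and since $\delta''$ dies on $\gr^W$ the graded pieces split as direct sums indexed by $q$, so the filtered quasi-isomorphism property does pass to the restriction; purity of the graded pieces reduces via $\mathrm{R\acute{e}s}_m$ to classical Hodge theory on the smooth projective strata $\pole(m)$.

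As a self-contained proof, however, it has a genuine gap, and it is exactly the one you flag: the identification of $\H^\bullet(\amb,s(C^{\bullet,\bullet}))$ with $\H^\bullet(\amb,\psi_g\Q_\amb)$ is granted rather than carried out, and that local comparison near the normal crossing divisor is the substantive content of the theorem --- without it you have produced a mixed Hodge structure on the hypercohomology of an ad hoc complex, not on the nearby cycle. The weight bookkeeping is likewise asserted rather than checked: one must verify that the degree shift coming from $A^{p, q}=\widetilde{A}^{p, q+1}$ and the Tate twists $(q)$ built into $\widetilde{C}^{p, q}$ cancel so that each summand of $H^k(\amb,\gr^W_r s(A^{\bullet,\bullet}))$ is pure of weight exactly $k+r$ (the analogous computation for $\rtot$ is displayed after Corollary \ref{speseq} and is a useful template). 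Finally, your last paragraph demands more than the theorem states: that $W$ realizes the monodromy weight filtration of $N$ is \emph{not} part of the definition of a cohomological mixed Hodge complex and is a separate, genuinely harder theorem (for $\rtot$ it is Theorem \ref{thmmon} of this paper, proved via polarized Hodge--Lefschetz modules; for $\psi_g$ itself it is the Steenbrink--Saito theorem). Folding that into the verification of the CMHC axioms overloads the local computation and is unnecessary for the statement at hand.
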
 
   The mixed $\Q$-Hodge structure on the hypercohomology group
   $\H^k(X,\psi_g\Q_X)$ is denoted by $H^k(Y_\infty)$.  
   Define $\vartheta_{\C}: \Omega^p_\amb(\log \pole)\to A^{p, 0}$ 
   by $\vartheta_{\C}(\eta):=(-1)^p[g^{-1}dg\wedge \eta\mod W_0]$. 
   It induces a morphism of complexes $\vartheta_{\C}:\Omega^\bullet_\amb(\log\pole)\to s(A^{\bullet,\bullet})$.
   Define $\vartheta_{\Q}: K^p_\infty\to C^{p,0}$
   by $\vartheta_{\Q}(x\otimes y):=(-1)^p[x\otimes g\wedge y]$.
   It induces a morphism of complexes
   $\vartheta_{\Q}:K^\bullet_\infty\to s(C^{\bullet,\bullet})$.
   By the construction, we have $\alpha_0\circ\vartheta_{\Q}=\vartheta_{\C}\circ \alpha$.
   Hence, we obtain a morphism of cohomological mixed $\Q$-Hodge complexes 
    $\vartheta:\mathcal{H}dg(\amb\log \pole) \to \psi_g^{\rm Hdg}$ 
    (See \cite[\S 3.3.4.2]{elzmix} for the definition of morphism of cohomological mixed Hodge complex).     
    We have the mixed cone complex 
    $\mathrm{C}(\vartheta)=\big{(}(\mathrm{C}(\vartheta_\Q),W), (\mathrm{C}(\vartheta_\C),W,F),\alpha_\vartheta)\big{)}$ 
    (\cite[\S 3.3.4.2]{elzmix}).
    We also have the notion of shift (\cite[\S 3.3.3.1]{elzmix}) for cohomological mixed Hodge complexes.
   \begin{proposition}\label{XI}
   The tuple 
   $\Xi^{\mathrm{Hdg}}_g:=
   \big{(} (s(\widetilde{C}^{\bullet,\bullet}),W),(s(\widetilde{A}^{\bullet,\bullet}),W,F),\alpha_1\big{)}$  
   constitutes 
   a cohomological mixed $\Q$-Hodge complex  on $\amb$, which is 
   isomorphic to $\mathrm{C}(\vartheta)[-1]$. 
   \end{proposition}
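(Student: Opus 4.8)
The plan is to deduce the statement from the general theory of cohomological mixed Hodge complexes together with one explicit identification. By construction $\vartheta:\mathcal{H}dg(\amb\log\pole)\to\psi^{\mathrm{Hdg}}_g$ is a morphism of cohomological mixed $\Q$-Hodge complexes, so its mixed cone $\mathrm{C}(\vartheta)$ is again such a complex \cite[\S 3.3.4.2]{elzmix}, and the shift functor of \cite[\S 3.3.3.1]{elzmix} preserves this property; hence $\mathrm{C}(\vartheta)[-1]$ is a cohomological mixed $\Q$-Hodge complex. Granting this, it suffices to produce an isomorphism $\Xi^{\mathrm{Hdg}}_g\simeq\mathrm{C}(\vartheta)[-1]$ of the underlying filtered data, after which the cohomological mixed Hodge complex structure on $\Xi^{\mathrm{Hdg}}_g$ is inherited. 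The substance of the proof is therefore this identification.

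To construct it I would split each total complex according to whether the index $q$ vanishes. On the rational side, the row $q=0$ of $\rtot$ is $\widetilde{C}^{p,0}=K^{p}_\infty$, which is the $\Q$-component of $\mathcal{H}dg(\amb\log\pole)$, while the rows $q\geq 1$ assemble, through the identity $C^{p,q}=\widetilde{C}^{p,q+1}$, into $s(C^{\bullet,\bullet})[-1]$, the shifted $\Q$-component of $\psi^{\mathrm{Hdg}}_g$. The part of the differential $\delta''$ leaving the row $q=0$ carries $[x\otimes y]\otimes(2\pi\i)^{0}$ to $[x\otimes g\wedge y \mod W_0]\otimes(2\pi\i)^{1}$, which is exactly $\vartheta_\Q$ up to the sign $(-1)^{p}$; consequently $\delta=\delta'+\delta''$ realizes $\rtot$ as $\mathrm{C}(\vartheta_\Q)[-1]$. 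The identical splitting on the $\C$-side, now using $A^{p,q}=\widetilde{A}^{p,q+1}$ (so that the row $q=0$ is $\widetilde{A}^{p,0}=\Omega^p_\amb(\log\pole)$) and the defining formula for $\vartheta_\C$, realizes $\tot$ as $\mathrm{C}(\vartheta_\C)[-1]$. Since $\alpha_1$ restricts to $\alpha$ on the rows $q=0$ and to $\alpha_0$ on the rows $q\geq 1$, and since $\alpha_0\circ\vartheta_\Q=\vartheta_\C\circ\alpha$ holds by construction, the comparison $\alpha_1$ is identified with $\alpha_\vartheta$; the three pieces then assemble into the desired isomorphism of triples.

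It remains to verify that the filtrations agree under this identification, and this is the step I expect to be the main obstacle. The Hodge filtration is the stupid filtration on both sides and matches at once. For the weight filtration one must reconcile the definition $W_r\widetilde{A}^{p,q}=W_{r+2q}\Omega^{p+q}_\amb(\log\pole)/W_{q-1}\Omega^{p+q}_\amb(\log\pole)$ with the weight filtration produced by the mixed cone. This forces one to track several shifts simultaneously: the weight shift by one intrinsic to the mixed cone of a morphism of cohomological mixed Hodge complexes, the reindexing $q\mapsto q+1$ relating $A^{p,q}$ to $\widetilde{A}^{p,q}$, the Tate twist $(q)$ in $\widetilde{C}^{p,q}$ which lowers weights by $2q$ on the rational side, and the overall shift $[-1]$. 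I expect these contributions to cancel precisely: the coefficient $2q$ in the defining formula is designed to absorb the accumulated Tate twists along the $q$-direction. The cleanest way to confirm this is to compare the two weight filtrations on the associated graded pieces, using the descriptions $\gr^W_j\tot\simeq\bigoplus_{k}\gr^W_{j+2k}\Omega^\bullet_\amb(\log\pole)$ and $\gr^W_j\rtot\simeq\bigoplus_{k}\gr^W_{j+2k}K^\bullet_\infty(k)$ recorded above, which reduce the check to the residue isomorphisms $\mathrm{R\acute{e}s}_m$ and Theorem \ref{QisK}. Once this compatibility is secured, the filtered isomorphism $\Xi^{\mathrm{Hdg}}_g\simeq\mathrm{C}(\vartheta)[-1]$ is complete and the proposition follows from the first paragraph.
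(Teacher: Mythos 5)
Your strategy is the one the paper uses: split $s(\widetilde{C}^{\bullet,\bullet})$ and $s(\widetilde{A}^{\bullet,\bullet})$ into the row $q=0$ (which is $K^\bullet_\infty$, resp.\ $\Omega^\bullet_\amb(\log\pole)$) and the rows $q\geq 1$ (which assemble into $s(C^{\bullet,\bullet})[-1]$, resp.\ $s(A^{\bullet,\bullet})[-1]$), recognize the component of $\delta''$ leaving the bottom row as $\vartheta$, and match $\alpha_1$ with $\alpha_\vartheta$ via $\alpha_0\circ\vartheta_\Q=\vartheta_\C\circ\alpha$. But the step identifying the differentials is not complete as written. The differential of $\mathrm{C}(\vartheta_\Q)[-1]$ on $(\mathrm{C}(\vartheta_\Q)[-1])^k=K^k_\infty\oplus s(C^{\bullet,\bullet})^{k-1}$ restricts to $-\delta$ on $\widetilde{C}^{p,q}$ for $q>0$ and to $\delta'+(-1)^{p+1}\delta''$ on $\widetilde{C}^{p,0}$, so the identity map of $\bigoplus_{p+q=k}\widetilde{C}^{p,q}$ is \emph{not} a chain map from $(s(\widetilde{C}^{\bullet,\bullet}),\delta)$ to the shifted cone; saying the splitting ``realizes'' the cone skips this. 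One must exhibit the sign-correcting isomorphism explicitly --- $h_\Q$ with $h_{\Q|\widetilde{C}^{p,0}}=\id$ and $h_{\Q|\widetilde{C}^{p,q+1}}=(-1)^{p+q}\id$, and its analogue $h_\C$ --- and then verify that these signs are still compatible with the comparison morphisms, i.e.\ $(\alpha_\vartheta[-1])\circ(h_\Q\otimes\id_\C)=h_\C\circ\alpha_1$, which is an additional check rather than an automatic consequence of $\alpha_0\circ\vartheta_\Q=\vartheta_\C\circ\alpha$.

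The second gap is the weight filtration, which you leave as an expectation and propose to confirm by comparing associated graded pieces. That route does not close the argument: two filtrations on the same complex can have isomorphic associated gradeds without being equal, so an $E_1$-level comparison cannot establish the required identity of subobjects. The actual verification is direct and short, and this is where the shifts you list cancel: the mixed cone's weight filtration is $W_\ell(\mathrm{C}(\vartheta_\Q)[-1])^k=W_\ell K^k_\infty\oplus W_{\ell+1}s(C^{\bullet,\bullet})^{k-1}$, and since $W_rC^{p,q}$ was \emph{defined} as $W_{r-1}\widetilde{C}^{p,q+1}$, the second summand equals $\bigoplus_{p+q=k-1}W_\ell\widetilde{C}^{p,q+1}$, so the whole is $W_\ell s(\widetilde{C}^{\bullet,\bullet})^k$; the Tate twists $(q)$ and the index $r+2q$ have already been absorbed into the definitions of $W_r\widetilde{C}^{p,q}$ and $W_rC^{p,q}$ and play no further role here. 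With the explicit sign isomorphisms and this one-line filtration computation supplied, your argument coincides with the paper's proof.
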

   \begin{proof}
   The shifted cone $\mathrm{C}(\vartheta_{\Q})[-1]$ of $\vartheta_{\Q}$
   is given by 
   \begin{align*}
   (\mathrm{C}(\vartheta_{\Q})[-1])^k
   &=K^k_\infty\oplus s(C^{\bullet,\bullet})^{k-1}\\
   &=\widetilde{C}^{k,0}\oplus \bigoplus_{p+q=k-1}C^{p, q}=\bigoplus_{p+q=k}\widetilde{C}^{p, q}.
   \end{align*}
   The differential
   $d:\bigoplus_{p+q=k}\widetilde{C}^{p, q}\to \bigoplus_{p+q=k+1}\widetilde{C}^{p, q}$
   of $\mathrm{C}(\vartheta_{\Q})[-1]$
   is given by $d_{|\widetilde{A}^{p, q}}=-\delta$ for $q>0$, 
   and $d_{|\widetilde{A}^{p,0}}=\delta'+(-1)^{p+1}\delta''$.
   The isomorphism $h_\Q:\rtot\to \mathrm{C}^\bullet(\vartheta_{\Q})[-1]$
   is given by $h_{\Q|\widetilde{C}^{p, 0}}:=\id_{\widetilde{C}^{p, 0}}$ and 
   $h_{\Q|\widetilde{C}^{p, q+1}}=(-1)^{p+q}\id_{\widetilde{C}^{p, q+1}}$.
   The weight filtration on $\mathrm{C}(\vartheta_{\Q})[-1]$
   is given by 
   \begin{align*}
   W_\ell(\mathrm{C}^k(\vartheta_\Q)[-1])^k
   &=W_\ell K^{k}_\infty\oplus W_{\ell+1}s(C^{\bullet,\bullet})^{k-1}\\
   &=W_\ell \rtot^k.
   \end{align*}
   This shows the compatibility of the weight filtrations.
   Similar argument can be applied to $\mathrm{C}(\vartheta_{\C})$.
   The compatibility of Hodge filtration $F$ can easily be checked.
   Let $h_\C:\tot\to \mathrm{C}(\vartheta_\C)$ be the isomorphism defined by the same way as $h_\Q$.   
   It can also be checked that $$(\alpha_\vartheta[-1])\circ(h_\Q\otimes \id_\C)=h_\C\circ \alpha_1.$$
   Remark that $\alpha_\vartheta:\mathrm{C}(\vartheta_\Q)\otimes \C\to \mathrm{C}(\vartheta_\C)$
   is defined by $\alpha_\vartheta(x, y):=(\alpha x, \alpha_0 y)$ for $x\in K^{k+1}_\infty\otimes \C$,
   $y\in s(C^{\bullet,\bullet})^k\otimes \C$.
   This proves the proposition.
   \end{proof}
   The mixed Hodge complex 
    $\Xi^{\rm Hdg}_g$
    defines a mixed Hodge structure on $\H^k(X, s(\widetilde{C}^{\bullet, \bullet}))$,
   which we denote by $H^k(Y, Y_\infty)=(H^k(Y,Y_\infty;\Q),F, W)$.
   \begin{corollary}[{\cite{petmix}}]
   We have the following long exact sequence of mixed Hodge structures: 
      \begin{align}\label{long infty}
    \cdots \longrightarrow H^{k-1}(Y_\infty) \longrightarrow 
    H^k(Y,Y_\infty) \longrightarrow
    H^k(Y) \longrightarrow 
    H^k(Y_\infty) \longrightarrow 
    \cdots.
    \end{align}\end{corollary}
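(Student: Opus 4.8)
The plan is to read off the long exact sequence from the mixed-cone presentation of $\Xi^{\rm Hdg}_g$ obtained in Proposition \ref{XI}, together with the general homological formalism of cohomological mixed Hodge complexes. First I would recall that the morphism of cohomological mixed $\Q$-Hodge complexes $\vartheta:\mathcal{H}dg(\amb\log\pole)\to\psi^{\rm Hdg}_g$ constructed in \S\ref{MHC} sits in the canonical distinguished triangle
\[
\mathcal{H}dg(\amb\log\pole)\xrightarrow{\ \vartheta\ }\psi^{\rm Hdg}_g\longrightarrow\mathrm{C}(\vartheta)\longrightarrow\mathcal{H}dg(\amb\log\pole)[1]
\]
in the triangulated homotopy category of cohomological mixed Hodge complexes, where $\mathrm{C}(\vartheta)$ is the mixed cone of \cite[\S 3.3.4.2]{elzmix}, \cite{petmix}.

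Second, I would invoke the fundamental property that passing to hypercohomology is a cohomological functor from cohomological mixed $\Q$-Hodge complexes to the abelian category of mixed $\Q$-Hodge structures (Deligne's theory, as developed in \cite{delIII}, \cite{petmix}, \cite{elzmix}): it carries each such complex to a mixed Hodge structure and each distinguished triangle to a long exact sequence whose arrows are morphisms of mixed Hodge structures. Applying this to the triangle above produces
\[
\cdots\to\H^k(\mathcal{H}dg(\amb\log\pole))\to\H^k(\psi^{\rm Hdg}_g)\to\H^k(\mathrm{C}(\vartheta))\to\H^{k+1}(\mathcal{H}dg(\amb\log\pole))\to\cdots,
\]
and by the identifications of \S\ref{KC} and \S\ref{MHC} the first two groups are the mixed Hodge structures $H^k(Y)$ and $H^k(Y_\infty)$.

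Third, I would rewrite the cone term. Proposition \ref{XI} gives an isomorphism of cohomological mixed Hodge complexes $\Xi^{\rm Hdg}_g\simeq\mathrm{C}(\vartheta)[-1]$, hence $\mathrm{C}(\vartheta)\simeq\Xi^{\rm Hdg}_g[1]$ and $\H^k(\mathrm{C}(\vartheta))\simeq\H^{k+1}(\Xi^{\rm Hdg}_g)=H^{k+1}(Y,Y_\infty)$ as mixed Hodge structures. Substituting this into the displayed sequence and reindexing $k\mapsto k-1$ yields exactly (\ref{long infty}). The argument is essentially formal once Proposition \ref{XI} is available; the only point requiring care is the bookkeeping of the shift $[-1]$ in the cone, so that the connecting homomorphism $H^k(Y_\infty)\to H^{k+1}(Y,Y_\infty)$ lands in the correct degree, together with the observation---automatic from the construction---that every map in the sequence is induced by a morphism of cohomological mixed Hodge complexes and is therefore a morphism of mixed Hodge structures, strictly compatible with $F$ and $W$.
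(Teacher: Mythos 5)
Your proposal is correct and follows essentially the same route as the paper, which simply applies the general cone construction for morphisms of cohomological mixed Hodge complexes (Peters--Steenbrink, Theorem 3.22(2)) to $\mathrm{C}(\vartheta)$ and uses Proposition \ref{XI} to identify $\mathrm{C}(\vartheta)[-1]$ with $\Xi^{\rm Hdg}_g$. Your index bookkeeping with the shift is also consistent with the sequence (\ref{long infty}).
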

    \begin{proof}
    Apply \cite[Theorem 3.22 (2)]{petmix} to the cone $\mathrm{C}(\vartheta)$.
    \end{proof}
    \begin{remark}
    Although we postpone to clarify the precise relation, 
    the notation $\Xi^{\rm Hdg}_g$ comes from the notation for the Beilinson's maximal extension
    $($see \cite[Theorem E.3]{esy}$)$.
    \end{remark}
   \subsubsection{Monodromy weight filtration}
   Let $\nu:\widetilde{A}^{p, q}\to \widetilde{A}^{p-1,q+1}$ be the morphism 
   given by $\nu([\eta \mod W_{q-1}]):=[\eta \mod W_q]$.
   It induces a nilpotent endomorphism on $\tot$,
   which is also denoted by $\nu$.      
   It can easily be observed that $\nu(W_r)\subset W_{r-2}$, 
   and $\nu (F_i)\subset F_{i+1}$.
   We also define $\nu: \widetilde{C}^{p, q}\to\widetilde{C}^{p-1,q+1}(-1)$ 
   similarly: $[x\otimes y\mod W_{q-1}]\otimes (2\pi\i)^{q-1}
   \mapsto [x\otimes y\mod W_{q}]\otimes (2\pi\i)^{q-1}$.
   Hence we have a morphism $\nu: H^k(Y,Y_\infty)\to H^k(Y,Y_\infty)(-1)$ 
   of mixed Hodge structures
   for each $k$.
   The following theorem is proved in \S \ref{phlm}.
   \begin{theorem}\label{thmmon}
   The map $\nu$ induces isomorphisms
   $$\nu^r: \gr^W_{k+r}H^k(Y, Y_\infty)\simeqto \gr^W_{k-r}H^k(Y, Y_\infty)(-r),$$
   i.e., the weight filtration $W$ on $H^k(Y,Y_\infty)$ is
    the monodromy weight filtration of $\nu$ centered at $k$.
   \end{theorem}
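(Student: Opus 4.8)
The plan is to carry out Steenbrink's argument for the monodromy weight filtration of a nearby cycle directly on the complex $\tot$ computing $H^\bullet(Y,Y_\infty)$. Since $\Xi^{\rm Hdg}_g$ is a cohomological mixed $\Q$-Hodge complex, the spectral sequence of its weight filtration degenerates at $E_2$, so $\gr^W_\bullet H^k(Y,Y_\infty)$ is read off from the $E_2$-page. The operator $\nu$ preserves the total degree, hence is an endomorphism of each $H^k(Y,Y_\infty)$, and it satisfies $\nu(W_r)\subset W_{r-2}$; thus it induces a morphism of the weight spectral sequence, and it suffices to establish the isomorphism $\nu^r\colon\gr^W_{k+r}\simeqto\gr^W_{k-r}(-r)$ after passing to $E_2$. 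First I would therefore describe $\nu$ at the $E_1$-level.

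Using $\gr^W_r\widetilde{A}^{p,q}=\gr^W_{r+2q}\Omega^{p+q}_\amb(\log\pole)$ together with the residue isomorphisms $\mathrm{R\acute{e}s}_m\colon\gr^W_m\Omega^\bullet_\amb(\log\pole)\simeqto a_{m*}\Omega^\bullet_{\pole(m)}[-m]$ of \S\ref{KC}, the complex $\gr^W_j\tot$ splits, one summand for each admissible $q$, into a copy of $\gr^W_{j+2q}\Omega^\bullet_\amb(\log\pole)$; its cohomology is thus a direct sum of Tate-twisted cohomologies $H^\bullet(\pole(m))$ of the smooth projective strata, with $m=j+2q$. Inspecting the definition $\nu([\eta\bmod W_{q-1}])=[\eta\bmod W_q]$ shows that $\nu$ preserves the underlying $\gr^W_{j+2q}\Omega$-class; hence on $E_1$ it is the identity ``staircase shift'', carrying the $q$-summand of $\gr^W_j$ onto the $(q+1)$-summand of $\gr^W_{j-2}$ attached to the same stratum $\pole(j+2q)$ and dropping off at the bottom row.

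With this normal form the assertion becomes a statement about the staircase double complex, whose $E_1$-differential is built from the alternating Gysin and restriction maps of the stratification already encoded in the Koszul complex of \S\ref{KC}. I would organize the $E_1$-page as an $\mathfrak{sl}_2$-module with $\nu$ as the lowering operator and reduce the required isomorphism on $E_2=E_\infty$ to the hard Lefschetz theorem on each smooth projective $\pole(m)$, the Lefschetz operator being cup product with the class controlling the residue maps. Checking that the primitive decomposition coming from hard Lefschetz is compatible with $d_1$ and survives to $E_2$—so that the staircase shift $\nu^r$ becomes an isomorphism on cohomology—is the delicate combinatorial core of the theorem, closely following \cite[Ch.~11]{petmix}, and is the step I expect to be the main obstacle. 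The relative (rather than nearby-cycle) nature of $\tot$, namely the extra bottom row $\widetilde{A}^{\bullet,0}=\Omega^\bullet_\amb(\log\pole)$ computing $H^\bullet(Y)$, only alters the range of the staircase and is absorbed into the same bookkeeping. One could alternatively try to deduce the property from the long exact sequence $(\ref{long infty})$, combining Steenbrink's monodromy weight result on the nearby cycle $H^\bullet(Y_\infty)$ with the behaviour of the induced $\nu$ on $H^\bullet(Y)$, but controlling the monodromy weight property across the connecting homomorphisms seems no easier than the direct computation.
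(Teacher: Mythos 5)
Your reduction is set up correctly and matches the paper's: degeneration of the weight spectral sequence at $E_2$ (Corollary \ref{speseq}), identification of the $E_1$-terms with Tate-twisted cohomologies of the strata $\pole(m)$ via the residue maps, the description of $\nu$ as the identity ``staircase shift'' (Lemma \ref{nyu}), and the identification of $d_1$ with alternating Gysin and restriction maps (Proposition \ref{gysin}). But the step you defer --- ``checking that the primitive decomposition coming from hard Lefschetz is compatible with $d_1$ and survives to $E_2$'' --- is not a combinatorial bookkeeping issue that hard Lefschetz resolves; it is the actual content of the theorem, and hard Lefschetz alone does not suffice. An endomorphism of a bigraded Lefschetz module that commutes with the $\mathfrak{sl}_2$-operators and squares to zero need not have Lefschetz cohomology: $\nu^r$ being an isomorphism on $E_1$ does not formally descend to an isomorphism on $d_1$-cohomology. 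What forces the descent is positivity.

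The paper supplies this missing ingredient by the theory of \emph{polarized} Hodge-Lefschetz modules. One defines a pairing $\psi$ on $K^{\bullet,\bullet}$ by $\psi(x,y)=\varepsilon(i+j-n)(2\pi\i)^{2k+i}\int_{\pole(2k+i)}x\wedge y$, verifies that $(K^{\bullet,\bullet},(2\pi\i)\nu,L,\psi,d_1)$ is a \emph{differential polarized} Hodge-Lefschetz module --- the positivity axiom on the primitive parts $K_0^{-i,-j}$ being exactly the classical Hodge-Riemann bilinear relations on the smooth projective strata $\pole(m)$, and the self-adjointness $\psi(d_1x,y)=\psi(x,d_1y)$ following from the adjunction between Gysin and restriction --- and then invokes Guill\'en--Navarro Aznar's theorem (Theorem \ref{surles}, after Saito and Deligne) that the cohomology of such an object is again a polarized Hodge-Lefschetz module. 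That theorem is where the isomorphism $\nu^r$ on $E_2$ comes from. Your proposal gestures at the right reference but stops exactly at the point where the polarization must enter; as written, the argument would not close.
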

   The way to prove this theorem is essentially the same as in \cite[Theorem 5.2]{guisur}.
   We remark that $n$ in \cite{guisur} corresponds to $n-1$ in this paper.%
   \subsubsection{Monodromy weight spectral sequence}
     By Proposition \ref{XI}, we have the following.
    \begin{corollary}\label{speseq}
   The spectral sequence for $(\R\Gamma(\amb, s(\widetilde{C}^{\bullet,\bullet})), W)$ whose $E_1$-term is given by    
   \begin{align*}
   E_1^{-r, q+r}=\H^q(\amb,\gr^W_r s(\widetilde{C}^{\bullet,\bullet})) 
   \end{align*}
   degenerates at $E_2$-term.
   In other words, 
   $\gr^W_{q+r}\H^q(X,\rtot)$ is the cohomology of the complex:
   $$ E^{-r-1,q+r}_1\xrightarrow{d_1}E_1^{-r, q+r}\xrightarrow{d_1}E_1^{-r+1,q+r}.$$
   \end{corollary}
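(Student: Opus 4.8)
The plan is to recognize the displayed spectral sequence as the weight spectral sequence of a cohomological mixed Hodge complex, and then to appeal to Deligne's general $E_2$-degeneration theorem. By Proposition \ref{XI}, the tuple $\Xi^{\mathrm{Hdg}}_g=\big((\rtot,W),(\tot,W,F),\alpha_1\big)$ is a cohomological mixed $\Q$-Hodge complex on $\amb$, and the spectral sequence for $(\R\Gamma(\amb,\rtot),W)$ in the statement is precisely the one attached to its weight filtration $W$. Thus the whole assertion reduces to the standard fact that the weight spectral sequence of a cohomological mixed Hodge complex degenerates at $E_2$; see \cite{delIII} and \cite[Theorem 3.18]{petmix}.

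To spell out the mechanism I would use, first note that each $E_1$-term carries a pure Hodge structure. Indeed $\gr^W_r\rtot$, together with $\gr^W_r\tot$ and the induced comparison isomorphism, underlies a Hodge complex of weight $r$, so that $E_1^{-r,q+r}=\H^q(\amb,\gr^W_r\rtot)$ is a pure $\Q$-Hodge structure of weight $q+r$. The differential $d_1\colon E_1^{-r,q+r}\to E_1^{-r+1,q+r}$ preserves this second (weight) index and is induced by a morphism of Hodge complexes, hence is a morphism of pure Hodge structures of equal weight; consequently the $E_2$-term is again pure of weight $q+r$ in the relevant bidegree.

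The key step is then to kill all higher differentials. For $m\geq 2$ the differential $d_m$ maps a piece of weight $q+r$ to one of weight $q+r-m+1<q+r$, so it is a morphism between pure $\Q$-Hodge structures of distinct weights and therefore vanishes. This yields degeneration at $E_2$, and identifies $\gr^W_{q+r}\H^q(\amb,\rtot)$ with the $E_2$-term, i.e., with the cohomology of $E^{-r-1,q+r}_1\xrightarrow{d_1}E_1^{-r,q+r}\xrightarrow{d_1}E_1^{-r+1,q+r}$. I do not expect a genuine obstacle here: the only input particular to our setting is that $\Xi^{\mathrm{Hdg}}_g$ verifies the axioms of a cohomological mixed Hodge complex, which has already been established in Proposition \ref{XI}, so the remaining content is purely the general theory applied to that complex.
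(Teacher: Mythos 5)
Your proposal is correct and follows the paper's own route: the paper's proof is exactly the one-line application of Deligne's weight-spectral-sequence degeneration theorem (\cite[(8.1.9)]{delIII}) to the cohomological mixed $\Q$-Hodge complex $\Xi^{\mathrm{Hdg}}_g$ furnished by Proposition \ref{XI}. Your additional paragraph sketching the purity/weight-comparison mechanism is a reasonable gloss on that general theorem, though the rigorous justification that the $E_m$-terms for $m\geq 2$ carry the expected pure structures is part of what Deligne's cited result (via the two-filtrations lemma) supplies.
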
 
   \begin{proof}
   Apply (\cite[(8.1.9)]{delIII}) 
   to the cohomological mixed $\Q$-Hodge complex  $\Xi_g^{\rm Hdg}$ on $\amb$.
   \end{proof}
    By Theorem \ref{QisK}, we have a quasi-isomorphism $\gr^W_mK_\infty^\bullet\simeq a_{m*}\Q_{\pole(m)}[-m](-m)$. 
    Recall that  $\gr^W_j s(\widetilde{C}^{\bullet,\bullet})\simeq 
    \bigoplus_{k\geq 0, -j}\gr^W_{j+2k}K_\infty^\bullet(k).$
    Hence,
    \begin{align*}
      E_1^{-r, q+r}
      &=\H^q(\amb,\gr^W_r s(\widetilde{C}^{\bullet,\bullet})) 
      \simeq\bigoplus_{k\geq 0, -r} \H^q(\amb,\gr^W_{r+2k}K_\infty^\bullet(k))\\
      &\simeq \bigoplus_{k\geq 0,-r}H^{q-r-2k}(D(2k+r);\Q)(-r-k).
    \end{align*}
    Following \cite{guisur}, we put $K^{i, j, k}_\Q:=H^{i+j-2k+n}(D(2k-i);\Q)(i-k)$ for $k\geq 0,i$,
    and $K_\Q^{i, j, k}=0$ otherwise. 
    Then we have $E_1^{-r, q+r}\simeq\bigoplus_{k\in \Z} K_\Q^{-r, q-n, k}$.
    We also put $E^{-r, q+r}_{1,\R}:=E_1^{-r, q+r}\otimes\R$, $K^{i, j, k}:=K_\Q^{i, j, k} \otimes \R$,
    and $K^{i, j}:=\bigoplus_{k}K^{i, j, k}$.
    The induced morphism $d_1\otimes \id_\R$ is also denoted by $d_1$.
    \begin{proposition}[cf.{\cite[Lemma (2.7), Proposition (2.9)]{guisur}}]\label{gysin}
    The restriction of $d_1$ to $K^{i, j, k}$
    decomposes to 
    $d_1':K^{i, j, k}\to K^{i+1, j+1, k}$ and 
    $d_1'':K^{i, j, k}\to K^{i+1, j+1,k+1}$. 
    Moreover, $d_1'$ is the alternating sum of the Gysin map $\gamma^{(2k-i)}$ in \cite[(1.3)]{guisur} times $(-1)$, 
    and $d_1''$ is the alternating sum of restriction map $\rho^{(2k-i)}$ in \cite[(1.3)]{guisur}.
    \end{proposition}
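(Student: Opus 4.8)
The plan is to realize $d_1$ as the connecting homomorphism of the weight spectral sequence of the filtered complex $(s(\widetilde{C}^{\bullet,\bullet}),W)$ and to split it using the double-complex decomposition $\delta=\delta'+\delta''$. From the definitions of $W_r\widetilde{C}^{p,q}$ and of the two differentials one checks that $\delta'$ (induced by $d$) preserves $W$ and the second index $q$, whereas $\delta''$ (induced by $\wedge\,g$) lowers $W$ by one while raising $q$ by one. Writing $\gr^W_r s(\widetilde{C}^{\bullet,\bullet})\simeq\bigoplus_{k}\gr^W_{r+2k}K_\infty^\bullet(k)$ with $k=q$, I would use the snake-lemma description of the connecting map: a class in $E_1^{-r,q+r}=\H^q(\gr^W_r)$ is represented by a $\delta'$-cocycle $x$; choosing a lift $\tilde{x}\in W_r$, the cocycle condition gives $\delta'\tilde{x},\delta''\tilde{x}\in W_{r-1}$ and $d_1[x]=[\delta'\tilde{x}]+[\delta''\tilde{x}]$ in $\H^{q+1}(\gr^W_{r-1})$. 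Setting $d_1'=[\delta'\tilde{x}]$ and $d_1''=[\delta''\tilde{x}]$ gives a well-defined splitting (the dependence on the lift dies in $\gr^W_{r-1}$-cohomology), and since $\delta'$ fixes $k$ while $\delta''$ raises it by one, the index bookkeeping forces the asserted bidegrees $K^{i,j,k}\to K^{i+1,j+1,k}$ and $K^{i,j,k}\to K^{i+1,j+1,k+1}$ respectively.

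It then remains to identify the two pieces through the residue isomorphism $\mathrm{R\acute{e}s}_m$ of Theorem \ref{QisK}, which trivializes $\gr^W_m K_\infty^\bullet\simeq a_{m*}\Q_{\pole(m)}[-m](-m)$ in terms of the strata $\pole(m)$. The term $d_1''$ is the transparent one: in a chart $(z_1,\dots,z_n)$ with $\pole=\bigcup\{z_j=0\}$, the operation $\wedge\,g$ amounts, modulo holomorphic forms, to adjoining a factor $z_j^{-1}dz_j$, so after taking residues it pulls a class on $\pole(2k-i)$ back to the codimension-one-deeper stratum $\pole(2k-i+1)$; summing over the adjoined component yields the alternating sum of the restriction maps $\rho^{(2k-i)}$ of \cite[(1.3)]{guisur}. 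For $d_1'$ the cocycle condition puts $\delta'\tilde{x}=d\tilde{x}$ into $W_{r-1}$, and a local Poincar\'e-residue computation shows that its residue along $\pole(2k-i-1)$ is, up to sign, the Gysin pushforward along the codimension-one inclusion $\pole(2k-i)\hookrightarrow\pole(2k-i-1)$; summing and tracking the Tate twist $(k)$ gives $-1$ times the alternating sum of the Gysin maps $\gamma^{(2k-i)}$.

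The final point is to pin down the signs: the alternating signs come from the chosen ordering of the irreducible components of $\pole$ entering $\mathrm{R\acute{e}s}_m$ together with the Koszul sign rule on $\bigwedge^\bullet(\mathcal{M}^{\rm gp}_{\amb,\pole}\otimes_\Z\Q)$. After the index shift $n\leftrightarrow n-1$ noted following Theorem \ref{thmmon}, this computation becomes formally identical to \cite[Lemma (2.7), Proposition (2.9)]{guisur} and reproduces exactly the asserted $-\gamma^{(2k-i)}$ and $\rho^{(2k-i)}$.

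I expect the genuine difficulty to be the $d_1'$ term. Unlike $d_1''$, which is induced directly by a differential of the associated graded, $d_1'$ is produced by the connecting map, so one must compute the class of $d\tilde{x}$ in $\gr^W_{r-1}$ for a lift of a $\delta'$-cocycle and verify the residue identity converting the exterior derivative into the Gysin map with the correct sign and Tate twist. Carrying this out simultaneously with the alternating-sign bookkeeping is the delicate step; the splitting $d_1=d_1'+d_1''$ and the reading of $d_1''$ as a restriction are comparatively formal.
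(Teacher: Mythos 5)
Your plan follows the paper's proof essentially verbatim: the paper likewise realizes $d_1$ as the connecting map, splits it as $d_1=d_1'+d_1''$ by lifting a graded cocycle to $\widetilde{x}\in W_r$ and separating $[\delta'\widetilde{x}]$ from $[\delta''\widetilde{x}]$ (working in the Dolbeault resolution $\widetilde{\mathscr{A}}^{\bullet,\bullet}$ to get honest representatives), and then reads off the bidegrees from the fact that $\delta'$ fixes $k$ while $\delta''$ raises it. The only difference is that the paper outsources the delicate local residue identifications $\mathrm{R\acute{e}s}\circ d_1'=-\gamma\circ\mathrm{R\acute{e}s}$ and $\mathrm{R\acute{e}s}\circ d_1''=\rho\circ\mathrm{R\acute{e}s}$, together with their compatibility with the rational structure, to \cite[(1.3),(1.8),(2.9)]{guisur} rather than recomputing them.
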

    \begin{proof}
    By the definition, $d_1:E_1^{-r, q+r}\to E^{-r+1,q+r}_1$ is induced by the following short exact sequence.
    \begin{align*}
    0\longrightarrow \gr^W_{r-1}\rtot\longrightarrow W_r\rtot/W_{r-2}\rtot\longrightarrow \gr^W_r\rtot\longrightarrow 0.
    \end{align*}
    We shall compute the complex version of $d_1$ using Dolbeault resolution, and then 
    observe the compatibility with the rational structure.
    
    Let $\mathscr{A}^{p, q}_\amb$ be the sheaf of $(p, q)$-forms on $X$.
    Put $\mathscr{A}^{p, q}_{\amb,\pole}:=\Omega_X^p(\log \pole)\otimes_{\mathcal{O}_\amb}\mathscr{A}_X^{0,q}$,
    $\mathscr{A}^k_\amb:=\bigoplus_{p+q=k}\mathscr{A}_\amb^{p, q}$,
    $\mathscr{A}^k_{\amb,\pole}:=\bigoplus_{p+q=k}\mathscr{A}^{p, q}_{\amb,\pole}$, 
    and $W_m\mathscr{A}^k_{\amb,\pole}:=\mathscr{A}_\amb^{k-m}\wedge\mathscr{A}_{\amb,\pole}^{m}$.
    Let $d:=\del+\delbar:\mathscr{A}_\star^k\to \mathscr{A}_\star^{k+1}$
    be the differential ($\star=\amb, \text{ or } \amb,\pole$).
    We have a resolution
    $(\Omega^\bullet_{\amb}(\log \pole), d)\simeq (\mathscr{A}^\bullet_{\amb,\pole},d)$
    compatible with the filtrations.
    Put $\widetilde{\mathscr{A}}^{p, q}:=\mathscr{A}^{p+q}_{\amb,\pole}/W_{q-1}\mathscr{A}_{\amb,\pole}^{p+q}$
    and define $\delta':\widetilde{\mathscr{A}}^{p, q}\to \widetilde{\mathscr{A}}^{p+1, q}$,
    $\delta'':\widetilde{\mathscr{A}}^{p, q}\to \widetilde{\mathscr{A}}^{p, q+1}$
    by $\delta'([\eta\mod W_{q-1}])=[d\eta \mod W_{q-1}]$
    and $\delta''([\eta \mod W_{q-1}])=[g^{-1}dg\wedge \eta \mod W_q]$.
    Denote $s(\widetilde{\mathscr{A}}^{\bullet,\bullet})$ the associated single complex.
    We also define the filtration on $s(\widetilde{\mathscr{A}}^{\bullet,\bullet})$ by 
    $W_r\widetilde{\mathscr{A}}^{p, q}
    =W_{r+2q}\mathscr{A}_{\amb,\pole}^{p+q}/W_{q-1}\mathscr{A}_{\amb,\pole}^{p+q}$.
    We have the quasi-isomorphism $\tot\simeq s(\widetilde{\mathscr{A}}^{\bullet,\bullet})$ compatible with 
    the filtrations.
    
    For $k\geq 0,-r$, take a class 
    $$[x]\in \H^q(X,\gr^W_{r+2k}\Omega_X^\bullet(\log \pole))\subset \H^q(X,\gr^W_rs(\widetilde{A}^{\bullet,\bullet})).$$
    Since we have the isomorphism 
    $\H^q(X,,\gr^W_{r+2k}\Omega_X^\bullet(\log \pole))\simeq H^q(\Gamma(X,\gr^W_{r+2k}\mathscr{A}_{X,D}^\bullet))$, 
    we can take a representative $x\in\Gamma(X,\gr^W_{r+2k}\mathscr{A}^{q}_{X,D})$
    with $0=dx\in \Gamma(X,\gr^W_{r+2k}\mathscr{A}^{q+1}_{X,D})$.
    Take a lift 
    $\widetilde{x}\in 
    \Gamma(X,W_{r+2k}\mathscr{A}^q_{X,D}/W_{k-1}\mathscr{A}^q_{X,D})
    =\Gamma(X,W_r\widetilde{\mathscr{A}}^{q-k, k})$.
    We have $\delta''\widetilde{x}\in\Gamma(X,W_{r-1}\widetilde{\mathscr{A}}^{q-k, k+1}).$
    Since $dx=0,$ we have $\delta'\widetilde{x}\in \Gamma(X,W_{r-1}\widetilde{\mathscr{A}}^{q-k+1,k})$.
    We obtain that 
    $$d_1[x]=[\delta'\widetilde{x}]+[\delta''x] 
    \in\H^{q+1}(X,\gr^W_{r+2k-1}\Omega_X^\bullet(\log D)) \oplus 
    \H^{q+1}(X,\gr^W_{r+2k+1}\Omega_X^\bullet(\log D)).$$
    Defining $d'_1[x]:=[\delta' \widetilde{x}]$, 
    and $d_1''[x]:=[\delta''\widetilde{x}]$, we have the decomposition $d_1=d_1'+d_1''$.
    
    By the construction, 
    $d_1':\H^q(X, \gr^W_{r+2k}\Omega^\bullet_\amb(\log \pole))
    \to \H^{q+1}(X,\gr_{r+2k-1}^W\Omega^\bullet_X(\log \pole))$ is induced 
    by the short exact sequence 
    $$0\longrightarrow  \gr^W_{r+2k-1}\Omega^\bullet_\amb(\log \pole)
    \longrightarrow \frac{W_{r+2k}\Omega^\bullet_\amb(\log \pole)}{W_{r+2k-2}\Omega^\bullet_\amb(\log \pole)}
    \longrightarrow \gr^W_{r+2k}\Omega^\bullet_\amb(\log \pole)
    \longrightarrow 0.$$ 
    The differential 
    $d''_1:\H^q(X,\gr^W_{r+2k}\Omega^\bullet_{\amb}(\log \pole))
    \to \H^{q+1}(X,\gr^W_{r+2k+1}\Omega^\bullet_{\amb}(\log \pole))$
    is induced by 
    $$g^{-1}dg:\gr^W_m\Omega_\amb^p(\log \pole)\to \gr^W_{m+1}\Omega^{p+1}_X(\log D)\ \ \ \ (m\geq0).$$
     In {\cite{guisur}}, it is shown that 
    $\mathrm{R\acute{e}s}_{r+2k-1}\circ d_1'=(-\gamma^{(r+2k)})\circ \mathrm{R\acute{e}s}_{r+2k}$
    and $\mathrm{R\acute{e}s}_{r+2k+1}\circ d_1''=\rho^{(r+2k)}\circ \mathrm{R\acute{e}s}_{r+2k}$ 
    holds,
    where $\gamma^{(m)}:\H^{k-m}(D(m);\C)\to \H^{k-m+2}(D(m-1);\C)$ denotes the (alternating sum of) Gysin map
    and $\rho^{(m)}:\H^k(D(m);\C)\to \H^k(D(m+1);\C)$ denotes (the alternating sum of) restriction \cite[(1.3)]{guisur}. 
    It is also shown that similar commutativity holds 
    for rational cohomology ({\cite[(1.8),(2.9)]{guisur}}).
    Hence, we obtain the conclusion.
    \end{proof}
    The morphism $\nu:\rtot\to\rtot(-1)$ induces
    morphisms
    $\nu:K^{i, j, k}\to K^{i+2, j, k+1}(-1)$, 
    which is identity whenever $k\geq 0, i$.
    Hence, we obtain:     
    \begin{lemma}[{\cite[Lemma (2.7), Proposition (2.9)]{guisur}},{\cite[Proposition 11.34]{petmix}}]\label{nyu}\ 
    
     \begin{enumerate}
     \item  For all $i\geq 0$, $\nu$ induces an isomorphism $\nu^i:K^{-i, j}\simeqto K^{i, j}(-i)$.
     \item  $\mathrm{Ker}(\nu^{i+1})\cap K^{-i, j}=K^{-i, j, 0}$. \qed
     \end{enumerate}
    \end{lemma}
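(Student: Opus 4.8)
The plan is to reduce both assertions to the elementary linear algebra of a single nilpotent Jordan block, by reindexing the trigraded groups $K^{i,j,k}$ according to the stratum $\pole(2k-i)$ that they involve. Fixing $j$ throughout, I set $m:=2k-i$, the index of the stratum $\pole(m)$ appearing in $K^{i,j,k}=H^{i+j-2k+n}(\pole(2k-i);\Q)(i-k)$. Substituting $i=2k-m$, the cohomological degree becomes $i+j-2k+n=j-m+n$, which is \emph{independent of $k$}; moreover the constraint $k\geq\max(0,i)$ becomes $0\leq k\leq m$. Hence for each fixed $m\geq 0$ the groups with stratum index $m$ are all equal to the single space $V_m:=H^{j-m+n}(\pole(m);\Q)$ up to a Tate twist $(k-m)$, and they occupy exactly the positions $i=-m,-m+2,\dots,m$, one for each $k=0,1,\dots,m$.

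Next I would record how $\nu$ acts under this bookkeeping. Since $\nu\colon K^{i,j,k}\to K^{i+2,j,k+1}(-1)$ preserves $m=2k-i$ and, by the description just before the lemma, is the identity on the underlying cohomology, each fixed-$m$ family becomes a chain $V_m\to V_m\to\cdots\to V_m$ of length $m+1$ on which $\nu$ is the identity between consecutive terms. In other words, for fixed $j$ the pair $\bigl(\bigoplus_i K^{i,j},\nu\bigr)$ splits as a direct sum over $m\geq 0$ of such strings, the string of index $m$ being $\dim V_m$ copies of a single Jordan block of size $m+1$ placed symmetrically about $i=0$ and annihilated by $\nu^{m+1}$.

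From this decomposition both statements are immediate. For (1), the two positions $-i$ and $+i$ of the $m$-string are occupied simultaneously, precisely when $m\geq i$ and $m\equiv i\pmod 2$, and on each such string $\nu^i$ carries the term at $-i$ isomorphically onto the term at $+i$; summing over $m$ yields the isomorphism $\nu^i\colon K^{-i,j}\simeqto K^{i,j}(-i)$. For (2), a component of $K^{-i,j}$ sitting in the $m$-string lies at position $-i$, and $\nu^{i+1}$ moves it to position $i+2$; this target is still inside the string (so $\nu^{i+1}$ is injective there) whenever $m\geq i+2$, while for the only remaining admissible value $m=i$ the target falls off the top and the component is killed. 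Since that surviving piece is the bottom term $k=0$ of the string $m=i$, it is exactly $K^{-i,j,0}$, giving $\mathrm{Ker}(\nu^{i+1})\cap K^{-i,j}=K^{-i,j,0}$.

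I do not expect a serious obstacle here: once the reindexing $m=2k-i$ is in place the content is purely formal. The only point demanding care is the verification that $\nu$ is genuinely the \emph{identity} (and not merely some isomorphism) between consecutive terms $V_m$, with the Tate twists and the $\Q$-structure tracked correctly, since this is precisely what turns each string into an honest Jordan block and thereby encodes the monodromy-weight property. This identity is furnished by the explicit formula for $\nu$ on $\widetilde{C}^{p,q}$ together with the residue isomorphisms $\mathrm{R\acute{e}s}_m$ and the compatibility of $d_1$ with the Gysin and restriction maps established in Proposition \ref{gysin}; I would simply check that all the twists $(i-k)$ and the factors of $(2\pi\i)$ match.
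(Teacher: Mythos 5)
Your proof is correct and follows essentially the same route as the paper, which deduces the lemma immediately (with a \qed) from the observation that $\nu\colon K^{i,j,k}\to K^{i+2,j,k+1}(-1)$ is the identity whenever source and target are nonzero. Your reindexing by $m=2k-i$ is just a careful spelling-out of the resulting Jordan-string structure that the paper treats as evident.
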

   \subsubsection{Polarized Hodge-Lefschets modules}\label{phlm}
   We shall use the Guill\'en-Navarro Aznar's formulation \cite[\S 4]{guisur} of  
   the result of
   Saito \cite{saimod} and Deligne on 
   the Hodge-Lefschetz modules. 
   Let $L^{\bullet,\bullet}=\bigoplus_{i, j\in \Z}L^{i, j}$ be a 
   bi-graded finite dimensional $\R$-vector space.
   Let $\ell_1, \ell_2$ be endomorphisms on $L$ such that 
   $\ell_1(L^{i, j})\subset L^{i+2, j}, \ell_2(L^{i, j})\subset L^{i, j+2}$, 
   and $[\ell_1,\ell_2]=0$.
   The tuple $(L^{\bullet,\bullet},\ell_1,\ell_2)$ is called Lefschetz module if 
   $\ell^i_1:L^{-i, j}\to L^{i, j}$ are isomorphisms for all $i>0$ 
   and $\ell_2^j:L^{i, -j}\to L^{i, j}$ are isomorphisms for all $j>0$.
   A Lefschetz module $(L^{\bullet,\bullet},\ell_1,\ell_2)$ is called Hodge-Lefschetz module
   if every $L^{i, j}$ has real Hodge structure and $\ell_1,\ell_2$ are 
   morphisms of real Hodge structures of some types 
   (\cite[(1.2)]{grirec}, or \cite[Definition 7.22]{voihod}). 
   
   A polarization $\psi$ of a Hodge-Lefschetz module $(L^{\bullet,\bullet},\ell_1,\ell_2)$
   is a morphism of real Hodge structures
   $\psi:L^{\bullet,\bullet}\otimes L^{\bullet,\bullet}\to \R $
   of certain type with the following properties:
   \begin{enumerate}
   \item[(P1)] $\psi(\ell_ix,  y)+\psi(x,  \ell_i y)=0$
   for $i=1,2$ and
   \item[(P2)]
    $\psi(-,\ell_1^i\ell_2^jC-)$ is 
   symmetric positive definite on 
   $L_0^{-i, -j}:=L^{-i, -j}\cap \mathrm{Ker}(\ell_1^{i+1})\cap \mathrm{Ker}(\ell_2^{j+1})$.
   \end{enumerate}
   Here, $C$ denotes the Weil operator.
   The tuple $(L^{\bullet,\bullet},\ell_1,\ell_2,\psi)$ of Hodge-Lefschetz module and its polarization is 
   called polarized Hodge-Lefschetz module.
   
   A differential $d$ on a polarized Hodge-Lefschetz module $(L^{\bullet,\bullet},\ell_1,\ell_2,\psi)$
   is a morphism of real Hodge structures $d:L^{\bullet,\bullet}\to L^{\bullet,\bullet}$ of certain type 
   such that   
   \begin{enumerate}
   \item[(D1)]\label{d1} 
    $d(L^{i, j})\subset L^{i+1,j+1} $ for $i, j\in \Z$,
   \item[(D2)]\label{d2}
    $d^2=0$, 
   \item[(D3)]\label{d3}
    $[d,\ell_i]=0$ for $i=1,2$, and 
   \item[(D4)]\label{d4}
   $\psi(d x, y)=\psi (x, dy).$
    \end{enumerate}
    The tuple $(L^{\bullet,\bullet},\ell_1,\ell_2,\psi, d)$ is called differential polarized Hodge-Lefschetz module.
    By definition, $\ell_i$ defines an endomorphism on the cohomology group $H^*(L^{\bullet,\bullet}, d)$
    for $i=1,2$, which is denoted by the same notation. 
    We also have a bilinear map on $H^*(L^{\bullet,\bullet}, d)$, which is also denoted by $\psi$.
    \begin{theorem}\cite[Theorem (4.5)]{guisur}\label{surles}
    Let $(L^{\bullet,\bullet},\ell_1,\ell_2,\psi, d)$ be a differential polarized Hodge-Lefschetz module.
    Then $(H^*(L^{\bullet,\bullet}, d),\ell_1, \ell_2,\psi)$ is a polarized Hodge-Lefschetz module.\qed
    \end{theorem}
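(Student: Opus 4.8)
The plan is to prove the statement by finite-dimensional Hodge theory on the complex $(L^{\bullet,\bullet},d)$, manufacturing out of the polarization a Laplacian that commutes with the Lefschetz operators, so that harmonic representatives carry all of the structure down to cohomology. First I would encode the Lefschetz data as a representation of $\mathfrak{sl}_2\times\mathfrak{sl}_2$. Since $[\ell_1,\ell_2]=0$ and $\ell_1^i\colon L^{-i,j}\simeqto L^{i,j}$, $\ell_2^j\colon L^{i,-j}\simeqto L^{i,j}$ are isomorphisms, the raising operators $e_1:=\ell_1$, $e_2:=\ell_2$ extend uniquely to two commuting $\mathfrak{sl}_2$-triples $(e_r,H_r,\Lambda_r)$, $r=1,2$, with $H_r$ acting on $L^{i,j}$ by the weight $i$ (resp.\ $j$) and $\Lambda_r$ the lowering operator. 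This produces the bi-Lefschetz decomposition $L=\bigoplus_{a,b\ge 0}\ell_1^a\ell_2^b\,L_0$, where $L_0^{-i,-j}:=L^{-i,-j}\cap\ker\ell_1^{i+1}\cap\ker\ell_2^{j+1}$ is the bi-primitive part.

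Next I would convert the polarization into a positive-definite inner product. Declaring the bi-Lefschetz decomposition orthogonal and using that, by (P2), the form $\psi(-,\ell_1^i\ell_2^j C-)$ is symmetric positive definite on each $L_0^{-i,-j}$ (with the standard normalization on the translates $\ell_1^a\ell_2^b L_0^{-i,-j}$), one obtains an inner product $\langle-,-\rangle$ on $L$ for which $C$ is an involution, the adjoint of $\ell_r$ is a multiple of $\Lambda_r$ on each isotypic piece, and $\langle-,-\rangle$ interacts with $\psi$ through $C$ exactly as in classical Hodge--Riemann theory. Let $d^{*}$ denote the adjoint of $d$ for $\langle-,-\rangle$; by (D1) it has bidegree $(-1,-1)$, and property (D4) together with the compatibility of $\psi$ and $C$ pins $d^{*}$ down intrinsically, up to sign, as a commutator of $d$ against the lowering operators $\Lambda_1,\Lambda_2$ and $C$. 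This is the finite-dimensional analogue of the K\"ahler identities.

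With $d^{*}$ available, set $\Delta:=dd^{*}+d^{*}d$. The decisive point is to show that $\Delta$ commutes with $\ell_1,\ell_2$ and with $C$. Commutation with $C$ is immediate, since $d$ (hence $d^{*}$) is a morphism of real Hodge structures of type $(r,r)$ and therefore commutes with the Weil operator. For the Lefschetz operators, $[\ell_r,d]=0$ by (D3) reduces the claim to $d[\ell_r,d^{*}]+[\ell_r,d^{*}]d=0$, and here the K\"ahler-type identity of the previous paragraph makes $[\ell_r,\Delta]=0$ drop out of the $\mathfrak{sl}_2\times\mathfrak{sl}_2$ relations. I expect this verification to be the \emph{main obstacle}: it is precisely the step forcing (P1), (P2) and (D1)--(D4) to be used simultaneously, and fixing the exact form of the identity, together with its signs, is the only genuine computation in the argument.

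Once $[\Delta,\ell_r]=[\Delta,C]=0$ has been established, the rest is formal. Finite-dimensional Hodge theory gives the orthogonal decomposition $L=\ker\Delta\oplus\mathrm{im}\,d\oplus\mathrm{im}\,d^{*}$, whence the harmonic space $\mathcal H:=\ker\Delta$ maps isomorphically to $H^{*}(L^{\bullet,\bullet},d)$. Because $\Delta$ commutes with $\ell_1,\ell_2$, the space $\mathcal H$ is stable under $\mathfrak{sl}_2\times\mathfrak{sl}_2$, so $(\mathcal H,\ell_1,\ell_2)$ is again a Lefschetz module (this is hard Lefschetz on cohomology); because $\Delta$ commutes with $C$, the real Hodge structures restrict to $\mathcal H$. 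Finally $\psi$ restricts to $\mathcal H$: relation (P1) is inherited verbatim, and the positivity (P2) on the bi-primitive parts of cohomology holds because, on $\mathcal H$, the form $\psi(-,\ell_1^i\ell_2^j C-)$ coincides with the positive-definite inner product $\langle-,-\rangle$ built above. Transporting everything through the harmonic isomorphism then shows that $(H^{*}(L^{\bullet,\bullet},d),\ell_1,\ell_2,\psi)$ is a polarized Hodge--Lefschetz module, as claimed.
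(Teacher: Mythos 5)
First, a point of reference: the paper does not prove this statement at all --- it is imported verbatim from Guill\'en--Navarro Aznar \cite[Th\'eor\`eme (4.5)]{guisur} (itself going back to Deligne and Saito), and the \qed\ after the statement signals exactly that. So there is no in-paper proof to compare against; your sketch has to stand on its own. On that footing, your strategy is the right one and is essentially the argument in the cited source: encode the two Lefschetz operators as commuting $\mathfrak{sl}_2$-triples, build a positive definite metric from $\psi$, the Weil operator and the bi-primitive decomposition, form the adjoint $d^{*}$ and the Laplacian, and transport everything to cohomology through harmonic representatives.

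The genuine gap is the one you flag yourself: the finite-dimensional K\"ahler identity expressing $d^{*}$ (up to sign) as $C^{-1}[\Lambda_1,[\Lambda_2,d]]C$ on each isotypic piece, and the resulting commutation $[\Delta,\ell_r]=0$, are asserted but never established. This is not a routine verification one can wave at: it is the entire content of the theorem, and every later step funnels through it. Without $[\Delta,\ell_1]=[\Delta,\ell_2]=0$ you cannot conclude that $\mathcal{H}=\ker\Delta$ is an $\mathfrak{sl}_2\times\mathfrak{sl}_2$-submodule (hence no hard Lefschetz on $H^{*}(L,d)$), and you also cannot conclude that harmonic representatives of bi-primitive cohomology classes are bi-primitive in $L$ --- which is what your final positivity claim, that $\psi(-,\ell_1^{i}\ell_2^{j}C-)$ restricted to harmonic bi-primitive elements agrees with the inner product, silently uses. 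The key structural fact that makes the identity provable, and which your sketch never invokes, is that $[\ell_r,d]=0$ together with $d$ having bidegree $(1,1)$ means $d$ is a highest weight vector of weight $(1,1)$ for the adjoint action of $\mathfrak{sl}_2\times\mathfrak{sl}_2$ on $\End(L)$, so that $d$, $[\Lambda_1,d]$, $[\Lambda_2,d]$, $[\Lambda_1,[\Lambda_2,d]]$ span a copy of the $(2,2)$-dimensional irreducible representation; the commutation of $\Delta$ with $\ell_r$ then follows from the Jacobi identity inside this representation, and the identification of $[\Lambda_1,[\Lambda_2,d]]$ with $\pm C d^{*}C^{-1}$ uses (D4) and (P2). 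Supplying this computation (or, alternatively, following Deligne's variant in \cite[Lemme 4.2.3]{saimod}, which sidesteps $\Delta$ by proving $\ker d\cap\mathrm{im}\,d^{*}=0$ directly from positivity) is what is needed to turn your outline into a proof.
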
   
    Fix a  K\"ahler form $\kah$ on $\amb$. 
    Let
    $[\kah]\in H^2(\amb;\R)$ be its cohomology class.
    Cup product with the restriction of the class $[\kah]$ to
    $H^2(D(2k+i);\R)$
    defines mappings
    $L:K^{i, j, k}\to K^{i, j+2, k}$ for all $k\geq 0, i$. 
    Define the linear mapping $\psi: K^{\bullet,\bullet}\otimes K^{\bullet,\bullet}\to \R$
    by 
    \begin{align*}
     \psi(x, y):=\begin{cases}
                      \varepsilon(i+j-n)\left({2\pi\i}\right)^{2k+i}\int_{\pole(2k+i)}x\wedge y 
                        & \text{ if } x\in K^{-i, -j, k}, y\in K^{i, j, k+i}\\
                      0& \text{ else},
                      \end{cases}
    \end{align*}
    where $\varepsilon(a):=(-1)^{a(a-1)/2}$.
    
    \begin{theorem}[cf.{\cite[Theorem (5.1)]{guisur}}]\label{dphl}
    The tuple $(K^{\bullet,\bullet}, (2\pi \i)^{}\nu, L, \psi, d_1)$
    is a differential polarized Hodge-Lefschetz module.
    \end{theorem}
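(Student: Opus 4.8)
The plan is to verify, one package of axioms at a time, the four conditions in the definition of a differential polarized Hodge--Lefschetz module, following Guill\'en--Navarro Aznar \cite[\S 5]{guisur} with $n$ there replaced by $n-1$ here, so that the relevant smooth projective varieties are the strata $\pole(m)$ of dimension $n-m$. The only geometric inputs beyond sign bookkeeping are the Hard Lefschetz theorem and the Hodge--Riemann bilinear relations on each $\pole(m)$, together with Lemma \ref{nyu} and Proposition \ref{gysin}. First I would set up the (Hodge--)Lefschetz structure of $(K^{\bullet,\bullet},\ell_1,\ell_2)$ with $\ell_1:=(2\pi\i)\nu$ and $\ell_2:=L$. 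Each summand $K^{i,j,k}=H^{i+j-2k+n}(\pole(2k-i))(i-k)\otimes\R$ is a pure real Hodge structure, and a short weight count shows $K^{i,j}=\bigoplus_k K^{i,j,k}$ is pure of weight $n+j-i$; hence $\ell_2$ (cup product with $[\kah]$) is a morphism of real Hodge structures of type $(1,1)$, while the factor $(2\pi\i)$ realises the Tate twist in $\nu$ so that $\ell_1$ is a morphism of type $(-1,-1)$. The Lefschetz isomorphism $\ell_1^i:K^{-i,j}\simeqto K^{i,j}(-i)$ is exactly Lemma \ref{nyu}(1), and $\ell_2^j:K^{i,-j}\simeqto K^{i,j}$ is Hard Lefschetz on $\pole(2k-i)$ (whose middle degree is $n-2k+i$, matching the degree of $K^{i,0,k}$).

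Next I would check that $\psi$ is a polarization. Axiom (P1), $\psi(\ell_r x,y)+\psi(x,\ell_r y)=0$, reduces for both $r=1,2$ to the single sign identity $\varepsilon(a-2)=-\varepsilon(a)$ obeyed by $\varepsilon(a)=(-1)^{a(a-1)/2}$: for $\ell_2$ one commutes the even-degree class $[\kah]$ past $x$ in the integral over $\pole(2k+i)$, and for $\ell_1$ one uses that $\nu$ is the identity on the underlying cohomology and that both terms are integrals over the same stratum $\pole(2k+i)$, so that only the shift $i+j-n\mapsto i+j-n-2$ in the argument of $\varepsilon$ survives. For axiom (P2), Lemma \ref{nyu}(2) identifies the bi-primitive space $K^{-i,-j}\cap\mathrm{Ker}(\ell_1^{i+1})\cap\mathrm{Ker}(\ell_2^{j+1})$ with the $k=0$ summand, i.e.\ with the $L$-primitive part of $H^{\bullet}(\pole(\cdot))$, where positivity of $\psi(-,\ell_1^i\ell_2^jC-)$ is precisely the Hodge--Riemann bilinear relations; the factors $\varepsilon(i+j-n)$ and $(2\pi\i)^{2k+i}$ are chosen to orient the signs correctly. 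That $\psi$ is a morphism of real Hodge structures of the right type is immediate from Poincar\'e duality on each $\pole(m)$.

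Finally I would verify that $d_1$ is a differential. Proposition \ref{gysin} gives $d_1=d_1'+d_1''$ with $d_1':K^{i,j,k}\to K^{i+1,j+1,k}$ and $d_1'':K^{i,j,k}\to K^{i+1,j+1,k+1}(-1)$, so (D1) holds with bidegree $(1,1)$ and $d_1$ is a morphism of real Hodge structures; (D2) is the identity ${d_1'}^2={d_1''}^2=d_1'd_1''+d_1''d_1'=0$ for the first differential of the weight spectral sequence. Axiom (D3), $[d_1,\ell_r]=0$, holds since the Gysin and restriction maps commute with the shift $\nu$ and, by the projection formula and naturality of the globally defined class $[\kah]$, with cup product by $[\kah]$. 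Axiom (D4), $\psi(d_1x,y)=\psi(x,d_1y)$, expresses the adjunction under Poincar\'e duality between the Gysin map $\gamma^{(m)}$ and the restriction map $\rho^{(m)}$ (Proposition \ref{gysin}), combined with the compatibility of $\varepsilon$ with the degree shift induced by $d_1$.

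I expect the genuinely delicate point to be the sign and ``type'' bookkeeping in (P1), (P2) and (D4): checking that the chosen sign $\varepsilon(i+j-n)$, the powers $(2\pi\i)^{2k+i}$, and the Weil-operator signs conspire so that $\psi$ is simultaneously infinitesimally isometric for $\ell_1,\ell_2$, positive on the bi-primitive parts, and makes $d_1$ self-adjoint. The underlying geometric facts (Hard Lefschetz, Hodge--Riemann, and adjointness of Gysin and restriction on the $\pole(m)$) are standard; the real work lies in threading them through these normalisations, exactly as in \cite[\S 5]{guisur}.
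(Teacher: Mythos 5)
Your proposal is correct and follows essentially the same route as the paper's proof: Lemma \ref{nyu} plus hard Lefschetz on the strata give the Lefschetz-module structure, the sign computations of Guill\'en--Navarro Aznar give the infinitesimal isometry of $\psi$ and the self-adjointness of $d_1$ via the Gysin/restriction adjunction of Proposition \ref{gysin}, and Lemma \ref{nyu} (2) reduces positivity on the bi-primitive part to the classical Hodge--Riemann bilinear relations on $\pole(i)$. The only discrepancy is cosmetic: the paper's proof inadvertently swaps the labels (P1) and (P2) relative to the definition in \S \ref{phlm}, whereas your labelling matches the definition.
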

    \begin{proof}
    By Lemma \ref{nyu}, $(2\pi \i\nu)^i:K^{-i, j}\simeqto K^{i, j}$ for $i>0$. 
    By the hard Lefschetz theorem, we also have $L^j:K^{i, -j}\simeqto K^{i, j}$ for $j>0$.
    Hence, $(K^{\bullet, \bullet}, (2\pi\i)\nu, L)$ is a Hodge-Lefschetz module.
    Since the trace map and the cup product are the morphisms of Hodge structures, 
    $\psi$ is a morphism of real Hodge structures.
    By some direct computations as in \cite[Proposition 3.5]{guisur}, 
    we have $\psi(x, y)=(-1)^n\psi(y, x)$, $\psi((2\pi\i)\nu x, y)+\psi(x, (2\pi\i)\nu y)=0$, 
    $\psi(L x, y)+\psi(x, L y)=0$. This proves (P2). 
    By Lemma \ref{nyu}, and the last formula in \cite[(1.3)]{guisur}, we also have $\psi(d_1'x, y)=\psi(x, d''_1y)$.  
    It follows that $\psi(d_1x, y)=\psi(x, d_1y)$. This proves (D4).
    (D1), (D2) are trivial by definition. (D3) follows from Proposition \ref{gysin}.
    
    It remains to prove (P1). Put $K^{-i, -j}_0:=K^{-i,-j}\cap \mathrm{Ker}(\nu^{i+1})\cap\mathrm{Ker}(L^{j+1})$.
    By the hard Lefschetz theorem and Lemma \ref{nyu}, 
    $K^{-i, -j}_0$ is the primitive part of $H^{n-i-j}(D(i);\R)(-i)$.
    If we put $Q(x, y):=\psi(x,((2\pi\i)\nu)^iL^jCy)$ for $x, y\in K^{-i, -j}_0$, 
    we have
    \begin{align*}
    Q(x, y)=\varepsilon(i+j-n)\int_{D(i)}\big{(}(2\pi \i)^ix\big{)}\wedge L^j C(2\pi\i)^i y
    \end{align*}
    Note that $\xi:=(2\pi \i)^ix$ and $\eta:=(2\pi \i)^i y$ are the element of the primitive part of $H^{n-i-j}(D(i);\R)$.
    Since $L$ is the Lefschetz operator on $D(i)$, 
    the map $(\xi,\eta)\mapsto \varepsilon(i+j-n)\int_{D(i)}\xi\wedge L^jC\eta$
    is positive definite by the classical Hodge-Riemann bilinear relations.
    This implies (P1).
    \end{proof}
    \begin{proof}[Proof of Theorem $\ref{thmmon}$]
    By Theorem \ref{surles} and Theorem \ref{dphl}, 
    the tuple
    $$(H^*(K^{\bullet,\bullet}, d_1),(2\pi \i)\nu, L, \psi)$$ 
    is a polarized Hodge-Lefschetz module.
    In particular, 
    $(2\pi\i\nu)^i:H^*(K^{\bullet,\bullet}, d_1)^{-i, j}\to H^*(K^{\bullet,\bullet}, d_1)^{i, j}$ are isomorphisms for $i>0$. 
    By Corollary \ref{speseq}, this implies the theorem.
    \end{proof}
 \subsubsection{Main theorem}
    We firstly compare the nilpotent endomorphisms in \S \ref{W}
    with $\nu$ in \S \ref{MHC}.  
    Recall that the stupid filtration on $(\Omega_f^\bullet, d)$ was
    denoted by $F$ in Lemma \ref{lemhod}.
    \begin{proposition}\label{lastprop}
    We have a filtered quasi-isomorphism   
    $ \rho:((\Omega_f^\bullet, d),F)\simeqto (s(\widetilde{A}^{\bullet,\bullet}),F)$, 
    which is compatible with the nilpotent endomorphisms
    $\varphi_2$ and $\nu$. In other words, $\nu\circ\rho=\rho\circ\varphi_2$ in the derived category.
    \end{proposition}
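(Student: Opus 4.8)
The plan is to realize $\rho$ as an honest inclusion of complexes into the bottom row $\{q=0\}$, to reduce the quasi-isomorphism statement to a local acyclicity computation on the graded pieces of $F$, and to deduce the compatibility of the nilpotent maps from the naturality of connecting morphisms.

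First I would define $\rho$. By the local description $\Omega^k_{f}=g\cdot \Omega_\amb^k(\log \pole)+\frac{df}{f}\wedge\Omega_\amb^{k-1}(\log \pole)$ recalled in \S\ref{sub section Beilinson functor}, we have $\Omega_f^k\subset\Omega^k_\amb(\log\pole)=\widetilde{A}^{k,0}$, and I set $\rho(\omega):=[\omega]\in\widetilde{A}^{k,0}\subset s(\widetilde{A}^{\bullet,\bullet})^k$. To check this is a morphism of complexes, write $\omega=g\alpha+\frac{df}{f}\wedge\beta$ with $\alpha,\beta$ logarithmic; since $\frac{df}{f}=-g^{-1}dg$ one computes $g^{-1}dg\wedge\omega=dg\wedge\alpha\in\Omega^{k+1}_\amb=W_0\Omega^{k+1}_\amb(\log\pole)$ (a holomorphic one-form wedged with a logarithmic form is logarithmic of weight zero). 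Hence $\delta''\rho(\omega)=[g^{-1}dg\wedge\omega \bmod W_0]=0$, and as $d\omega\in\Omega_f^{k+1}$ this gives $\delta\rho(\omega)=[d\omega]=\rho(d\omega)$. Moreover $\rho$ is compatible with $F$: $\rho(\Omega_f^k)$ lies in $\widetilde{A}^{k,0}$, which is contained in $F_{-p}$ exactly when $k\geq p$, matching the stupid filtration on $\Omega_f^\bullet$.

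Next I would show $\rho$ is a filtered quasi-isomorphism, which, $F$ being finite on both sides, reduces to $\gr^F\rho$. Here $\gr^F_{-p}(\Omega_f^\bullet,d)=\Omega_f^p[-p]$, while $\gr^F_{-p}s(\widetilde{A}^{\bullet,\bullet})=(\widetilde{A}^{p,\bullet},\delta'')[-p]$, and $\gr^F_{-p}\rho$ is the inclusion $\Omega_f^p\hookrightarrow\widetilde{A}^{p,0}$. So the statement reduces to the local claim that $(\widetilde{A}^{p,\bullet},\delta'')$, with $\delta''=g^{-1}dg\wedge(-)$, is a resolution of $\Omega_f^p$ in degree $0$. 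In a one–component chart $\pole=\{w=0\}$, $g=w$, one has $\widetilde{A}^{p,q}=0$ for $q\geq2$, $\ker(\delta''\colon\widetilde{A}^{p,0}\to\widetilde{A}^{p,1})=\Omega_f^p$, and $\delta''$ maps onto $\widetilde{A}^{p,1}=\Omega^{p+1}_\amb(\log\pole)/\Omega^{p+1}_\amb$, so the complex resolves $\Omega_f^p$; the general normal-crossing case follows by a Koszul/induction argument over the components, using the residue isomorphisms $\mathrm{R\acute{e}s}_m$ of \S\ref{KC}.

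Finally I would prove $\nu\circ\rho=\rho\circ\varphi_2$ in $D^b(\C_X)$. The map $\varphi_2$ is the connecting morphism of $(\ref{GME0})$, whereas $\nu$ is an honest chain endomorphism of $s(\widetilde{A}^{\bullet,\bullet})$, hence the connecting morphism of the shifted cone sequence $0\to s(\widetilde{A}^{\bullet,\bullet})[-1]\to \mathrm{C}(\nu)[-1]\to s(\widetilde{A}^{\bullet,\bullet})\to0$. I would extend $\rho$ to a morphism of short exact sequences from $(\ref{GME0})$ into this cone sequence, covering $\rho$ on the quotient and the shift of $\rho$ (via $\frac{ds}{s}\otimes\alpha\mapsto[\alpha]$) on the sub-object; the middle map $\bm{E}^\bullet_0\to\mathrm{C}(\nu)[-1]$ is built from the presentation of $\bm{E}^\bullet_0$ in \S\ref{sub res} together with the correspondence $[s^{-1}ds]\leftrightarrow[\para^{-1}d\para]$ and the weight-shift defining $\nu$, mirroring the construction of $\Phi$ in the proof of Theorem \ref{icchi}. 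Verifying it is a chain map is a local computation along $\pole$, and naturality of connecting morphisms then yields the identity up to the usual sign. The main obstacle is the local resolution statement in the genuinely multi-component situation, together with pinning down the target extension so that its connecting morphism is exactly $\nu$; both are variants of the Steenbrink-type computations of \cite{guisur}, \cite{petmix}, and the care lies in the bookkeeping of the weight filtration and of the $\frac{ds}{s}$-direction.
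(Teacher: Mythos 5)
Your proposal follows essentially the same route as the paper: $\rho$ is the inclusion $\Omega_f^p\into\widetilde{A}^{p,0}$, the filtered quasi-isomorphism is reduced to the exactness of $0\to\Omega_f^p\to\widetilde{A}^{p,0}\xrightarrow{\delta''}\widetilde{A}^{p,1}\to\cdots$ (which the paper simply cites from Steenbrink rather than re-deriving locally as you sketch), and the compatibility of $\varphi_2$ with $\nu$ is obtained exactly as in the paper by mapping the sequence $(\ref{GME0})$ into the shifted cone sequence of $\nu$ via $\omega_1+s^{-1}ds\,\omega_2\mapsto\omega_1\oplus\omega_2$ and invoking naturality of connecting morphisms. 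The argument is correct and the remaining local verifications you defer are precisely the computations carried out in the paper's proof.
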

    \begin{proof}
    The morphism $\rho$ is given by the natural inclusion 
    $\Omega^p_f\into \Omega^p_\amb(\log \pole)=\widetilde{A}^{p,0}$.
    It is trivial that $\rho$ is strictly compatible with $F$.
    By (\ref{GME}), 
    we have a short exact sequence 
    $$0\longrightarrow \Omega_f^p\longrightarrow \Omega^p_\amb(\log \pole)
    \longrightarrow \Omega^p_\amb(\log \pole)\otimes\mathcal{O}_D\longrightarrow 0$$      
    By \cite{stelim}, we have an exact sequence 
    $$0\longrightarrow  \Omega^p_\amb(\log \pole)\otimes\mathcal{O}_D
     \xrightarrow{\theta_p} A^{p,0}\xrightarrow{\delta''}\cdots$$
    where $\theta_p(\eta):=(-1)^p[g^{-1}dg\wedge\eta\mod W_0]$.
    Hence, we obtain an exact sequence
    $$0\longrightarrow \Omega_f^p\xrightarrow{\rho} \widetilde{A}^{p,0}
    \xrightarrow{\delta''} \widetilde{A}^{p,1}
    \xrightarrow{\delta''}\cdots.$$
    This implies that $\rho$ is a filtered quasi-isomorphism.
    
    Take the shifted cone $B^\bullet:=\mathrm{C}^\bullet(\nu)[-1]$ of $\nu$.  
    Define $\varrho:\bm{E}_0^k\to B^k=\tot^k\oplus\tot^{k-1}$  as
    the restriction of the following morphism:
    \begin{align*}
    \Omega_{\mathfrak{\amb}}^k(\log (\mathfrak{D}\cup\{s=0\}))_0
    &=\Omega^k_\amb(\log \pole)\oplus s^{-1}ds\Omega^{k-1}_\amb(\log\pole)\ni\\
     \omega_1+s^{-1}ds\omega_2
    &\mapsto \omega_1\oplus \omega_2 \\
    &\in A^{k,0}\oplus A^{k-1,0}
    \subset \tot^k\oplus\tot^{k-1}.
    \end{align*}
    Then, $\varrho$ gives a morphism of complex.
    Indeed, it is trivial on $\amb\setminus \pi_s(\Gamma)$.
    On $\pi_s(\Gamma)$, 
    take a section $g\omega_1+(s^{-1}d s-g^{-1}dg)\omega_2$ of $\bm E_0^k$.
    Remark that $[g\omega_1\mod W_0]=0$, and 
    $[dg\wedge \omega_1\mod W_0]=[g(g^{-1}dg\wedge \omega_1)\mod W_0]=0$.
    Then we have
    \begin{align*}
    d\varrho(g\omega_1)
     =&(dg\wedge\omega_1+gd\omega_1)\oplus 0\\
     =&\varrho(d(g\omega_1)),\\
    d \varrho((s^{-1}ds-g^{-1}dg)\cdot \omega_2)
    =&d((-g^{-1}dg\omega_2)\oplus \omega_2)\\
    =&(g^{-1}dg d\omega_2)
      \oplus(-d\omega_2, [-g^{-1}dg\wedge\omega_2+g^{-1}dg\wedge\omega_2\mod W_0])\\
    =&(g^{-1}dg  d\omega_2)\oplus- d\omega_2\\
    =&\varrho\circ d((s^{-1}ds-g^{-1}dg)\omega_2).
    \end{align*}
    
    We obtain the following diagram:
    \begin{align*}
    \xymatrix{0\ar[r]&s^{-1}ds\cdot(\Omega^\bullet_f, d)[-1] \ar[d]^{\rho}\ar[r] & {\bm E}_0\ar[d]^{\varrho}\ar[r]& (\Omega^\bullet_f, d)\ar[d]^{\rho}\ar[r]&0\\
    0\ar[r]& s(\widetilde{A}^{\bullet,\bullet})[-1]\ar[r]&B^\bullet\ar[r]& s(\widetilde{A}^{\bullet,\bullet})\ar[r]&0
    }
    \end{align*}
    The compatibility with $\varphi_2$ and $\nu$ follows from this diagram.
    \end{proof}

   Combining the Theorem \ref{icchi}, Theorem \ref{thmmon},  and Proposition \ref{lastprop}
   we attain the following main theorem of this paper.
   \begin{theorem}\label{main theorem}
   The filtrations $F$ and $W$ on $V_f^k$ are
   identified with the Hodge filtration and the weight filtration on $H^k(Y,Y_\infty;\C)$.
    In particular, the rescaling structure $\mathcal{H}_f$ is of Hodge-Tate type
    if and only if
    the mixed Hodge structures
    $(H^k(Y, Y_\infty;\Q),F,W)$ 
    are Hodge-Tate for all $k$.\qed
   \end{theorem}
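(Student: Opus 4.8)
The plan is to combine the three results proved immediately above---Theorem \ref{icchi}, Theorem \ref{thmmon}, and Proposition \ref{lastprop}---into a single filtered isomorphism $(V_f^k,F,W)\simeq(H^k(Y,Y_\infty;\C),F,W)$ for each $k$, and then to read off the Hodge-Tate equivalence as a formal consequence of the grading.

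First I would identify the underlying spaces together with their Hodge filtrations. Recall that $V_f^k\simeq\H^k(\amb,(\Omega_f^\bullet,d))$ and that, by Lemma \ref{lemhod} (equation (\ref{hodge filter})), the Hodge filtration $F$ produced on $V_f^k$ by the rescaling structure is exactly the one induced by the stupid filtration on $(\Omega_f^\bullet,d)$. On the other side, $H^k(Y,Y_\infty;\C)=\H^k(\amb,\tot)$ carries the Hodge filtration of the mixed Hodge structure defined by $\Xi^{\rm Hdg}_g$. Proposition \ref{lastprop} supplies a filtered quasi-isomorphism $\rho\colon((\Omega_f^\bullet,d),F)\simeqto(\tot,F)$; taking hypercohomology and using the strictness of $F$ (the $E_1$-degeneration available from \cite{esy} on the left and from the cohomological mixed Hodge complex structure on the right), one obtains a filtered isomorphism $(V_f^k,F)\simeqto(H^k(Y,Y_\infty;\C),F)$ for every $k$.

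Next I would match the weight filtrations through the corresponding nilpotent endomorphisms. By definition (\S\ref{weight filtration}) the filtration $W$ on $V_f^k$ is the monodromy weight filtration, centered at $k$, of $N_k=(\Res_{\{\para=0\}}\nabla)_{|\rmod=1}$. Theorem \ref{icchi} (together with Lemma \ref{varphi}) identifies $N_k$ with the endomorphism $\varphi_2$ induced by the exact sequence (\ref{GME0}), and Proposition \ref{lastprop} asserts $\nu\circ\rho=\rho\circ\varphi_2$ in the derived category. Passing to $\H^k$, the isomorphism $\rho$ therefore intertwines $N_k$ with the endomorphism $\nu$ acting on $H^k(Y,Y_\infty)$. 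Since the monodromy weight filtration centered at $k$ is uniquely determined by its nilpotent operator, $W$ on $V_f^k$ is carried by $\rho$ to the monodromy weight filtration of $\nu$ centered at $k$; by Theorem \ref{thmmon} the latter is exactly the weight filtration of the mixed Hodge structure $H^k(Y,Y_\infty)$. Together with the previous paragraph this yields $(V_f^k,F,W)\simeqto(H^k(Y,Y_\infty;\C),F,W)$ for all $k$, which is the first assertion.

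Finally I would deduce the equivalence. The total fiber $V_f=\bigoplus_k V_f^k$ carries $F$ and $W$ respecting the grading, with $W$ restricting on $V_f^k$ to the filtration centered at $k$; hence conditions (\ref{Wei}) and (\ref{fw}) for $(V_f,F,W)$ hold if and only if they hold on each graded piece $(V_f^k,F,W)$ separately. Under the isomorphism just established each such piece is $(H^k(Y,Y_\infty;\C),F,W)$, so by Definition \ref{def ht} the rescaling structure $\mathcal{H}_f$ is of Hodge-Tate type if and only if every $(H^k(Y,Y_\infty;\Q),F,W)$ is Hodge-Tate. The \emph{main obstacle} is the weight comparison: abstract isomorphism of the two spaces is not enough, and one must transport the nilpotent operator faithfully through the chain Theorem \ref{icchi}$\to$Lemma \ref{varphi}$\to$Proposition \ref{lastprop} and then invoke Theorem \ref{thmmon} to recognize the resulting monodromy weight filtration as the genuine weight filtration of the mixed Hodge structure. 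The Hodge-filtration comparison and the bookkeeping of the grading are, by contrast, routine.
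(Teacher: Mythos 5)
Your proposal is correct and follows exactly the route the paper takes: it combines Theorem \ref{icchi}, Proposition \ref{lastprop} (with Lemma \ref{lemhod} for the Hodge side), and Theorem \ref{thmmon}, which is precisely the paper's own (much terser) justification. Your write-up simply makes explicit the strictness/uniqueness arguments that the paper leaves implicit.
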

   We also have the equation 
   \begin{align}\label{hodge number h}
   h^{p, q}(\mathcal{H}_f)=\dim \gr^W_{2p}H^{p+q}(Y,Y_\infty).
   \end{align}
   The right hand side of (\ref{hodge number h}) is denoted by $h^{p, q}(Y, \w)$
   in \S 1. By Lemma \ref{2.8} and Proposition \ref{HT special}, we obtain Theorem \ref{thm intro}, (1). 
   Theorem \ref{thm intro}, (2) follows from Theorem \ref{main theorem} immediately.
   \begin{remark} 
   A similar relation between $V_f$ and $H^{\bullet}(Y,Y_{\infty})$ 
   is obtained in \cite[Theorem (4.3), Theorem (5.3)]{sabmon} in terms of 
   Hodge modules. However, it is not clear whether the weight filtrations
   are the same as ours.
   \end{remark} 
      
    By the strictness of the morphisms of mixed Hodge structures \cite[Theorem (2.3.5)]{delII}, we have the following
    well known fact
    (see  \cite[Corollary 3.8]{petmix}, for example): 
    \begin{lemma}\label{lemstr} 
    Let   $V^i=(V^i_{\Q}, F, W)$ $(i=1,2,3)$ be mixed $\Q$-Hodge structures, 
    where $V_\Q^i$ is the $\Q$-vector space, $F$ is the Hodge filtration on $V_\C^i:=V^i_\Q\otimes\C$,
    and $W$ is the weight filtration for each $i$.
    Assume that we have the following 
    \begin{align*}
     V^1\longrightarrow V^2\longrightarrow V^3
    \end{align*}  
    be a exact sequence of mixed $\Q$-Hodge structures. 
     
    Then, for all $k, p\in \Z$, the sequences 
    $$\gr^{F}_{-p} \gr^{W}_kV^1_\C\longrightarrow  \gr^{F}_{-p} \gr^{W}_kV^2_\C\longrightarrow
    \gr^{F}_{-p} \gr^{W}_kV^3_\C$$
    of complex vector spaces are exact.\qed
    \end{lemma}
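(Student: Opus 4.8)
The plan is to reduce the whole statement to Deligne's fundamental strictness theorem for morphisms of mixed Hodge structures, after which everything becomes formal. First I would recall the precise content of \cite[Theorem (2.3.5)]{delII}: the category of mixed $\Q$-Hodge structures is abelian, and every morphism in it is \emph{strictly} compatible with both the weight filtration $W$ (which I take over $\Q$, and then complexify) and the Hodge filtration $F$ on the complexification. Equivalently, for a morphism $f\colon A\to B$ of mixed Hodge structures one has $f(W_kA)=f(A)\cap W_kB$ and $f(F_pA_\C)=f(A_\C)\cap F_pB_\C$, and the induced maps $\gr^W_k(f)$ are morphisms of pure $\Q$-Hodge structures of weight $k$. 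This is the only genuine input; I would cite it rather than reprove it.

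Granting this, I would argue in two steps. Strict compatibility with $W$ says exactly that the functor $\gr^W_k(-)$ is exact on the abelian category of mixed Hodge structures; applying it to the given exact sequence $V^1\to V^2\to V^3$ (exactness at the middle term) yields, for each $k$, an exact sequence of pure weight-$k$ Hodge structures $\gr^W_k V^1\to \gr^W_k V^2\to \gr^W_k V^3$. Now each term carries a pure Hodge structure, and the maps between them are morphisms of pure Hodge structures; for such morphisms $F$ is automatically strict (they respect the Hodge $(p,q)$-decomposition), so $\gr^F_{-p}(-)$ is an exact functor on the subcategory of pure Hodge structures. Applying $\gr^F_{-p}$ to the previous sequence gives the asserted exactness of
\[
\gr^F_{-p}\gr^W_k V^1_\C\longrightarrow \gr^F_{-p}\gr^W_k V^2_\C\longrightarrow \gr^F_{-p}\gr^W_k V^3_\C.
\]

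The step needing the most care is purely bookkeeping rather than conceptual: the hypothesis is only two-term exactness at $V^2$, so to apply the exactness of the graded functors I would factor each map through its image in the abelian category of mixed Hodge structures, obtaining short exact sequences, and use that both $\gr^W_k$ and $\gr^F_{-p}$ (the latter on pure structures) carry short exact sequences to short exact sequences; composing these reconstitutes exactness at the middle term on the doubly-graded pieces. There is no real obstacle beyond invoking \cite[Theorem (2.3.5)]{delII} correctly, since the content of the lemma is nothing more than the exactness of the bifunctor $V\mapsto \gr^F_{-p}\gr^W_k V_\C$, which is a direct consequence of strictness.
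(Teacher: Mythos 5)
Your argument is correct and is precisely the standard strictness argument that the paper itself invokes: the paper gives no proof beyond citing Deligne's Theorem (2.3.5) of Th\'eorie de Hodge II (and Peters--Steenbrink), which is exactly the exactness of the functors $\gr^W_k$ and $\gr^F_{-p}\gr^W_k$ on the abelian category of mixed Hodge structures that you deduce. Nothing in your write-up deviates from the intended route, and the bookkeeping reduction from two-term exactness to short exact sequences via images in the abelian category is sound.
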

    Remark that a mixed $\Q$-Hodge structure $V=(V_\Q, F, W)$ is Hodge-Tate 
    if and only if $$\gr^F_{-p}\gr^W_{p+q}V_\C=0$$ for $p\neq q$. 
    Then, we immediately have the following: 
    \begin{corollary}\label{lemHt}
    Let $V^i$ be as in Lemma $\ref{lemstr}$.
    If $V^1$ and $V^3$ are Hodge-Tate, then so is $V^2$.\qed
  \end{corollary}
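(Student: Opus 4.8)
The plan is to reduce the Hodge-Tate property of $V^2$ to a vanishing statement on associated graded pieces, and then to extract that vanishing from the exactness supplied by Lemma~\ref{lemstr}. Concretely, I would invoke the characterization recorded just above the statement: a mixed $\Q$-Hodge structure $V=(V_\Q,F,W)$ is Hodge-Tate if and only if $\gr^F_{-p}\gr^W_{p+q}V_\C=0$ for every pair $(p,q)$ with $p\neq q$. It therefore suffices to prove that $\gr^F_{-p}\gr^W_{p+q}V^2_\C=0$ whenever $p\neq q$.

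Fix such a pair $(p,q)$ with $p\neq q$ and put $k:=p+q$. Applying Lemma~\ref{lemstr} to the given exact sequence $V^1\to V^2\to V^3$ with these values of $k$ and $p$ yields an exact sequence of complex vector spaces
\begin{align*}
\gr^F_{-p}\gr^W_{p+q}V^1_\C\longrightarrow \gr^F_{-p}\gr^W_{p+q}V^2_\C\longrightarrow \gr^F_{-p}\gr^W_{p+q}V^3_\C.
\end{align*}
Because $V^1$ and $V^3$ are Hodge-Tate and $p\neq q$, both outer terms vanish by the characterization above, so the middle term sits in an exact sequence $0\to \gr^F_{-p}\gr^W_{p+q}V^2_\C\to 0$ and must itself vanish. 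Since $(p,q)$ was an arbitrary pair with $p\neq q$, the characterization then shows that $V^2$ is Hodge-Tate.

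The argument is entirely formal once Lemma~\ref{lemstr} is available, which is why the statement is recorded as immediate. The only point to watch is the index bookkeeping: one runs the exactness for the single grading $k=p+q$ dictated by the Hodge-Tate characterization, and checks that the outer terms vanish for precisely that pair $(p,q)$, rather than attempting to use all gradings at once. All the substance lives in Lemma~\ref{lemstr}, whose proof rests on the strictness of morphisms of mixed Hodge structures with respect to both $F$ and $W$ (\cite[Theorem (2.3.5)]{delII}); the corollary simply propagates that bi-strict exactness through a two-out-of-three vanishing principle on the graded pieces.
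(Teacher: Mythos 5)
Your proof is correct and is precisely the argument the paper intends: the corollary is stated with \qed because it follows immediately from Lemma \ref{lemstr} together with the characterization of Hodge-Tate structures by the vanishing of $\gr^F_{-p}\gr^W_{p+q}V_\C$ for $p\neq q$, exactly as you spell out. Your exactness-sandwich on the bigraded pieces is the intended "two-out-of-three" reasoning, with no deviation from the paper's route.
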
             
   By the long exact sequence (\ref{long infty}) of mixed Hodge structures, we have the following: 
   \begin{corollary}\label{the cor}
    If the mixed Hodge structures $H^k(Y)$ and $H^k(Y_\infty)$ are of Hodge-Tate type
    for all $k$, 
    then $\mathcal{H}_f$ is of Hodge-Tate type.\qed
   \end{corollary}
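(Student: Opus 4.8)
The plan is to deduce this directly from Theorem~\ref{main theorem} together with the long exact sequence~(\ref{long infty}). By Theorem~\ref{main theorem}, the rescaling structure $\mathcal{H}_f$ is of Hodge-Tate type if and only if the mixed Hodge structure $(H^k(Y,Y_\infty;\Q),F,W)$ is Hodge-Tate for every $k\in\Z$. Hence it suffices to prove that each $H^k(Y,Y_\infty)$ is Hodge-Tate under the stated hypothesis, and the role of the long exact sequence is to transfer Hodge-Tateness from the groups $H^\bullet(Y)$ and $H^\bullet(Y_\infty)$ to the relative groups $H^\bullet(Y,Y_\infty)$.

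First I would isolate, from the long exact sequence~(\ref{long infty}) of mixed $\Q$-Hodge structures, the three-term exact piece
\begin{align*}
H^{k-1}(Y_\infty)\longrightarrow H^k(Y,Y_\infty)\longrightarrow H^k(Y),
\end{align*}
which is exact at the middle term $H^k(Y,Y_\infty)$. Since the hypothesis guarantees that $H^{k-1}(Y_\infty)$ and $H^k(Y)$ are both of Hodge-Tate type, I would apply Corollary~\ref{lemHt} with $V^1=H^{k-1}(Y_\infty)$, $V^2=H^k(Y,Y_\infty)$, and $V^3=H^k(Y)$ to conclude that $H^k(Y,Y_\infty)$ is Hodge-Tate. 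As $k$ is arbitrary, every $H^k(Y,Y_\infty)$ is then Hodge-Tate, and the conclusion follows from Theorem~\ref{main theorem}.

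In fact no genuine obstacle arises here: the real content has already been absorbed into the earlier results. The compatibility of~(\ref{long infty}) with the mixed Hodge structures is established through the construction of $\Xi^{\mathrm{Hdg}}_g$ and the mixed cone $\mathrm{C}(\vartheta)$, while the fact that Hodge-Tateness is inherited by the middle term of an exact triple is precisely Corollary~\ref{lemHt} (which itself rests on the strictness of morphisms of mixed Hodge structures via Lemma~\ref{lemstr}). The only point worth verifying with care is that the relevant segment of~(\ref{long infty}) is exact \emph{at} $H^k(Y,Y_\infty)$ and that its two neighbouring terms are $H^{k-1}(Y_\infty)$ and $H^k(Y)$, so that the hypothesis indeed supplies Hodge-Tateness of both outer terms and Corollary~\ref{lemHt} applies verbatim.
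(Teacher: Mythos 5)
Your argument is correct and is exactly the paper's intended proof: the paper derives the corollary from the long exact sequence~(\ref{long infty}) together with Corollary~\ref{lemHt} and the identification in Theorem~\ref{main theorem}, precisely as you do. No differences worth noting.
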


  \section{Examples}\label{section Ex}
  In this section,  we shall give some examples of 
  Landau-Ginzburg models $(X, f)$ in \S \ref{section LG}
  such that the induced rescaling structures $\mathcal{H}_f$
  are of Hodge-Tate type. 
  In \S \ref{ellsur}, we consider the case $\dim X=2.$ 
  In \S \ref{torthr}, we consider the case $\dim X=3.$  
  \subsection{Two dimensional examples}\label{ellsur}
  We shall prove the following: 
   \begin{proposition}\label{2-dim prop}
   Let $f:X\to\P^1$ be a rational elliptic surface such that 
   $(f)_\infty$ is reduced normal crossing, and 
  $\pole=|(f)_\infty|$ is a wheel of $d$ smooth rational curves 
  for $2\leq d\leq 9$.  
  Then  the rescaling structure $\mathcal{H}_f$ of $(X, f)$ is of Hodge-Tate type. 
  \end{proposition}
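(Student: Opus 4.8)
The plan is to invoke Theorem~\ref{main theorem}, which reduces the assertion to showing that the mixed Hodge structures $(H^k(Y,Y_\infty;\Q),F,W)$ are of Hodge--Tate type for every $k$. The decisive geometric fact is that all the strata $D(m)$ of the fibre at infinity, as well as $X$ itself, are rational and therefore have Hodge--Tate cohomology. Indeed, a rational elliptic surface is a blow-up of $\P^2$ at nine points, so $H^0(X)=\Q(0)$, $H^2(X)=\Q(-1)^{\oplus 10}$, $H^4(X)=\Q(-2)$ and $H^{\mathrm{odd}}(X)=0$; and since $D=|(f)_\infty|$ is, by hypothesis, a reduced simple normal crossing $I_d$-configuration (a cycle of $d$ smooth rational curves), its strata are $D(0)=X$, $D(1)=\bigsqcup_i D_i\cong\bigsqcup_i\P^1$, $D(2)=\{d\ \text{nodes}\}$, and $D(m)=\varnothing$ for $m\geq 3$ (on a surface at most two branches of an SNC divisor pass through a point). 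Each of these is rational, so $H^\bullet(D(m))$ is Hodge--Tate.

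Granting this, I would read off the conclusion from the machinery already built in the paper. By Corollary~\ref{speseq} the weight spectral sequence for $H^\bullet(X,s(\widetilde{C}^{\bullet,\bullet}))=H^\bullet(Y,Y_\infty)$ degenerates at $E_2$, and its $E_1$-terms are $E_1^{-r,q+r}\cong\bigoplus_k K_\Q^{-r,q-n,k}$ with $K_\Q^{i,j,k}=H^{i+j-2k+n}(D(2k-i);\Q)(i-k)$. Thus every $E_1$-term is a Tate twist of some $H^\bullet(D(m);\Q)$, hence Hodge--Tate. Since the Hodge--Tate mixed Hodge structures are stable under subquotients and $\gr^W_{q+r}H^q(Y,Y_\infty)$ is the $E_2=E_\infty$ cohomology of the $E_1$-complex, each $\gr^W H^k(Y,Y_\infty)$ is pure of Tate type; therefore $(H^k(Y,Y_\infty;\Q),F,W)$ is Hodge--Tate for all $k$, and Theorem~\ref{main theorem} gives that $\mathcal{H}_f$ is of Hodge--Tate type.

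Equivalently, one could apply Corollary~\ref{the cor}, for which the same rationality of the strata shows that $H^k(Y)$ (via the weight spectral sequence of $\mathcal{H}dg(X\log D)$ from \S\ref{KC}) and $H^k(Y_\infty)$ (via the Steenbrink weight spectral sequence of $\psi^{\mathrm{Hdg}}_g$ from \S\ref{MHC}, the degeneration being semistable because $(f)_\infty$ is reduced) are both Hodge--Tate; as a check this reproduces $\gr^W_0 H^1(Y_\infty)=\Q(0)$ and $\gr^W_2 H^1(Y_\infty)=\Q(-1)$ for an $I_d$ degeneration of elliptic curves. I expect there to be no genuine obstacle: the entire argument is forced by the rationality of $X$ together with that of every stratum $D(m)$, so the essential hypothesis is that $X$ is \emph{rational} (for an elliptic surface with $H^2$ not of Tate type the conclusion would fail), while the bound $2\leq d\leq 9$ serves only to ensure that such a rational elliptic surface with a wheel of $d$ curves at infinity exists. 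The only real care needed is the bookkeeping identifying the $E_1$-terms with Tate twists of the $H^\bullet(D(m))$, which is already carried out in the constructions preceding Corollary~\ref{speseq}.
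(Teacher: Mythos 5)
Your proposal is correct, but your main line of argument is genuinely different from the paper's. The paper never touches the weight spectral sequence of $\Xi^{\rm Hdg}_g$ at this point: it first proves $H^k(Y)$ is Hodge--Tate via the Gysin-type exact sequence $\cdots\to H^k(X)\to H^k(Y)\to H^{k-1}(D)(-1)\to\cdots$ and Corollary~\ref{lemHt}, then proves $H^k(Y_\infty)$ is Hodge--Tate via the Clemens--Schmid sequence $H^k(D)\to H^k(Y_\infty)\xrightarrow{N}H^k(Y_\infty)(-1)\to H_{2-k}(D)(-2)$, reducing to a four-term exact sequence $0\to A_1\to H^k(Y_\infty)\to H^k(Y_\infty)(-1)\to A_2\to 0$ with $A_1,A_2$ Hodge--Tate and concluding by the Hodge-number-polynomial identity $(1-xy)p_k(x,y)=\sum_p a_px^py^p\Rightarrow p_k(x,y)=\sum_p b_px^py^p$; it then invokes Corollary~\ref{the cor} and the long exact sequence (\ref{long infty}). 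Your route goes straight at $H^k(Y,Y_\infty)$ through Corollary~\ref{speseq}: the $E_1$-terms $\bigoplus_k H^{q-r-2k}(D(2k+r);\Q)(-r-k)$ are Tate twists of cohomology of the rational strata $D(0)=X$, $D(1)=\bigsqcup\P^1$, $D(2)=\{\text{points}\}$, the differentials are morphisms of pure Hodge structures, and $E_2=E_\infty$, so each $\gr^W_{q+r}H^q(Y,Y_\infty)$ is a subquotient of a Tate-type pure Hodge structure and hence itself of Tate type; this characterization of Hodge--Tate ($\gr^F_{-p}\gr^W_{p+q}=0$ for $p\neq q$) then closes the argument via Theorem~\ref{main theorem}. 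Both arguments are valid and rest on the same geometric input (rationality of $X$ and of every stratum of $D$). The paper's version buys a softer argument that never unpacks the $E_1$-page and reuses only standard exact sequences of mixed Hodge structures; yours buys uniformity -- it bypasses Clemens--Schmid and the polynomial trick entirely, stays within machinery the paper has already set up for Theorem~\ref{thmmon}, and transfers verbatim to the three-dimensional examples of \S\ref{torthr}. Your secondary remark -- that one could instead feed Hodge--Tateness of $H^k(Y)$ and $H^k(Y_\infty)$ into Corollary~\ref{the cor} -- is essentially the paper's proof, differing only in how $H^k(Y_\infty)$ is handled (Steenbrink spectral sequence versus Clemens--Schmid). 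Your reading of the role of the hypothesis $2\leq d\leq 9$ also matches the paper: it is not used in the argument beyond guaranteeing that such surfaces exist.
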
 \begin{proof}
  Since $X$ is a rational surface, we have $h^{p, q}(X)=0$ for $p\neq q$.
  Since $\pole$ is a wheel of $d$ rational curves, 
  the (co)homology of $\pole$ is of Hodge-Tate type (see \cite[Example 5.34]{petmix} for example).
  We have the exact sequence of mixed Hodge structures \cite[(9.2.1.2)]{delIII}: 
  \begin{align*}
  \cdots\longrightarrow H^{k}(X)\longrightarrow H^k(Y)\longrightarrow H^{k-1}(D)(-1)\longrightarrow \cdots.
  \end{align*}
  By Corollary \ref{lemHt}, 
  it follows that $H^k(Y)$ are Hodge-Tate for all $k$.
  By the Clemens-Schmid exact sequence \cite[(10.14), Theorem (10.16)]{grirec},  we  have 
  the following exact sequence of mixed Hodge structures:
  \begin{align*}
   H^k(D)
     \longrightarrow H^k(Y_\infty)
     \xrightarrow{N} H^k(Y_\infty)(-1)
     \longrightarrow H_{2-k}(D)(-2),
  \end{align*}
  where $0\leq k\leq 2$ and $N$ is the nilpotent endomorphism.
  Since $H^k(D)$ and $H_{2-k}(D)$ are Hodge-Tate, by Corollary \ref{lemHt}, we have the exact sequence 
  \begin{align}\label{CSE2}
  0\longrightarrow A_1\longrightarrow H^k(Y_\infty)\longrightarrow 
  H^k(Y_\infty)(-1)\longrightarrow A_2\longrightarrow 0, 
  \end{align}
  where $A_1$ and $A_2$ are Hodge-Tate.
  Let $p_k(x, y)$ be the Hodge number polynomial of $H^k(Y_\infty)$ 
  (see \cite[(II-1), Lemma 2.8, and (III-2)]{petmix} for example).   
  The exact sequence (\ref{CSE2}) implies
  that $(1-x y)p_k(x, y)=\sum_pa_px^p y^p$ for some $a_p$. 
  Hence, we have $p_k(x, y)=\sum_p b_p x^p y^p$ for some $b_p$.
  Namely, we have that $H^k(Y_\infty)$ is of mixed Hodge Tate for each $k$. 
  By Corollary \ref{the cor}, we have the conclusion.
  \end{proof} 
  By Theorem \ref{main theorem}
  Lemma \ref{2.8}, Proposition \ref{HT special}, (\ref{hodge filter}), and (\ref{hodge number h}), we obtain the following:
  \begin{corollary}\label{2-dim cor}
   Let $(X, f)$ be as in Proposition $\ref{2-dim prop}$.
   Then, we have
   $f^{p, q}(Y, \w)=h^{p, q}(Y, \w)$,
   and $\mathcal{H}_{f|\para=1}$ is special.\qed
  \end{corollary}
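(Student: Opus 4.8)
The plan is to obtain the corollary as a formal consequence of Proposition~\ref{2-dim prop} together with the general apparatus assembled in \S\ref{se res} and \S\ref{section LG}. Since Proposition~\ref{2-dim prop} already asserts that the rescaling structure $\mathcal{H}_f$ is of Hodge-Tate type, no further geometric input is needed; the task reduces to translating the abstract invariants $f^{p,q}(\mathcal{H}_f)$ and $h^{p,q}(\mathcal{H}_f)$ into the geometric Hodge numbers $f^{p,q}(Y,\w)$ and $h^{p,q}(Y,\w)$ of \S\ref{int num}.

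First I would apply Lemma~\ref{2.8} to the Hodge-Tate rescaling structure $\mathcal{H}_f$, which gives the equality of abstract numbers $f^{p,q}(\mathcal{H}_f)=h^{p,q}(\mathcal{H}_f)$ for all $p,q$. Next I would match each side with its geometric counterpart. For the left-hand side, the filtered isomorphism (\ref{hodge filter}) of Lemma~\ref{lemhod} yields $\gr^F_{-p}V_f^{p+q}\simeq H^q(X,\Omega_f^p)$, so that $f^{p,q}(\mathcal{H}_f)=\dim H^q(X,\Omega_f^p)=f^{p,q}(Y,\w)$ by the definition recorded in \S\ref{section LG}. For the right-hand side, equation (\ref{hodge number h}) identifies $h^{p,q}(\mathcal{H}_f)=\dim\gr^W_{2p}H^{p+q}(Y,Y_\infty)$, which is exactly $h^{p,q}(Y,\w)$. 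Concatenating these identities produces
\begin{align*}
f^{p,q}(Y,\w)=f^{p,q}(\mathcal{H}_f)=h^{p,q}(\mathcal{H}_f)=h^{p,q}(Y,\w),
\end{align*}
establishing the first claim. For the speciality statement I would simply invoke Proposition~\ref{HT special}: because $\mathcal{H}_f$ is of Hodge-Tate type, its restriction $\mathcal{H}_{f|\para=1}$ is special in the sense of Definition~\ref{special}. This finishes the argument.

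I do not expect a genuine obstacle at this stage, since all the substantive work precedes it: Proposition~\ref{2-dim prop} supplies the Hodge-Tate property (via the Clemens-Schmid and Gysin sequences together with Corollary~\ref{lemHt}), while Theorem~\ref{main theorem} has already identified $F$ and $W$ on $V_f^k$ with the Hodge and weight filtrations on $H^k(Y,Y_\infty)$, which is what underlies (\ref{hodge number h}). The only point requiring care is applying the dictionary between abstract and geometric Hodge numbers consistently---in particular, checking that the indexing conventions in (\ref{hodge filter}) and (\ref{hodge number h}) line up---but this is bookkeeping rather than mathematics.
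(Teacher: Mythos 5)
Your argument is correct and is exactly the paper's own route: the paper derives this corollary by citing Lemma \ref{2.8}, Proposition \ref{HT special}, (\ref{hodge filter}), and (\ref{hodge number h}) applied to the Hodge-Tate rescaling structure supplied by Proposition \ref{2-dim prop}. Nothing further is needed.
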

  \begin{remark}
  This example was studied by Auroux-Katzarkov-Orlov \cite{aurmir} as homological mirrors of del Pezzo surfaces.
  The equality of Hodge numbers $f^{p, q}(Y, \w)$ and $h^{p, q}(Y, \w)$ was proved by Lunts-Przjalkowski \cite{lunlan}
  who directly computed both of the numbers
  $($The number $f^{p, q}(Y, \w)$ was also computed in Harder's thesis \cite{harthe}$)$.
  Here, we gave a more conceptual proof of the equality.
  To the best of the author's knowledge, the speciality of $\mathcal{H}_{f|\para=1}$ was not known.
  \end{remark}

  \subsection{Three dimensional examples}\label{torthr}
  We consider toric Landau-Ginzburg models considered in Harder's thesis \cite{harthe}.
  \subsubsection{Fano polytope}\label{fan pol}
  Let $M$ be a free Abelian group of rank $3$.
  Put $M_\R:=M\otimes \R$, and $N:=\Hom_\Z(M, \Z)$. 
  We have the natural pairing $\langle \cdot,\cdot\rangle: M\times N\to\Z$.
  Define $N_\R$ similarly.
  We consider an integral polytope $\poly$
  with the following properties: 
  \begin{enumerate}
  \item[(a)] There is a finite set $\{u_\facet\mid \facet \text{ is facet of }\poly\}$ of primitive vectors in $N$ 
  indexed by all facets of $\poly$ 
  such that 
  \begin{align*}\begin{cases}
   \poly&=\left\{m\in M_\R \middle| \langle m, u_\facet \rangle \geq -1, \text{ for all } \facet \right\}, \\
   \facet&=\{m\in \poly \mid \langle m, u_\facet \rangle =-1\}.
   \end{cases}
  \end{align*}
  In particular,  the origin $0\in M$ is contained in the interior of $\poly$.
   \item [(b)] For each facet $\facet$, the set of vertex of $\facet$ form a basis of $M$. 
   In particular, $\facet$ is a triangle whose interior does not contain 
   the point of $M$. 
  \end{enumerate}
  \begin{remark}
   The condition $(\mathrm{a})$ is called reflexivity. 
   The condition $(\mathrm{b})$ implies that the cone generated by $\facet$ is smooth. 
   These cones generates a smooth fan, which defines a smooth Fano variety. 
  \end{remark}
  \subsubsection{Toric varieties}
      For a face $\face$ of $\poly$, 
      let $\cone_\face$ be the cone generated by $\{u_\facet\mid\face\subset \facet\}$.
      We remark that $\cone_\poly=\{0\}$ since $\{0\}$ is the cone generated by empty set.
      Then we have a fan $\Sigma_\poly:=\{\cone_\face\mid \face \text{ is a face of } \poly\}$
      (see \cite[Theorem 2.3.2]{coxtor}, for example).
      Although this fan is not smooth in general, 
      we have a smooth refinement $\Sigma$ of $\Sigma_\poly$. 
      Since the dimension of $\Sigma_\poly$ is $3$, the refinement is given by 
      a triangulation of the convex hull of the set  
      $\{u_\facet \mid \facet \text{ is a facet of } \poly\}$. 
      In particular, together with the condition $(\mathrm{a})$, 
      we may assume that for every primitive vector $u_\ray$ of a ray $\ray$ in $\Sigma$,
      we have $\min_{m\in\poly}\langle m, u_\ray \rangle= -1$. 
      The toric variety corresponding to $\Sigma$ is denoted by $\amb_\Sigma$.
      It contains the algebraic torus $T_N=\mathrm{Spec}( \C[M])$ as 
      an open dense subset.
      Put $\pole_\Sigma:=\amb_\Sigma\setminus T_N$.
  \subsubsection{A non-degenerate Laurent polynomial}
      We consider a Laurent polynomial 
      $$f_\poly(\chi)=\sum_{m\in M}c_m \chi^m \in \C[M], $$ 
      where $c_m$ are complex numbers and 
      $\chi^m$ is the monomial corresponding to $m\in M$.
      The polynomial $f_\poly$ is considered as an algebraic function on $T_N$.
      Since $T_N$ is an open dense subvariety of $\amb_\Sigma$, 
      $f_\poly$ is considered as a meromorphic function on $\amb_\Sigma$,
      whose pole divisor is contained in $\pole_\Sigma$.
      We impose the following non-degenerate condition on $f_\poly$:
          \begin{enumerate}
                 \item[(c)] The convex hull of $\{m\mid c_m\neq 0\}$ in $M_\R$ is $\poly$. 
                 \item[(d)] For every face $\face \subset \poly$, 
               put 
              $ f_\face(\chi):=\sum_{m\in \face}c_m\chi^m$.
              Then, the intersection of $(d f_\face)^{-1}(0)$ and $ f_\face^{-1}(0)$
              in $T_N$ is empty for every $\face$. 
          \end{enumerate}
      The meaning of the non-degenerate condition considering 
      $f_\poly$ as a meromorphic connection on $\amb_\Sigma$
      is explained later.
  \subsubsection{Coordinate system with respect to a cone}\label{torcor}
      Fix an isomorphism $M \simeqto \Z^3$; $m\mapsto (m_1,m_2,m_3)$.
      Let $(e_i)_{i=1}^3$ be a canonical base of $M$ via $M \simeqto \Z^3$.
      We have an isomorphism
      $\C[M]\simeqto \C[x_1^{\pm},x_2^{\pm },x_3^{\pm }]$ by 
      $\chi^m\mapsto x_1^{m_1}x_2^{m_2}x_3^{m_3}$.
      For a maximal cone $\cone\in \Sigma(3)$, take primitive vectors $u_\ray$
      for rays $\ray$ of $\cone$.
        Then the open subvariety $U_\cone=\mathrm{Spec}(\C[\cone^\vee \cap M])$ 
        of $\amb_\Sigma$ have 
      coordinate $(y_\ray)_{\ray\in\sigma(1)}$.
      The relation between the two coordinates is given by $x_i=\prod_\ray y_\ray^{\langle e_i, u_{\ray}\rangle}$.
      The function $f_\poly$ considered as a meromorphic function on $U_\cone$
      is given by
          \begin{align}\label{f poly}
           f_\poly (y)=\sum_{m\in\poly}c_m \prod_{\ray\in \sigma(1)}y^{\langle m, u_\ray\rangle}_\ray.             
          \end{align}
%
%
%
 \subsubsection{Pole orders along invariant divisors}\label{pole order}
     For each ray $\ray\in\Sigma(1)$, we have the divisor 
     $\pole_\ray$ invariant under the action of $T_N$.
      If $\ray\in \cone(1)$, the intersection $U_\cone\cap \pole_\ray$ is given by $\{y_\ray=0\}$.
      Let $\face_\ray$ be a face defined by
      $$\face_{\ray}:=
      \left\{m\in\poly\middle| \langle m, u_{\ray}\rangle=
      \min_{m'\in\poly}\langle m',u_{\ray}\rangle
      =-1
      \right\}.$$ 
      Remark that $\face_\ray\neq \emptyset$.
     The equation (\ref{f poly}) is written as follows: 
          \begin{align}\label{f ray}
              f_\poly(y)=y_\ray^{-1}\left(y_\ray f_{\face_\ray}(y)
                               +y_\ray\sum
                               _{\substack{m\in\poly , \langle m, u_\ray\rangle\geq 0 }}
                               c_m\prod_{\ray'\in \sigma(1)}
                               y_{\ray'}^{\langle m, u_{\ray'}\rangle}
                               \right).
          \end{align}
      Remark that $y_\ray f_{\face_\ray}(y)$ does not depend on $y_\ray$.
      The pole order along $\pole_\ray$ is one.
  \subsubsection{Non-degenerate condition}\label{sNDC}
      For a $\tau\in \cone(2)$, take $\ray,\ray'\in\cone(1)$ so that $\tau=\ray+\ray'$.
      Put $\face_\tau:=\face_\ray\cap\face_{\ray'}$. 
      We have 
       \begin{align}
              f_\poly(y)=y_\ray^{-1}y_{\ray'}^{-1}\left(y_\ray y_{\ray'}f_{\face_\tau}(y)
                               +y_\ray y_{\ray'}\sum
                               _{\substack{m\in\poly \\ \langle m, u_\ray\rangle\geq 0,
                                   \text{or} \langle m, u_{\ray'}\rangle\geq 0  }}
                               c_m\prod_{\ray''\in \sigma(1)}
                               y_{\ray''}^{\langle m, u_{\ray'}\rangle}
                               \right).
          \end{align}
          Note that $y_\ray y_{\ray'}f_{\face_\tau}(y)$ does not depend on $y_\ray$ nor $y_{\ray'}$.
          There is also a similar description of $f_\poly$ for the vertex 
          $\face_\cone=\bigcap_{\ray\in \cone(1)}\face_\ray$.
          From these descriptions, we have the following properties of 
          the zero divisor $(f_\poly)_0$ in $\amb_\Sigma$: 
          \begin{itemize}
          \item The divisor $(f_\poly)_0$ is  a (reduced) smooth hypersurface of $\amb_\Sigma$.
          \item The fixed points of the action of $T_N$ is not contained in $(f_\poly)_0$.
          \item The divisor $\pole_\Sigma\cup (f_\poly)_0$ is simply normal crossing.
          \end{itemize}
  \subsubsection{Base locus} 
      Put $B_\ray:=|(f_{\sf P})_0|\cap D_{\ray}$ for all ray $\ray$ in $\Sigma$.       
          \begin{lemma}
            For every $\ray$, $B_\ray$ is isomorphic to a projective line.
          \end{lemma}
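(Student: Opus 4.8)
The plan is to reduce the computation of $B_\ray$ to that of a nondegenerate toric hypersurface on the invariant divisor $\pole_\ray$, and then to read off its isomorphism type from the face $\facet_\ray$ of $\poly$ dual to $\ray$. First I would work in a chart $U_\cone$ with $\cone\in\Sigma(3)$, $\ray\in\cone(1)$, and coordinates $(y_{\ray'})_{\ray'\in\cone(1)}$, so that $\pole_\ray=\{y_\ray=0\}$. Since the pole order of $f_\poly$ along $\pole_\ray$ is one (\S\ref{pole order}), the function $h_\ray:=y_\ray f_\poly$ is regular near $\pole_\ray$, and by (\ref{f ray}) its restriction to $\{y_\ray=0\}$ retains exactly the monomials with $\langle m,u_\ray\rangle=-1$, that is $h_\ray|_{\pole_\ray}=y_\ray f_{\facet_\ray}$. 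Hence $h_\ray|_{\pole_\ray}$ is the defining section of $B_\ray=|(f_\poly)_0|\cap\pole_\ray$ inside the complete toric surface $\pole_\ray$, whose dense orbit is the torus $T_{N_\ray}$ with $N_\ray:=N/\Z u_\ray$ and character lattice $M_\ray:=u_\ray^{\perp}\cap M$. Choosing $m_0\in\facet_\ray\cap M$ and dividing by $\chi^{m_0}$ identifies $B_\ray\cap T_{N_\ray}$ with $\{\overline f=0\}$, where $\overline f=\sum_{m\in\facet_\ray}c_m\chi^{m-m_0}$ is a Laurent polynomial on $T_{N_\ray}$ with Newton polygon the translate $\facet_\ray-m_0\subset M_\ray\otimes\R$. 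By the simple normal crossing property of $(f_\poly)_0\cup\pole_\Sigma$ from \S\ref{sNDC}, $B_\ray$ is a smooth complete curve equal to the closure of $\{\overline f=0\}$ in $\pole_\ray$, so it remains to identify this closure.

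Next I would split according to $\dim\facet_\ray$, noting first that $B_\ray\neq\emptyset$ because $(f_\poly)_0\sim\pole_\Sigma$ restricts to a nonzero effective divisor on $\pole_\ray$. The vertex case $\dim\facet_\ray=0$ cannot occur: then $f_{\facet_\ray}$ is a single monomial, $B_\ray$ would miss $T_{N_\ray}$ and thus lie in the complete toric boundary of $\pole_\ray$, forcing it to pass through a torus fixed point of $\amb_\Sigma$, which contradicts the fact (\S\ref{sNDC}) that $(f_\poly)_0$ avoids all fixed points. If $\facet_\ray$ is a facet of $\poly$, condition (b) makes it a unimodular triangle, so after a monomial change of coordinates $\overline f=a+bx+cy$ with $abc\neq0$ by (c); the affine curve $\{a+bx+cy=0\}$ is irreducible and its smooth complete model has genus equal to the number of interior lattice points of the triangle, namely $0$. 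If $\facet_\ray$ is an edge, its endpoints are two vertices of a unimodular facet, so their difference is primitive and the edge has lattice length one; thus $\overline f=a+b\chi^{w}$ with $w$ primitive in $M_\ray$ and $ab\neq0$, and $\{\overline f=0\}$ is the coset $\{\chi^{w}=-a/b\}$ of the one-dimensional subtorus $\ker(\chi^{w})\simeq\C^*$.

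Finally, in each surviving case the affine curve is irreducible and isomorphic either to a line in $(\C^*)^2$ or to a copy of $\C^*$; by the nondegeneracy condition (d) its closure in the complete toric surface $\pole_\ray$ is smooth and irreducible of geometric genus $0$, and a smooth complete connected curve of genus $0$ is $\P^1$, giving $B_\ray\simeq\P^1$. I expect the main obstacle to be the bookkeeping of the closure: checking that passing from $\{\overline f=0\}$ to its closure in $\pole_\ray$ introduces no spurious boundary components and preserves smoothness, which is precisely where condition (d) and the local normal crossing description of \S\ref{sNDC} must be used; ruling out the vertex case through the absence of fixed points on $(f_\poly)_0$ is the other delicate point.
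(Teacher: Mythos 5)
Your reduction is essentially the paper's: restrict to the dense torus orbit of $\pole_\ray$, observe that $y_\ray f_\poly$ restricts there to the face polynomial $y_\ray f_{\face_\ray}$, and exploit the fact that $\face_\ray$ sits inside a unimodular facet of $\poly$; smoothness and the $0$-dimensionality of $B_\ray\cap(\text{lower orbits})$ come from the normal crossing statement of \S\ref{sNDC} in both arguments. The difference is purely in how rationality of the open curve is extracted. The paper chooses the vertices of a facet $\facet\supset\face_\ray$ as a basis, so that $f_{\face_\ray}=\sum_{i\in I}c_ix_i$, and identifies the open curve in one stroke with (an open subset of) the line $\{\sum_{i\in I}c_ix_i=0\}$ in $\P^2$ via the quotient by the $\C^*$-action $t\cdot x=(t^{-1}x_1,t^{-1}x_2,t^{-1}x_3)$; you instead split by $\dim\face_\ray$ and argue the facet case by the genus formula for a unimodular triangle and the edge case by recognizing a coset of a one-parameter subtorus. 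Both routes are fine where they apply, and your observation that the edge has lattice length one because the difference of two basis vectors is primitive (and remains primitive in the saturated sublattice $u_\ray^\perp\cap M$) is correct.

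The genuine problem is your treatment of the vertex case. Your claim that $B_\ray\neq\emptyset$ because $(f_\poly)_0\sim\sum_{\ray'}D_{\ray'}$ ``restricts to a nonzero effective divisor on $\pole_\ray$'' is unjustified, and it fails exactly in the case you are trying to exclude: $\sum_{\ray'}D_{\ray'}$ is the pullback of the ample anticanonical divisor from $X_{\Sigma_\poly}$, and when $u_\ray$ lies in the relative interior of a facet of the dual polytope (so that $\face_\ray$ is a vertex of $\poly$), the divisor $D_\ray$ is contracted to a torus-fixed point by the resolution $X_\Sigma\to X_{\Sigma_\poly}$ and the restricted line bundle is trivial. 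Concretely, in that case $y_\ray f_{\face_\ray}$ is a single nonvanishing monomial on the dense orbit of $D_\ray$, so $B_\ray$ misses that orbit, and by the SNC property it cannot be a curve inside the boundary either; the correct conclusion is $B_\ray=\emptyset$, not a contradiction. So the vertex case can occur for rays added by the smooth refinement, and your argument produces no contradiction there because the fixed-point argument presupposes the nonemptiness you failed to establish. (To be fair, the paper's own proof also glosses over this: with $|I|=1$ its ``line in $\P^2$'' has empty intersection with the torus, and the lemma as stated is then vacuously false for such $\ray$; the blow-up construction survives since blowing up an empty center does nothing, but your write-up should either restrict to rays with $\dim\face_\ray\geq 1$ or record $B_\ray=\emptyset$ in the vertex case rather than assert it is impossible.)
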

      \begin{proof}
                  By the non-degenerateness of $f_{\sf P}$, all $B_\ray$ are smooth curves in $\amb_\Sigma$.
                  Since $D_\Sigma\cup(f_{\sf P})_0$ is normal crossing, 
                  the intersections of 
                  $B_\ray$ and the lower  dimensional $T_N$-orbits in $D_\ray$
                  are $0$-dimensional.
                  Therefore, it is enough to show that
                  the intersection of $|(f_\poly)_0|$ and the two dimensional orbit in $\pole_\ray$
                  is rational.
                  
                  Take a facet $\facet\subset \poly$ which contains $\face_\ray$.
                  By the assumption $(a)$, $(b)$ in \S \ref{fan pol}, 
                  $\facet$ is a triangle, whose vertexes $e_1, e_2, e_3$ form a $\Z$-basis of $M$.
                  Using this basis, we take an isomorphism $M\simeq \Z^3$. 
                  Let $(x_1,x_2,x_3)$ be the corresponding coordinate as in \S \ref{torcor}.
                  Put $$I:=\{i\in\{1,2,3\}\mid e_i \text{ is a vertex of } \face_\ray\}.$$
                  Remark that $I\neq \emptyset$,
                  and $f_{\face_\ray}=\sum_{i\in I}c_ix_i\neq 0$. 
                  
                  Take $\sigma\in \Sigma (2)$ so that $\ray\in \sigma (1)$.
                  Let $\ray_1:=\ray$, $\ray_2$, $\ray_3$ be the three ray of $\sigma$.
                  Put $y_i:=y_{\ray_i}$ for $i=1,2,3$.
                  Then $g:=y_1f_{\face_\ray}$ is a Laurent polynomial 
                  depending only on $y_2, y_3$. 
                  We need to show that 
                  $\{(y_2,y_3)\in (\C^*)^2\mid g(y_2,y_3)=0\}$ is rational.
                  This space is isomorphic 
                  to the quotient space of 
                  $\{(y_1,y_2,y_3)\in (\C^*)^3\mid f_{\face_\ray}(y_1,y_2,y_3)=0\}$
                  by the $\C^*$-action 
                  defined by 
                  $t\cdot(y_1,y_2,y_3):=(ty_1,y_2,y_3)$.
                  
                  Using the coordinate $(x_1, x_2, x_3)$, 
                  the $\C^*$-action is given by 
                  $t\cdot (x_1,x_2,x_3)=(t^{-1}x_1,t^{-1}x_2, t^{-1}x_3)$ 
                  since $\langle e_i, u_{\ray_1}\rangle=-1$. 
                  We are considering quotient space of 
                  $\{(x_1,x_2,x_3)\in (\C^*)^3\mid \sum_{i\in I}c_ix_i=0\}$. 
                  Since the quotient of 
                   $\{(x_1,x_2,x_3)\in \C^3\mid \sum_{i\in I}c_ix_i=0\}$
                  by the action defined above is a line  in $\P^2$, 
                  we obtain the rationality.
      \end{proof}
  \subsubsection{Blowing ups}
      Take an ordering $\Sigma(1)=\{\ray_1,\dots,\ray_\ell\}$ for the set of all rays in $\Sigma$.
      We consider the following sequence of blowing ups: 
       $$X=X^{(\ell)}\xrightarrow{p^{(\ell-1)}}\cdots\to X^{(j+1)}\xrightarrow{p^{(j)}} X^{(j)}\to 
       \cdots\xrightarrow{p^{(0)}} X^{(0)}=X_\Sigma,$$
      where $p^{(j)}:X^{(j+1)}\to X^{(j)}$ is the blowing up along the strict transform of $B_{\ray_{j+1}}$ in $X^{(j)}$.
      The composition $X\to X_\Sigma$ is denoted by $\pi_\Sigma$.
      The strict transform of $D_{\ray_j}$ 
      is denoted by $D_j$ $(1\leq j\leq \ell)$. 
      \begin{lemma}\label{Bl}
       We have the following: 
       \begin{enumerate}
       \item The divisor $D_j$ is given by the composition of blowing ups of $D_{\ray_j}$ along reduced $0$-schemes. 
       \item The union $D:=\bigcup_jD_j$ is simple normal crossing.      
       \item The pole divisor of  $\pi_\Sigma^* f_\poly$ is reduced and the support $|(\pi_\Sigma^* f_\poly)_\infty|$ is $D$.
       \item The pull back of $f_\poly$ by $\pi_\Sigma$ gives a well defined morphism 
                $\pi_\Sigma^* f_\poly: X\to\P^1.$
       \end{enumerate}
      \end{lemma}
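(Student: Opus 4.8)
The idea I would build on is that the whole sequence of blowing ups is nothing but a resolution of the indeterminacy of the rational map $f_\poly\colon \amb_\Sigma\dashrightarrow\P^1$. Away from $T_N$ the function $f_\poly$ is a genuine meromorphic function whose pole locus sits in $\pole_\Sigma$ and whose zero locus is the smooth hypersurface $|(f_\poly)_0|$ (see \S\ref{sNDC}); the map $x\mapsto[1:f_\poly(x)]$ therefore fails to be defined exactly where the pole and zero divisors meet, i.e.\ on $\bigcup_\ray B_\ray=|(f_\poly)_0|\cap\pole_\Sigma$. So the plan is: first record an explicit local model of $f_\poly$ along each $D_\ray$, then analyze what a single blowing up along $B_\ray$ does to that model, and finally propagate the analysis through the ordered sequence by induction, always keeping track of strict transforms.

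For the local model, near a generic point of $B_\ray$ (lying on the $2$-dimensional orbit of $\pole_\ray$) the formula (\ref{f ray}) of \S\ref{pole order}, together with the non-degeneracy conditions (c), (d), lets me choose coordinates $(y_\ray,u,w)$ in which $D_\ray=\{y_\ray=0\}$ and $|(f_\poly)_0|=\{u=0\}$ meet transversally and $f_\poly=u/y_\ray$, with pole order one along $D_\ray$. Blowing up the smooth curve $B_\ray=\{y_\ray=u=0\}$, I would verify in the two standard charts that the pullback of $f_\poly$ extends to a morphism: in the chart $u=y_\ray t$ it becomes the regular function $t$, whose zero locus is the strict transform of $|(f_\poly)_0|$, while in the chart $y_\ray=u s$ it becomes $1/s$, with a pole of order one precisely along the strict transform $\{s=0\}$ of $D_\ray$. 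In both charts the exceptional divisor is where $f_\poly$ is finite and nonzero, so it carries neither a zero nor a pole. At the $1$- and $0$-dimensional torus strata I would run the same computation using the refined expressions of \S\ref{sNDC}.

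Granting this local picture, I would settle the four assertions as follows. For (2), each center $B_\ray=|(f_\poly)_0|\cap D_\ray$ is the transverse intersection of two components of the simple normal crossing divisor $\pole_\Sigma\cup|(f_\poly)_0|$, hence a smooth stratum; since blowing up a smooth stratum of an SNC divisor keeps the total transform SNC, and inductively the strict transform of the next center $B_{\ray_{j+1}}$ is again a smooth stratum of the updated divisor, the union $D=\bigcup_j D_j$ is SNC. For (1), blowing up $B_{\ray_j}$ itself leaves $D_{\ray_j}$ unchanged because $B_{\ray_j}$ is a Cartier divisor inside the smooth surface $D_{\ray_j}$; whereas blowing up $B_{\ray_i}$ for $i\neq j$ alters the strict transform of $D_{\ray_j}$ only along $B_{\ray_i}\cap D_{\ray_j}=|(f_\poly)_0|\cap D_{\ray_i}\cap D_{\ray_j}$, which condition (d) forces to be a reduced $0$-dimensional scheme. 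Thus $D_j$ is an iterated blowing up of $D_{\ray_j}$ at reduced points. Finally (3) and (4) fall out of the local computation: once all the $B_\ray$ have been blown up the base locus has become a divisor, so $\pi_\Sigma^*f_\poly$ extends to a morphism $X\to\P^1$, the pole divisor is the union of the strict transforms $D_j$ with pole order one along each, and its support is exactly $D$.

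The hard part will be the bookkeeping at the deepest strata and at the points $B_\ray\cap B_{\ray'}=|(f_\poly)_0|\cap D_\ray\cap D_{\ray'}$ where two of the centers meet: after some of the curves have already been blown up I must check that the strict transform of the next center is still smooth and still meets the evolving SNC divisor transversally, and that the pole order remains equal to one there. This is precisely where the explicit toric local models of \S\ref{sNDC} and the non-degeneracy hypotheses (a)--(d) must be used, tracking strict transforms chart by chart through the successive blowing ups; the generic-point computation above is only the template.
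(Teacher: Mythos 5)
Your strategy is the same as the paper's: write $f_\poly$ in explicit local coordinates near each center, compute the blowup in the two standard charts, and propagate through the ordered sequence by induction on strict transforms. Your chart computation ($f=u/y_\ray$ becoming $t$ in one chart and $1/s$ in the other, with the exceptional divisor carrying neither a zero nor a pole) and your observations for (1) and (2) (the center is a Cartier divisor inside the smooth surface $D_{\ray_j}$, so that surface is untouched by its own blowup; later centers meet it only in the reduced $0$-scheme $|(f_\poly)_0|\cap D_{\ray_i}\cap D_{\ray_j}$) all match what the paper does in its charts $V_Q^{\pm}$.

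However, the step you explicitly defer --- ``the strict transform of the next center is still smooth, still meets the evolving SNC divisor transversally, and the pole order remains one'' --- is exactly where the paper has to introduce an extra inductive invariant that your outline lacks, namely its condition $(4)_i$: after the first $i$ blowups, the zero divisor $(f^{(i)})_0$ is \emph{disjoint} from the already-processed components $\bigcup_{j\le i}D_j^{(i)}$. This is what guarantees that at every later stage the local model near a point of the next center is again $f=z_0\prod_{i=1}^k z_i^{-1}$ with $k\le 2$ and with $\{z_2=0\}$ (when $k=2$) a \emph{not-yet-processed} component, so that the two-chart computation can be repeated verbatim and the pole order stays one. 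Without formulating some such separation statement, your induction does not obviously close at the points $B_\ray\cap B_{\ray'}$ you single out: a priori a later center could pass through a configuration created by an earlier blowup where the simple normal form fails. So the idea is right and the route is the paper's, but to complete the argument you should add this disjointness of $(f^{(i)})_0$ from the processed divisors to your list of inductive hypotheses (alongside reducedness, smoothness, and SNC-ness) and verify it in each chart, as the paper does with its statements $(1)_i$--$(4)_i$.
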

      \begin{proof}
      Let $\pi^{(i)}:X^{(i)}\to X^{(0)}$ be the composition $p^{(i-1)}\circ\cdots \circ p^{(0)}$ for $i=1,2,\dots \ell$.
      We put $\pi^{(0)}:=\id_{X^{(0)}}$. Let $f^{(i)}$ be the pull back of $f_\poly$ by $\pi^{(i)}$ for $i=0,1,\dots,\ell$.
      Let $D^{(i)}_j$ (resp. $B_j^{(i)}$) denote the strict transform of $D_{\ray_j}$ (resp.  $B_{\ray_j}$) in $X^{(i)}$
      for $i, j=1,2,\dots, \ell$.
      Put $D^{(0)}_j:=D_{\ray_j}$ and $B_j^{(0)}:=B_{\ray_j}$, respectively. 
      We define $D^{(i)}:=\bigcup_j D^{(i)}_j$.
      We shall prove the following by the induction on $i$: 
      \begin{enumerate}
      \item[$(1)_i$] The divisor $D^{(i)}_j$ is given by the composition 
                            of blowing ups of $D_j^{(0)}$ along reduced $0$-schemes.
      \item[$(2)_i$] The zero divisor $(f^{(i)})_0$ is a reduced smooth hypersurface of $X^{(i)}$, 
                           and the union $(f^{(i)})_0\cup D^{(i)}$ is simple normal crossing. 
      \item[$(3)_i$] The pole divisor $(f^{(i)})_\infty$ is reduced and the support $|(f^{(i)})_\infty|$ is $D^{(i)}$.
      \item[$(4)_i$] The intersection $(f^{(i)})_0\cap (f^{(i)})_\infty \cap\Big{(}\bigcup_{j=1}^{i}D^{(i)}_j\Big{)} $ is empty.
      \end{enumerate}
      Remark that $(1)_0$, and $(4)_0$ are trivial.  
      We also remark that $(2)_0$ and $(3)_0$ are shown in \S \ref{pole order} and \S \ref{sNDC}.          
      
       Take $i\in \{1,2,\dots, \ell\}$. 
       Assume that $(1)_{i-1}, (2)_{i-1}, (3)_{i-1}, (4)_{i-1}$ holds. 
       Let $Q$ be an arbitrary point in $B_{i}^{(i-1)}$. 
       By the assumption $(2)_{i-1}$, $(3)_{i-1}$,  
       we have a local coordinate system $(U_Q; z_0,z_1, z_2)$ centered at $Q$ 
       with the following properties:
       \begin{enumerate}
       \item[] $D^{(i-1)}\cap U_Q=\bigcup_{i=1}^k\{z_i=0\}$ , $D_{i}^{(i-1)}\cap U_Q=\{z_1=0\}$, and 
                $f_{ |U_Q}^{(i-1)}(z)=z_0\cdot \prod_{i=1}^k z_i^{-1}$,  
       \end{enumerate}where $k=1$, or $2$.
       We have $B_{i}^{(i-1)}\cap U_Q=\{z_0=z_1=0\}$.
       Let $V_Q$ be the inverse image of $U_Q$ by $p^{(i-1)}$.       
       Then we have 
       \begin{align*}
       V_Q=\left\{\big{(}(z_0,z_1,z_2),[w_0:w_1]\big{)}\in U_Q\times \P^1\mid z_0w_1-z_1w_0=0 \right\}.
       \end{align*}       
       If $k=2$ and $\{z_2=0\}=D_{j}^{(i-1)}$ then $j>i$ by the assumption $(4)_{i-1}$. 
       $D^{(i)}_j\cap V_Q$ is given by the blowing up
       of $D_{j}^{(i-1)}\cap U_Q$ at the reduced point $Q$.
       On $V_Q^+:=V_Q\cap\{w_0\neq 0\}$, we have a local coordinate $(u_0,u_1,u_2)$
       with $z_0=u_0$, $z_1=u_0u_1$, $z_2=u_2$, and $w_1/w_0=u_1$.
       We have $f^{(i)}_{|V_Q^+}(u)=\prod_{i=1}^ku_i^{-1}$. 
       The strict transform $D^{(i)}_i\cap V_Q^+$ 
       is given by $\{u_1=0\}$.       
       On $V_Q^-:=V_Q\cap\{w_1\neq 0\}$, we have a local coordinate $(v_0,v_1,v_2)$ 
       with $z_0=v_0v_1$, $z_1=v_1$, $z_2=v_2$, and $w_0/w_1=v_0$.
       We have $f^{(i)}_{V^-_Q}(v)=v_0$ if $k=1$, and $f^{(i)}_{V^-_Q}(v)=v_0v_2^{-1}$ if $k=2$.
       The strict transform $D^{(i)}_i\cap V_Q^-$ 
       is given by $\{v_1=0\}$.          
       By this description and the assumptions, 
       we have $(1)_i,(2)_i,(3)_i,(4)_i$.             
       Then, by the induction, we obtain $(1)_\ell, (2)_\ell, (3)_\ell, (4)_\ell$. 
       It is easy to prove that  $(1)_\ell, (2)_\ell, (3)_\ell, (4)_\ell$ implies the lemma.
       \end{proof}
  \subsubsection{Hodge-Tate condition}
  We obtain the following:
  \begin{proposition}\label{3-dim prop}
  Let $f:\amb\to \P^1$ be the pull back of $f_\poly$ by $\pi_\Sigma$.
  Then the rescaling structure $\mathcal{H}_f$ is of Hodge-Tate type.
 \end{proposition}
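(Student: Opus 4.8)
The plan is to invoke Corollary \ref{the cor}: it suffices to prove that for every $k$ the mixed Hodge structures $H^k(Y)$ and $H^k(Y_\infty)$ are of Hodge-Tate type, whereupon Theorem \ref{main theorem} gives that $\mathcal{H}_f$ is of Hodge-Tate type. Just as in the proof of Proposition \ref{2-dim prop}, everything will be reduced to the Hodge-Tate property of $H^\bullet(\amb)$ and of the cohomology of the closed strata of the pole divisor $\pole=|(f)_\infty|$.

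First I would verify that $H^k(\amb)$ and the cohomology of each stratum $\pole(m)$ are of Hodge-Tate type. By Lemma \ref{Bl} the variety $\amb$ is obtained from the smooth complete toric variety $\amb_\Sigma$ by successive blow-ups along (strict transforms of) the rational curves $B_\ray\cong\P^1$; since $H^\bullet(\amb_\Sigma)$ is of Hodge-Tate type, as for any smooth toric variety, the blow-up formula $H^k(\mathrm{Bl}_CX')\simeq H^k(X')\oplus H^{k-2}(C)(-1)$ along the smooth rational centers $C$ shows inductively that $H^k(\amb)$ is of Hodge-Tate type. For the strata, Lemma \ref{Bl}~(1) identifies each component $D_j$ with a composition of blow-ups of the toric surface $D_{\ray_j}$ along reduced points, hence with a rational surface, whose cohomology is of Hodge-Tate type; the double intersections $D_i\cap D_j$ are strict transforms of the toric curves $D_{\ray_i}\cap D_{\ray_j}\cong\P^1$, hence again rational curves, and the triple intersections are reduced points. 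Thus every $\pole(m)$ has Hodge-Tate cohomology, and via the spectral sequence expressing $H^\bullet(\pole)$ through the cohomology of the $\pole(m)$ so does the reducible variety $\pole$; by duality the same holds for $H_\bullet(\pole)$.

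Granting this, the Hodge-Tate property of $H^k(Y)$ follows exactly as in Proposition \ref{2-dim prop}: the weight spectral sequence for $Y=\amb\setminus\pole$ has $E_1$-terms $H^{k-m}(\pole(m))(-m)$, with $\pole(0)=\amb$, which are all of Hodge-Tate type, so each $\gr^W H^k(Y)$ and hence $H^k(Y)$ is of Hodge-Tate type; equivalently, one feeds the exact sequence relating $H^\bullet(\amb)$, $H^\bullet(Y)$ and $H^\bullet(\pole)$ into Corollary \ref{lemHt}. For $H^k(Y_\infty)$ I would apply the Clemens-Schmid exact sequence, which is available because $(f)_\infty=\pole$ is reduced and simple normal crossing by Lemma \ref{Bl}~(2),(3) and because $\Delta_s^\times$ contains no critical value of $f$. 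In our dimension it reads
$$H^k(\pole)\longrightarrow H^k(Y_\infty)\xrightarrow{\ N\ }H^k(Y_\infty)(-1)\longrightarrow H_{4-k}(\pole)(-3),$$
so $\ker N$ is a quotient of the Hodge-Tate module $H^k(\pole)$ and $\mathrm{coker}\,N$ is a submodule of the Hodge-Tate module $H_{4-k}(\pole)(-3)$; both are therefore of Hodge-Tate type. Writing $p_k(x,y)$ for the Hodge-number polynomial of $H^k(Y_\infty)$, the resulting four-term exact sequence yields $(1-xy)\,p_k(x,y)=\sum_p a_p\,(xy)^p$, whence $p_k$ is a polynomial in $xy$ and $H^k(Y_\infty)$ is of Hodge-Tate type. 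Corollary \ref{the cor} then gives the proposition.

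The step I expect to be the main obstacle is the second paragraph: verifying that all closed strata of $\pole$ remain of Hodge-Tate type after the blow-ups, and in particular that the intersection curves $D_i\cap D_j$ stay rational. This is precisely where the explicit local coordinates and the inductive assertions $(1)_i$--$(4)_i$ of Lemma \ref{Bl} are needed, in order to control how the strict transforms of the toric boundary meet one another and the exceptional loci. Once the combinatorics of these intersections is pinned down, the remainder is the formal Hodge-Tate bookkeeping above, identical in spirit to the two-dimensional case.
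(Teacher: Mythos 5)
Your proposal is correct and follows essentially the same route as the paper: reduce via Corollary \ref{the cor} to the Hodge--Tate property of $H^k(Y)$ and $H^k(Y_\infty)$, establish the former from the Hodge--Tate cohomology of $X$ (blow-ups of a toric manifold along rational curves) and of the strata of $D$ (Lemma \ref{Bl}) together with the Gysin-type exact sequence and Corollary \ref{lemHt}, and establish the latter from the Clemens--Schmid sequence and the Hodge-number-polynomial argument exactly as in Proposition \ref{2-dim prop}. The only difference is that you spell out the blow-up formula and the stratification bookkeeping in slightly more detail than the paper, which simply cites Lemma \ref{Bl} and \cite[Example 5.34]{petmix}.
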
 
 \begin{proof}  
  By Lemma \ref{Bl}, the pair $(X, f)$ satisfies the condition in \S \ref{section LG}.  
  Since $X$ is given by blowing ups of a toric manifold along projective lines,
  $h^{p, q}(X)=0$ for $p\neq q$ (\cite[Theorem 7.31]{voihod}).
  Since $D_j$ is given by the
  composition of blowing ups of $D_{\ray_j}$ along reduced $0$-schemes (Lemma \ref{Bl} (1)), 
  and each $D_i\cap D_j$ is isomorphic to $\P^1$,
  the (co)homology of $D$ is Hodge-Tate (see \cite[Example 5.34]{petmix} for example).
  Hence, by Lemma \ref{lemHt} and the exact sequence 
  \begin{align*}
  \cdots\longrightarrow H^{k}(X)\longrightarrow H^k(Y)\longrightarrow H^{k-1}(D)(-1)\longrightarrow \cdots,
  \end{align*}  
  we have that the mixed Hodge structure on $H^k(Y)$ is Hodge-Tate for each $k$. 
  By Corollary \ref{the cor}, it remains to show that the limit mixed Hodge structure $H^k(Y_\infty)$ is of 
  Hodge-Tate type.
  From the Clemens-Schmid exact sequence \cite[(10.14), Theorem (10.16)]{grirec}, 
  we obtain the following exact sequence of mixed Hodge structures:
  $$H^k(D)\longrightarrow H^k(Y_\infty)\xrightarrow{N}H^k(Y_\infty)(-1)\longrightarrow H_{4-k}(D)(-3),$$
  where $0\leq k\leq 4$.
  Since by Corollary \ref{lemHt}, 
  we have the exact sequence 
  $$0\longrightarrow A_1\longrightarrow H^k(Y_\infty)\longrightarrow 
  H^k(Y_\infty)(-1)\longrightarrow A_2\longrightarrow 0,$$
  where $A_1$ and $A_2$ are Hodge-Tate.
  Then, by the similar argument as in the proof of Proposition \ref{2-dim prop}, $H^k(Y_\infty)$ is also Hodge-Tate
  for each $k$.
  \end{proof}
  Similarly as Corollary \ref{2-dim cor}, we have the following:
  \begin{corollary}
  Let $(X, f)$ be as in Proposition $\ref{3-dim prop}$.
  Then we have $f^{p, q}(Y, \w)=h^{p, q}(Y, \w)$.
  We also have that $\mathcal{H}_{f|\para=1}$ is special. 
  \qed
  \end{corollary}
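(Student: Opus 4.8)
The plan is to deduce this corollary formally from Proposition \ref{3-dim prop}, which already carries all the geometric content; the corollary itself is a mechanical consequence of the abstract theory of \S\ref{se res} together with the comparison results of \S\ref{section LG}. First I would record that, by Proposition \ref{3-dim prop}, the rescaling structure $\mathcal{H}_f$ is of Hodge-Tate type. Both assertions then follow by quoting the general facts that a Hodge-Tate rescaling structure has equal abstract Hodge numbers (Lemma \ref{2.8}) and has trivial skewed canonical extension at $\para=1$ (Proposition \ref{HT special}), after identifying the abstract invariants $f^{p,q}(\mathcal{H}_f)$ and $h^{p,q}(\mathcal{H}_f)$ with the geometric Hodge numbers $f^{p,q}(Y,\w)$ and $h^{p,q}(Y,\w)$.

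For the equality of Hodge numbers I would chain three identities. On the Hodge side, Lemma \ref{lemhod} (equivalently the identification $(\ref{hodge filter})$) gives $\gr^F_{-p}V_f^{p+q}\simeq H^q(X,\Omega_f^p)$, whence $f^{p,q}(\mathcal{H}_f)=\dim H^q(X,\Omega_f^p)=f^{p,q}(Y,\w)$ by the very definition of $f^{p,q}(Y,\w)$. On the weight side, formula $(\ref{hodge number h})$ identifies $h^{p,q}(\mathcal{H}_f)=\dim\gr^W_{2p}H^{p+q}(Y,Y_\infty)=h^{p,q}(Y,\w)$. Since $\mathcal{H}_f$ is of Hodge-Tate type, Lemma \ref{2.8} supplies the middle equality $f^{p,q}(\mathcal{H}_f)=h^{p,q}(\mathcal{H}_f)$, and combining the three identities yields $f^{p,q}(Y,\w)=h^{p,q}(Y,\w)$. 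For the speciality statement I would simply apply Proposition \ref{HT special}: a rescaling structure of Hodge-Tate type has $\mathcal{H}_{f|\para=1}$ special, so nothing further is required. This is exactly the argument already used for the two-dimensional analogue in Corollary \ref{2-dim cor}, now fed by Proposition \ref{3-dim prop} instead of Proposition \ref{2-dim prop}.

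There is essentially no obstacle in the corollary itself: all of the difficulty lies upstream, in establishing that $\mathcal{H}_f$ is Hodge-Tate, which rests on Corollary \ref{the cor} together with the verification that both $H^k(Y)$ and the limit mixed Hodge structure $H^k(Y_\infty)$ are of Hodge-Tate type for the toric blow-up models of \S\ref{torthr}. Once Proposition \ref{3-dim prop} is granted, the corollary reduces to a citation of Lemma \ref{2.8}, Proposition \ref{HT special}, $(\ref{hodge filter})$, and $(\ref{hodge number h})$, so the natural proof is a one-line \emph{qed} pointing to these results.
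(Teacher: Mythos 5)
Your argument is correct and is exactly the paper's route: the corollary is obtained, as in Corollary \ref{2-dim cor}, by feeding Proposition \ref{3-dim prop} into Lemma \ref{2.8} and Proposition \ref{HT special}, with $(\ref{hodge filter})$ and $(\ref{hodge number h})$ supplying the identification of the abstract numbers $f^{p,q}(\mathcal{H}_f)$, $h^{p,q}(\mathcal{H}_f)$ with the geometric ones. Nothing is missing.
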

  \begin{remark}
     In \cite{harthe}, A. Harder computed the number $f^{p, q} (Y, \w)$ and compare it with 
     the Hodge number of the smooth toric Fano manifold $X_\poly$ associated to $\poly$
     \cite[Theorem 2.3.7]{harthe}. 
     In \cite{reilog}, Reichelt-Sevenheck studied hypergeometric $\dmod$-module 
     associated to $($a family of$)$ $f_\poly$, and solved a kind of Birkhoff problem.
     The result here is a priori different from theirs 
     since the cohomology considered here is different from the one considered in \cite{reilog}.
     We also remark that 
     T. Mochizuki informed that we can obtain similar but a priori different results
     from the viewpoint of twistor $\dmod$-modules. 
   \end{remark}

   \appendix
  \section{Rescaling structures for quantum $\dmod$-modules of Fano manifolds}\label{app A}
  
  \subsection{Square roots of Tate twists}
  We use the notation in \S \ref{def and eg of scaling}.
  Set $\T^{1/2}:=\mathcal{O}_S(*(\rmod)_\infty)w$ where $w$ is a global section 
  with $\deg w=1$.
  We define a connection $\nabla$ on $\T^{1/2}$ by $\nabla w:=-(1/2) w\rmod^{-1}d \rmod$.  
  Since $p_2^*(\T^{1/2},\nabla)$ is not isomorphic to $\sigma^*(\T^{1/2},\nabla)$, 
  $(\T^{1/2},\nabla)$ is not equipped with a rescaling structure.
  However, we have a flat isomorphism
   $(\T^{1/2})^{\otimes 2}\simeqto \T; w^{\otimes 2}\mapsto v$. 
  Hence we use the notation $\T(-1/2):=(\T^{1/2},\nabla).$
  For each $k\in \Z$, we define 
  \begin{align}
   \T(-k/2):=\begin{cases}
                     \T(-1/2)^{\otimes k} &(k\geq 0)\\
                     (\T(-1/2)^\vee)^{\otimes -k} &(k<0).
                  \end{cases}
  \end{align}
  In the case where $k\in 2\Z$, $\T(k/2)$ is identified with the rescaling structure defined in Example $\ref{Tate twist}$.
  For a meromorphic connection $(\mathcal{H},\nabla)$ as in Definition $\ref{definition of rescaling structure}$, 
  we also define $\mathcal{H}(k/2):=\mathcal{H}\otimes \T(k/2)$.
  \subsection{Tate twisted quantum $\dmod$-modules}
  Let $\fano$ be a smooth projective Fano variety over $\C$ of dimension $n$. 
  Put $\HH_a(\fano):=\bigoplus_{a=q-p}H^q(\fano,\Omega_\fano^p)$.
  Set $\HH_\bullet(\fano):=\bigoplus_a\HH_a(\fano)$ and identify it with $H^\bullet(\fano;\C)$ by the Hodge decomposition.  
  Let $\star_{\para}$ be the quantum cup product of $\fano$ with respect to 
  the parameter $c_1(\fano)\log \para\in H^2(\fano;\C)$, where $c_1(\fano)$ is the first Chern class of
  the tangent bundle of $\fano$. 
  This is well defined for all $\para\in\C$. Indeed, the right hand side of
  \begin{align}\label{A.2}
  (\alpha\star_{\para} \beta, \gamma)_\fano=
  \sum_{d\in H_2(\fano;\Z)}\langle \alpha,\beta, \gamma\rangle^\fano_{0,3,d}\para^{c_1(\fano)\cdot d}
  \end{align}
  is a finite sum since $\fano$ is Fano,
  where $\alpha, \beta, \gamma\in H^\bullet(\fano;\C)\simeq \HH_\bullet(\fano)$,
  $(\cdot,\cdot)_\fano$ denotes the Poincar\'e pairing, 
  and $\langle\cdot,\cdot,\cdot \rangle^\fano_{0,3,d}$
  denotes genus-zero 3-points Gromov-Witten invariant of degree $d\in H_2(\fano;\Z)$ 
  (see \cite{behgro}, \cite{behsta},  \cite{coxmir}, and references therein). 
  
  For any non-negative integer $k$, we take a finite rank free $\mathcal{O}_S(*(\rmod)_\infty)$-module
  ${}^\mathfrak{a}{H}^k:=\HH_{k-n}(\fano)\otimes \mathcal{O}_S(*(\rmod)_\infty)$.
  The $\Z$-grading of ${}^\mathfrak{a}{H}^k$ is defined to be $0$.
  Define $\mu_\fano\in \End(\HH_{k-n}(\fano))$ 
  by $\mu_{\fano|H^q(\fano,\Omega_\fano^p)}:=(p+q-n)/2 \cdot \id_{H^q(\fano,\Omega_\fano^p)}$.
  We also have an endomorphism 
  $c_1(\fano)\star_{\para}$ on $\HH_{k-n}(\fano)$.
  We have the Dubrovin connection ${}^\mathfrak{a}\nabla$  
   on ${}^\mathfrak{a}{H}^k$ as follows (\cite{dubgeo2}, \cite{dubgeo}, \cite{dubpai}):
  \begin{align*}
  {}^\mathfrak{a} \nabla:=d+\frac{c_1(\fano)\star_{\para}}{\rmod}\frac{d\para}{\para}+\mu_\fano\frac{d\rmod}{\rmod}
                    -c_1(\fano)\star_{\para}\frac{d\rmod}{\rmod^2}.
  \end{align*}
  \begin{proposition}\label{pA2}
  $\mathcal{H}_\fano^k:={}^\mathfrak{a}{H}^k(-k/2)$ comes equipped with a  
  rescaling structure.
  \end{proposition}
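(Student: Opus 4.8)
The plan is to produce the connection $\nabla$ and the equivariant isomorphism $\chi$ on $\mathcal{H}_\fano^k={}^\mathfrak{a}H^k(-k/2)$ explicitly, and then to check the three requirements of Definition~\ref{definition of rescaling structure} one at a time; every condition turns out to be formal except the flatness of $\chi$, where the homogeneity of the quantum product enters. First I would record the connection obtained by tensoring ${}^\mathfrak{a}\nabla$ with the connection $\nabla w^{\otimes k}=-(k/2)w^{\otimes k}\rmod^{-1}d\rmod$ of $\T(-k/2)$, namely
\[
\nabla=d+\frac{c_1(\fano)\star_{\para}}{\rmod}\frac{d\para}{\para}
+\Bigl(\mu_\fano-\tfrac{k}{2}\Bigr)\frac{d\rmod}{\rmod}
-c_1(\fano)\star_{\para}\,\frac{d\rmod}{\rmod^2}.
\]
I would then verify that $\nabla$ maps $\mathcal{H}_\fano^k$ into $\mathcal{H}_\fano^k\otimes\Omega^1_S\bigl(*(|(\rmod)_\infty|\cup|(\rmod\para)_0|)\bigr)$: the first two terms have at worst logarithmic poles along $\{\rmod=0\}$, $\{\para=0\}$ and $\{\rmod=\infty\}$, and after the substitution $\rmod\mapsto\rmod^{-1}$ the last term is holomorphic at $\{\rmod=\infty\}$ and has a pole only along $\{\rmod=0\}\subset|(\rmod\para)_0|$. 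Since $c_1(\fano)\star_{\para}$ is polynomial in $\para$ (a finite sum by the Fano condition, see (\ref{A.2})), $\nabla$ is a genuine meromorphic connection; its flatness is inherited from the flatness of the Dubrovin connection (standard from the WDVV/Euler-field identities, \cite{dubgeo2}, \cite{dubgeo}, \cite{dubpai}) together with the flatness of the line bundle $\T(-k/2)$. For condition~(1) I would contract $\nabla$ with the two fields: $\nabla_{\rmod\para\del_\para}=\rmod\para\del_\para+c_1(\fano)\star_{\para}$ and $\nabla_{\rmod^2\del_\rmod}=\rmod^2\del_\rmod+(\mu_\fano-\tfrac{k}{2})\rmod-c_1(\fano)\star_{\para}$, and both operators visibly preserve $\HH_{k-n}(\fano)\otimes\mathcal{O}_S(*(\rmod)_\infty)$.

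Next I would construct $\chi$. On the bare module ${}^\mathfrak{a}H^k$ the natural candidate is the gauge transformation $\scale^{-\mu_\fano}$; combined with the factor $\scale^{k/2}$ coming from $\T(-k/2)$, the resulting isomorphism $\chi$ acts on the summand $H^q(\fano,\Omega_\fano^p)$ of $\HH_{k-n}(\fano)$ (so that $q-p=k-n$) by $\scale^{\,k/2-\mu_\fano}=\scale^{\,n-p}$. The decisive point is that $\mu_\fano=p+\tfrac{k}{2}-n$ on this summand, so the half-integer powers of $\scale$ produced separately by $\scale^{-\mu_\fano}$ and by $\T^{1/2}$ cancel, leaving the integer power $\scale^{\,n-p}$. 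Hence $\chi\colon p_2^*\mathcal{H}_\fano^k\simeqto\m^*\mathcal{H}_\fano^k$ is a genuine single-valued isomorphism of $\mathcal{O}_S(*(\rmod)_\infty)$-modules, whereas for ${}^\mathfrak{a}H^k$ alone it is not; this is exactly why the Tate twist is needed. The module is concentrated in degree $k$, so $\nabla$ and $\chi$ are automatically grade-preserving, and the cocycle condition~(3) is immediate because $\chi$ is the character $\scale\mapsto\scale^{\,n-p}$ on each Hodge summand and $(\scale_1\scale_2)^{\,n-p}=\scale_1^{\,n-p}\scale_2^{\,n-p}$.

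The main obstacle, and the only step where the geometry really enters, is condition~(2): the flatness of $\chi$ with respect to $p_2^*\nabla$ and $\m^*\nabla$ on $\C^*_\rmod\times\C^*_\para$. Here I would first establish the homogeneity relation
\[
c_1(\fano)\star_{\scale\para}=\scale\,\scale^{-\mu_\fano}\,\bigl(c_1(\fano)\star_{\para}\bigr)\,\scale^{\mu_\fano},
\]
which follows from (\ref{A.2}) and the dimension axiom of Gromov--Witten theory: the degree-$d$ component of $c_1(\fano)\star_{\para}$ shifts the $\mu_\fano$-grading by $1-c_1(\fano)\cdot d$, so conjugation by $\scale^{-\mu_\fano}$ multiplies it by $\scale^{\,c_1(\fano)\cdot d-1}$, which recombines with the extra factor $\scale$ and with $\para^{c_1(\fano)\cdot d}$ into $(\scale\para)^{c_1(\fano)\cdot d}$. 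Using this identity, a direct computation of the gauge transform $\scale^{-\mu_\fano}\,(p_2^*A)\,\scale^{\mu_\fano}-\bigl(d\scale^{-\mu_\fano}\bigr)\scale^{\mu_\fano}$ of the connection one-form shows that $\scale^{-\mu_\fano}$ intertwines $p_2^*({}^\mathfrak{a}\nabla)$ and $\m^*({}^\mathfrak{a}\nabla)$; the two $d\scale$-contributions coming from the $\para^{-1}d\para$-term and from the $\rmod^{-2}d\rmod$-term cancel precisely because both carry the same operator $c_1(\fano)\star_{\para}$, while the remaining $d\scale$-term is absorbed by the gauge correction $\mu_\fano\,\scale^{-1}d\scale$. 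Tensoring with the standard flat equivariant structure of $\T(-k/2)$ (Example~\ref{Tate twist}) then yields flatness of $\chi$ on $\mathcal{H}_\fano^k$, which completes the verification that $(\mathcal{H}_\fano^k,\nabla,\chi)$ is a rescaling structure.
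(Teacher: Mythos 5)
Your proposal is correct and follows essentially the same route as the paper: you write down the twisted connection, observe that $\mu_\fano-\tfrac{k}{2}=p-n$ on $H^q(\fano,\Omega^p_\fano)$ so that the equivariant isomorphism is the integer-power character $\scale^{\,n-p}$ (this is exactly why the Tate twist is needed), and deduce flatness of $\chi$ from the homogeneity identity $c_1(\fano)\star_\para=\scale^{\mu_k}\bigl(\scale^{-1}c_1(\fano)\star_{\scale\para}\bigr)\scale^{-\mu_k}$ coming from $(\ref{A.2})$. The only differences are that you spell out the pole bookkeeping for condition (1) and derive the homogeneity identity from the dimension axiom, points the paper leaves implicit or delegates to the references.
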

  \begin{proof}
  $\mathcal{H}_\fano^k$ is identified with the free $\mathcal{O}_S(*(\rmod)_\infty)$-module
  $\HH_{k-n}(\fano)\otimes \mathcal{O}_S(*(\rmod)_\infty)$ with the connection: 
  \begin{align*}
   \nabla=d+\frac{c_1(\fano)\star_{\para}}{\rmod}\frac{d\para}{\para}
              +\left(\mu_\fano-\frac{k}{2}\cdot \id\right)\frac{d\rmod}{\rmod}
                    -c_1(\fano)\star_{\para}\frac{d\rmod}{\rmod^2}.
  \end{align*}  
  Taking the pull back by $\sigma:\C^*_\scale\times S\to S; (\scale, \rmod, \para)\mapsto (\scale \rmod, \scale \para)$,
  we have 
   \begin{align*}
   \sigma^*\nabla=d+\frac{c_1(\fano)\star_{\scale\para}}{\scale\rmod}
                            \frac{d\para}{\para}+\left(\mu_\fano-\frac{k}{2}\cdot \id\right)
                            \left(\frac{d\rmod}{\rmod}+\frac{d\scale}{\scale}\right)
                            -\frac{c_1(\fano)\star_{\scale\para}}{\scale}\frac{d\rmod}{\rmod^2}.
  \end{align*}
  Put $\mu_k:=\mu_\fano-(k/2)\cdot\id$.
  On $H^q(\fano,\Omega_\fano^p)$ with $q-p=k-n$, 
  we have $\mu_k=(q-k)\cdot \id=(p-n)\cdot\id$. 
  Hence we have a morphism of $\mathcal{O}_{\C_\scale^*\times S}(*(\rmod)_\infty)$-modules:
  $$\theta^{-\mu_k}:p_2^*\mathcal{H}_\fano^k\simeqto \m^*\mathcal{H}_\fano^k.$$
  By (\ref{A.2}),  we obtain
  $$c_1\star_\para =\scale^{\mu_k}\left(\frac{c_1(\fano)\star_{\scale \para}}{\scale}\right)\scale^{-\mu_k},$$
  which implies that $\scale^{-\mu_k}$ is flat with respect to the connections 
  (see \cite[\S 2.2]{ggi} for example).
  \end{proof}
  \begin{definition}  
  We define a rescaling structure $\mathcal{H}_\fano$ by
  \begin{align*}
   \mathcal{H}_\fano:=\bigoplus_{k\in \Z}\mathcal{H}^k_\fano.
  \end{align*}
  We call $ \mathcal{H}_\fano$ a Tate twisted quantum $\dmod$-module of $\fano$.
  \end{definition}
  \begin{remark}
   The $\Z/2\Z$-graded flat meromorphic connection 
   ${}^\mathfrak{a} H$ in the introduction  $($or \cite{kkp14} $)$ 
   is given by ${}^\mathfrak{a} H=\bigoplus_k{}^\mathfrak{a}{H}^k$,
   where the $\Z/2\Z$-grading on 
   ${}^\mathfrak{a}{H}^k$ is given by $(k\mod 2)$.
  \end{remark}
    \subsection{Hodge-Tate condition and Hodge numbers}
    The fiber of $\mathcal{H}_\fano^{k}$
    at $(\rmod,\para)=(1,0)$
     is naturally identified with 
    $\HH_{k-n} (\fano)$.
    We shall describe the Hodge and weight filtrations 
    on $\HH_{k-n} (\fano)$
    in the sense of \S \ref{section hodge}.
    
    As we have seen in the proof of Proposition \ref{pA2}, 
    the $\C^*$-action on $\mathcal{H}^k_{\fano|\para=0}$ 
    is given by $\scale^{-(q-k)}=\scale^{-(p-n)}$ on 
    $ H^q(\fano,\Omega_\fano^p)\otimes \mathcal{O}_{\C_\rmod}$
    with $q-p=k-n$. 
    Hence the Hodge filtration on $\HH_{k-n}(\fano)$ is given as follows: 
    \begin{align}\label{h qdm}
     F_{i}\HH_{k-n}(\fano)=\bigoplus_{\substack{ p-n\leq i,\\ q-p=k-n}}H^q(\fano,\Omega^p_\fano).
    \end{align}
    We obtain $f^{p, q}(\mathcal{H}_\fano)=\dim H^{q}(\fano,\Omega^{n-p}_\fano)=h^{n-p, q}(\fano)$.
        
    The residue endomorphism 
    $N_k:=\Res_\para\nabla$ on $\HH_{k-n}(\fano)$ is identified with 
    $c_1(\fano)\cup$.
    It follows that the monodromy weight filtration centered at $k$ is 
    given as follows: 
    \begin{align}\label{w qdm}
    {}^k W_i\HH_{k-n}(\fano)=\bigoplus_{\substack{p\geq n-i/2 ,\\q-p=k-n}}H^q(\fano, \Omega^p_\fano).
    \end{align}
    Hence, we have 
    $h^{p, q}(\mathcal{H}_\fano)=h^{n-p, q}(\fano).$
    By (\ref{h qdm}) and (\ref{w qdm}), we obtain the following:
    \begin{proposition}
    The Tate twisted quantum $\dmod$-module $\mathcal{H}_\fano$
    satisfies the Hodge-Tate condition 
    for any smooth projective Fano variety $\fano$.\qed
    \end{proposition}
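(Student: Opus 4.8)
The plan is to deduce the Hodge-Tate condition directly from the two explicit filtration formulas already established on the fiber $V^k:=\mathcal{H}^k_{\fano|(\rmod,\para)=(1,0)}=\HH_{k-n}(\fano)$, namely the Hodge filtration $(\ref{h qdm})$ and the monodromy weight filtration $(\ref{w qdm})$. Since $F$ and $W$ are defined degree-wise on $V=\bigoplus_k V^k$ (the filtrations on $V$ being the direct sums of those on the summands $V^k$), it suffices to verify the conditions $(\ref{Wei})$ and $(\ref{fw})$ of Definition $\ref{def ht}$ on each graded piece $V^k$ separately, and then sum over $k$.

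First I would read off the graded pieces from the displayed formulas. From $(\ref{w qdm})$, the defining inequality $p\geq n-i/2$ with $p\in\Z$ does not change when $i$ passes from $2\ell$ to $2\ell+1$, so $W_{2\ell+1}=W_{2\ell}$ on $V^k$; this yields $(\ref{Wei})$. Comparing $W_{2\ell}$ with $W_{2\ell-1}$ shows that $\gr^W_{2\ell}V^k$ consists exactly of the Hodge summands with $p=n-\ell$, so that $\gr^W_{2\ell}V^k=H^{k-\ell}(\fano,\Omega^{n-\ell}_\fano)$ and $\gr^W_mV^k=0$ for $m$ odd. Dually, from $(\ref{h qdm})$ the inequality $p-n\leq i$ gives $\gr^F_{-p}V^k=H^{k-p}(\fano,\Omega^{n-p}_\fano)$.

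The key observation is then that the single Hodge summand $H^{k-\ell}(\fano,\Omega^{n-\ell}_\fano)$ occupying $\gr^W_{2\ell}V^k$ lives in Hodge degree $n-\ell$, hence lies in $F_{-\ell}V^k$ but not in $F_{-\ell-1}V^k$. Consequently $\gr^F_{-p}\gr^W_m V^k=0$ unless $m=2p$, which by the reformulation of the Hodge-Tate property recalled just before Corollary $\ref{lemHt}$ is precisely the statement that $F$ and $W$ are opposed, i.e. equivalent to $(\ref{Wei})$ together with $(\ref{fw})$. Summing over all $k$ gives the Hodge-Tate condition for $\mathcal{H}_\fano$, completing the proof.

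I do not expect a genuine obstacle at this stage: the substance of the proposition has already been absorbed into the derivation of $(\ref{h qdm})$ and $(\ref{w qdm})$, in particular into the identification of the residue $N_k$ with cup product by $c_1(\fano)$ and the use of Hard Lefschetz on $\fano$ to compute its monodromy weight filtration. What remains is the purely combinatorial verification that $\gr^F_{-p}$ and $\gr^W_{2p}$ isolate the \emph{same} one-dimensional-in-Hodge-type stratum $H^{k-p}(\fano,\Omega^{n-p}_\fano)$, which is immediate from the two formulas. The only point requiring a little bookkeeping is the Tate twist $(-k/2)$, which is exactly what shifts the $\C^*_\scale$-weights so that $(\ref{h qdm})$ and $(\ref{w qdm})$ take the aligned form used above.
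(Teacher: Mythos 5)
Your proposal is correct and is essentially the paper's own argument: the paper proves the proposition in one line ("By (\ref{h qdm}) and (\ref{w qdm}), we obtain the following"), and your write-up simply spells out the combinatorial verification that $\gr^W_{2\ell}V^k=H^{k-\ell}(\fano,\Omega^{n-\ell}_\fano)=\gr^F_{-\ell}V^k$, so that $\gr^F_{-p}\gr^W_mV^k=0$ unless $m=2p$. All the individual computations check out against the two displayed formulas.
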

    
    \section{Relation to the work of Katzarkov-Konstevich-Pantev}\label{app kkp}
    \subsection{Tame compactified Landau-Ginzburg model}
    In \cite{kkp14}, Katzarkov-Kontsevich-Pantev considered the following: 
    \begin{definition}[{\cite[Definition 2.4, (T)]{kkp14}, See also \cite[Definition 3]{lunlan}}]
    A tame compactified Landau-Ginzburg model is a tuple $((X, f), D, {\sf{vol}}_X)$, where
    \begin{enumerate}
        \item $X$ is a smooth projective variety and $f:X\to \P^1$ is a flat projective morphism. 
        \item $D=(\bigcup_iD^{\sf h}_i)\cup(\bigcup_jD_j^{\sf v})\subset X$ is a reduced normal crossing divisor such that 
           \begin{enumerate}
             \item $D^{\sf v}=\bigcup_jD_j^{\sf v}$ is a scheme theoretic pole divisor of $f$, i.e. $(f)_\infty=D^{\sf v}$.
                      In particular, the pole order of $f$ along $D^{\sf v}_j$ is one;
             \item each component $D_i^{\sf h}$ of $D^{\sf h}:=\bigcup_iD_i^{\sf h}$ is smooth and horizontal for $f$, 
                      i.e. $f_{|D_i^{\sf h}}$ is a flat morphism;
             \item the critical locus of $f$ does not intersect $D^{\sf h}$.
           \end{enumerate}
        \item ${\sf vol}_X$ is a nowhere vanishing meromorphic section of the canonical bundle 
                  $K_X$ with poles of order exactly one
                 along each component of $D$.  
                 In other words, we have an isomorphism $\mathcal{O}_X\simeqto K_X(D); 1\mapsto {\sf{vol}}_X$.              
    \end{enumerate}
    \end{definition}
    In this paper (\S \ref{section LG}), the horizontal divisor $D^{\sf h}$
    is assumed to be empty, and each component $D_j^{\sf v}$ is assumed to be smooth.
    Although we do not impose the existence of 
    ${\sf{vol}}_X$ in \S \ref{section LG},
    all examples in \S \ref{section Ex} have ${{\sf vol}}_X$.
    
 \subsection{Landau-Ginzburg Hodge numbers}
     The Hodge number $f^{p, q}(Y, \w)$ in this paper 
     corresponds to $f^{q, p}(Y,{\sf w})$ in \cite[Definition 3.1]{kkp14}.
     The definition in this paper suits to the convention in the classical Hodge theory.
     The number $h^{p ,q}(Y, \w)$ 
     in \cite{kkp14} is $\dim \gr^{W}_pH^{p+q}(Y,Y_\infty)$
     in our notation. 
     Our definition of $h^{p, q}(Y, \w)$ is $\dim \gr^{W}_{2p}H^{p+q}(Y,Y_\infty)$, 
     which is different from their definition.
     As mentioned in \cite{lunlan}, 
     their definition seems not to be what they had in mind. 
     The definition of $h^{p, q}(Y, \w)$ in this paper 
     corresponds to $h^{q, p}(Y, \w)$ in \cite[Definition 3]{lunlan}.
     In \cite{lunlan}, they also gave a counter-example 
     for the part of equality with the numbers $i^{p, q} (Y, \w)$ in \cite[Conjecture 3.6]{kkp14}. 
    \subsection{One parameter families}
      Recall that $S=\P_\rmod^1\times\C_\para$.
      We also recall that  
      $\pi_S:S\times\amb\to \amb$ and 
      $p_S:S\times\amb\to S$ denote the projections.     
      Put 
      $$\Omega_{X,S}^k(*\pole):=
      \mathcal{O}_{\amb\times S}(*(\rmod)_\infty)\otimes \pi_S^{-1}\Omega_\amb^k(*\pole).$$
      Let ${}^{\mathfrak{b}}H^k$ be the $\mathcal{O}_S(*(\rmod)_\infty)$-module defined by
     $${}^\mathfrak{b}H^k:=
     \R^kp_{S*}(\Omega_{X,S}^\bullet(*\pole),
     \rmod d+\para df\wedge). $$ 
     Let $\nabla:\Omega^\bullet_{X, S}(*D)\to 
     \Omega^{\bullet}_{\amb, S}(*D)\otimes p_S^*\Omega^1_S(*|(\rmod\para)_0|)$
     be the connection on $ \Omega^{\bullet}_{\amb, S}(*\pole):=\bigoplus_k \Omega^k_{\amb, S}(*\pole)$ defined by 
     $$\nabla= d_S+\frac f{\rmod}d \para +{\sf G}\frac{d\rmod}{\rmod}-\para f\frac{d\rmod}{\rmod^2},$$
     where ${\sf G}=-(k/2)\id$ on $\Omega_{\amb, S}^k(*\pole)$.
     Then, we have 
     $[\nabla_{\del_\para},\rmod d+\para df\wedge]=0$, 
     and $[\nabla_{\del_\rmod},\rmod d+\para df\wedge]=(2\rmod)^{-1}(\rmod d+\para df\wedge)$.
     Let $\mathscr{A}_\amb^{p, q}$ be the sheaf of $(p, q)$-forms on $\amb$
     and $\del$ and $\overline{\del}$ be the Dolbeault operators.
     Put 
     $\mathscr{A}^{p, q}_{\amb, S,\pole}
     :=\Omega^p_{\amb, S}(*\pole)
        \otimes_{\pi_S^{-1}\mathcal{O}_\amb}\pi_S^{-1}\mathscr{A}_\amb^{0,q}$.    
     Let
     $\del:\mathscr{A}^{p, q}_{\amb, S, \pole}\to\mathscr{A}^{p+1,q}_{\amb, S, \pole}$, and
     $\overline{\del}:\mathscr{A}^{p, q}_{\amb, S, \pole}\to\mathscr{A}^{p, q+1}_{\amb, S, \pole}$
     be the induced operators.
     Put $\mathscr{A}_{\amb, S, \pole}^\ell:=\bigoplus_{p+q=\ell}\mathscr{A}^{p, q}_{\amb, S, \pole}$
     and 
     $$d_{\rm tot}:=\rmod\del+\overline{\del}+\para\del f:
     \mathscr{A}_{\amb, S,\pole}^\ell \to
     \mathscr{A}_{\amb, S, \pole}^{\ell+1}.$$
     We have a natural quasi-isomorphism
     \begin{align*}
     \iota_{\rm Dol}:(\Omega_{X, S}^\bullet(*\pole), \rmod d+\para d{f})
     \simeqto 
     (\mathscr{A}_{\amb, S, \pole}^\bullet, d_{\rm tot}).
     \end{align*}
     We also have the connection 
     ${\bm\nabla}:\mathscr{A}^\bullet_{\amb, S, \pole}\to 
     \mathscr{A}^\bullet_{\amb, S, \pole}\otimes \Omega^1_S(*|(\rmod\para)_0|)$
     by 
     $$ {\bm\nabla}:=d_S+\frac{f}{\rmod}d\para+\mu_f\frac{d\rmod}{\rmod}-\para f\frac{d\rmod}{\rmod^2}, $$
     where $\mu_{f|\mathscr{A}^{p, q}_{X, S, D}}=2^{-1}(q-p)\cdot \id$.
     Then $\iota_{\rm Dol}\circ \nabla={\bm\nabla}\circ \iota_{\rm Dol}$ by definition.
     We have $[{\bm\nabla}_{\del_\para},d_{\rm tot}]=0$, and 
    \begin{align*} 
     [\bm{\nabla}_{\del_\rmod}, d_{\rm tot}]
     &=[\del_\rmod+\rmod^{-1}\mu_f-\rmod^{-2}\para f, \rmod\del+\overline{\del}+\para \del f]\\
     &=\del-(1/2)\del+(1/2)\rmod^{-1}\overline{\del}-(1/2)\rmod^{-1}\para\del f
         +\rmod^{-1}\para\del f\\
     &=(2\rmod)^{-1}(\rmod\del+\overline{\del}+\para\del f)=(2\rmod)^{-1}d_{\rm tot}.
     \end{align*}
     Hence $\bm\nabla$ gives a connection ${}^\mathfrak{b}\nabla^k$
     on $^{\mathfrak{b}}H^k
             \simeq\mathscr{H}^kp_{S *}(\mathscr{A}^\bullet_{X,S,D},d_{\rm tot})$.
     We remark that similar discussions are given in \cite{eficyc} and \cite{kkp08}.
     
    \begin{lemma}
     For each $k\in \Z_{\geq0}$, 
      we have $({}^{\mathfrak{b}}H^k,{}^\mathfrak{b}\nabla^k) (-k/2)\simeq \mathcal{H}_f^k$.  
     \end{lemma}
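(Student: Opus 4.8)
The plan is to realise the isomorphism by an explicit morphism of relative de Rham complexes, built from two ingredients: a rescaling by powers of $\rmod$ and the Esnault--Sabbah--Yu inclusion $\Omega_f^\bullet\into\Omega_\amb^\bullet(*\pole)$. On the chart $\C_\rmod\times\C_\para$ a local section of $\Omega_{f,\rmod,\para}^k=\pi_S^{-1}\Omega_f^k\otimes\rmod^{-k}\mathcal{O}_{S\times\amb}(*(\rmod)_\infty)$ is written $\rmod^{-k}\otimes\omega$ with $\omega\in\Omega_f^k$, and I define
\[
\psi^k:\Omega_{f,\rmod,\para}^k\longrightarrow \Omega_{X,S}^k(*\pole),\qquad \rmod^{-k}\otimes\omega\longmapsto \omega,
\]
that is, multiplication by $\rmod^k$ followed by the inclusion $\Omega_f^k\into\Omega_\amb^k(*\pole)$. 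A direct check shows that $\psi$ intertwines the two differentials: since $(d+\rmod^{-1}\para df)(\rmod^{-k}\otimes\omega)=\rmod\cdot(\rmod^{-(k+1)}\otimes d\omega)+\rmod^{-(k+1)}\para\otimes(df\wedge\omega)$, applying $\psi^{k+1}$ gives $\rmod\,d\omega+\para\,df\wedge\omega=(\rmod d+\para df\wedge)\psi^{k}(\rmod^{-k}\otimes\omega)$. The factor $\rmod^{-k}$ in the definition of $\Omega_{f,\rmod,\para}^k$ is precisely what makes $\psi^k$ identify $\Omega_{f,\rmod,\para}^k$, with neither zero nor pole created along $\{\rmod=0\}$, with the subsheaf $\pi_S^{-1}\Omega_f^k\otimes\mathcal{O}_{S\times\amb}(*(\rmod)_\infty)\subset\Omega_{X,S}^k(*\pole)$, and the differential $\rmod d+\para df\wedge$ is transported correctly.

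Next I would argue that $\psi$ is a quasi-isomorphism, whence $\R^kp_{S*}\psi$ is an isomorphism of $\mathcal{O}_S(*(\rmod)_\infty)$-modules $\mathcal{H}_f^k\simeqto{}^{\mathfrak b}H^k$. Indeed $\psi$ factors as the isomorphism of complexes given by multiplication by $\rmod^k$ onto $(\pi_S^{-1}\Omega_f^\bullet\otimes\mathcal{O}(*(\rmod)_\infty),\rmod d+\para df\wedge)$, followed by the inclusion into $(\Omega_{X,S}^\bullet(*\pole),\rmod d+\para df\wedge)$; away from $\{\rmod=0,\infty\}$ the differential is $\rmod(d+\rmod^{-1}\para df\wedge)$, and the inclusion is then a quasi-isomorphism by the relative version of \cite[Corollary 1.4.3]{esy} quoted in the introduction. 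Since both $\mathcal{H}_f^k$ and ${}^{\mathfrak b}H^k$ are locally free over $\mathcal{O}_S(*(\rmod)_\infty)$ (Esnault--Sabbah--Yu, Mochizuki), it suffices to check this fibrewise at a generic $(\rmod,\para)$. Because $\T(-k/2)$ is free of rank one over $\mathcal{O}_S(*(\rmod)_\infty)$, tensoring by it leaves the underlying module unchanged, so this already yields $({}^{\mathfrak b}H^k)(-k/2)\simeq\mathcal{H}_f^k$ as $\mathcal{O}_S(*(\rmod)_\infty)$-modules; the gradings agree because $\deg w^{\otimes k}=k$ carries the degree-$0$ module ${}^{\mathfrak b}H^k$ into degree $k$.

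The decisive and most delicate step is the compatibility of connections, which is exactly where the half-Tate twist enters. Both sides carry Gauss--Manin connections induced from the $S$-directions of their complexes, the difference being a scalar $\rmod^{-1}d\rmod$-term produced by the $\rmod^{-k}$-normalisation together with the weight convention ${\sf G}=-k/2$ (equivalently $\mu_f=\tfrac12(q-p)$) built into ${}^{\mathfrak b}\nabla^k$. Transporting ${}^{\mathfrak b}\nabla^k$ through $\psi$ and differentiating the normalising factor $\rmod^{-k}$ shifts the $\frac{d\rmod}{\rmod}$-coefficient by $-k$; matching this against the connection required on $\mathcal{H}_f^k$, whose $d\rmod$-part must lie in $\rmod^{-2}d\rmod\cdot\mathcal{H}_f^k$ by Definition \ref{definition of rescaling structure}, forces the residual $\frac{d\rmod}{\rmod}$-term to be split as $-k=-\tfrac k2-\tfrac k2$, one half supplied by ${\sf G}$ and the other half by the twist $\nabla w^{\otimes k}=-\tfrac k2 w^{\otimes k}\rmod^{-1}d\rmod$ of $\T(-k/2)$; the $d\para$-parts are unaffected by both the twist and the $\rmod^{-k}$-normalisation and match directly. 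The main obstacle is therefore to show cleanly that the intrinsic connection attached to $\mathcal{H}_f^k$ by \cite[Theorem 3.5]{moctwi} coincides with the connection read off from ${}^{\mathfrak b}\nabla^k$ after this twist. I would carry this out by passing to the Dolbeault model $(\mathscr{A}^\bullet_{\amb,S,\pole},d_{\rm tot})$ and the connection $\bm\nabla$ of the appendix, using $\iota_{\rm Dol}$ and the identity $\iota_{\rm Dol}\circ\nabla=\bm\nabla\circ\iota_{\rm Dol}$ to identify the two descriptions, and invoking the $\C^*_\scale$-equivariance $\chi_f$ to fix the normalisation; the commutator $[\bm\nabla_{\del_\rmod},d_{\rm tot}]=(2\rmod)^{-1}d_{\rm tot}$ computed there is exactly the infinitesimal half-weight that the twist $\T(-k/2)$ records, and once this identification is in place the isomorphism $({}^{\mathfrak b}H^k,{}^{\mathfrak b}\nabla^k)(-k/2)\simeq\mathcal{H}_f^k$ follows.
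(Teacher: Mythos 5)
Your proposal is correct and follows essentially the same route as the paper: the isomorphism is realized by multiplication by $\rmod^{p}$ on the degree-$p$ part of $\Omega^\bullet_{f,\rmod,\para}$ composed with the inclusion into $\Omega^\bullet_{X,S}(*\pole)$ (the paper's map ${\rm iso}_{|\Omega^p_{f,\rmod,\para}}=\rmod^p$), the quasi-isomorphism is supplied by the Esnault--Sabbah--Yu comparison together with local freeness of both sides, and the $-k/2$ Tate twist is accounted for exactly as in the paper by combining it with ${\sf G}$ (equivalently $\mu_f$ in the Dolbeault model) so that the residual $\frac{d\rmod}{\rmod}$-coefficient $-p$ cancels against differentiating the normalizing factor $\rmod^{-p}$. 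Your bookkeeping of the connection terms matches the paper's passage from ${}^{\mathfrak b}\nabla^k$ to $\nabla'$ with $P_{|\Omega^p}=(-p)\cdot\id$, so there is nothing substantive to add.
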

     \begin{proof}
    We have a natural isomorphism 
    $({}^{\mathfrak{b}}H^k,{}^\mathfrak{b}\nabla^k) (-k/2)
    \simeq({}^{\mathfrak{b}}H^k,{}^\mathfrak{b}\nabla^k-(k/2)\rmod^{-1}d\rmod)$. 
    Then the connection ${}^\mathfrak{b}\nabla^k-(k/2)\rmod^{-1}d\rmod$
    is induced from the following
    connection on $\mathscr{A}_{X, S, D}^\bullet$:
    $${\bm\nabla}':=d_S+\frac{f}{\rmod}d\para+P\frac{d\rmod}{\rmod}-\para f\frac{d\rmod}{\rmod^2}, $$
    where $P_{|\mathscr{A}_{X, S, D}^{p, q}}=2^{-1}((q-p)-(p+q))\cdot \id=(-p)\cdot\id$.
    Remark that $[{\bm\nabla}',d_{\rm tot}]=0$.
    Moreover, 
    it is induced from the following connection on $\Omega^\bullet_{X,S}(*D)$:
    $$\nabla'= d_S+\frac f{\rmod}d \para +{P}\frac{d\rmod}{\rmod}-\para f\frac{d\rmod}{\rmod^2},$$
    where $P_{|\Omega_{X, S}^p(*D)}=(-p)\cdot \id$.
    We also remark that $[\nabla', \rmod d+\para df]=0$.
    Then, the quasi-isomorphism
    $${\rm{iso}}:(\Omega^\bullet_{f,\rmod,\para},d+\rmod^{-1}\para df)
    \simeqto (\Omega_{X,S}^\bullet(*D),\rmod d+\para df)$$
    on $S^*\times X=(\C_\rmod^*\times\C_\para^*)\times X$
    defined by ${\rm{iso}}_{|\Omega^p_{f,\rmod,\para}}=\rmod^p$
    induces the conclusion naturally.
    \end{proof}
    \begin{remark}
     It seems that the connection on $^{\mathfrak{b}}H$ which
     Katzarkov-Kontsevich-Pantev had in mind in \cite[(3.2.2)]{kkp14} 
     was the one where $\sf f$ is replaced by $q{\sf f}$.  
     The dual of it $($or, the connection 
     $({}^{\mathfrak{b}}H,{}^\mathfrak{b}\nabla)$
     defined firstly in \cite[\S 3.2.2]{kkp14}$)$ is isomorphic to 
     $\bigoplus_{k\in\Z}(^\mathfrak{b}H^k, ^{\mathfrak{b}}\nabla^k)$.
    \end{remark}

    \subsection*{Acknowledgement}
         The author hopes to express 
         the deepest appreciation to his supervisor Takuro Mochizuki.
         The discussions with him are always enlightening to the author.
         He would like to thank Fumihiko Sanda for very useful comments 
         especially on the mirror symmetry and quantum cohomology rings.
         He is grateful to Claus Hertling and Claude Sabbah for
         their advise, encouragement, and kindness
         in many occasions.
         In particular, C. Hertling explained the paper \cite{hernil} to the author, and
         C. Sabbah indicated the author to the notion of rescaling in the irregular Hodge theory.
         He also thank Thomas Reichelt for explaining his results in \cite{reilog}. 
         He is grateful to the referee for his/her useful remarks and comments.

          The author was supported by Grant-in-Aid for JSPS Research Fellow number
          16J02453 and the Kyoto Top Global University Project (KTGU).       
  \bibliographystyle{plain}


  \end{document}